\newtheorem{theorem}{Theorem}[section]
\newtheorem{lemma}[theorem]{Lemma}
\theoremstyle{definition}
\newtheorem{definition}[theorem]{Definition}
\newtheorem{proposition}[theorem]{Proposition}
\newtheorem{corollary}[theorem]{Corollary}
\theoremstyle{remark}
\newtheorem{remark}[theorem]{Remark}
\numberwithin{equation}{section}
\begin{document}
\title[Sturm-Liouville Problems with Distributional Potentials]{Eigenvalues
of Sturm-Liouville Operators with Distributional Potentials}
\author{Jun Yan}
\address{Department of Mathematics, Tianjin University, Tianjin, 300072, P.
R. China}
\email{junyantju@126.com}
\author{Guoliang Shi}
\address{Department of Mathematics, Tianjin University, Tianjin, 300072, P.
R. China}
\email{glshi@tju.edu.cn}
\author{Jia Zhao}
\address{Department of Mathematics, Hebei University of Technology, Tianjin, 300401, P.
R. China}
\email{zhaojia@tju.edu.cn}
\thanks{*This research was supported by the National Youth Scientific
Foundation of China under Grant No. 11601372.}
\subjclass[2010]{Primary 34B24; Secondary 34L15, 34L05, 34L10, 34C10}
\keywords{Sturm-Liouville problems, eigenvalue inequalities, distributional
potentials, oscillation properties}

\begin{abstract}
We introduce a novel approach for dealing with eigenvalue problems of Sturm-Liouville operators
generated by the differential expression
\begin{equation*}
Ly=\frac{1}{r}\left( -(p\left[ y^{\prime }+sy\right] )^{\prime }+sp\left[
y^{\prime }+sy\right] +qy\right)
\end{equation*}%
which is based on norm resolvent convergence of classical Sturm-Liouville operators. This
enables us to describe the continuous dependence of the $n$-th eigenvalue on the space of
self-adjoint boundary conditions and the coefficients of the differential equation
after giving the inequalities among the eigenvalues. Moreover, oscillation properties of
the eigenfunctions are also characterized. In particular, our main results can be applied to
solve a class of Sturm-Liouville problems with transmission conditions.
\end{abstract}

\maketitle







\section*{Introduction}

The prime motivation behind this paper is to discuss the properties of
eigenvalues of self-adjoint Sturm-Liouville operators generated by the
differential expression%
\begin{equation}
Ly=\frac{1}{r}\left( -(p\left[ y^{\prime }+sy\right] )^{\prime }+sp\left[
y^{\prime }+sy\right] +qy\right) ,\text{ on }J=(a,b),\text{ }-\infty
<a<b<\infty ,  \label{a}
\end{equation}%
where the coefficients $p,$ $q,$ $r,$ $s$ are real-valued and
\begin{equation}
1/p,\text{ }q,\text{ }r,\text{ }s\in L(J,%
\mathbb{R}
),\text{ }p>0,\text{ }r>0\text{ a.e. on }J.  \label{tx}
\end{equation}

Note that when $p(x)\equiv 1$, the definition and the self-adjoint domain of
(\ref{a}) have been characterized by A. M. Savchuk and A. A. Shkalikov in
\cite{CM1} and \cite{CC3}. Moreover, in the special case $s\equiv 0$ this
differential expression reduces to the standard one, that is, one obtains,%
\begin{equation}
Ly=\frac{1}{r}\left( -(py^{\prime })^{\prime }+qy\right) .  \label{classi}
\end{equation}%
In the paper \cite{xx14}, J. Eckhardt, F. Gesztesy, R. Nichols and G. Teschl
have given a description of all the self-adjoint operators generated by the
expression (\ref{a}). Following \cite{xx14}, we introduce the
quasi-derivative $y^{[1]}=p\left[ y^{\prime }+sy\right] ,$ the self-adjoint
boundary conditions are given as follows:%
\begin{equation}
A\left(
\begin{array}{c}
y(a) \\
y^{[1]}(a)%
\end{array}%
\right) +B\left(
\begin{array}{c}
y(b) \\
y^{[1]}(b)%
\end{array}%
\right) =\left(
\begin{array}{c}
0 \\
0%
\end{array}%
\right) ,  \label{25}
\end{equation}%
where the complex $2\times 2$ matrices $A$ and $B$ satisfy$:$
\begin{equation}
\text{the }2\times 4\text{ matrix }(A|B)\text{ has full rank, and }AEA^{\ast
}=BEB^{\ast },\text{ }E=\left(
\begin{array}{cc}
0 & -1 \\
1 & 0%
\end{array}%
\right) .  \label{juzhen}
\end{equation}%
Note that $A^{\ast }$ is the complex conjugate transpose of the complex
matrix $A.$ The boundary conditions (\ref{25}) can be divided into three
classes of boundary conditions as follows:

1.Separated self-adjoint boundary conditions:%
\begin{equation}
S_{\alpha ,\beta }:\left\{
\begin{array}{c}
\cos \alpha y(a)-\sin \alpha y^{[1]}(a)=0,\text{ }\alpha \in \lbrack 0,\pi ),
\\
\cos \beta y(b)-\sin \beta y^{[1]}(b)=0,\text{ }\beta \in (0,\pi ].%
\end{array}%
\right.  \label{11}
\end{equation}%
~ $\ \ $2.All real coupled self-adjoint boundary conditions:
\begin{equation}
Y(b)=KY(a),\text{ }K\in \mathit{\mathrm{SL}}(2,%
\mathbb{R}
).  \label{21}
\end{equation}

3.All complex coupled self-adjoint boundary conditions: \
\begin{equation}
Y(b)=e^{i\gamma }KY(a),\text{ }-\pi <\gamma <0\text{ or }0<\gamma <\pi ,%
\text{ }K\in \mathit{\mathrm{SL}}(2,%
\mathbb{R}
),  \label{ee}
\end{equation}%
where
\begin{equation*}
K\in \mathit{\mathrm{SL}}(2,%
\mathbb{R}
)=:\left\{ \left(
\begin{array}{cc}
k_{11} & k_{12} \\
k_{21} & k_{22}%
\end{array}%
\right) ;\text{ }k_{ij}\in
\mathbb{R}
,\text{ }\det K=1\right\} ,Y(\cdot )=\left(
\begin{array}{c}
y(\cdot ) \\
y^{[1]}(\cdot )%
\end{array}%
\right) .
\end{equation*}
Actually, (\ref{21}) can be treated as a case of (\ref{ee}) when $\gamma
=0.\ $

In the last decades, Schr\"{o}dinger operators with distributional
potentials have attracted tremendous interest since they can be used as
solvable models in many situations. We should mention that there were
actually earlier papers dealing with Schr\"{o}dinger operators involving
strongly singular and oscillating potentials, such as, M.-L. Baeteman and K.
Chadan \cite{15}, \cite{16}, M. Combescure \cite{28}, M. Combescure and J.
Ginibre \cite{27}, D. B. Pearson \cite{131}, F. S. Rofe-Beketov and E. H.
Hristov \cite{134}, \cite{135}, and a more recent contribution treating
distributional potentials by J. Herczy\'{n}ski \cite{72}. In addition,
numerous results on the case of point interactions can be found in some
standard monographs by S. Albeverio, F. Gesztesy, R. H\o gh-Krohn, and H.
Holden \cite{2} and S. Albeverio and P. Kurasov \cite{5}. It was not until
1999 that A. M. Savchuk and A. A. Shkalikov \cite{CM1} started a new
development for Schr\"{o}dinger operators with distributional potential
coefficients. And the operators with distribution potentials proposed by
A.M. Savchuk and A.A. Shkalikov have been received enormous attention. We
also emphasize that similar differential expressions have already been
studied by C. Bennewitz and W. N. Everitt \cite{C. Bennewitz} (see also \cite%
{Everitt}).

In the paper \cite{xx14} and \cite{inverse}, J. Eckhardt, F. Gesztesy, R.
Nichols and G. Teschl have given a systematical development of
Weyl--Titchmarsh theory and inverse spectral theory for singular
differential operators on arbitrary intervals $(a,b)\subset
\mathbb{R}
$ associated with the differential expressions (\ref{a}). Under the
assumption (\ref{tx}) on the coefficients, the discreteness and boundedness
from below of the spectrum has been proved in \cite{xx14} for the
self-adjoint differential operators associated with the differential
expression (\ref{a}). In this paper, we will continue to discuss the
properties of eigenvalues of the self-adjoint differential operators with
distributional potentials under the assumption (\ref{tx}).

Actually, in this paper, by a different method, we will also show that the
spectrum of the self-adjoint differential operators associated with the
differential expression (\ref{a}) is discrete, and the eigenvalues can also
be ordered to form a non-decreasing sequence%
\begin{equation*}
\lambda _{0},\lambda _{1},\lambda _{2},\lambda _{3},\ldots
\end{equation*}%
approaching $+\infty $ so that the number of times an eigenvalue appears in
the sequence is equal to its multiplicity. Here multiplicity refers to both
the algebraic and geometric multiplicities, since in this paper we will show
that the two multiplicities are equal for self-adjoint boundary conditions.\
Note that for the Sturm-Liouville problems with distributional potentials,
the algebraic multiplicity of an eigenvalue we introduce is the order of its
zero as a root of the characteristic function discussed in Lemma \ref{L1}.
The geometric multiplicity of an eigenvalue is naturally the number of the
linearly independent eigenfunctions of this eigenvalue.

As we have known, for Sturm-Liouville operators with regular potentials,
i.e., the operators generated by the differential expression (\ref{classi}),
M.S.P. Eastham, Q. Kong, H. Wu, and A. Zettl (\cite{CC5}, \cite{XIN1} and
\cite{kong}) have characterized the inequalities among the eigenvalues
corresponding to different self-adjoint boundary conditions, the continuity
region of the $n$-th eigenvalue as a function on the space of self-adjoint
boundary conditions, the dependence of the $n$-th eigenvalue on the
coefficients of the differential equation, and the oscillation properties of
the eigenfunctions. In contrast, such theory for Sturm-Liouville operators
with distributional potentials has not yet been developed, and it is
precisely the purpose of this paper to have a discussion on the
corresponding properties.

Enlightened by the space introduced by Q. Kong and A. Zettl in \cite{kong},
in this paper we will introduce a more general \textquotedblleft boundary
value problem space\textquotedblright\ with a metric to study the
Sturm-Liouville problems with distributional potentials. Let $\Omega
=\{\omega =(A,B,1/p,q,r,s);(\ref{tx})$ and $(\ref{juzhen})$ hold$\}.$ For
the topology of $\Omega $ we use a metric $d$ defined as follows: For $%
\omega =(A,B,1/p,q,r,s)\in \Omega ,$ $\omega _{0}=(A_{0},B_{0},\frac{1}{p_{0}%
},q_{0},r_{0},s_{0})\in \Omega ,$ define $d(\omega ,\omega _{0})=\left\Vert
A-A_{0}\right\Vert +\left\Vert B-B_{0}\right\Vert +\int_{a}^{b}\left(
\left\vert \frac{1}{p}-\frac{1}{p_{0}}\right\vert +\left\vert
q-q_{0}\right\vert +\left\vert r-r_{0}\right\vert +\left\vert
s-s_{0}\right\vert \right) $ where $\left\Vert \cdot \right\Vert $ denotes
any matrix norm. Denote the space of all complex self-adjoint boundary
conditions by $\mathscr{B}_{S}^{%
\mathbb{C}
}$ which is the similar to the space associated with the Sturm-Liouville
problems with regular potentials introduced firstly in \cite{8}. Under such
a topology, we will have a research on the continuity region of the $n$-th
eigenvalue as a function on $\mathscr{B}_{S}^{%
\mathbb{C}
}$, the differentiability and monotonicity of the $n$-th eigenvalue with
respect to $\alpha ,\beta $ in the separated boundary conditions are also
given. We will prove the continuous dependence and differentiability of the $%
n$-th eigenvalue with respect to the coefficients $1/p,$ $q,$ $r,$ $s$ in
the sense of Frechet derivative in the Banach space $L(J,%
\mathbb{R}
).$

It is worth mentioning that, in order to analyze
the eigenvalues of Sturm-Liouville operators with distributional potentials,
we introduce an approach that relies heavily on the \textquotedblleft norm
resolvent convergence\textquotedblright\ and an asymptotic form of the
fundamental solutions of the equation $(\ref{fangcheng})$ for sufficiently
negative $\lambda $, which is different from that for Sturm-Liouville operators with regular
potentials. In this paper, we will find a sequence of Sturm-Liouville operators $L_{m}$
with regular potentials to approximate the Sturm-Liouville operator $L$ with a distributional potential
in norm resolvent convergence (Lemma \ref{L13}).
Furthermore, in Lemma \ref{L0 copy(1)}, we will show that the lowest eigenvalues of $L_{m}$ are uniformly
semi-bounded from below, this will guarantee the sequence of the $n$-th eigenvalues of Sturm-Liouville operators $L_{m}$
converges to the $n$-th eigenvalue of the operator $L$(Lemma \ref{norm resovent}).
In a word, our approach not only enables us to obtain  a series of results, but also
yields a relation between the eigenvalues of Sturm-Liouville operators with distributional potentials
and the eigenvalues of Sturm-Liouville operators with regular potentials.
Moreover, the main conclusions obtained in this paper can be applied
to solve the eigenvalue problems of Sturm-Liouville operators with transmission
conditions which have been an important research topic in
mathematical physics \cite{tr1,tr2,tr3,tr4}

This paper is organized as follows. In Section 1, we recall some basic
results, and prove a condition for norm resolvent convergence. In Section 2,
some preliminary and important lemmas for the main results are stated and
proved. In Section 3, we give a comment on the continuity region, the
differentiability and monotonicity of the $n$-th eigenvalue with respect to $%
\alpha ,\beta $ in the separated boundary conditions. Oscillation properties
of the eigenfunctions of all the self-adjoint Sturm-Liouville problems are
given in Section 5 after discussions on the inequalities among eigenvalues
in Section 4. Section 6 is devoted to describe the continuity region of the $%
n$-th eigenvalue as a function on the space of self-adjoint boundary
conditions. In Section 7, we also comment on the continuous
dependence and differentiability of the $n$-th eigenvalue on the
coefficients of the differential equation. Finally, in Section 8, we solve some eigenvalue problems
of a class of Sturm-Liouville operators with transmission conditions.

\section{Notation and prerequisites results}

We introduce the quasi-derivative $y^{[1]}=p\left[ y^{\prime }+sy\right] $
and rewrite expression (\ref{a}) in the form%
\begin{equation}
Ly=\frac{1}{r}\left( -(y^{[1]})^{\prime }+sy^{[1]}+qy\right) ,\text{ }x\in
(a,b).  \label{b}
\end{equation}

Let $\phi _{1}$ and $\phi _{2}$ be the fundamental solutions of
\begin{equation}
-(y^{[1]})^{\prime }+sy^{[1]}+qy=\lambda ry,\text{ }x\in (a,b),
\label{fangcheng}
\end{equation}%
determined by the initial conditions%
\begin{equation}
\phi _{1}(a,\lambda )=\phi _{2}^{[1]}(a,\lambda )=1,\text{ }\phi
_{2}(a,\lambda )=\phi _{1}^{[1]}(a,\lambda )=0,\text{ }\lambda \in
\mathbb{C}
.  \label{c}
\end{equation}

Denote
\begin{equation}
\Phi \left( x,\lambda \right) =\left(
\begin{array}{cc}
\phi _{1}(x,\lambda ) & \phi _{2}(x,\lambda ) \\
\phi _{1}^{[1]}(x,\lambda ) & \phi _{2}^{[1]}(x,\lambda )%
\end{array}%
\right) ,  \label{3}
\end{equation}%
then $\Phi \left( x,\lambda \right) $ is the fundamental matrix solution of%
\begin{equation}
Y^{\prime }(x)=[P(x)-\lambda W(x)]Y(x),\text{ }Y(a)=I,\text{ }x\in (a,b),
\label{4}
\end{equation}%
where
\begin{equation*}
\text{ }P(x)=\left(
\begin{array}{cc}
-s(x) & \frac{1}{p(x)} \\
q(x) & s(x)%
\end{array}%
\right) \text{, \ }W(x)=\left(
\begin{array}{cc}
0 & 0 \\
r(x) & 0%
\end{array}%
\right) .
\end{equation*}%
For $K\in \mathit{\mathrm{SL}}(2,%
\mathbb{R}
),\ $and\ $\lambda \in
\mathbb{C}
$, we define%
\begin{eqnarray*}
D(\lambda ) &=&k_{11}\phi _{2}^{[1]}(b,\lambda )-k_{21}\phi _{2}(b,\lambda
)+k_{22}\phi _{1}(b,\lambda )-k_{12}\phi _{1}^{[1]}(b,\lambda ), \\
A(\lambda ) &=&k_{11}\phi _{1}^{[1]}(b,\lambda )-k_{21}\phi _{1}(b,\lambda ),
\\
B(\lambda ) &=&k_{11}\phi _{2}^{[1]}(b,\lambda )+k_{12}\phi
_{1}^{[1]}(b,\lambda )-k_{21}\phi _{2}(b,\lambda )-k_{22}\phi _{1}(b,\lambda
), \\
D_{1}(\lambda ) &=&k_{11}\phi _{2}^{[1]}(b,\lambda )-k_{21}\phi
_{2}(b,\lambda ), \\
D_{2}(\lambda ) &=&k_{22}\phi _{1}(b,\lambda )-k_{12}\phi
_{1}^{[1]}(b,\lambda ), \\
C(\lambda ) &=&k_{22}\phi _{2}(b,\lambda )-k_{12}\phi _{2}^{[1]}(b,\lambda ).
\end{eqnarray*}%
Note that%
\begin{equation*}
K^{-1}\Phi \left( b,\lambda \right) =\left(
\begin{array}{cc}
D_{2}(\lambda ) & C(\lambda ) \\
A(\lambda ) & D_{1}(\lambda )%
\end{array}%
\right) ,
\end{equation*}%
\begin{equation}
D(\lambda )=D_{1}(\lambda )+D_{2}(\lambda ),\ B(\lambda )=D_{1}(\lambda
)-D_{2}(\lambda ).  \label{GP}
\end{equation}%
For $K\in \mathit{\mathrm{SL}}(2,%
\mathbb{R}
),\ $consider the separated boundary conditions:%
\begin{eqnarray}
y(a) &=&0,k_{22}y(b)-k_{12}y^{[1]}(b)=0,  \label{ff} \\
y^{[1]}(a) &=&0,k_{21}y(b)-k_{11}y^{[1]}(b)=0,  \label{gg}
\end{eqnarray}%
and denote the $n$-th eigenvalue for (\ref{ff}) and (\ref{gg}) by $\mu
_{n}=\mu _{n}(K)$ and $\nu _{n}=\nu _{n}(K)$ respectively, $n\in
\mathbb{N}
_{0}=\{0,1,2,3,\ldots \}$. Denote the $n$-th eigenvalue for (\ref{21}), (\ref%
{ee}) by $\lambda _{n}(K)$, $\lambda _{n}(\gamma ,K)$ respectively, $n\in
\mathbb{N}
_{0}$.

Note that if $ps$ is a smooth function$,$ then the operators generated by
the differential expression (\ref{a}) with the boundary conditions (\ref{25}%
) are Sturm-Liouville operators with regular potentials.

We also introduce the conditions: $($i$)$ Dirichlet boundary conditions $%
S_{0,\pi }$: $y(a)=y(b)=0,$ $($ii$)$ periodic boundary conditions $($when $%
K=I):y(a)=y(b),y^{[1]}(a)=y^{[1]}(b),$ $($iii$)$ semi-periodic boundary
conditions $($when $K=-I)$: $y(a)=-y(b)$, $y^{[1]}(a)=-y^{[1]}(b)$.

Now we recall some notations introduced by T. Kato in \cite{xx13}. Consider
closed linear manifolds $M$ and $N$ of a Banach space $X$. We denote by $%
S_{M}$ the unit sphere of $M$ (the set of all $u\in M$ with $\left\Vert
u\right\Vert $ $=1$). For any two closed linear manifolds $M,$ $N$ of $X$,
we set%
\begin{equation*}
\delta (M,N)=\sup\limits_{u\in S_{M}}\mathrm{dist}(u,N),\text{ }\hat{\delta}%
(M,N)=\max [\delta (M,N),\delta (N,M)].
\end{equation*}%
Consider the set $\mathscr{C}(X,Y)$ of all closed operators from a Banach
space $X$ to a Banach space $Y$. If $T,S$ $\in $ $\mathscr{C}(X,Y)$, their
graphs $G(T),$ $G(S)$ are closed linear manifolds of the product space $%
X\times Y$. We set%
\begin{equation*}
\delta (T,S)=\delta (G(T),G(S)),\hat{\delta}(T,S)=\hat{\delta}\left(
G(T),G(S)\right) =\max [\delta (T,S),\delta (S,T)].
\end{equation*}%
$\hat{\delta}(T,S)$ will be called the gap between $T$ and $S.$ We recall
that $T_{n}$\textbf{\ converges to }$T$\textbf{\ in the generalized sense if
}$\hat{\delta}(T_{n},T)$\textbf{\ }$\rightarrow $\textbf{\ }$0$.

Now we recall the definition of the norm resolvent convergence.~Let $T$ and $%
\{T_{m}\}_{m\in
\mathbb{N}
}$ be closed operators in Hilbert space $H$. We say that the sequence of the
operators $T_{m}$ converges to $T$ in the sense of norm resolvent
convergence, i.e. $T_{m}\overset{R}{\Longrightarrow }T$, if there is a
number $\mu \in
\mathbb{C}
$ belonging to the resolvent sets $\rho (T)$ and $\rho (T_{m})$ for all
sufficiently large $m$ and the sequence of bounded operators $(T_{m}-\mu
)^{-1}$ converges uniformly to the operator $(T-\mu )^{-1}.$

\begin{lemma}
Let $T,T_{m}$ $\in \mathscr{C}(X,Y)$, $n\in
\mathbb{N}
.$ We denote by $\mathscr{B}(X,Y)$ the set of all bounded operators on $X$
to $Y$.

$(1)$ If $T^{-1}$ exists and belongs to $\mathscr{B}(Y,X)$, $T_{m}$ $%
\rightarrow $ $T$ in the generalized sense if and only if $T_{m}^{-1}$
exists and belongs to $\mathscr{B}(Y,X)$, for sufficiently large $m$ and $%
\left\Vert T_{m}^{-1}-T^{-1}\right\Vert \rightarrow 0$.

$(2)$ $T_{m}$ $\rightarrow $ $T$ in the generalized sense and if $A\in %
\mathscr{B}(X,Y)$, then $T_{m}$ $+A\rightarrow T+A$ in the generalized sense.
\end{lemma}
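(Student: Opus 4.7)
The plan is to work throughout with the graph-geometric definition of $\hat{\delta}$, so that both parts reduce to how natural bounded linear maps on the product space $X\times Y$ interact with graphs and unit spheres.

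For part (1), I would use the coordinate swap $J:X\times Y\to Y\times X$, $J(x,y)=(y,x)$, which is an isometry. When a closed operator $T$ has an inverse, $J$ carries $G(T)\subset X\times Y$ onto $G(T^{-1})\subset Y\times X$. Consequently, provided $T_m^{-1}$ exists, $\hat{\delta}(T,T_m)$ computed in $X\times Y$ equals $\hat{\delta}(T^{-1},T_m^{-1})$ computed in $Y\times X$, so generalized convergence of $T_m$ to $T$ is equivalent to generalized convergence of the inverses. Since $T^{-1}\in\mathscr{B}(Y,X)$ is everywhere defined, a stability argument (using that the projection of $G(T^{-1})$ onto $Y$ covers all of $Y$, together with the graph proximity) shows that $T_m^{-1}$ is everywhere defined and bounded for $m$ large. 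The inequality $\hat{\delta}(S,R)\le\|S-R\|$, trivial for bounded operators with full domain, then upgrades $\hat{\delta}(T_m^{-1},T^{-1})\to 0$ to $\|T_m^{-1}-T^{-1}\|\to 0$; the converse direction of the iff is immediate from this same inequality combined with the swap argument.

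For part (2), the shear map $\Psi_A:X\times Y\to X\times Y$, $\Psi_A(u,z)=(u,z+Au)$, is a bounded linear bijection with inverse $\Psi_{-A}$ and $\max\{\|\Psi_A\|,\|\Psi_A^{-1}\|\}\le 1+\|A\|$. A direct inspection shows $\Psi_A(G(T))=G(T+A)$ and $\Psi_A(G(T_m))=G(T_m+A)$. Given $w\in S_{G(T+A)}$, write $w=\Psi_A u/\|\Psi_A u\|$ with $u\in G(T)$ of norm at most $1+\|A\|$; use $\delta(T,T_m)$ to approximate $u/\|u\|$ by some $v\in G(T_m)$, then push forward by $\Psi_A$ and renormalize to obtain an element of $S_{G(T_m+A)}$ close to $w$. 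Symmetric reasoning in the other direction yields an estimate of the form $\hat{\delta}(T+A,T_m+A)\le C(\|A\|)\,\hat{\delta}(T,T_m)\to 0$.

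The step I expect to demand the most care is the normalization bookkeeping: transferring unit vectors between graphs through non-isometric bounded maps forces one to track constants in both directions of $\hat{\delta}$; in part (1) one additionally has to verify that the set of $y\in Y$ on which $T_m^{-1}$ could fail to be defined really shrinks to empty as $m\to\infty$, which relies on the surjectivity of $T^{-1}$ combined with the graph proximity and a standard open-mapping argument.
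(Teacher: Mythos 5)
Your plan reconstructs exactly the argument behind the result the paper cites (Kato, Theorem IV.2.23): the paper itself gives no proof, only the reference, and both of your devices -- the graph-swap isometry $J$ for part (1) and the shear $\Psi_A$ for part (2) -- are Kato's. Part (2) as you outline it is correct: $\Psi_A(G(T))=G(T+A)$, and $\delta(\Psi_A M,\Psi_A N)\le \Vert \Psi_A\Vert\,\Vert \Psi_A^{-1}\Vert\,\delta(M,N)$ for closed subspaces $M,N$, so $\hat{\delta}(T+A,T_m+A)\le C(\Vert A\Vert)\,\hat{\delta}(T,T_m)$; the renormalization on the target side is not even needed, since $\delta$ measures distance to the whole subspace $G(T_m+A)$, not to its unit sphere.

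In part (1), however, the step carrying the forward implication is stated backwards, and this is a genuine gap. The inequality $\hat{\delta}(S,R)\le\Vert S-R\Vert$ says that norm closeness implies gap closeness; it therefore serves only the converse direction of the equivalence (norm convergence of the inverses $\Rightarrow$ gap convergence $\Rightarrow$ generalized convergence, via $J$). It cannot ``upgrade'' $\hat{\delta}(T_m^{-1},T^{-1})\to 0$ to $\Vert T_m^{-1}-T^{-1}\Vert\to 0$. What the forward direction actually requires is the reverse, quantitative stability estimate: if $R\in\mathscr{B}(Y,X)$ is everywhere defined and $M$ is a closed subspace of $Y\times X$ with $\hat{\delta}(M,G(R))$ small (below $(1+\Vert R\Vert^{2})^{-1/2}$), then $M$ contains no vector of the form $(0,x)$ with $x\ne 0$, hence $M=G(S)$ for a closed operator $S$, and moreover $D(S)=Y$, $S\in\mathscr{B}(Y,X)$, with $\Vert S-R\Vert\le C(\Vert R\Vert)\,\hat{\delta}(M,G(R))$. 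Applied to $M=J\,G(T_m)$ and $R=T^{-1}$, this one lemma simultaneously yields the injectivity of $T_m$ (a point your sketch silently assumes when it says ``provided $T_m^{-1}$ exists''), the surjectivity and uniform boundedness of $T_m^{-1}$, and the norm convergence; it is the substantive content of Kato's Theorem IV.2.23(a) (his Theorems IV.2.13--2.14), and the everywhere-definedness is obtained by a Riesz-lemma argument on $D(S)$ using $\delta(G(R),M)$, not by the open mapping theorem you invoke. With that estimate put in place of the trivial inequality, your outline closes.
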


\begin{proof}
See \cite[Theorem 2.23]{xx13}.
\end{proof}

\begin{remark}
If $T_{m}\overset{R}{\Longrightarrow }T,$ from the above claim $(1)$, it can
be seen that $T_{m}-\mu \rightarrow T-\mu $ in the generalized sense, $\mu $
$\in
\mathbb{C}
$ is a number belonging to the resolvent sets $\rho (T)$ and $\rho (T_{m})$
for all sufficiently large $m$. From the above claim $(2)$, it follows that $%
T_{m}$ $\rightarrow $ $T$ in the generalized sense.
\end{remark}

\begin{lemma}
\label{geo copy(1)}Let $T$ $\in $ $\mathscr{C}(X)$ and let $\Gamma $ be a
compact subset of the resolvent set $\rho (T)$. Then there is a $\delta >0$
such that $\Gamma \subset \rho (S)$ for any $S\in $ $\mathscr{C}(X)$ with $%
\hat{\delta}(S,T)<\delta $.
\end{lemma}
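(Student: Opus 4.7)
The plan is to upgrade the pointwise statement ``$\mu\in\rho(T)$ for each $\mu\in\Gamma$'' to a uniform neighborhood statement valid simultaneously over all of $\Gamma$, by combining claim~(1) of the previous lemma with a Neumann-series expansion and a finite covering argument.

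First I would fix an arbitrary $\mu_{0}\in\Gamma$. Since $(T-\mu_{0})^{-1}$ exists and is bounded, claim~(1) furnishes $\eta(\mu_{0})>0$ such that $\hat{\delta}(T',T-\mu_{0})<\eta(\mu_{0})$ implies $(T')^{-1}\in\mathscr{B}(X)$ together with $\|(T')^{-1}-(T-\mu_{0})^{-1}\|<1$. To apply this with $T'=S-\mu_{0}$, I need a quantitative bound relating $\hat{\delta}(S-\mu_{0},T-\mu_{0})$ to $\hat{\delta}(S,T)$; this is the standard estimate $\hat{\delta}(S+A,T+A)\leq 2(1+\|A\|^{2})^{1/2}\,\hat{\delta}(S,T)$ for bounded $A$ (Kato, Theorem~IV.2.14), applied with $A=-\mu_{0}I$. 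Hence there is a $\delta(\mu_{0})>0$ such that $\hat{\delta}(S,T)<\delta(\mu_{0})$ forces $\mu_{0}\in\rho(S)$ and the uniform-in-$S$ bound $\|(S-\mu_{0})^{-1}\|\leq M(\mu_{0}):=\|(T-\mu_{0})^{-1}\|+1$.

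Next I would propagate this pointwise statement to an open disk around $\mu_{0}$. For any $\mu$ with $|\mu-\mu_{0}|<r(\mu_{0}):=1/M(\mu_{0})$, the Neumann series
\begin{equation*}
(S-\mu)^{-1}=\sum_{k\geq 0}(\mu-\mu_{0})^{k}\,(S-\mu_{0})^{-(k+1)}
\end{equation*}
converges in operator norm, so $\mu\in\rho(S)$ as well. Thus each $\mu_{0}\in\Gamma$ possesses an open disk $D_{\mu_{0}}$ of radius $r(\mu_{0})$ together with a threshold $\delta(\mu_{0})>0$ such that $\hat{\delta}(S,T)<\delta(\mu_{0})$ entails $D_{\mu_{0}}\subset\rho(S)$.

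Finally I would invoke compactness of $\Gamma$: extract a finite subcover $\{D_{\mu_{1}},\ldots,D_{\mu_{N}}\}$ from the open cover $\{D_{\mu_{0}}:\mu_{0}\in\Gamma\}$, and set $\delta:=\min_{1\leq i\leq N}\delta(\mu_{i})>0$. Whenever $\hat{\delta}(S,T)<\delta$, the inclusion $D_{\mu_{i}}\subset\rho(S)$ holds for every $i$ simultaneously, so $\Gamma\subset\bigcup_{i=1}^{N}D_{\mu_{i}}\subset\rho(S)$, which is the required conclusion. The principal obstacle is the passage from a pointwise statement at $\mu_{0}$ to a statement holding on an entire disk uniformly in $S$; this is exactly where the quantitative norm bound on $\|(S-\mu_{0})^{-1}\|$ supplied by claim~(1) is essential, since a mere existence statement for $(S-\mu_{0})^{-1}$ would not yield a radius of resolvent expansion independent of $S$.
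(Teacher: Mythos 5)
Your argument is correct, but it is worth noting that the paper does not prove this lemma at all: it simply cites Kato \cite[Theorem 3.1]{xx13} (Theorem IV.3.1 in Kato's book), so what you have done is reconstruct that proof. Your reconstruction differs slightly from Kato's own: Kato uses compactness of $\Gamma$ once, to get a uniform bound $M$ on $\left\Vert (T-\zeta)^{-1}\right\Vert$ and on $\left\vert \zeta \right\vert$ over $\Gamma$ (continuity of the resolvent norm on $\rho(T)$), and then applies the quantitative invertibility-stability theorem (Theorem IV.2.21, via the translation estimate for $\hat{\delta}(S-\zeta ,T-\zeta )$) at every $\zeta \in \Gamma$ with a single $\delta$; you instead apply the stability result pointwise, use the uniform-in-$S$ resolvent bound to expand in a Neumann series on a disk of radius independent of $S$, and finish with a finite subcover. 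Both routes are sound and of comparable length; Kato's avoids the covering argument, yours avoids invoking continuity of $\zeta \mapsto \left\Vert (T-\zeta)^{-1}\right\Vert$. Two small points to tidy up, neither a genuine gap: claim $(1)$ of the preceding lemma is stated for sequences, so to extract your $\eta(\mu_{0})$ you should either argue by contradiction along a sequence $T_{n}^{\prime}$ with $\hat{\delta}(T_{n}^{\prime},T-\mu_{0})<1/n$, or cite the quantitative statement (Kato, Theorem IV.2.21) directly, which also hands you the norm bound $\left\Vert (S-\mu_{0})^{-1}\right\Vert \leq \left\Vert (T-\mu_{0})^{-1}\right\Vert +1$ without detour; and the constant in the translation estimate $\hat{\delta}(S+A,T+A)\leq 2(1+\left\Vert A\right\Vert ^{2})\hat{\delta}(S,T)$ is not exactly the one you quote, though the precise value is immaterial to the argument.
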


\begin{proof}
See \cite[Theorem 3.1]{xx13}.
\end{proof}

\begin{corollary}
\label{geo copy(2)}For self-adjoint operators $T_{m}$ and $T,$ $T_{m}\overset%
{R}{\Longrightarrow }T.$

$\left( 1\right) $ If $\lambda \left( m\right) \in \sigma \left(
T_{m}\right) ,$ and $\lambda \left( m\right) \rightarrow c,$ as $%
m\rightarrow \infty ,$ then $c\in \sigma \left( T\right) .$

$\left( 2\right) $ If $\lambda \in \sigma \left( T\right) ,$ then there must
exist $\lambda \left( m\right) \in \sigma \left( T_{m}\right) ,$ such that $%
\lambda \left( m\right) \rightarrow \lambda ,$ as $m\rightarrow \infty .$
\end{corollary}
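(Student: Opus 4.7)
\medskip
\noindent\textbf{Proof plan.} The plan is to deduce part $(1)$ directly from Lemma \ref{geo copy(1)} together with the preceding remark (norm resolvent convergence implies generalized convergence), and to handle part $(2)$ by passing to the bounded resolvent operators $(T_m-\mu)^{-1}$ and invoking the spectral mapping theorem for self-adjoint operators.

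\medskip
\noindent\textbf{Part $(1)$.} I would argue by contradiction. Suppose $c\in\rho(T)$. Since $\rho(T)$ is open in $\mathbb{C}$, some closed disk $\overline{B(c,\eta)}$ is contained in $\rho(T)$; this is a compact subset of $\rho(T)$. By the Remark following Lemma 1.1, $T_m\to T$ in the generalized sense, so $\hat\delta(T_m,T)\to 0$. Lemma \ref{geo copy(1)} then forces $\overline{B(c,\eta)}\subset\rho(T_m)$ for all sufficiently large $m$. But $\lambda(m)\to c$ means $\lambda(m)$ eventually lies in $\overline{B(c,\eta)}$, which contradicts $\lambda(m)\in\sigma(T_m)$. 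Hence $c\in\sigma(T)$.

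\medskip
\noindent\textbf{Part $(2)$.} I would argue by contrapositive: assume that no sequence $\lambda(m)\in\sigma(T_m)$ satisfies $\lambda(m)\to\lambda$, and show $\lambda\in\rho(T)$. By the assumption there exist $\epsilon>0$ and a subsequence (still denoted $m$) with $\mathrm{dist}(\lambda,\sigma(T_m))\ge\epsilon$ for all $m$. Fix $\mu=\lambda+i\in\mathbb{C}\setminus\mathbb{R}\subset\rho(T)$. Since $\{\mu\}$ is compact in $\rho(T)$, Lemma \ref{geo copy(1)} places $\mu$ in $\rho(T_m)$ for large $m$, and by the definition of norm resolvent convergence $(T_m-\mu)^{-1}\to(T-\mu)^{-1}$ in the operator norm. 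The spectral mapping theorem for self-adjoint operators gives
\[
\sigma\bigl((T_m-\mu)^{-1}\bigr)\setminus\{0\}=\{(\xi-\mu)^{-1}:\xi\in\sigma(T_m)\}.
\]
Since $i=1/(\lambda-\mu)$, a direct calculation $i-(\xi-\mu)^{-1}=i(\xi-\lambda)/(\xi-\lambda-i)$ for real $\xi$ with $|\xi-\lambda|\ge\epsilon$ yields $\mathrm{dist}\bigl(i,\sigma((T_m-\mu)^{-1})\bigr)\ge\epsilon/\sqrt{\epsilon^2+1}$, uniformly in $m$. For a normal (indeed, here normal bounded) operator, this distance estimate gives a uniform bound $\bigl\|((T_m-\mu)^{-1}-iI)^{-1}\bigr\|\le\sqrt{\epsilon^2+1}/\epsilon$. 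The standard fact that a norm limit of uniformly invertible bounded operators with uniformly bounded inverses is invertible then shows $(T-\mu)^{-1}-iI$ is invertible, i.e.\ $i\in\rho((T-\mu)^{-1})$. Applying the spectral mapping theorem in reverse, $\lambda=\mu-1/i\in\rho(T)$, contradicting $\lambda\in\sigma(T)$.

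\medskip
\noindent\textbf{Main obstacle.} Part $(1)$ is almost a tautology once one has Lemma \ref{geo copy(1)}. The real work lies in part $(2)$, because Lemma \ref{geo copy(1)} propagates information from the resolvent set of the limit $T$ to that of the approximants $T_m$, which is the opposite direction of what part $(2)$ needs. The trick to overcome this is to transfer the spectral problem to the bounded resolvents $(T_m-\mu)^{-1}$, where norm convergence is genuinely available and the spectral mapping theorem for self-adjoint operators, combined with the uniform distance estimate from $i$ to $\sigma((T_m-\mu)^{-1})$, upgrades upper semicontinuity of spectra into the inclusion we need.
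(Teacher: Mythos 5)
Your part (1) is essentially the paper's own argument: both use the Remark that norm resolvent convergence gives $\hat{\delta}(T_m,T)\to 0$ and then apply Lemma \ref{geo copy(1)} to a compact neighborhood of $c$ inside $\rho(T)$ (the paper takes an interval $[c-\gamma,c+\gamma]$, you take a closed disk) to force that neighborhood into $\rho(T_m)$ for large $m$, contradicting $\lambda(m)\to c$. For part (2) the paper does no work at all: it simply cites Theorem VIII.24 of Reed--Simon. Your contrapositive argument through the bounded resolvents is a correct, self-contained substitute: the reduction to a subsequence with $\mathrm{dist}(\lambda,\sigma(T_m))\ge\epsilon$, the spectral mapping identity $\sigma((T_m-\mu)^{-1})\setminus\{0\}=\{(\xi-\mu)^{-1}:\xi\in\sigma(T_m)\}$, the fact that for the normal operator $(T_m-\mu)^{-1}$ the resolvent norm equals the reciprocal of the distance to the spectrum, and the stability of invertibility under norm limits with uniformly bounded inverses do combine as you claim. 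Three small points to tidy: the possible spectral value $0$ of $(T_m-\mu)^{-1}$ (present when $T_m$ is unbounded) must also be kept away from $i$, which it is since $|i-0|=1\ge\epsilon/\sqrt{\epsilon^{2}+1}$; the paper's definition of norm resolvent convergence gives convergence of resolvents only at one fixed point $\mu_0$, so you should note (or cite, e.g.\ Reed--Simon Theorem VIII.19) that for self-adjoint operators this propagates to your chosen $\mu=\lambda+i$; and the final inversion should read $\lambda=\mu+1/i=\mu-i$, not $\mu-1/i$ --- a sign slip that does not affect the argument, since you had already established $i=1/(\lambda-\mu)$. What your route buys is independence from the cited textbook theorem (and it genuinely uses norm, not just strong, resolvent convergence); what the paper's route buys is brevity.
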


\begin{proof}
$\left( 1\right) $ If $c\in \rho \left( T\right) \cap
\mathbb{R}
,$ since $\rho (T)$ is open, so there exists a $\gamma >0$ such that $\left[
c-\gamma ,c+\gamma \right] \subset \rho (T).$ So from Lemma $\ref{geo
copy(1)},$ there is a $\delta >0$ such that $\left[ c-\gamma ,c+\gamma %
\right] \subset \rho (S)$ for any $S\in $ $\mathscr{C}(X)$ with $\hat{\delta}%
(S,T)<\delta $. If $T_{m}\overset{R}{\Longrightarrow }T,$ then $T_{m}$ $%
\rightarrow $ $T$ in the generalized sense. So there exists $N>0$ such that
if $m>N,$ $\hat{\delta}(T_{m},T)<\delta .$ Then $\left[ c-\gamma ,c+\gamma %
\right] \subset \rho (T_{m})$ if $m>N.$ This contradicts to the fact that $%
\lambda \left( m\right) \rightarrow c,$ as $m\rightarrow \infty .$

$\left( 2\right) $ See \cite[Theorem VIII.24]{xx12}.
\end{proof}

\begin{lemma}
\label{geo}For self-adjoint operators $T_{m}$ and $T,$ $T_{m}\overset{R}{%
\Longrightarrow }T,$ if $\lambda _{0}$ is an isolated eigenvalue of the
operator $T$ with finite geometric multiplicity $\chi $, then there are
finitely many eigenvalues of the operators $T_{m}$ in an arbitrary
sufficiently small $\delta $-neighborhood of the point $\lambda _{0}$ if $m$
is large enough. Moreover, their total geometric multiplicity equals $\chi $.
\end{lemma}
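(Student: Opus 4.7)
The plan is to reduce the lemma to a statement about Riesz spectral projections along a small contour encircling $\lambda_0$. Since $\lambda_0$ is isolated in $\sigma(T)$, choose $\delta_0>0$ such that $\overline{D(\lambda_0,\delta_0)}\cap\sigma(T)=\{\lambda_0\}$, and for each $\delta\in(0,\delta_0]$ set $\Gamma_\delta=\{z\in\mathbb{C}:|z-\lambda_0|=\delta\}$. Then $\Gamma_\delta$ is a compact subset of $\rho(T)$. Because $T_m\overset{R}{\Longrightarrow}T$ implies $T_m\to T$ in the generalized sense (the remark following the definition of norm resolvent convergence), Lemma \ref{geo copy(1)} gives $\Gamma_\delta\subset\rho(T_m)$ for all sufficiently large $m$, so the Riesz projections
\[
P=-\frac{1}{2\pi i}\oint_{\Gamma_\delta}(T-z)^{-1}\,dz,\qquad P_m=-\frac{1}{2\pi i}\oint_{\Gamma_\delta}(T_m-z)^{-1}\,dz
\]
are well defined. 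Self-adjointness of $T$ forces algebraic and geometric multiplicity of the isolated eigenvalue $\lambda_0$ to agree, so $\mathrm{rank}(P)=\chi$.

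The principal analytic step is to prove $\|P_m-P\|\to 0$, which requires upgrading the pointwise norm convergence $(T_m-\mu)^{-1}\to(T-\mu)^{-1}$ at the single point $\mu$ provided by the definition to uniform norm convergence on $\Gamma_\delta$. Using the identity
\[
(T_m-z)^{-1}=\bigl[I-(z-\mu)(T_m-\mu)^{-1}\bigr]^{-1}(T_m-\mu)^{-1},
\]
valid on any disc about $\mu$ on which the Neumann series converges (uniformly in $m$, by the uniform boundedness of $\|(T_m-\mu)^{-1}\|$), I would first obtain uniform norm convergence on a small disc around $\mu$. A standard covering/continuation argument—covering $\Gamma_\delta$ by finitely many such discs inside $\rho(T)$ and propagating the estimate along the contour via Lemma \ref{geo copy(1)}—extends the convergence to all of $\Gamma_\delta$. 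Integrating term by term then yields $\|P_m-P\|\to 0$.

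Once $\|P_m-P\|<1$ for large $m$, a classical result from Kato's perturbation theory \cite{xx13} shows that $P_m$ and $P$ are similar via a bounded invertible operator, so in particular $\mathrm{rank}(P_m)=\mathrm{rank}(P)=\chi$. Since $T_m$ is self-adjoint, $P_m$ coincides with the spectral projection of $T_m$ onto $\sigma(T_m)\cap\overline{D(\lambda_0,\delta)}$, and $T_m P_m$ restricts to a self-adjoint operator on the $\chi$-dimensional range of $P_m$. Therefore $\sigma(T_m)\cap\overline{D(\lambda_0,\delta)}$ consists of finitely many eigenvalues whose total geometric multiplicity equals $\chi$, which is the conclusion. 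The main obstacle is the uniform-on-$\Gamma_\delta$ resolvent convergence step; the rest is standard Riesz projection calculus.
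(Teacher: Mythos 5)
Your argument is correct, but it is not the route the paper takes: the paper proves this lemma purely by citation, referring to \cite[4.3.4 and 4.3.5]{xx13} and \cite[Lemma 5]{CM1} together with the observation (via \cite[Proposition 6.3]{intro}) that for self-adjoint operators the geometric multiplicity coincides with the multiplicity used in those references, whereas you reconstruct a self-contained proof. Your Riesz-projection scheme is in fact the standard argument underlying the cited results, and every step is sound: isolation of $\lambda _{0}$ gives $\Gamma _{\delta }\subset \rho (T)$; generalized convergence plus Lemma \ref{geo copy(1)} gives $\Gamma _{\delta }\subset \rho (T_{m})$ for large $m$; self-adjointness identifies $P$ with the orthogonal spectral projection, so $\mathrm{rank}\,P=\chi $; norm convergence of $P_{m}$ to $P$ yields equality of ranks once $\Vert P_{m}-P\Vert <1$; and self-adjointness of $T_{m}$ converts $\mathrm{rank}\,P_{m}=\chi $ into the statement about finitely many eigenvalues of total geometric multiplicity $\chi $ inside the disc. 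The only step you flag as delicate, upgrading convergence at the single point $\mu $ to uniform convergence on $\Gamma _{\delta }$, can be handled as you sketch, but it is worth noting two simplifications: for self-adjoint $T_{m}$ one has $\Vert (T_{m}-z)^{-1}\Vert =1/\mathrm{dist}(z,\sigma (T_{m}))$, and applying Lemma \ref{geo copy(1)} to the closed annulus $\{\delta /2\leq |z-\lambda _{0}|\leq 3\delta /2\}$ gives a uniform bound $\sup_{m}\sup_{z\in \Gamma _{\delta }}\Vert (T_{m}-z)^{-1}\Vert <\infty $ without any continuation along a chain of Neumann discs; then the identity $(T_{m}-z)^{-1}-(T-z)^{-1}=[I+(z-\mu )(T_{m}-z)^{-1}]\,[(T_{m}-\mu )^{-1}-(T-\mu )^{-1}]\,[I+(z-\mu )(T-z)^{-1}]$ gives $\sup_{z\in \Gamma _{\delta }}\Vert (T_{m}-z)^{-1}-(T-z)^{-1}\Vert \rightarrow 0$ in one line, and hence $\Vert P_{m}-P\Vert \rightarrow 0$. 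In short, your proof is a valid and essentially complete substitute for the paper's citation; what the paper's approach buys is brevity, while yours makes the mechanism (stability of the Riesz projection under norm resolvent convergence) explicit within the framework of Lemmas already available in Section 1.
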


\begin{proof}
See \cite[4.3.4 and 4.3.5]{xx13} and \cite[Lemma 5]{CM1}. Note that for
self-adjoint operators, from \cite[Proposition 6.3]{intro}, the geometric
multiplicity of an eigenvalue is equal to the multiplicity described in \cite%
{xx13} and \cite{CM1}.
\end{proof}

\begin{lemma}
\label{L-1}For self-adjoint operators $T_{m}$ and $T,$ $T_{m}\overset{R}{%
\Longrightarrow }T,$ as $m\rightarrow \infty ,m\in
\mathbb{N}
,$ the spectrum of $T_{m}$ are discrete and uniformly semi-bounded from
below, denote the n-th eigenvalue of $T_{m}$ by $\lambda _{n}(m)$, $n\in
\mathbb{N}
_{0}$. Then we obtain the following conclusions:

$(1)$ The spectrum of $T$ is discrete and semi-bounded from below.

$(2)$ The sequence of the n-th eigenvalues $\lambda _{n}(m)$ of the
operators $T_{m}$ converges to the n-th eigenvalue $\lambda _{n}(0)$ of the
operator $T$, i.e., $\lambda _{n}(m)\rightarrow \lambda _{n}(0),$ as $%
m\rightarrow \infty .$ $($Note that the eigenvalues are ordered with
geometric multiplicities.$)$
\end{lemma}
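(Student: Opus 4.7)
The plan is to establish (1) and then (2), leveraging the hypothesis $T_m\overset{R}{\Longrightarrow}T$ together with Corollary \ref{geo copy(2)} and Lemma \ref{geo}. For (1) I would split the claim into semi-boundedness and discreteness. Let $c$ be the uniform lower bound, so that $\lambda_0(m)\geq c$ for every $m$. If $\lambda\in\sigma(T)$, Corollary \ref{geo copy(2)}(2) produces $\lambda(m)\in\sigma(T_m)$ with $\lambda(m)\to\lambda$; since each $\lambda(m)\geq c$ we conclude $\lambda\geq c$, proving $T$ is semi-bounded from below by the same constant $c$.

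For discreteness of $\sigma(T)$, the definition of norm resolvent convergence supplies a $\mu\in\rho(T)\cap\rho(T_m)$ (for all large $m$) with $\|(T_m-\mu)^{-1}-(T-\mu)^{-1}\|\to 0$. Since each $T_m$ has discrete spectrum, the resolvent $(T_m-\mu)^{-1}$ is a compact operator on the underlying Hilbert space. The operator-norm limit of compact operators is compact, so $(T-\mu)^{-1}$ is compact; a self-adjoint operator with compact resolvent has discrete spectrum, which finishes (1).

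For (2), list the distinct eigenvalues of $T$ as $\mu_0<\mu_1<\mu_2<\cdots$ with geometric multiplicities $\chi_0,\chi_1,\chi_2,\ldots$, and fix $n\in\mathbb{N}_0$. Let $\mu_k$ be the distinct value equal to $\lambda_n(0)$, so
\[
\chi_0+\chi_1+\cdots+\chi_{k-1}\leq n\leq \chi_0+\chi_1+\cdots+\chi_k-1.
\]
Choose $\delta>0$ so small that the closed $\delta$-neighborhoods $B_\delta(\mu_i)$ of $\mu_0,\ldots,\mu_{k+1}$ are pairwise disjoint. By Lemma \ref{geo}, for all sufficiently large $m$ the total geometric multiplicity of eigenvalues of $T_m$ inside $B_\delta(\mu_i)$ equals exactly $\chi_i$ for each $i=0,1,\ldots,k+1$.

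The main obstacle is then to rule out \emph{stray} eigenvalues of $T_m$ lying in $[c,\mu_k+\delta]\setminus\bigcup_{i=0}^{k+1}B_\delta(\mu_i)$. Suppose such $\nu(m_j)$ existed along a subsequence; the uniform lower bound $c$ and the upper bound $\mu_k+\delta$ would extract a further convergent sub-subsequence $\nu(m_{j_l})\to\nu_\infty$, and Corollary \ref{geo copy(2)}(1) would force $\nu_\infty\in\sigma(T)$. Hence $\nu_\infty=\mu_i$ for some $i\leq k+1$, contradicting $\nu(m_{j_l})\notin B_\delta(\mu_i)$ for all $l$. Consequently, for all large $m$, every eigenvalue of $T_m$ not exceeding $\mu_k+\delta$ lies in $\bigcup_{i=0}^{k}B_\delta(\mu_i)$, so counting with multiplicity gives exactly $\chi_0+\cdots+\chi_k\geq n+1$ of them in $(-\infty,\mu_k+\delta]$ and at most $\chi_0+\cdots+\chi_{k-1}\leq n$ of them in $(-\infty,\mu_k-\delta)$. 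Therefore $\lambda_n(m)\in B_\delta(\mu_k)$, i.e.\ $|\lambda_n(m)-\lambda_n(0)|\leq\delta$, and since $\delta>0$ was arbitrary we obtain $\lambda_n(m)\to\lambda_n(0)$.
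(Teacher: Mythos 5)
Your proof is correct, and both parts rest on the same two tools the paper uses (Corollary \ref{geo copy(2)} and Lemma \ref{geo}); part $(1)$ is essentially identical to the paper's argument (compactness of the resolvent as a norm limit of compact resolvents, plus producing approximating eigenvalues above the uniform bound $c$). In part $(2)$, however, your organization differs in a useful way: the paper fixes the index $n$, takes $\Lambda_n(m)\in\sigma(T_m)$ converging to $\lambda_n(0)$ via Corollary \ref{geo copy(2)}$(2)$, and then argues by subsequence extraction that $\Lambda_n(m)=\lambda_n(m)$ eventually, but it only carries this out for a geometrically simple $\lambda_0(0)$ and a double $\lambda_1(0)$ and dismisses the general case as a ``similar process.'' You instead give a uniform counting argument: localize $\sigma(T_m)\cap(-\infty,\mu_k+\delta]$ into the disjoint $\delta$-neighborhoods of the distinct eigenvalues $\mu_0,\dots,\mu_k$ (the no-stray step via Corollary \ref{geo copy(2)}$(1)$ and the uniform lower bound), then use the exact multiplicity count of Lemma \ref{geo} to trap $\lambda_n(m)$ in $B_\delta(\mu_k)$. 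This handles arbitrary indices and multiplicities at once, so it actually supplies the detail the paper omits; the only implicit point worth stating is that $\delta$ must also be taken small enough for Lemma \ref{geo} to yield the exact count $\chi_i$ in each neighborhood, which is harmless since your final limit only needs arbitrarily small admissible $\delta$.
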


\begin{proof}
$(1)$ For self-adjoint operators $T_{m},$ the spectrum of $T_{m}$ is
discrete if and only if $T_{m}$ has compact resolvent, together with the
fact $T_{m}\overset{R}{\Longrightarrow }T,$ so $T$ has compact resolvent and
the spectrum of $T$ is discrete. Denote $r$ the uniform bound of the
spectrum of $T_{m}.$ Assume the spectrum of $T$ is not semi-bounded from
below, there must exist an eigenvalue $\lambda $ of $T$ such that $\lambda
<r-1.$ Since $T_{m}\overset{R}{\Longrightarrow }T,$ for the eigenvalue $%
\lambda $ of $T$, there must exist $\lambda \left( m\right) \in \sigma
\left( T_{m}\right) ,$ such that%
\begin{equation*}
\lambda \left( m\right) \rightarrow \lambda <r-1\text{, as }m\rightarrow
\infty .
\end{equation*}
Now we reach a contradiction to obtain our claim.

$(2)$ Next, we will show that for $n\in
\mathbb{N}
_{0}$, $\lambda _{n}(m)\rightarrow \lambda _{n}(0),$ as $m\rightarrow \infty
.$For simplicity, we assume $\lambda _{0}(0)$ is geometrically simple and $%
\lambda _{1}(0)$ is geometrically double, and only prove the claim for $%
\lambda _{0}(0)$ and $\lambda _{1}(0),$ the proofs for other cases follow
from a similar process.

$($i$)$ For the simple eigenvalue $\lambda _{0}(0)$ of $T$, there exist $%
\Lambda _{0}\left( m\right) \in \sigma \left( T_{m}\right) $ such that
\begin{equation*}
\Lambda _{0}\left( m\right) \rightarrow \lambda _{0}(0),\text{ as }%
m\rightarrow \infty .
\end{equation*}%
It suffices to show that $\Lambda _{0}\left( m\right) =\lambda _{0}\left(
m\right) $ for sufficiently large $m.$ Assume the contrary$,$ there exists a
subsequence $\left\{ \lambda _{0}(m_{j})\right\} _{j=1}^{\infty }$ such that%
\begin{equation*}
\Lambda _{0}\left( m_{j}\right) >\lambda _{0}(m_{j}).
\end{equation*}%
Since the spectrum of $T_{m}$ are uniformly semi-bounded from below, without
loss of generality, assume $\lambda _{0}(m_{j})\rightarrow c,$ then $\lambda
_{0}(0)$ $\geq c\in \sigma \left( T\right) .$ Since $\lambda _{0}(0)$ is
geometrically simple, from Lemma $\ref{geo},$ one deduces that $c<\lambda
_{0}(0).$ This contrary implies
\begin{equation*}
\lambda _{0}(m)\rightarrow \lambda _{0}(0)\text{, as }m\rightarrow \infty .
\end{equation*}%
$($ii$)$ For the double eigenvalue $\lambda _{1}(0)=\lambda _{2}(0)$, there
exist eigenvalues $\Lambda _{1}\left( m\right) $ and $\Lambda _{2}\left(
m\right) \ $of $T_{m}$ such that%
\begin{equation*}
\Lambda _{1}\left( m\right) \rightarrow \lambda _{1}(0),\text{ }\Lambda
_{2}\left( m\right) \rightarrow \lambda _{2}(0)\text{, as }m\rightarrow
\infty ,
\end{equation*}%
and%
\begin{equation*}
\Lambda _{1}\left( m\right) \leq \Lambda _{2}\left( m\right) .
\end{equation*}%
Then it suffices to show that for sufficiently large $m$,
\begin{equation*}
\Lambda _{1}\left( m\right) =\lambda _{1}(m)\text{, }\Lambda _{2}\left(
m\right) =\lambda _{2}(m).
\end{equation*}%
Assume there exists a subsequence $\left\{ \lambda _{1}(m_{j})\right\}
_{j=1}^{\infty }$ such that $\Lambda _{1}\left( m_{j}\right) >\lambda
_{1}(m_{j})$. Since $\left\{ \lambda _{1}(m_{j})\right\} _{j=1}^{\infty }$
is bounded, without loss of generality, assume
\begin{equation*}
\lambda _{1}(m_{j})\rightarrow c\text{, as }j\rightarrow \infty .
\end{equation*}%
Hence it follows from Corollary $\ref{geo copy(2)}$ that $c\in \sigma \left(
T\right) $ and $\lambda _{0}(0)\leq c\leq \lambda _{1}(0).$ Since $\lambda
_{0}(0)$ is geometrically simple and $\lambda _{1}(0)$ is geometrically
double, from Lemma $\ref{geo},$ one deduces that
\begin{equation*}
\lambda _{0}(0)<c<\lambda _{1}(0).
\end{equation*}%
This contrary implies $\Lambda _{1}\left( m\right) =\lambda _{1}(m)$ for
sufficiently large $m$.

Assume there exists a subsequence $\left\{ \lambda _{2}(m_{j})\right\}
_{j=1}^{\infty }$ such that $\Lambda _{2}\left( m_{j}\right) >\lambda
_{2}(m_{j})$. Since $\left\{ \lambda _{2}(m_{j})\right\} _{j=1}^{\infty }$
is bounded, without loss of generality, assume
\begin{equation*}
\lambda _{2}(m_{j})\rightarrow c\text{, as }j\rightarrow \infty .
\end{equation*}%
Hence $c\in \sigma \left( T\right) $ and $\lambda _{1}(0)\leq c\leq \lambda
_{2}(0).$ Since $\lambda _{1}(0)$ is geometrically double, from Lemma $\ref%
{geo},$ one deduces a contradiction to imply $\Lambda _{2}\left( m\right)
=\lambda _{2}(m)$ when $m$ is sufficiently large.

Proceeding as in the proof for $\lambda _{0}(0)$ and $\lambda _{1}(0)$, this
theorem will be completed.
\end{proof}

\begin{lemma}
\label{L2} For any $x_{0}\in \lbrack a,b],\ $the initial problem consisting
of equation $(\ref{fangcheng})$ with the initial value
\begin{equation}
y(x_{0},\lambda )=c_{1},\text{ }y^{[1]}(x_{0},\lambda )=c_{2},  \label{6}
\end{equation}
where $c_{1},$ $c_{2}\in
\mathbb{C}
$, has a unique solution $y(x,\lambda )$. And each of the functions $%
y(x,\lambda )$ and $y^{[1]}(x,\lambda )$ is continuous on $[a,b]\times
\mathbb{C}
,$ in particular, the functions $y(x,\lambda )$ and $y^{[1]}(x,\lambda )$
are entire functions of $\lambda \in
\mathbb{C}
.$
\end{lemma}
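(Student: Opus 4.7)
The plan is to reduce the scalar second-order problem to the first-order linear system $Y'=(P(x)-\lambda W(x))Y$ already recorded in (\ref{4}), with the modified initial data $Y(x_{0})=(c_{1},c_{2})^{T}$, and apply the Carath\'eodory existence--uniqueness theory for ODEs whose coefficients are only assumed to be Lebesgue integrable. Under the hypothesis (\ref{tx}), every entry of the coefficient matrix $M(\cdot,\lambda):=P(\cdot)-\lambda W(\cdot)$ is Lebesgue integrable on $(a,b)$, and for $\lambda$ ranging over any compact set $K\subset\mathbb{C}$ the integral $\int_{a}^{b}\|M(t,\lambda)\|\,dt$ is uniformly bounded.

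First I would pass to the equivalent integral equation
\[
Y(x,\lambda)=Y(x_{0})+\int_{x_{0}}^{x}M(t,\lambda)Y(t,\lambda)\,dt
\]
and construct the Picard iterates $Y_{0}\equiv Y(x_{0})$, $Y_{n+1}(x,\lambda)=Y(x_{0})+\int_{x_{0}}^{x}M(t,\lambda)Y_{n}(t,\lambda)\,dt$. A routine induction in $n$, using a submultiplicative matrix norm, yields the majorization
\[
\|Y_{n+1}(x,\lambda)-Y_{n}(x,\lambda)\|\;\le\;\|Y(x_{0})\|\cdot\frac{1}{(n+1)!}\left(\int_{a}^{b}\|M(t,\lambda)\|\,dt\right)^{n+1},
\]
so the telescoping series $\sum_{n}(Y_{n+1}-Y_{n})$ converges absolutely and uniformly on $[a,b]\times K$ for every compact $K\subset\mathbb{C}$. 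The limit $Y(x,\lambda)$ solves the integral equation and hence the original IVP in the sense that $y$ and $y^{[1]}$ are absolutely continuous on $[a,b]$ and satisfy (\ref{fangcheng}) almost everywhere; uniqueness follows from a Gr\"onwall-type estimate applied to the difference of two candidate solutions.

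It remains to carry the regularity claims through the iteration. Each $Y_{n}(x,\lambda)$ is jointly continuous on $[a,b]\times\mathbb{C}$: the $x$-continuity is built in by the indefinite Lebesgue integral, while the $\lambda$-continuity follows from the affine dependence of $M$ on $\lambda$ together with dominated convergence. Moreover, by induction $Y_{n}(\cdot,\lambda)$ is a polynomial in $\lambda$ of degree at most $n$, hence entire. Because the convergence $Y_{n}\to Y$ is locally uniform in $(x,\lambda)$ on $[a,b]\times\mathbb{C}$, joint continuity passes to the limit, and Weierstrass's theorem on locally uniform limits of holomorphic functions gives the entirety of $\lambda\mapsto Y(x,\lambda)$ for each fixed $x$. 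I do not foresee any substantive obstacle; the only subtlety compared with the classical smooth case is that the existence--uniqueness step must be the Carath\'eodory version rather than Picard--Lindel\"of, since the coefficients only lie in $L(J,\mathbb{R})$---which is exactly what the quasi-derivative formulation $y^{[1]}=p(y'+sy)$ was introduced to accommodate.
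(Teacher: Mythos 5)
Your proposal is correct and follows essentially the same route as the paper, which simply defers to the standard argument for quasi-differential systems in \cite{CC7} and \cite{xx14}: reduce to the first-order system $Y'=(P-\lambda W)Y$ with $L^{1}$ coefficients and run the Carath\'eodory/Picard successive-approximation scheme, obtaining continuity and entirety in $\lambda$ from the locally uniform convergence of the polynomial (in $\lambda$) iterates. You have merely written out the details the paper leaves to the citations, and they check out.
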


\begin{proof}
The proof is similar to the proof of Sturm-Liouville problems with regular
potentials, see \cite{CC7}. The last conclusion can also be found in \cite%
{xx14}.
\end{proof}

\begin{lemma}
\bigskip \label{continuous}Consider the initial value problem consisting of
the equation $(\ref{fangcheng})$ and the initial conditions
\begin{equation}
y(c)=h,\text{ }y^{[1]}(c)=k,\text{ }c\in \lbrack a,b].
\end{equation}%
Then, given $c_{j}\in \lbrack a,b],$ $h_{j},k_{j}\in
\mathbb{C}
,$ $1/p_{j},q_{j},r_{j},s_{j}\in L(J,%
\mathbb{R}
),$ $j=1,2,$ and given $\epsilon >0,$ there exists a number $\delta >0$ such
that if
\begin{eqnarray*}
\int_{a}^{b}(\left\vert 1/p_{1}-1/p_{2}\right\vert &+&\hspace{-3mm}%
\left\vert q_{1}-q_{2}\right\vert +\left\vert r_{1}-r_{2}\right\vert
+\left\vert s_{1}-s_{2}\right\vert ) \\
&+&\left\vert c_{1}-c_{2}\right\vert +\left\vert h_{1}-h_{2}\right\vert
+\left\vert k_{1}-k_{2}\right\vert<\delta ,
\end{eqnarray*}%
then%
\begin{equation*}
\left\vert
y(t,c_{2},h_{2},k_{2},1/p_{2},q_{2},r_{2},s_{2})-y(t,c_{1},h_{1},k_{1},1/p_{1},q_{1},r_{1},s_{1})\right\vert <\epsilon
\end{equation*}%
and%
\begin{equation*}
\left\vert
y^{[1]}(t,c_{2},h_{2},k_{2},1/p_{2},q_{2},r_{2},s_{2})-y^{[1]}(t,c_{1},h_{1},k_{1},1/p_{1},q_{1},r_{1},s_{1})\right\vert <\epsilon
\end{equation*}%
uniformly for all $t\in \lbrack a,b].$
\end{lemma}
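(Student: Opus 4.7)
The plan is to convert the initial value problem into a first-order integral equation and apply Gronwall's inequality to the difference of two solutions. Let $Y_j(t) = (y_j(t), y_j^{[1]}(t))^T$ denote the solution with data $(c_j, h_j, k_j, 1/p_j, q_j, r_j, s_j)$ for $j = 1, 2$, and let $M_j(x) = P_j(x) - \lambda W_j(x)$ be the coefficient matrix built from the $j$-th coefficients as in (\ref{4}). By Lemma \ref{L2}, each $Y_j$ is absolutely continuous on $[a, b]$ and satisfies
\begin{equation*}
Y_j(t) = \left(\begin{array}{c} h_j \\ k_j \end{array}\right) + \int_{c_j}^{t} M_j(x) Y_j(x) \, dx.
\end{equation*}

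First I would establish a uniform $L^{\infty}$-bound $|Y_j(t)| \leq C$ on $[a, b]$, valid whenever $(1/p_2, q_2, r_2, s_2)$ lies in a fixed $L^1$-neighborhood of $(1/p_1, q_1, r_1, s_1)$ and $(h_2, k_2)$ is bounded. This follows from the scalar Gronwall inequality applied to the integral equation above, using that $\int_a^b \|M_j(x)\| \, dx$ is controlled by $|\lambda|$ together with the $L^1$-norms of $1/p_j, q_j, r_j, s_j$.

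Next I would form the difference and decompose
\begin{eqnarray*}
Y_2(t) - Y_1(t) &=& \left(\begin{array}{c} h_2 - h_1 \\ k_2 - k_1 \end{array}\right) + \int_{c_2}^{c_1} M_2(x) Y_2(x) \, dx \\
&& + \int_{c_1}^{t} (M_2(x) - M_1(x)) Y_1(x) \, dx + \int_{c_1}^{t} M_2(x) (Y_2(x) - Y_1(x)) \, dx.
\end{eqnarray*}
The initial-value term contributes $O(\delta)$; the ``coefficient'' term is bounded by $C \int_a^b \|M_2 - M_1\| \leq C' \delta$; the ``starting-point'' term is bounded by $C \int_{\min(c_1, c_2)}^{\max(c_1, c_2)} \|M_2(x)\| \, dx$, which tends to $0$ as $|c_1 - c_2| \to 0$ by absolute continuity of the Lebesgue integral of $\|M_1\|$ combined with $L^1$-closeness of $M_2$ to $M_1$. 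Applying Gronwall's inequality with the final term as the Gronwall kernel yields $|Y_2(t) - Y_1(t)| < \epsilon$ uniformly in $t \in [a, b]$, for $\delta$ sufficiently small; reading off the two components of $Y_2 - Y_1$ gives both estimates in the lemma.

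The main technical obstacle is handling the change of integration base point $c_j$: because the coefficients are only assumed in $L(J,\mathbb{R})$, the matrix $M_2$ need not be pointwise bounded, so one cannot estimate $\int_{c_2}^{c_1} M_2 Y_2$ by $|c_2 - c_1| \cdot \|M_2\|_{\infty}$. Instead one must rely on absolute continuity of the Lebesgue integral, applied to the fixed $L^1$ function $\|M_1\|$, together with the uniform $L^1$-closeness of $M_2$ to $M_1$ guaranteed by the hypothesis. Every other step (the initial-value perturbation, the coefficient perturbation, and the Gronwall closure) is then routine.
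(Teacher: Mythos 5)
Your proof is correct. Note, though, that the paper does not actually argue this lemma at all: its ``proof'' is a one-line citation of Zettl \cite[Theorem 1.6.2]{CC7}, the standard continuous-dependence theorem for first-order systems with $L^{1}$ coefficients, applied to the system $Y'=(P-\lambda W)Y$ from (\ref{4}). What you have written is, in effect, a self-contained proof of that cited result in the quasi-derivative setting: rewrite the IVP as the integral equation $Y_j(t)=(h_j,k_j)^{T}+\int_{c_j}^{t}M_jY_j$, get an a priori bound by Gronwall, split the difference $Y_2-Y_1$ into the initial-data term, the base-point term $\int_{c_2}^{c_1}M_2Y_2$, the coefficient term $\int_{c_1}^{t}(M_2-M_1)Y_1$, and the Gronwall kernel term, and close with Gronwall again. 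The decomposition is algebraically exact, the entrywise differences of $M_2-M_1$ are precisely the quantities controlled by the hypothesis (with a fixed factor $|\lambda|$ multiplying $|r_1-r_2|$, harmless since $\lambda$ is fixed in the statement), and you correctly identify and handle the only genuinely delicate point: since $1/p_j,q_j,r_j,s_j$ are merely in $L(J,\mathbb{R})$, the shift of base point must be estimated through absolute continuity of the integral of the fixed function $\Vert M_1\Vert$ plus the $L^{1}$-closeness of $M_2$ to $M_1$, not through a sup-norm bound. So your route gives an explicit argument where the paper only outsources to the literature; the citation buys brevity, your version buys a proof that visibly works for the quasi-derivative $y^{[1]}=p[y'+sy]$ without having to check that the cited theorem's hypotheses are met.
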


\begin{proof}
This is a consequence of \cite[Theorem 1.6.2]{CC7}.
\end{proof}

Note that in this paper, we will denote the norm in $L(J,%
\mathbb{R}
)$ by $\left\Vert \cdot \right\Vert _{1}.$

\begin{lemma}
\label{norm}For $m\in
\mathbb{N}
,$ let $L_{m}$ denote the operators generated by the expression $(\ref{b})\ $%
and the self-adjoint boundary conditions $(\ref{25}),$ with the coefficients
$p,$ $q$, $s$ replaced by $p_{m},$ $q_{m}$, $s_{m},$ respectively$.$ The
coefficients $p_{m},$ $q_{m}$, $s_{m}$ are real-valued and%
\begin{equation*}
\text{ }1/p_{m},\text{ }q_{m},\text{ }s_{m}\in L(J,%
\mathbb{R}
),\text{ }p_{m}>0\text{ a.e. on }J.
\end{equation*}
For the operator $L$\ generated by the expression $(\ref{b})\ $and the
self-adjoint boundary conditions $(\ref{25}),$ if
\begin{equation}
\left\Vert 1/p_{m}-1/p\right\Vert _{1}\rightarrow 0,\left\Vert
q_{m}-q\right\Vert _{1}\rightarrow 0,\left\Vert s_{m}-s\right\Vert
_{1}\rightarrow 0,\text{ as }m\rightarrow \infty ,  \label{mbijin}
\end{equation}%
then $L_{m}\overset{R}{\Longrightarrow }L$.
\end{lemma}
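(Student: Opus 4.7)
The plan is to unpack the definition of norm resolvent convergence and reduce it to uniform convergence of Green's functions, which in turn follows from the continuous dependence result of Lemma \ref{continuous}. Since both $L$ and $L_m$ are self-adjoint on $L^2(J, r\,dx)$ (note that $r$ is not perturbed), any $\mu \in \mathbb{C} \setminus \mathbb{R}$, for instance $\mu = i$, lies in $\rho(L) \cap \rho(L_m)$ for every $m$, so the common $\mu$ required by the definition is available for free.

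First I would represent each resolvent $(L - \mu)^{-1}$ and $(L_m - \mu)^{-1}$ as an integral operator. Using the fundamental solutions $\phi_1, \phi_2$ of equation $(\ref{fangcheng})$ determined by the initial conditions $(\ref{c})$, together with a second pair of solutions satisfying the boundary condition at $b$, one builds a kernel $G(x,y,\mu)$ (piecewise product of two such solutions, normalized by their modified Wronskian) such that
\begin{equation*}
(L - \mu)^{-1} f(x) \;=\; \int_a^b G(x,y,\mu)\, r(y)\, f(y)\, dy, \quad f \in L^2(J, r\,dx).
\end{equation*}
The analogous kernel $G_m$ is constructed from the corresponding fundamental solutions $\phi_1^{(m)}, \phi_2^{(m)}$ of the equation with coefficients $1/p_m, q_m, s_m$ and the same boundary conditions (\ref{25}), which are unchanged by the perturbation.

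Next, I would apply Lemma \ref{continuous}: since $\|1/p_m - 1/p\|_1 \to 0$, $\|q_m - q\|_1 \to 0$, $\|s_m - s\|_1 \to 0$, this lemma yields uniform convergence on $[a,b]$ of $\phi_j^{(m)}(\,\cdot\,,\mu)$ and $(\phi_j^{(m)})^{[1]}(\,\cdot\,,\mu)$ to $\phi_j(\,\cdot\,,\mu)$ and $\phi_j^{[1]}(\,\cdot\,,\mu)$ respectively, for $j=1,2$. (The same lemma also handles any second pair of solutions of (\ref{fangcheng}) satisfying prescribed initial conditions at $b$.) In particular, the modified Wronskian denominators that appear in the construction of $G_m$ are bounded away from zero uniformly in $m$ for all large $m$, since they converge to the nonzero Wronskian for $L$ (which is nonzero precisely because $\mu \in \rho(L)$). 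Consequently $G_m(x,y,\mu) \to G(x,y,\mu)$ uniformly on $[a,b] \times [a,b]$.

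Finally, from uniform convergence of the kernels and the boundedness of $r$ on $J$ (it lies in $L^1(J)$), one obtains
\begin{equation*}
\|(L_m - \mu)^{-1} - (L - \mu)^{-1}\|_{HS}^2 \;\le\; \int_a^b \!\! \int_a^b |G_m(x,y,\mu) - G(x,y,\mu)|^2 r(x) r(y)\, dx\, dy \;\longrightarrow\; 0,
\end{equation*}
and since the Hilbert-Schmidt norm dominates the operator norm, this gives the required norm convergence $(L_m - \mu)^{-1} \to (L - \mu)^{-1}$, i.e. $L_m \overset{R}{\Longrightarrow} L$. The main obstacle I expect is the bookkeeping at the boundary: one must verify that the solutions entering the construction of $G_m$ are chosen so as to satisfy the (unperturbed) self-adjoint boundary conditions (\ref{25}) in a way that is stable under the $L^1$ perturbation of the coefficients, and that the Wronskian denominators do not degenerate along the sequence — both of which reduce cleanly to Lemma \ref{continuous} once the fundamental matrix $\Phi(b,\mu)$ is used as the intermediate object between the initial data and the boundary data.
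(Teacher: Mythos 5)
Your proposal is correct in substance and is close in spirit to the paper's argument: both proofs fix a nonreal $\mu\in\rho(L)\cap\rho(L_m)$, build the resolvents explicitly from the fundamental solutions of $(\ref{fangcheng})$, invoke $L^{1}$-continuous dependence of solutions and quasi-derivatives on the coefficients (your Lemma \ref{continuous}; the paper cites the same estimate from Zettl's Theorem 1.6.1 in quantitative form), and then pass to operator-norm convergence of the resolvents. The differences are in the packaging. The paper never forms a Green's kernel: it writes $(L_m-\mu)^{-1}f$ as a variation-of-parameters particular solution plus $a_1(m)\phi_{1,m}+a_2(m)\phi_{2,m}$, determines $a_1(m),a_2(m)$ by Cramer's rule from the boundary forms, with denominator $\triangle_m=\det$ of the boundary-form matrix (nonzero exactly because $\mu$ is not an eigenvalue, cf. Lemma \ref{L1}), and then bounds $\sup_t|y_m-y|$ directly to get the operator-norm estimate, which is uniform and explicit in the $L^1$ distances of the coefficients. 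Your route instead converges the kernels and finishes with the Hilbert--Schmidt bound; that is a clean and perfectly valid alternative (note that your parenthetical ``boundedness of $r$'' is not needed and not true in general --- $r\in L^1$ together with uniform convergence of $G_m-G$ already makes the double integral vanish). The one point you should repair is the form of the kernel: the description ``a second pair of solutions satisfying the boundary condition at $b$, normalized by their modified Wronskian'' is literally available only for separated conditions; for coupled or complex coupled conditions in $(\ref{25})$ there is no ``condition at $b$'' to solve separately, and the correct nondegeneracy quantity is the characteristic determinant $\det\bigl(A+B\Phi(b,\mu)\bigr)$ (equivalently the paper's $\triangle$), whose nonvanishing for $\mu\in\rho(L)$ and stability under the perturbation follow from Lemma \ref{L1} and Lemma \ref{continuous}. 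You anticipate exactly this in your closing sentence, and carrying it out amounts to replacing the two-solution Green's function by the representation the paper uses (particular solution plus a $\Phi$-dependent correction determined by inverting $A+B\Phi_m(b,\mu)$), so the gap is one of bookkeeping rather than of ideas.
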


\begin{proof}
Let $\lambda \in
\mathbb{C}
\backslash
\mathbb{R}
,$ then $\lambda \in $ $\rho (L)$ $\cap $ $\rho (L_{m}).$ Denote by $\phi
_{1,m}$ and $\phi _{2,m}$ the pair of solutions of the equation%
\begin{equation*}
-(p_{m}\left[ y^{\prime }+s_{m}y\right] )^{\prime }+s_{m}(p_{m}\left[
y^{\prime }+s_{m}y\right] )+q_{m}y=\lambda ry,\text{ }x\in (a,b),
\end{equation*}%
determined by the initial conditions%
\begin{equation}
\phi _{1,m}(a)=\phi _{2,m}^{[1]}(a)=1,\text{ }\phi _{2,m}(a)=\phi
_{1,m}^{[1]}(a)=0.
\end{equation}%
According to Theorem 1.6.1 in \cite{CC7}, (let $y^{[0]}=y$), we have for $%
j=0,1,$%
\begin{eqnarray}
&&\left\vert \phi _{1,m}^{[j]}(t)-\phi _{1}^{[j]}(t)\right\vert +\left\vert
\phi _{2,m}^{[j]}(t)-\phi _{2}^{[j]}(t)\right\vert  \label{1.11} \\
&\leq &C\left( \left\Vert 1/p_{m}-1/p\right\Vert _{1}+\left\Vert
q_{m}-q\right\Vert _{1}+\left\Vert s_{m}-s\right\Vert _{1}\right) ,  \notag
\end{eqnarray}%
where $\phi _{1}$ and $\phi _{2}$ are defined at the beginning of this
section and $C$ depends only on the chosen number $\lambda $ and the fixed
functions $1/p,\ q,$ $s,$ $r$.

Since the Wronskian of the pair $\phi _{1,m}$ and $\phi _{2,m}$ equals 1
identically, a straightforward calculation shows that the function%
\begin{equation*}
z_{m}(x)=\int_{a}^{x}\left( \phi _{1,m}(x)\phi _{2,m}(\xi )-\phi
_{2,m}(x)\phi _{1,m}(\xi )\right) r(\xi )f(\xi )d\xi
\end{equation*}%
satisfies the resolvent equation%
\begin{equation}
\frac{1}{r}(-(p_{m}\left[ y^{\prime }+s_{m}y\right] )^{\prime }+s_{m}(p_{m}%
\left[ y^{\prime }+s_{m}y\right] )+q_{m}y)-\lambda y=f\in L_{r}^{2}(J,%
\mathbb{R}
),x\in (a,b).  \label{14}
\end{equation}%
Also, the function
\begin{equation*}
z(x)=\int_{a}^{x}\left( \phi _{1}(x)\phi _{2}(\xi )-\phi _{2}(x)\phi
_{1}(\xi )\right) r(\xi )f(\xi )d\xi
\end{equation*}%
satisfies the resolvent equation%
\begin{equation}
\frac{1}{r}(-(p\left[ y^{\prime }+sy\right] )^{\prime }+s(p\left[ y^{\prime
}+sy\right] )+qy)-\lambda y=f\in L_{r}^{2}(J,%
\mathbb{R}
),\text{ }x\in (a,b).  \label{16}
\end{equation}%
As before, the solution is understood in the sense of Lemma $\ref{L2}$. By
the estimate $(\ref{1.11})$, we have that
\begin{eqnarray*}
&&\left\vert z_{m}(x)-z(x)\right\vert \\
&\leq &C\left( \left\Vert 1/p_{m}-1/p\right\Vert _{1}+\left\Vert
q_{m}-q\right\Vert _{1}+\left\Vert s_{m}-s\right\Vert _{1}\right)
\int_{a}^{b}\left\vert r(t)f(t)\right\vert dt \\
&\leq &C_{1}\left( \left\Vert 1/p_{m}-1/p\right\Vert _{1}+\left\Vert
q_{m}-q\right\Vert _{1}++\left\Vert s_{m}-s\right\Vert _{1}\right)
\int_{a}^{b}r(t)\left\vert f(t)\right\vert ^{2}dt,
\end{eqnarray*}%
where $C_{1}$ depends only on the chosen number $\lambda $ and the fixed
functions $1/p,\ q,$ $s,$ $r.$

A general solution of equation ($\ref{14}$) and ($\ref{16}$) has the
representation%
\begin{equation*}
y_{m}(x)=z_{m}(x)+a_{1}(m)\phi _{1,m}(x)+a_{2}(m)\phi _{2,m}(x)
\end{equation*}%
and%
\begin{equation*}
y(x)=z(x)+a_{1}\phi _{1}(x)+a_{2}\phi _{2}(x),
\end{equation*}%
respectively.

Substituting $y_{m}(x)$ and $y(x)$ into the boundary conditions $(\ref{25})$%
, denote
\begin{eqnarray*}
A &=&\left(
\begin{array}{cc}
a_{11} & a_{12} \\
a_{21} & a_{22}%
\end{array}%
\right) ,B=\left(
\begin{array}{cc}
b_{11} & b_{12} \\
b_{21} & b_{22}%
\end{array}%
\right) , \\
U_{j}(y) &=&a_{j1}y(a)+a_{j2}y^{[1]}(a)+b_{j1}y(b)+b_{j2}y^{[1]}(b),\text{ }%
j=1,2,
\end{eqnarray*}%
we get%
\begin{equation*}
a_{1}(m)=\triangle _{m}^{-1}\left\vert
\begin{array}{cc}
U_{1}(\phi _{2,m}) & U_{1}(z_{m}) \\
U_{2}(\phi _{2,m}) & U_{2}(z_{m})%
\end{array}%
\right\vert ,\text{ }a_{2}(m)=\triangle _{m}^{-1}\left\vert
\begin{array}{cc}
U_{1}(z_{m}) & U_{1}(\phi _{1,m}) \\
U_{2}(z_{m}) & U_{2}(\phi _{1,m})%
\end{array}%
\right\vert ,
\end{equation*}%
\begin{equation*}
a_{1}=\triangle ^{-1}\left\vert
\begin{array}{cc}
U_{1}(\phi _{2}) & U_{1}(z) \\
U_{2}(\phi _{2}) & U_{2}(z)%
\end{array}%
\right\vert ,\text{ }a_{2}=\triangle ^{-1}\left\vert
\begin{array}{cc}
U_{1}(z) & U_{1}(\phi _{1}) \\
U_{2}(z) & U_{2}(\phi _{1})%
\end{array}%
\right\vert ,
\end{equation*}%
where $\triangle _{m}$ and $\triangle $ are defined as follows:%
\begin{equation*}
\triangle _{m}=\left\vert
\begin{array}{cc}
U_{1}(\phi _{1,m}) & U_{1}(\phi _{2,m}) \\
U_{2}(\phi _{1,m}) & U_{2}(\phi _{2,m})%
\end{array}%
\right\vert ,\triangle =\left\vert
\begin{array}{cc}
U_{1}(\phi _{1}) & U_{1}(\phi _{2}) \\
U_{2}(\phi _{1}) & U_{2}(\phi _{2})%
\end{array}%
\right\vert .
\end{equation*}%
Note that $\triangle _{m}(\lambda )\neq 0$ and $\triangle \left( \lambda
\right) \neq 0$, otherwise, the chosen complex number $\lambda $ is an
eigenvalue of the operators $L_{m}\ $and $L.$

From the estimate $(\ref{1.11})$, we have that
\begin{eqnarray*}
&&\left\vert U_{j}(\phi _{1,m})-U_{j}(\phi _{1})\right\vert +\left\vert
U_{j}(\phi _{2,m})-U_{j}(\phi _{2})\right\vert \\
&\leq &C\left( \left\Vert 1/p_{m}-1/p\right\Vert _{1}+\left\Vert
q_{m}-q\right\Vert _{1}+\left\Vert s_{m}-s\right\Vert _{1}\right) ,\text{ }%
j=1,2,
\end{eqnarray*}%
and therefore
\begin{equation*}
\left\vert \triangle _{m}-\triangle \right\vert \leq C\left( \left\Vert
1/p_{m}-1/p\right\Vert _{1}+\left\Vert q_{m}-q\right\Vert _{1}+\left\Vert
s_{m}-s\right\Vert _{1}\right) ,
\end{equation*}%
where $C$ depends only on the chosen number $\lambda $ and the fixed
functions $1/p,\ q,$ $s,$ $r.$

Consequently,%
\begin{eqnarray*}
&&\left\vert a_{1}(m)-a_{1}\right\vert +\left\vert a_{2}(m)-a_{2}\right\vert
\\
&\leq &C\left( \left\Vert 1/p_{m}-1/p\right\Vert _{1}+\left\Vert
q_{m}-q\right\Vert _{1}+\left\Vert s_{m}-s\right\Vert _{1}\right)
\int_{a}^{b}r(t)\left\vert f(t)\right\vert ^{2}dt,
\end{eqnarray*}%
where $C$ depends only on the chosen number $\lambda $ and the fixed
functions $1/p,\ q,$ $s,$ $r.$ The estimates obtained show that the solutions%
\begin{equation*}
y_{m}=(L_{m}-\lambda )^{-1}f
\end{equation*}%
are subject to the inequality
\begin{eqnarray*}
&&\left\Vert (L_{m}-\lambda )^{-1}f-(L-\lambda )^{-1}f\right\Vert
_{L_{r}^{2}(J,%
\mathbb{R}
)} \\
&=&\left\Vert y_{m}-y\right\Vert _{L_{r}^{2}(J,%
\mathbb{R}
)}=\left( \int_{a}^{b}\left\vert r(t)\right\vert \left\vert
y_{m}-y\right\vert ^{2}dt\right) ^{\frac{1}{2}} \\
&\leq &C\max_{t\in \lbrack a,b]}\left\vert y_{m}(t)-y(t)\right\vert \\
&\leq &C\left( \left\Vert 1/p_{m}-1/p\right\Vert _{1}+\left\Vert
q_{m}-q\right\Vert _{1}+\left\Vert s_{m}-s\right\Vert _{1}\right)
\int_{a}^{b}r(t)\left\vert f(t)\right\vert ^{2}dt,
\end{eqnarray*}%
where $C$ depends only on the chosen number $\lambda $ and the fixed
functions $1/p,\ q,$ $s,$ $r.$

This estimate implies the norm resolvent convergence of the operators $L_{m}$%
.
\end{proof}

\begin{lemma}
\label{L1} A number $\lambda $ is an eigenvalue of Sturm-Liouville problem
consisting of $(\ref{fangcheng})$ and $(\ref{25})$ if and only if
\begin{equation*}
\Delta (\lambda )=:\det (A+B\Phi \left( b,\lambda \right) )=0.
\end{equation*}%
A number $\lambda $ is an eigenvalue of Sturm-Liouville problem consisting
of $(\ref{fangcheng})$ and $(\ref{ee})$ if and only if $D(\lambda )=2\cos
\gamma .\ $A number $\lambda $ is an eigenvalue of Sturm-Liouville problem
consisting of $(\ref{fangcheng})\ $and $(\ref{21})$ if and only if $%
D(\lambda )=2.\ $
\end{lemma}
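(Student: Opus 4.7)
The plan is to reduce each boundary value problem to a homogeneous $2\times2$ linear system for the coordinates of a solution in the basis $\{\phi_1,\phi_2\}$, and then translate the existence of a nontrivial solution into the vanishing of a determinant. By Lemma \ref{L2}, every solution $y$ of (\ref{fangcheng}) is of the form $y=c_1\phi_1+c_2\phi_2$, and the initial conditions (\ref{c}) give $Y(a)=(c_1,c_2)^{T}$ and $Y(b)=\Phi(b,\lambda)(c_1,c_2)^{T}$. So first I would substitute this into the general boundary condition (\ref{25}) to obtain $\bigl(A+B\Phi(b,\lambda)\bigr)(c_1,c_2)^{T}=0$; this system has a nonzero solution (equivalently, (\ref{fangcheng})--(\ref{25}) has a nontrivial solution, i.e.\ $\lambda$ is an eigenvalue) if and only if $\Delta(\lambda)=\det\bigl(A+B\Phi(b,\lambda)\bigr)=0$, proving the first assertion.

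For the coupled case (\ref{ee}), the same substitution yields $\bigl(e^{-i\gamma}K^{-1}\Phi(b,\lambda)-I\bigr)(c_1,c_2)^{T}=0$. A nontrivial solution exists exactly when $\det\bigl(e^{-i\gamma}K^{-1}\Phi(b,\lambda)-I\bigr)=0$. The key simplification comes from the fact that $\Phi(b,\lambda)$ is the fundamental matrix of the first-order system (\ref{4}) whose coefficient matrix $P(x)-\lambda W(x)$ has zero trace; by Liouville's formula $\det\Phi(b,\lambda)=1$, and since $\det K=1$ we get $\det(K^{-1}\Phi(b,\lambda))=1$. Using the $2\times2$ identity $\det(M-I)=\det M - \mathrm{tr}(M)+1$ with $M=e^{-i\gamma}K^{-1}\Phi(b,\lambda)$ gives
\begin{equation*}
\det\bigl(e^{-i\gamma}K^{-1}\Phi(b,\lambda)-I\bigr)=e^{-2i\gamma}-e^{-i\gamma}\,\mathrm{tr}\bigl(K^{-1}\Phi(b,\lambda)\bigr)+1.
\end{equation*}
From the explicit formula for $K^{-1}\Phi(b,\lambda)$ displayed before (\ref{GP}), its trace equals $D_1(\lambda)+D_2(\lambda)=D(\lambda)$. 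Multiplying by $e^{i\gamma}$ shows the vanishing of the above determinant is equivalent to $D(\lambda)=e^{i\gamma}+e^{-i\gamma}=2\cos\gamma$, which is the second assertion; the third (real coupled) assertion is the special case $\gamma=0$.

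The only nonroutine step is verifying $\det\Phi(b,\lambda)=1$, which follows at once from the vanishing trace of $P(x)-\lambda W(x)$, and recognizing $D(\lambda)$ as the trace of $K^{-1}\Phi(b,\lambda)$ from the definitions preceding (\ref{GP}); both are short bookkeeping computations, so no serious obstacle is expected. The remainder is just standard linear algebra, essentially identical to the regular case in \cite{kong}, but carried out with the quasi-derivative $y^{[1]}$ in place of $py'$.
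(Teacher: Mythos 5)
Your proof is correct and is essentially the same argument the paper intends: the paper only cites Lemma 3.2.2 and Lemma 3.2.6 of Zettl's book rather than writing out the details, and your reduction to the homogeneous $2\times 2$ system $\bigl(A+B\Phi(b,\lambda)\bigr)(c_1,c_2)^{T}=0$, the observation that $\det\Phi(b,\lambda)=1$ because the coefficient matrix of (\ref{4}) is traceless, and the identity $\det(M-I)=\det M-\operatorname{tr}M+1$ together with $\operatorname{tr}\bigl(K^{-1}\Phi(b,\lambda)\bigr)=D(\lambda)$ are exactly the standard steps, correctly adapted to the quasi-derivative setting. No gaps.
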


\begin{proof}
The proof is similar to the proof of the classical Sturm-Liouville problem,
see Lemma 3.2.2, Lemma 3.2.6 in \cite{CC7}.
\end{proof}

\begin{lemma}
\label{L3}For a fixed $\lambda \in
\mathbb{R}
$, the function $y(x,\lambda )$ is a real-valued solution to the initial
problem consisting of equation $(\ref{fangcheng})$ with the initial value $%
y(x_{0},\lambda )=0,$ $y^{[1]}(x_{0},\lambda )=c,$ $c>0.\ $Then there exists
$\varepsilon >0$ such that $y(x,\lambda )<0$ for all $x\in
(x_{0}-\varepsilon ,x_{0})$ and $y(x,\lambda )>0$ for all $x\in
(x_{0},x_{0}+\varepsilon ).$
\end{lemma}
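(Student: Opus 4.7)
The plan is to exploit the first-order linear equation hidden in the definition of the quasi-derivative. Since $y^{[1]}=p\bigl[y'+sy\bigr]$, dividing by $p$ rewrites the relation as
\begin{equation*}
y'(x)+s(x)y(x)=\frac{y^{[1]}(x)}{p(x)} \quad \text{a.e.\ on } (a,b).
\end{equation*}
From the differential expression $-(y^{[1]})'+sy^{[1]}+qy=\lambda ry$ with $q,r,s\in L(J,\mathbb{R})$ and $y$ continuous, the quasi-derivative $y^{[1]}$ is absolutely continuous; in particular it is continuous at $x_{0}$. Since $y^{[1]}(x_{0},\lambda)=c>0$, I would first choose $\varepsilon>0$ so that
\begin{equation*}
y^{[1]}(t,\lambda)>\tfrac{c}{2} \quad \text{for all } t\in(x_{0}-\varepsilon,x_{0}+\varepsilon).
\end{equation*}

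Next, I would apply the integrating factor $e^{\int_{x_{0}}^{x}s(u)\,du}$ (well defined since $s\in L(J,\mathbb{R})$) to the first-order equation above. With the initial value $y(x_{0},\lambda)=0$ this yields the explicit representation
\begin{equation*}
y(x,\lambda)=\int_{x_{0}}^{x}\exp\!\left(-\int_{t}^{x}s(u)\,du\right)\frac{y^{[1]}(t,\lambda)}{p(t)}\,dt,
\end{equation*}
valid for $x\in(x_{0}-\varepsilon,x_{0}+\varepsilon)$; note that $1/p\in L(J,\mathbb{R})$ and $y^{[1]}$ is bounded on this neighborhood, so the integrand lies in $L^{1}$.

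On the interval $(x_{0}-\varepsilon,x_{0}+\varepsilon)$ the integrand is a.e.\ strictly positive: the exponential factor is positive, $p(t)>0$ a.e., and $y^{[1]}(t,\lambda)>c/2>0$. Hence the integral from $x_{0}$ to $x$ is strictly positive when $x>x_{0}$ and strictly negative when $x<x_{0}$, which is exactly the claim.

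The only delicate point is justifying the integrating-factor formula under the weak regularity $s,1/p\in L(J,\mathbb{R})$ rather than continuous coefficients. I would dispatch this by a direct verification: differentiating $x\mapsto e^{\int_{x_{0}}^{x}s}\,y(x)$ a.e.\ gives $e^{\int_{x_{0}}^{x}s}\,y^{[1]}(x)/p(x)$, and the resulting function is absolutely continuous, so integration from $x_{0}$ to $x$ recovers $y(x)$ using $y(x_{0})=0$. Once this step is in hand, the sign analysis in the previous paragraph finishes the proof.
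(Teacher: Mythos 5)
Your proposal is correct and follows essentially the same route as the paper: the paper's proof (citing Lemma 11.2 of Eckhardt--Gesztesy--Nichols--Teschl) rests on exactly the same integrating-factor representation $y(x)=e^{-S(x)}\int_{x_{0}}^{x}e^{S(t)}p(t)^{-1}y^{[1]}(t)\,dt$, from which the sign of $y$ near $x_{0}$ is read off using $y^{[1]}(x_{0})=c>0$, continuity of $y^{[1]}$, and $p>0$ a.e. Your additional care about the a.e.\ justification of the formula and the explicit choice of $\varepsilon$ with $y^{[1]}>c/2$ only makes explicit what the paper leaves implicit.
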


\begin{proof}
See \cite[Lemma 11.2]{xx14}. Since $y^{[1]}(x)=p(x)\left[ y^{\prime
}(x)+s(x)y(x)\right] ,$ it follows that $y^{\prime
}(x)=p(x)^{-1}y^{[1]}(x)-s(x)y(x),$ from the knowledge of the differential
equation, we have
\begin{equation*}
y(x)=e^{-S(x)}\int_{x_{0}}^{x}e^{S(t)}p(t)^{-1}y^{[1]}(t)dt,\text{ \ \ \ }%
S(x)=\int_{x_{0}}^{x}s(t)dt,
\end{equation*}%
thus the claim is obvious.
\end{proof}

\begin{lemma}
\label{L5} Let $y_{1}(x,\lambda _{1})$ and $y_{2}(x,\lambda _{2})$ be two
real-valued solutions of the equation $(\ref{fangcheng})$, and $\lambda
_{2}\geq \lambda _{1},\ $if $x_{1}$ and $x_{2}$ are two adjacent zeros of $%
y_{1}(x)$, then $y_{2}(x)$ has at least one zero on $[x_{1},x_{2}].\ $
\end{lemma}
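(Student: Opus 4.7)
The plan is to carry out a Sturm comparison argument via the Wronskian of $y_1$ and $y_2$, formed with the quasi-derivative:
$$W(x)=y_1(x)y_2^{[1]}(x)-y_2(x)y_1^{[1]}(x).$$
Using $y^{\prime}=y^{[1]}/p-sy$ and the equation $(y^{[1]})^{\prime}=sy^{[1]}+qy-\lambda ry$ from $(\ref{fangcheng})$, one checks that the contributions of the $s$- and $q$-terms cancel, leaving the clean identity
$$W^{\prime}(x)=(\lambda_1-\lambda_2)\,r(x)\,y_1(x)y_2(x)\quad\text{a.e. on }(a,b).$$
Note that $W$ is absolutely continuous since $y_j$ and $y_j^{[1]}$ are, so integration over $[x_1,x_2]$ gives
$$W(x_2)-W(x_1)=(\lambda_1-\lambda_2)\int_{x_1}^{x_2} r(t)\,y_1(t)y_2(t)\,dt.$$

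Argue by contradiction: suppose $y_2$ has no zero on $[x_1,x_2]$. Replacing $y_1,y_2$ by $-y_1,-y_2$ if necessary, assume $y_2>0$ on $[x_1,x_2]$ and $y_1>0$ on $(x_1,x_2)$. Since $x_1$ is a zero of $y_1$ and $y_1$ is positive just to the right of $x_1$, Lemma $\ref{L3}$ forces $y_1^{[1]}(x_1)>0$; the same lemma applied at $x_2$ (where $y_1>0$ just to the left) forces $y_1^{[1]}(x_2)<0$. Consequently
$$W(x_1)=-y_2(x_1)y_1^{[1]}(x_1)<0,\qquad W(x_2)=-y_2(x_2)y_1^{[1]}(x_2)>0,$$
so $W(x_2)-W(x_1)>0$. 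On the other hand $r>0$ a.e. and $y_1y_2>0$ on $(x_1,x_2)$, while $\lambda_1-\lambda_2\le 0$, so the right-hand side of the integrated Wronskian identity is nonpositive. This contradiction proves the lemma.

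The only subtlety is the use of Lemma $\ref{L3}$ to read off the signs of the quasi-derivative at the endpoint zeros; without it, the distributional nature of $s$ and the fact that $y$ is typically not classically differentiable would obstruct the usual ``$y^{\prime}(x_1)>0$'' step of the regular Sturm comparison argument. Once that input is in place, the proof reduces to the Wronskian calculation above, which is purely algebraic and valid under the integrability hypotheses $(\ref{tx})$.
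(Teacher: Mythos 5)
Your proof is correct and takes essentially the same route as the paper: the integrated Wronskian identity you derive is precisely the Lagrange formula the paper invokes from \cite{xx14}, specialized to $y_{1}(x_{1})=y_{1}(x_{2})=0$, and your endpoint sign analysis via Lemma \ref{L3} matches the paper's argument step for step.
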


\begin{proof}
The Lagrange's formula in \cite{xx14} gives
\begin{eqnarray}
(\lambda _{2}-\lambda _{1})\int_{x_{1}}^{x_{2}}y_{1}y_{2}rdt
&=&\int_{x_{1}}^{x_{2}}(y_{1}Ly_{2}-y_{2}Ly_{1})rdt  \notag \\
&=&y_{1}^{[1]}(x_{2})y_{2}(x_{2})-y_{1}^{[1]}(x_{1})y_{2}(x_{1}).
\label{126}
\end{eqnarray}%
Suppose $y_{2}(x)$ has no zeros on $[x_{1},x_{2}]$, then without loss of
generality, we assume that $y_{2}(x)>0$ on $[x_{1},x_{2}]$ and $y_{1}(x)>0$
on $(x_{1},x_{2}),$ thus Lemma \ref{L3} implies that $y_{1}^{[1]}(x_{2})<0$
and $y_{1}^{[1]}(x_{1})>0$. The last term of (\ref{126}) is negative, but
the first term of the equality is not negative, so the contradiction proves
the lemma. In fact, if $\lambda _{2}>\lambda _{1}$, through the similar
process, we can obtain $y_{2}(x)$ has at least one zero on $(x_{1},x_{2})$.
\end{proof}

\section{Preliminary lemmas for the main results}

In this section, we will give several lemmas that will be used in the proofs
of our main results.

To study the Sturm-Liouville problem consisting of $(\ref{fangcheng})\ $and $%
(\ref{25})$, we introduce a \textquotedblleft boundary value problem
space\textquotedblright\ with a metric. Let $\Omega =\{\omega
=(A,B,1/p,q,r,s);$ $(\ref{tx})$ and $(\ref{juzhen})$ hold$\}.$ For the
topology of $\Omega $ we use a metric $d$ defined as follows:

For $\omega =(A,B,1/p,q,r,s)\in \Omega ,$ $\omega _{0}=(A_{0},B_{0},\frac{1}{%
p_{0}},q_{0},r_{0},s_{0})\in \Omega ,$ define
\begin{equation*}
d(\omega ,\omega _{0})=\left\Vert A-A_{0}\right\Vert +\left\Vert
B-B_{0}\right\Vert +\int_{a}^{b}\left( \left\vert \frac{1}{p}-\frac{1}{p_{0}}%
\right\vert +\left\vert q-q_{0}\right\vert +\left\vert r-r_{0}\right\vert
+\left\vert s-s_{0}\right\vert \right) ,
\end{equation*}%
where $\left\Vert \cdot \right\Vert $ denotes any matrix norm.

Note that an element $\omega =(A,B,1/p,q,r,s)\in \Omega $ can be used to
represent a Sturm-Liouville problem consisting of $(\ref{fangcheng})\ $and $(%
\ref{25})$. By an eigenvalue of $\omega \in \Omega $ we mean an eigenvalue
of the Sturm-Liouville problem consisting of $(\ref{fangcheng})\ $and $(\ref%
{25})$.

\begin{lemma}
\bigskip \label{ZEREO}$($Continuity of the zeros of an analytic function$)$.
Let $A$ be an open set in the complex plane $%
\mathbb{C}
$, $F$ a metric space, $f$ a continuous complex valued function on $A\times
F $ such that for each $\alpha \in F$, the map $z\rightarrow f(z,\alpha )$
is an analytic function on $A$. Let $B$ be an open subset of $A$ whose
closure $B$ in $%
\mathbb{C}
$ is compact and contained in $A$, and let $\alpha _{0}$ $\in $ $F$ be such
that no zero of $f(z,\alpha _{0})$ is on the boundary of $B$. Then there
exists a neighborhood $W$ of $\alpha _{0}$ in $F$ such that :

$\left( a\right) $ For any $\alpha $ $\in $ $W$, $f(z,\alpha )$ has no zero
on the boundary of $B$.

$\left( b\right) $ For any $\alpha $ $\in $ $W$, the sum of the orders of
the zeros of $f(z,\alpha )$ contained in $B$ is independent of $\alpha $.
\end{lemma}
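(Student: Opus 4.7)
The plan is to apply Rouch\'e's theorem after localizing around the zeros of $f(\cdot,\alpha_0)$ in $\bar B$, together with a compactness/joint-continuity argument to produce the neighborhood $W$. First I would note that, under the natural non-triviality assumption that $f(\cdot,\alpha_0)$ is not identically zero on any connected component of $A$ meeting $\bar B$ (which is forced, on components that touch $\partial B$, by the hypothesis that $f(\cdot,\alpha_0)$ has no zeros on $\partial B$), the zeros of $f(\cdot,\alpha_0)$ in the compact set $\bar B$ form a finite set $\{z_1,\ldots,z_k\}$ with multiplicities $n_1,\ldots,n_k$. Since no $z_j$ lies on $\partial B$, each $z_j$ is interior to $B$, so I can enclose each $z_j$ in a closed disk $\bar D_j\subset B$ such that the $\bar D_j$ are pairwise disjoint and contain no other zero of $f(\cdot,\alpha_0)$.

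The next step is a joint-continuity argument. I would consider the compact set
\[
K \;=\; \partial B \;\cup\; \bigcup_{j=1}^{k}\partial D_j \;\cup\; \Bigl(\bar B \setminus \bigcup_{j=1}^{k} D_j\Bigr),
\]
which is contained in $A$ and on which $f(\cdot,\alpha_0)$ vanishes nowhere, so $\mu := \min_{z\in K}|f(z,\alpha_0)| > 0$. Using that $f$ is continuous on $A\times F$, a standard tube-lemma covering argument over the compactum $K$ yields a neighborhood $W$ of $\alpha_0$ in $F$ on which $|f(z,\alpha)-f(z,\alpha_0)| < \mu/2$ for all $(z,\alpha)\in K\times W$.

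With this $W$, conclusion $(a)$ is immediate from the lower bound $|f(z,\alpha)| \ge \mu/2$ on $\partial B$. For $(b)$, I would apply Rouch\'e's theorem on each circle $\partial D_j$: the inequality $|f(z,\alpha)-f(z,\alpha_0)| < \mu/2 \le |f(z,\alpha_0)|$ holds there, so $f(\cdot,\alpha)$ has exactly $n_j$ zeros in $D_j$ counted with multiplicity. The same perturbation estimate on $\bar B\setminus\bigcup_j D_j$ gives $|f(z,\alpha)|\ge \mu/2>0$, so $f(\cdot,\alpha)$ has no further zeros in $B$. Summing over $j$ produces the total order $n_1+\cdots+n_k$, independent of $\alpha\in W$.

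The main obstacle is ensuring that the perturbation estimate $|f(z,\alpha)-f(z,\alpha_0)|<\mu/2$ holds not merely on the Rouch\'e contours $\partial D_j$ and $\partial B$ but simultaneously on the full compactum $\bar B\setminus\bigcup_j D_j$; this uniform control is what prevents stray zeros of $f(\cdot,\alpha)$ from appearing away from the original $z_j$'s, and it is exactly what the tube-lemma argument over $K$ delivers. Everything else amounts to standard analytic bookkeeping.
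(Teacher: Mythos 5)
Your proof is correct: the paper does not give its own argument for this lemma but simply cites Dieudonn\'e \cite[9.17.4]{66}, and your Rouch\'e-plus-compactness (tube lemma) argument is essentially the standard proof found there, so the approaches coincide. One minor remark: the non-triviality hypothesis you flag is in fact automatic, since any connected component of $A$ that meets $B$ must also meet $\partial B$ (a component contained in $B$ would be a nonempty open subset of $\mathbb{C}$ with compact closure in $A$ on which it is relatively closed, hence compact and open, which is impossible), so $f(\cdot,\alpha_0)$ cannot vanish identically on any component meeting $\bar{B}$ and the finiteness of its zero set in $\bar{B}$ needs no extra assumption.
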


\begin{proof}
See \cite[9.17.4]{66}.
\end{proof}

\begin{lemma}
\bigskip \label{lianxuxing}Let $\omega _{0}=(A_{0},B_{0},\frac{1}{p_{0}}%
,q_{0},r_{0},s_{0})\in \Omega .$ Assume that $\lambda (\omega _{0})$ is an
eigenvalue of the problem $(\ref{fangcheng})$, $(\ref{25}).$ Then, given any
$\epsilon >0,$ there exists a $\delta >0$ such that if $\omega
=(A,B,1/p,q,r,s)\in \Omega $ satisfies
\begin{equation*}
d(\omega ,\omega _{0})<\delta ,
\end{equation*}%
then the Sturm-Liouville problem $\omega $ has an eigenvalue $\lambda
(\omega )$ satisfying
\begin{equation*}
\left\vert \lambda (\omega )-\lambda (\omega _{0})\right\vert <\epsilon .
\end{equation*}
\end{lemma}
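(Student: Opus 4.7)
The plan is to realize eigenvalues of the problem $\omega$ as zeros of the characteristic function
\[
\Delta(\lambda,\omega) := \det\bigl(A + B\,\Phi(b,\lambda,\omega)\bigr),
\]
where I now explicitly display the dependence of the fundamental matrix $\Phi(b,\lambda,\cdot)$ on $\omega\in\Omega$. By Lemma \ref{L1}, $\lambda$ is an eigenvalue of $\omega$ iff $\Delta(\lambda,\omega)=0$, and by Lemma \ref{L2}, $\Delta(\cdot,\omega)$ is an entire function of $\lambda$ for each fixed $\omega$. The strategy is then to apply Lemma \ref{ZEREO} (continuity of zeros of an analytic function) with $F=\Omega$ (with metric $d$), $\alpha_0=\omega_0$, $A=\mathbb{C}$, and $B$ a small disk around $\lambda(\omega_0)$.

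First I would verify that $\Delta(\lambda,\omega)$ is jointly continuous on $\mathbb{C}\times\Omega$. Since $A$ and $B$ appear only as matrix factors and depend continuously on $\omega$, the issue reduces to joint continuity of $\Phi(b,\lambda,\omega)$. The trick is to absorb the spectral parameter into the potential: the system (\ref{fangcheng}) for $(\lambda,\omega)$ coincides with the system for $\lambda=0$ after replacing $q$ by $\tilde q:=q-\lambda r$. Then Lemma \ref{continuous} applied to the modified coefficients gives
\[
\bigl|\Phi(t,\lambda,\omega)-\Phi(t,\lambda_0,\omega_0)\bigr|<\varepsilon
\]
uniformly in $t\in[a,b]$ whenever $\|\tilde q-\tilde q_0\|_1+d(\omega,\omega_0)$ is small enough; and since
\[
\|\tilde q-\tilde q_0\|_1\leq \|q-q_0\|_1+|\lambda-\lambda_0|\,\|r\|_1+|\lambda_0|\,\|r-r_0\|_1,
\]
this yields joint continuity of $\Phi(b,\cdot,\cdot)$, and hence of $\Delta$, on $\mathbb{C}\times\Omega$.

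Next I would exploit discreteness of the spectrum. Because $\omega_0$ is a self-adjoint problem with discrete spectrum (this is the general fact recalled in the introduction and also follows from the fact that the eigenvalues are precisely the zeros of the entire function $\Delta(\cdot,\omega_0)$, which cannot vanish identically — otherwise every $\lambda$ would be an eigenvalue, contradicting self-adjointness), the zero $\lambda(\omega_0)$ is isolated. Given $\epsilon>0$, choose $0<r<\epsilon$ so that $\lambda(\omega_0)$ is the only zero of $\Delta(\cdot,\omega_0)$ in the closed disk $\overline{B(\lambda(\omega_0),r)}$; in particular no zero lies on $\partial B(\lambda(\omega_0),r)$. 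Then Lemma \ref{ZEREO} supplies a neighborhood $W$ of $\omega_0$ in $\Omega$ such that for every $\omega\in W$ the function $\Delta(\cdot,\omega)$ has no zero on $\partial B(\lambda(\omega_0),r)$ and the total order of its zeros inside $B(\lambda(\omega_0),r)$ is the same as for $\omega_0$, hence is at least one. Picking any such zero yields an eigenvalue $\lambda(\omega)$ of $\omega$ with $|\lambda(\omega)-\lambda(\omega_0)|<r<\epsilon$; taking $\delta>0$ so that $W$ contains the $d$-ball of radius $\delta$ around $\omega_0$ completes the argument.

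The main obstacle is the joint-continuity step: Lemma \ref{continuous} is stated at fixed $\lambda$ and Lemma \ref{L2} gives analyticity at fixed $\omega$, but Lemma \ref{ZEREO} needs genuine joint continuity on $\mathbb{C}\times\Omega$. The $\lambda$-into-$q$ reduction above is what bridges this gap, and once it is in place the application of Lemma \ref{ZEREO} is essentially routine.
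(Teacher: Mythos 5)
Your proposal is correct and follows essentially the same route as the paper, which simply invokes Lemma \ref{L1}, Lemma \ref{L2} and Lemma \ref{continuous} and refers to the Kong--Zettl argument for regular potentials (i.e., exactly the characteristic-function-plus-continuity-of-zeros argument via Lemma \ref{ZEREO} that you carry out). You have in fact supplied the one detail the paper leaves implicit, namely the joint continuity of $\Delta(\lambda,\omega)$ obtained by absorbing $\lambda r$ into $q$.
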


\begin{proof}
On the basis of Lemma \ref{L2}, Lemma \ref{continuous} and Lemma \ref{L1},
the proof is similar to the proof of the classical Sturm-Liouville problem
with regular potentials, see \cite[Theorem 3.1]{kong}.
\end{proof}

Note that for the Sturm-Liouville problem with distributional potentials
consisting of $(\ref{fangcheng})$ and $(\ref{25})$, the \textbf{algebraic
multiplicity} of an eigenvalue is the order of the eigenvalue as a zero of
the characteristic function $\Delta (\lambda )$ discussed in Lemma \ref{L1}.

\begin{lemma}
Let $\omega _{0}=(A_{0},B_{0},\frac{1}{p_{0}},q_{0},r_{0},s_{0})\in \Omega .$
Assume that $r_{1}$ and $r_{2}$, $r_{1}$ $<$ $r_{2},$ are any two real
numbers such neither of them is an eigenvalue of $\omega _{0}$ and $n\geq 0$
is the number of eigenvalues of $\omega _{0}$ in the interval $%
(r_{1},r_{2}). $ Then there exists a neighborhood $O$ of $\omega _{0}$ in $%
\Omega $ such that any $\omega $ $\in O$ also has $n$ eigenvalues in the
interval $(r_{1},r_{2}).($Here the eigenvalues are counted with algebraic
multiplicity.$)$
\end{lemma}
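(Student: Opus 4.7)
The plan is to count eigenvalues as zeros of the characteristic function
\begin{equation*}
\Delta(\lambda,\omega) := \det(A + B\,\Phi(b,\lambda,\omega)),
\end{equation*}
and then apply Lemma \ref{ZEREO} on the continuity of zeros of an analytic function. By Lemma \ref{L1}, the algebraic multiplicity of an eigenvalue of $\omega$ is precisely the order of the corresponding zero of $\Delta(\cdot,\omega)$, so the problem reduces to counting zeros of $\Delta(\cdot,\omega)$ inside $(r_1,r_2)$.

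First I would verify the hypotheses of Lemma \ref{ZEREO}: for each $\omega\in\Omega$, $\Delta(\cdot,\omega)$ is entire in $\lambda$ (by Lemma \ref{L2}), and the map $(\lambda,\omega)\mapsto\Delta(\lambda,\omega)$ is jointly continuous on $\mathbb{C}\times(\Omega,d)$. Joint continuity follows from Lemma \ref{continuous} applied to the entries of $\Phi(b,\lambda,\omega)$; the dependence on $\lambda$ can be absorbed into the potential via $q\mapsto q-\lambda r$, so a variation of $\lambda$ acts as an $L^1$-perturbation of the coefficients and fits directly into the setting of Lemma \ref{continuous}.

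Since the spectrum of $\omega_0$ is discrete and $r_1,r_2$ are not eigenvalues, I would choose $\varepsilon>0$ small enough that $\omega_0$ has no eigenvalues in $[r_1-2\varepsilon,r_1+2\varepsilon]\cup[r_2-2\varepsilon,r_2+2\varepsilon]$, so that all $n$ eigenvalues of $\omega_0$ in $(r_1,r_2)$ lie in $(r_1+2\varepsilon,r_2-2\varepsilon)$. Introduce the three bounded open subsets of $\mathbb{C}$
\begin{equation*}
B := (r_1-\varepsilon,r_2+\varepsilon)\times(-\varepsilon,\varepsilon),\quad B_1 := \{|z-r_1|<\varepsilon\},\quad B_2 := \{|z-r_2|<\varepsilon\},
\end{equation*}
whose closures are compact in $\mathbb{C}$. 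Because $\omega_0$ defines a self-adjoint problem, Lemma \ref{L1} forces every zero of $\Delta(\cdot,\omega_0)$ to be real; combined with the choice of $\varepsilon$, this guarantees that $\Delta(\cdot,\omega_0)$ has no zeros on $\partial B\cup\partial B_1\cup\partial B_2$, no zeros in $\overline{B_1}\cup\overline{B_2}$, and exactly $n$ zeros in $B$ counted with order.

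Applying Lemma \ref{ZEREO} separately to each of the three regions $B$, $B_1$, $B_2$ produces a common neighborhood $W$ of $\omega_0$ in $(\Omega,d)$ on which $\Delta(\cdot,\omega)$ has no zero on any of the three boundaries, has exactly $n$ zeros in $B$ counted with order, and has no zeros in $B_1$ or $B_2$. For any $\omega\in W\subset\Omega$ self-adjointness again forces all zeros of $\Delta(\cdot,\omega)$ to be real, so the $n$ zeros in $B$ lie on the real axis inside $B\cap\mathbb{R}=(r_1-\varepsilon,r_2+\varepsilon)$ but avoid $B_1\cup B_2$; hence they lie in $[r_1+\varepsilon,r_2-\varepsilon]\subset(r_1,r_2)$. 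Conversely every eigenvalue of $\omega$ in $(r_1,r_2)$ is a real zero of $\Delta(\cdot,\omega)$ lying inside $B$, so counting with algebraic multiplicity gives exactly $n$ eigenvalues of $\omega$ in $(r_1,r_2)$. I expect the main obstacle to be the joint-continuity verification needed to invoke Lemma \ref{ZEREO}; once that is in place, the remainder is elementary bookkeeping with the three shrinking regions and the reality of the spectrum.
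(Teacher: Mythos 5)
Your proposal is correct and follows essentially the same route as the paper, which deduces the lemma directly from Lemma \ref{L1}, Lemma \ref{L2}, Lemma \ref{continuous} and Lemma \ref{ZEREO}; your write-up simply fills in the bookkeeping (the three regions $B$, $B_1$, $B_2$ and the use of reality of the spectrum to keep zeros away from the endpoints) that the paper leaves implicit. No changes needed.
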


\begin{proof}
On the basis of Lemma \ref{L2}, Lemma \ref{continuous} and Lemma \ref{L1},
this is a direct consequent of Lemma \ref{ZEREO}.
\end{proof}

\begin{remark}
\label{alge}Furthermore, we can obtain that each algebraically simple
eigenvalue is on a locally unique continuous eigenvalue branch, while each
algebraically double eigenvalue is on two locally unique continuous
eigenvalue branches, the number of the eigenvalue branches is counted with
algebraic multiplicity.
\end{remark}

\begin{lemma}
\bigskip \label{eigenfunction}$(i)$Assume the eigenvalue $\lambda (\omega
_{0})$ is geometrically simple for some $\omega _{0}\in \Omega $ and let $%
w=w(\cdot ,\omega _{0})$ denote a normalized eigenfunction of the eigenvalue
$\lambda (\omega _{0}).$ Then there exist normalized eigenfunctions $%
w=w(\cdot ,\omega )$ of $\lambda (\omega )$ such that
\begin{equation*}
w(\cdot ,\omega )\rightarrow w(\cdot ,\omega _{0}),\text{ }w^{[1]}(\cdot
,\omega )\rightarrow w^{[1]}(\cdot ,\omega _{0}),\text{ as }\omega
\rightarrow \omega _{0}\text{ in }\Omega ,
\end{equation*}%
both uniformly on the interval $[a,b].$

$(ii)$Assume that $\lambda (\omega )$ is a geometrically double eigenvalue
for all $\omega $ in some neighborhood $M$ of $\omega _{0}$ in $\Omega .$
Let $w=w(\cdot ,\omega _{0})$ be any normalized eigenfunction of the
eigenvalue $\lambda (\omega _{0}).$ Then there exist normalized
eigenfunctions $w=w(\cdot ,\omega )$ of $\lambda (\omega )$ such that
\begin{equation*}
w(\cdot ,\omega )\rightarrow w(\cdot ,\omega _{0}),\text{ }w^{[1]}(\cdot
,\omega )\rightarrow w^{[1]}(\cdot ,\omega _{0}),\text{ as }\omega
\rightarrow \omega _{0}\text{ in }\Omega ,
\end{equation*}%
both uniformly on the interval $[a,b].$
\end{lemma}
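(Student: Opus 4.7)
The plan is to construct the normalized eigenfunctions $w(\cdot,\omega)$ explicitly as normalized linear combinations of the fundamental solutions $\phi_j(\cdot,\lambda(\omega),\omega)$, $j=1,2$, of \eqref{fangcheng}, and then to exploit the uniform joint continuity supplied by Lemma \ref{continuous} together with Lemma \ref{lianxuxing}.

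Given $\omega\to\omega_0$ in $\Omega$, Lemma \ref{lianxuxing} supplies an eigenvalue $\lambda(\omega)$ of $\omega$ with $\lambda(\omega)\to\lambda(\omega_0)$. I would then form the $2\times 2$ boundary matrix
\[
M(\omega):=A+B\,\Phi\bigl(b,\lambda(\omega),\omega\bigr),
\]
whose entries, together with $\phi_j(t,\lambda(\omega),\omega)$ and $\phi_j^{[1]}(t,\lambda(\omega),\omega)$, depend continuously on $\omega$ uniformly in $t\in[a,b]$ by Lemma \ref{continuous}. The identity
\[
A\,Y(a)+B\,Y(b)=M(\omega)\bigl(y(a),y^{[1]}(a)\bigr)^{T}
\]
for every solution $y=y(a)\phi_1+y^{[1]}(a)\phi_2$ shows that the eigenfunctions of $\lambda(\omega)$ correspond bijectively to $\ker M(\omega)$.

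For part $(i)$, geometric simplicity of $\lambda(\omega_0)$ forces $\mathrm{rank}\,M(\omega_0)=1$, so some entry, say $m_{12}(\omega_0)$, is nonzero and hence remains so in a neighborhood; since $\det M(\omega)=0$ by Lemma \ref{L1}, we have $\mathrm{rank}\,M(\omega)\equiv 1$ there, and $\ker M(\omega)$ is spanned by the continuously varying vector $(m_{12}(\omega),-m_{11}(\omega))$. Setting
\[
v(t,\omega):=m_{12}(\omega)\,\phi_1(t,\lambda(\omega),\omega)-m_{11}(\omega)\,\phi_2(t,\lambda(\omega),\omega),
\]
Lemma \ref{continuous} yields uniform convergence on $[a,b]$ of $v$ and $v^{[1]}$ to $v(\cdot,\omega_0)$ and $v^{[1]}(\cdot,\omega_0)$; the limit, lying in the one-dimensional eigenspace, is a nonzero scalar multiple $\kappa\, w(\cdot,\omega_0)$. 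I would then define
\[
w(\cdot,\omega):=\frac{\sigma\,v(\cdot,\omega)}{\|v(\cdot,\omega)\|_{L^2_r}},\qquad \sigma:=\bar\kappa/|\kappa|,
\]
so that this is the prescribed $w(\cdot,\omega_0)$ at $\omega_0$. The denominator stays bounded away from $0$ because the uniform convergence of $v$ combined with $\|r-r_0\|_1\to 0$ forces $\int_a^b|v|^2r\,dt\to\int_a^b|v(\cdot,\omega_0)|^2r_0\,dt>0$; dividing yields uniform convergence of $w$ and of $w^{[1]}$.

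For part $(ii)$, geometric multiplicity two on an entire neighborhood forces $M(\omega)\equiv 0$ there, so \emph{every} linear combination $c_1\phi_1(\cdot,\lambda(\omega),\omega)+c_2\phi_2(\cdot,\lambda(\omega),\omega)$ is an eigenfunction. Expanding $w(\cdot,\omega_0)=c_1\phi_1(\cdot,\lambda(\omega_0),\omega_0)+c_2\phi_2(\cdot,\lambda(\omega_0),\omega_0)$ and defining $v(\cdot,\omega)$ with the \emph{same} constants $c_1,c_2$, the identical normalization argument applies verbatim. The main obstacle I anticipate is entirely in part $(i)$: verifying that a single nonvanishing minor of $M$ persists on a whole neighborhood (so the kernel selection is continuous) and pinning down the constant phase factor so that $w(\cdot,\omega_0)$ is reproduced exactly. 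Beyond this bookkeeping, all the analytic work is already packaged into Lemma \ref{continuous}.
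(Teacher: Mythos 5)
Your construction — realizing the eigenfunctions as $y=y(a)\phi_1+y^{[1]}(a)\phi_2$ with initial data selected continuously from the kernel of $A+B\,\Phi(b,\lambda(\omega),\omega)$, then normalizing and fixing the phase, with all convergence coming from Lemma \ref{continuous} and Lemma \ref{lianxuxing} — is correct, and it is essentially the argument the paper invokes by citing the regular-potential case (Kong--Zettl, Theorem 3.2). The remaining bookkeeping you flag (which nonvanishing entry of $M(\omega_0)$ to use, and the phase constant) is handled exactly as you describe, so no gap remains.
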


\begin{proof}
On the basis of Lemma \ref{lianxuxing} and Lemma \ref{continuous}, the proof is similar to \cite[Theorem 3.2]{kong}.
\end{proof}

\begin{lemma}
\label{L4} The eigenvalues $\left\{ \lambda _{n},n\in
\mathbb{N}
_{0}\right\} $ of the separated Sturm-Liouville problems consisting of $(\ref%
{fangcheng})\ $and $(\ref{11})$ are all geometrically simple, real, and form
a sequence accumulating to $+\infty :$%
\begin{equation*}
-\infty <\lambda _{0}<\lambda _{1}<\lambda _{2}<\lambda _{3}<\cdots ,
\end{equation*}%
the number of times an eigenvalue appears in the sequence is equal to its
geometric multiplicity$;$ the eigenfunction ${\Psi }_{n}(x)$ corresponding
to the eigenvalue $\lambda _{n}$ has exactly $n$ zeros on the interval $%
(a,b) $.
\end{lemma}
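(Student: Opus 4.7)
I would decompose the statement into four assertions and treat them in order of increasing difficulty. The eigenvalues are real by self-adjointness of the underlying operator; discreteness and accumulation at $+\infty$ are inherited from the framework of the paper, for instance by approximating through any mollified sequence $L_{m}\overset{R}{\Longrightarrow}L$ (Lemmas \ref{norm} and \ref{L-1}) or directly from the results cited from \cite{xx14}. Geometric simplicity is essentially built into the separated structure: the left-endpoint condition $\cos\alpha\,y(a)-\sin\alpha\,y^{[1]}(a)=0$ determines the initial vector $(y(a),y^{[1]}(a))$ up to a nonzero scalar, so by the existence/uniqueness result Lemma \ref{L2} any eigenfunction is a scalar multiple of one fixed solution $\Psi_{\lambda}(\cdot)$. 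Combined with the coincidence of algebraic and geometric multiplicity for self-adjoint operators, this yields the strict inequalities $\lambda_{0}<\lambda_{1}<\lambda_{2}<\cdots$ and the fact that each eigenvalue appears exactly once in the enumeration.

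\textbf{Oscillation count.} The heart of the lemma is showing that $\Psi_{n}$ has exactly $n$ zeros in $(a,b)$. My plan is to transfer the classical Sturmian oscillation theorem to the distributional setting through norm resolvent convergence. Mollifying the coefficients produces a sequence $(1/p_{m},q_{m},s_{m})$ of regular coefficients (with $p_{m}s_{m}$ smooth, so that each $L_{m}$ is a classical regular Sturm--Liouville operator) converging in $L^{1}(J)$ to $(1/p,q,s)$; by Lemma \ref{norm} the operators $L_{m}$ then converge to $L$ in norm resolvent sense. Assuming the uniform lower bound on $\inf\sigma(L_{m})$ (supplied by the asymptotic form of fundamental solutions for $\lambda\to-\infty$ referenced in the introduction), Lemma \ref{L-1} gives $\lambda_{n}(m)\to\lambda_{n}$ for every $n$, and Lemma \ref{eigenfunction} supplies normalized eigenfunctions $\Psi_{n}^{(m)}\to\Psi_{n}$ together with $(\Psi_{n}^{(m)})^{[1]}\to\Psi_{n}^{[1]}$ uniformly on $[a,b]$. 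Since each $L_{m}$ is regular, the classical Sturmian theorem says $\Psi_{n}^{(m)}$ has exactly $n$ zeros in $(a,b)$.

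\textbf{Main obstacle.} The principal difficulty is that uniform convergence does not automatically preserve a zero count: zeros of $\Psi_{n}^{(m)}$ could merge into a double zero in the limit, or escape to an endpoint. I would handle this in two steps. Lemma \ref{L3} ensures that every zero of the limit $\Psi_{n}$ is simple with a genuine sign change, so zeros cannot coalesce while remaining zeros; escape to an endpoint is controlled by the boundary conditions, which fix the values $(\Psi_{n}(a),\Psi_{n}^{[1]}(a))=(\sin\alpha,\cos\alpha)$ and $(\Psi_{n}(b),\Psi_{n}^{[1]}(b))$ up to scalar. To obtain the exact count I would then argue intrinsically within $L$ using Sturm comparison (Lemma \ref{L5}): the asymptotic analysis shows that for $\lambda\ll 0$ the left-initial-value solution $\Psi_{\lambda}$ has no zeros in $(a,b]$, so $\Psi_{0}$ has no interior zeros, and Lemma \ref{L5} applied between consecutive eigenfunctions forces a strict increase by at least one in the zero count from $\Psi_{n-1}$ to $\Psi_{n}$, with equality guaranteed by the matching upper bound obtained from the approximating sequence. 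The step that genuinely requires the new approach of this paper, and which I expect to be the main technical burden, is transferring the $\lambda\to-\infty$ asymptotic behavior of the fundamental solutions from the regular approximants $L_{m}$ to the distributional operator $L$ via norm resolvent convergence.
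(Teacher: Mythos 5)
Your proposal has a genuine circularity problem relative to the logical structure of this paper. The paper proves Lemma \ref{L4} directly and self-containedly by a Pr\"{u}fer transformation adapted to the quasi-derivative: writing $y=\rho\sin\theta$, $y^{[1]}=\rho\cos\theta$, it establishes that $\theta(t,\lambda)$ is strictly increasing in $\lambda$, that $\theta(b,\lambda)\rightarrow\infty$ as $\lambda\rightarrow+\infty$, that $\theta(t,\lambda)\rightarrow 0$ as $\lambda\rightarrow-\infty$, and that $\lambda$ is an eigenvalue iff $\theta(b,\lambda)=\beta+n\pi$; all assertions (reality, simplicity, ordering, semiboundedness, accumulation at $+\infty$, and the exact zero count) follow at once. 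Your route instead invokes the norm-resolvent machinery: convergence of the $n$-th eigenvalues of the mollified operators $L_{m}$ (Lemma \ref{L-1} together with the uniform lower bound on $\inf\sigma(L_{m})$, i.e.\ Lemma \ref{L0 copy(1)}), convergence of eigenfunctions (Lemma \ref{eigenfunction}), and the $\lambda\rightarrow-\infty$ asymptotics of the fundamental solutions (Lemma \ref{solutio}). But in this paper those inputs are themselves consequences of Lemma \ref{L4}: step (A) of the proof of Lemma \ref{L0 copy(1)}, which handles exactly the separated boundary conditions you need, uses the fact from Lemma \ref{L4} that the ground-state eigenfunction has no interior zeros, and the proof of Lemma \ref{solutio} uses the monotonicity of the Pr\"{u}fer angle in $\lambda$ and the limit $\theta(t,\lambda)\rightarrow 0$ established inside the proof of Lemma \ref{L4}. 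So the statement you are asked to prove sits upstream of the convergence results you want to quote; as written, your argument assumes what it must deliver, namely an independent proof that the lowest eigenvalues of the separated problems (for $L$ and uniformly for the $L_{m}$) are bounded below and that solutions for $\lambda\ll 0$ have no zeros. To repair the plan you would have to supply these by some independent means (e.g.\ a quadratic-form lower bound), which neither the paper nor your sketch provides.

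A secondary, smaller issue: even granting $\lambda_{n}(m)\rightarrow\lambda_{n}$ and uniform convergence of $(\Psi_{n}^{(m)},(\Psi_{n}^{(m)})^{[1]})$, the transfer of the exact zero count needs more care than stated. Lemma \ref{L5} between $\Psi_{n-1}$ and $\Psi_{n}$ only yields at least one zero of $\Psi_{n}$ in each closed interval between adjacent zeros of $\Psi_{n-1}$; to get a strict increase of the count by one you need the standard interlacing argument at the endpoints using the fixed boundary data, and to exclude extra zeros of $\Psi_{n}^{(m)}$ appearing near a zero of $\Psi_{n}$ or near an endpoint you need the nonvanishing of the pair $(\Psi_{n},\Psi_{n}^{[1]})$ together with a Lemma \ref{L3}-type sign-change argument. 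These points are fixable, but they are exactly the kind of bookkeeping the Pr\"{u}fer-angle proof of the paper dispenses with, since there the zero count is read off directly from $\theta(b,\lambda_{n})=\beta+n\pi$.
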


\begin{proof}
The assertion of the eigenvalues and the eigenfunctions can be proved by the
Pr\"{u}fer transformation introduced in \cite{CC4}. Denote $y_{1}=y,$ $%
y_{2}=y^{[1]},$ assume
\begin{equation*}
\left\{
\begin{array}{l}
y_{1}(t,\lambda )=\rho (t,\lambda )\sin \theta (t,\lambda ), \\
y_{2}(t,\lambda )=\rho (t,\lambda )\cos \theta (t,\lambda ),%
\end{array}%
\right.
\end{equation*}%
we can obtain
\begin{equation}
\theta ^{\prime }(t,\lambda )=\frac{1}{p(t)}\cos ^{2}\theta (t,\lambda
)-s(t)\sin 2\theta (t,\lambda )+\left( \lambda r(t)-q(t)\right) \sin
^{2}\theta (t,\lambda ).  \label{s1}
\end{equation}%
For the equation (\ref{s1}) with the initial condition
\begin{equation}
\theta (a,\lambda )=\alpha ,\text{ }0\leq \alpha <\pi ,\text{ }\lambda \in
\mathbb{R}
,  \label{s2}
\end{equation}%
using the method introduced in \cite[Theorem 4.5.3]{CC7}, we can obtain the
properties of the unique real valued solution $\theta (t,\lambda )$ defined
on $[a,b]:$

(1) For fixed $t\in (a,b],$ $\theta (t,\lambda )$ is continuous and strictly
increasing in $\lambda $.

(2) If $\theta (c,\lambda )=k\pi ,$ for some $c\in (a,b),$ $\lambda \in
\mathbb{R}
,$ and some $k\in
\mathbb{N}
,$ then $\theta (t,\lambda )>k\pi $ for $c<t\leq b.$

(3) $\theta (b,\lambda )\rightarrow \infty ,$ as $\lambda \rightarrow \infty
$.

(4) For all $t\in (a,b],$ $\theta (t,\lambda )\rightarrow 0,$ as $\lambda
\rightarrow -\infty $.

To prove part (1), assume $\lambda _{1}>\lambda _{2},$ $\theta _{1}=\theta
(t,\lambda _{1}),$ $\theta _{2}=\theta (t,\lambda _{2}),$ $V=\theta _{1}-$ $%
\theta _{2}$, then $V^{\prime }$$=fV+h,$ where
\begin{eqnarray*}
f &=&-2s(t)\frac{\sin 2\theta _{1}-\sin 2\theta _{2}}{2\theta _{1}-2\theta
_{2}}+\left[ \lambda _{2}r(t)-q(t)-\frac{1}{p(t)}\right] (\sin \theta
_{1}+\sin \theta _{2})\frac{\sin \theta _{1}-\sin \theta _{2}}{\theta
_{1}-\theta _{2}}, \\
h &=&(\lambda _{1}r(t)-\lambda _{2}r(t))\sin ^{2}\theta _{1}.
\end{eqnarray*}%
Then the claim follows from the proof that is similar to the proof in \cite[%
Theorem 4.5.1 and 4.5.2]{CC7}. The proof of (2) is similar to that of part
(1). To prove part (2), we consider the following equation, let $\theta
=\theta (t,\lambda ),$ $V(t)=\theta (t,\lambda )-k\pi ,$ then $V^{\prime }$$%
=fV+h,$ where%
\begin{eqnarray*}
f(t)\hspace{-3mm} &=&\hspace{-3mm}-2s(t)\frac{\sin 2\theta -\sin 2k\pi }{%
2\theta -2k\pi }+\left[ \lambda r(t)-q(t)-\frac{1}{p(t)}\right] (\sin \theta
+\sin k\pi )\frac{\sin \theta -\sin k\pi }{\theta -k\pi }, \\
h(t)\hspace{-3mm} &=&\hspace{-3mm}\frac{1}{p(t)}.
\end{eqnarray*}

To prove part (3) let $\tan \phi =\lambda ^{\frac{1}{2}}\tan \theta $ for $%
\lambda >0$ and determine $\phi $ uniquely by requiring $\left\vert \theta
-\phi \right\vert <\frac{\pi }{2}.$ Then
\begin{eqnarray*}
\phi ^{\prime } &=&\lambda ^{\frac{1}{2}}\frac{1}{p}\cos ^{2}\phi +\lambda ^{%
\frac{1}{2}}r\sin ^{2}\phi -\lambda ^{-\frac{1}{2}}q\sin ^{2}\phi -s\sin
2\phi , \\
\phi (b,\lambda )-\phi (a,\lambda ) &=&\lambda ^{\frac{1}{2}%
}\int_{a}^{b}\left( \frac{1}{p}\cos ^{2}\phi +r\sin ^{2}\phi \right)
-\lambda ^{-\frac{1}{2}}\int_{a}^{b}q\sin ^{2}\phi dt-\int_{a}^{b}s\sin
2\phi dt, \\
\phi (b,\lambda ) &\geq &\phi (a,\lambda )+\lambda ^{\frac{1}{2}%
}\int_{a}^{b}\min (\frac{1}{p},r)-\lambda ^{-\frac{1}{2}}\int_{a}^{b}\left%
\vert q\right\vert -\int_{a}^{b}\left\vert s\right\vert ,
\end{eqnarray*}%
hence $\phi (b,\lambda )\rightarrow \infty ,$ as $\lambda \rightarrow \infty
.$ Therefore $\theta (b,\lambda )\rightarrow \infty ,$ as $\lambda
\rightarrow \infty .$

Finally, for the proof of part (4), let $\theta _{-\infty
}(t)=\lim\limits_{\lambda \rightarrow -\infty }\theta (t,\lambda ),$ for $%
t\in (a,b].$ Since $\theta (t,\lambda )$ is strictly increasing in $\lambda
, $ and $0\leq \theta (t,\lambda )\leq \theta (t,0)\ $for $\lambda <0,$ so
the limit exists.%
\begin{eqnarray*}
\int_{a}^{b}\lambda r(t)\sin ^{2}\theta (t,\lambda )dt &=&\theta (b,\lambda
)-\alpha +\int_{a}^{b}s(t)\sin 2\theta (t,\lambda )dt \\
&&-\int_{a}^{b}\frac{1}{p(t)}\cos ^{2}\theta (t,\lambda
)dt+\int_{a}^{b}q\sin ^{2}\theta (t,\lambda )dt.
\end{eqnarray*}%
Hence
\begin{equation*}
\int_{a}^{b}r(t)\sin ^{2}\theta (t,\lambda )dt\rightarrow 0,\text{ as }%
\lambda \rightarrow -\infty .
\end{equation*}%
Let $\{\lambda _{n},n\in
\mathbb{N}
\}\rightarrow -\infty $ and define $f_{n}(s)=r(s)\sin ^{2}\theta (s,\lambda
_{n}),$ then $\left\vert f_{n}(s)\right\vert \leq r(s)$ and $%
f_{n}(s)\rightarrow r(s)\sin ^{2}\theta _{-\infty }(s)$ as $n\rightarrow
\infty .$ Hence $\int_{a}^{b}r(s)\sin ^{2}\theta _{-\infty }(s)ds=0,$ and
thus $r(s)\sin ^{2}\theta _{-\infty }(s)=0,$ a.e. and $\theta _{-\infty
}(s)=0($mod $\pi ).$ For $\lambda <0,$ $s<t,$ $s,t\in \lbrack a,b],$%
\begin{eqnarray*}
&&\theta (t,\lambda )-\theta (s,\lambda ) \\
&=&\int_{s}^{t}\frac{1}{p(x)}\cos ^{2}\theta (x,\lambda
)dx+\int_{s}^{t}\left( \lambda r(x)-q(x)\right) \sin ^{2}\theta (x,\lambda
)dx \\
&&-\int_{s}^{t}s(x)\sin 2\theta (x,\lambda )dx \\
&\leq &\int_{s}^{t}\frac{1}{p(x)}dx-\int_{s}^{t}q(x)\sin ^{2}\theta
(x,\lambda )dx-\int_{s}^{t}s(x)\sin 2\theta (x,\lambda )dx.
\end{eqnarray*}%
Let $\lambda \rightarrow -\infty ,$ then we have $\theta _{-\infty }(t)\leq
\theta _{-\infty }(s)+\int_{s}^{t}\frac{1}{p(x)}dx$. Then we can get $\theta
(t,\lambda )\rightarrow 0,$ as $\lambda \rightarrow -\infty $ by a similar
proof to \cite[Theorem 4.5.3]{CC7}.

As a similar proof to the classical Sturm-Liouville problems in \cite{xx10},
we can easily obtain that $\lambda $ is an eigenvalue of the problem$(\ref%
{fangcheng}),$ $(\ref{11})$ if and only if $\theta (b,\lambda )=\beta +n\pi
. $ Thus we can obtain our claims.
\end{proof}

The following result describes an asymptotic form of the fundamental
solutions of the equation $(\ref{fangcheng})$ for sufficiently negative $%
\lambda .$

\begin{lemma}
\label{solutio}There exists $\lambda _{0}$ $\in
\mathbb{R}
,$ $k>0$ and a continuous function%
\begin{equation*}
\alpha :[a,b]\times (-\infty ,\lambda _{0}]\rightarrow \lbrack 0,\infty )
\end{equation*}%
such that $\alpha (t,\lambda )$ is decreasing in $\lambda $ for each $t\in
(a,b],$ $\alpha _{t}(t,\lambda )$ exists a.e. on $[a,b]$ for $\lambda \in
(-\infty ,\lambda _{0}],$ $p(t)(\tanh (\alpha (t,\lambda ))\alpha
_{t}(t,\lambda )+s(t))$ is continuous on $[a,b]$ for $\lambda \in (-\infty
,\lambda _{0}],$ and%
\begin{equation*}
\lim\limits_{\lambda \rightarrow -\infty }\alpha (t,\lambda )=\infty ,\text{
}\lim\limits_{\lambda \rightarrow -\infty }p(t)(\tanh (\alpha (t,\lambda
))\alpha _{t}(t,\lambda )+s(t))=\infty
\end{equation*}%
for each $t\in (a,b].$ Moreover, for the fundamental solutions $\phi _{1}$
and $\phi _{2}$ of $(\ref{fangcheng})$ we have%
\begin{eqnarray}
\phi _{1}(t,\lambda ) &=&k\cosh (\alpha (t,\lambda )),  \label{tyjy1} \\
\phi _{1}^{[1]}(t,\lambda ) &=&k\cosh (\alpha (t,\lambda ))p(t)(\tanh
(\alpha (t,\lambda ))\alpha _{t}(t,\lambda )+s(t)),  \label{tyjy 2} \\
\phi _{2}(t,\lambda ) &=&\frac{1}{k^{2}}\phi _{1}(t,\lambda )\int_{a}^{t}%
\frac{\mathit{\mathrm{sech}}^{2}\mathit{(}\alpha (s,\lambda ))}{p(s)}ds,
\label{tyjy 3} \\
\phi _{2}^{[1]}(t,\lambda ) &=&\frac{1}{k^{2}}\phi _{1}^{[1]}(t,\lambda
)\int_{a}^{t}\frac{\mathit{\mathrm{sech}}^{2}\mathit{(}\alpha (s,\lambda ))}{%
p(s)}ds+\frac{1}{k}\mathit{\mathrm{sech}(}\alpha (t,\lambda ))
\label{tyjy 4}
\end{eqnarray}%
on $[a,b]\times (-\infty ,\lambda _{0}].$
\end{lemma}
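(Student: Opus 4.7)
My plan is to construct the ansatz $\phi_1=k\cosh(\alpha)$ by first analysing the Riccati equation satisfied by the logarithmic quasi-derivative. A direct computation using the system (\ref{4}) shows that $u:=\phi_1^{[1]}/\phi_1$ solves
\begin{equation*}
u'=-\frac{u^{2}}{p}+2su+q-\lambda r,\qquad u(a,\lambda)=0.
\end{equation*}
The negative sign in front of $u^2/p$ is essential: the right-hand side, viewed as a quadratic in $u$, has a large positive equilibrium $u^{+}(t,\lambda)\sim\sqrt{-\lambda\,p(t)r(t)}$ as $\lambda\to-\infty$. Taking any constant slightly above $\sup_{t}u^{+}(t,\lambda)$ as a supersolution, and $0$ as a subsolution, one obtains, for all $\lambda$ below some threshold $\lambda_{0}\in\mathbb{R}$, a unique nonnegative solution $u(\cdot,\lambda)$ defined on the whole interval $[a,b]$, with $u(t,\lambda)\to\infty$ as $\lambda\to-\infty$ for every $t\in(a,b]$. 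Monotone decrease of $u$ in $\lambda$ follows by a standard comparison argument: if $\lambda_{1}<\lambda_{2}$, the difference $w=u(\cdot,\lambda_{1})-u(\cdot,\lambda_{2})$ satisfies a linear ODE with vanishing initial datum and strictly positive forcing term $(\lambda_{2}-\lambda_{1})r$.

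Given such a $u$, the identity $\phi_{1}^{[1]}/\phi_{1}=p(\tanh(\alpha)\alpha_{t}+s)$ forces $[\log\cosh(\alpha)]'=u/p-s$, so I would \emph{define}
\begin{equation*}
\cosh(\alpha(t,\lambda))=\cosh(\alpha_{0})\exp\!\int_{a}^{t}\!\Bigl(\frac{u(\xi,\lambda)}{p(\xi)}-s(\xi)\Bigr)d\xi,\qquad k=\frac{1}{\cosh(\alpha_{0})},
\end{equation*}
with $\alpha_{0}>0$ a fixed small constant, and shrink $\lambda_{0}$ if necessary so that the exponential stays above $1/\cosh(\alpha_{0})$ uniformly on $[a,b]$; then $\alpha(t,\lambda)\geq 0$ is well defined via $\mathrm{arccosh}$. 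Continuity in $(t,\lambda)$, monotone decrease in $\lambda$, existence of $\alpha_{t}$ a.e., continuity of $p(\tanh(\alpha)\alpha_{t}+s)=u$, and the two asymptotics $\alpha\to\infty$ and $u\to\infty$ for each $t\in(a,b]$ all transfer directly from the corresponding properties of $u$ obtained in the Riccati step. A direct differentiation then verifies that $\phi_{1}=k\cosh(\alpha)$ solves (\ref{fangcheng}) with the required initial data $\phi_{1}(a,\lambda)=1$, $\phi_{1}^{[1]}(a,\lambda)=0$, which gives (\ref{tyjy1}) and (\ref{tyjy 2}).

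For $\phi_{2}$ I would invoke reduction of order. Because the quasi-Wronskian $\phi_{1}\phi_{2}^{[1]}-\phi_{2}\phi_{1}^{[1]}=p(\phi_{1}\phi_{2}'-\phi_{2}\phi_{1}')$ is constant along solutions and equals $1$ at $t=a$, we obtain $(\phi_{2}/\phi_{1})'=1/(p\phi_{1}^{2})=\mathrm{sech}^{2}(\alpha)/(k^{2}p)$; integrating from $a$ (where $\phi_{2}/\phi_{1}=0$) yields (\ref{tyjy 3}), and differentiating together with the Wronskian relation produces (\ref{tyjy 4}). The main obstacle will be the Riccati analysis of the first paragraph: under the merely integrable hypothesis (\ref{tx}) on $1/p,q,r,s$, one must verify carefully that the solution $u$ does not blow up on $[a,b]$ for sufficiently negative $\lambda$ and exhibits the claimed uniform divergence as $\lambda\to-\infty$. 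Once this step is in place, the remaining verifications reduce to algebraic identities and standard comparison arguments.
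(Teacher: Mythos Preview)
Your reduction-of-order computation for $\phi_{2}$ and $\phi_{2}^{[1]}$ is correct and is exactly what the paper does. The core difference lies in how you establish the properties of $\phi_{1}$ (equivalently of $u=\phi_{1}^{[1]}/\phi_{1}$), and there your Riccati argument has a genuine gap under the hypothesis (\ref{tx}).

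Your sub/supersolution step relies on pointwise objects that need not exist here. The claimed equilibrium $u^{+}(t,\lambda)\sim\sqrt{-\lambda\,p(t)r(t)}$ and the constant supersolution ``slightly above $\sup_{t}u^{+}$'' require pointwise control of $p$, $s$ and $q$, but only $1/p,\,q,\,r,\,s\in L^{1}$ is assumed; in particular $p$ may be unbounded and $sp$ need not be bounded pointwise, so a constant $C$ need not satisfy $0\geq -C^{2}/p+2sC+q-\lambda r$ a.e. Likewise $0$ is a subsolution only if $q-\lambda r\geq 0$ a.e., i.e.\ $\lambda\leq q/r$ a.e., and there is no reason for $q/r$ to be essentially bounded below. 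Thus neither barrier is available, and you have not shown that $u$ stays finite on $[a,b]$ (equivalently, that $\phi_{1}$ has no zero there) for any $\lambda_{0}$. You correctly flagged this as the ``main obstacle'', but the mechanism you propose does not clear it.

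The paper sidesteps the Riccati difficulty entirely. It takes $\lambda_{0}$ to be the lowest eigenvalue of the Neumann problem $y^{[1]}(a)=y^{[1]}(b)=0$, so that $\phi_{1}(\cdot,\lambda_{0})$ is a nonvanishing eigenfunction and one can pick $k>0$ with $\phi_{1}(\cdot,\lambda_{0})>k$. Then, using the Pr\"ufer-angle monotonicity already proved in Lemma~\ref{L4}, one gets $\phi_{1}^{[1]}/\phi_{1}\geq \phi_{1}^{[1]}(\cdot,\lambda_{0})/\phi_{1}(\cdot,\lambda_{0})$ and hence $\phi_{1}(\cdot,\lambda)\geq \phi_{1}(\cdot,\lambda_{0})>k$ for all $\lambda\leq\lambda_{0}$, together with strict monotone decrease of $\phi_{1}$ in $\lambda$. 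Now $\alpha$ is \emph{defined a posteriori} by $\phi_{1}=k\cosh\alpha$; its continuity, monotonicity in $\lambda$, and a.e.\ differentiability in $t$ are immediate. The limit $\phi_{1}^{[1]}/\phi_{1}\to\infty$ comes from $\theta(t,\lambda)\to 0$ in Lemma~\ref{L4}, and $\alpha\to\infty$ is obtained by a short contradiction using the integral form of the equation. The point is that the existence and positivity of $\phi_{1}$ are secured by the spectral/Pr\"ufer machinery rather than by a Riccati comparison, which is precisely what accommodates the merely integrable coefficients. If you want to keep your Riccati viewpoint, the clean fix is to start from the globally defined $\phi_{1}$ (Lemma~\ref{L2}), use the Neumann eigenvalue plus Pr\"ufer to guarantee $\phi_{1}>k$, and then read off the properties of $u$; attempting to bound $u$ first and reconstruct $\phi_{1}$ afterwards is the harder direction here.
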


\begin{proof}
Consider the Sturm-Liouville problem consisting of $(\ref{fangcheng})\ $and
\begin{equation*}
y^{[1]}(a)=y^{[1]}(b)=0.
\end{equation*}%
Let $\lambda _{0}$ be the smallest eigenvalue of this problem. Then $\phi
_{1}(\cdot ,\lambda _{0})$ is an eigenfunction for $\lambda _{0}.$ Hence $%
\phi _{1}(\cdot ,\lambda _{0})$ has no zero on $[a,b].$ So there exists $k>0$
such that $\phi _{1}(t,\lambda _{0})>k$ for each $t\in \lbrack a,b].$ Denote
$\theta (t,\lambda )$ the Pr\"{u}fer angle for $\phi _{1}(t,\lambda ),$
\begin{equation*}
\left\{
\begin{array}{l}
\phi _{1}(t,\lambda )=\rho (t,\lambda )\sin \theta (t,\lambda ) \\
\phi _{1}^{[1]}(t,\lambda )=\rho (t,\lambda )\cos \theta (t,\lambda ).%
\end{array}%
\right.
\end{equation*}%
For each $t\in (a,b]$ and $\lambda \in (-\infty ,\lambda _{0}]$, $\theta
(a,\lambda )=\theta (a,\lambda _{0})=\frac{\pi }{2},$ from Lemma $\ref{L4},$
we have $\theta (t,\lambda )$ is strictly increasing in $\lambda $ and $%
\theta (t,\lambda )\in (0,\pi ).$ Since $\cot \theta (t,\lambda )=\frac{\phi
_{1}^{[1]}(t,\lambda )}{\phi _{1}(t,\lambda )},$ hence we have%
\begin{equation}
\frac{\phi _{1}^{[1]}(t,\lambda )}{\phi _{1}(t,\lambda )}\geq \frac{\phi
_{1}^{[1]}(t,\lambda _{0})}{\phi _{1}(t,\lambda _{0})},\text{ \ for }t\in
(a,b]\text{ a.e., }\lambda \leq \lambda _{0},  \label{5.3}
\end{equation}%
\begin{equation*}
\frac{\phi _{1}^{\prime }(t,\lambda )}{\phi _{1}(t,\lambda )}\geq \frac{\phi
_{1}^{\prime }(t,\lambda _{0})}{\phi _{1}(t,\lambda _{0})},\text{ \ for }%
t\in (a,b]\text{ a.e., }\lambda \leq \lambda _{0},
\end{equation*}%
\begin{equation*}
\left( \ln \frac{\phi _{1}(t,\lambda )}{\phi _{1}(t,\lambda _{0})}\right)
^{\prime }\geq 0,\text{ i.e. }\ln \frac{\phi _{1}(t,\lambda )}{\phi
_{1}(t,\lambda _{0})}\geq \ln \frac{\phi _{1}(a,\lambda )}{\phi
_{1}(a,\lambda _{0})}=0,
\end{equation*}%
the above inequality implies that $\phi _{1}(t,\lambda )\geq \phi
_{1}(t,\lambda _{0})>k$ for each $t\in (a,b]$ and $\lambda \leq \lambda
_{0}. $ In the same way we see that $\phi _{1}(t,\lambda )$ is strictly
decreasing in $\lambda $ on $(-\infty ,\lambda _{0}]$ for each fixed $t\in
(a,b].$

There is a unique $\alpha :[a,b]\times (-\infty ,\lambda _{0}]\rightarrow
\lbrack 0,\infty )$ determined by $(\ref{tyjy1})$ which is continuous.
Moreover, $\alpha (t,\lambda )$ is decreasing in $\lambda $ on $(-\infty
,\lambda _{0}]$ for each $t\in (a,b],\alpha _{t}(t,\lambda )$ exists a.e. on
$[a,b]$. By the reduction of order formula we see that $\phi _{2}$ satisfies
$(\ref{tyjy 3})$ and $\phi _{1}^{[1]}$, $\phi _{2}^{[1]}$ satisfy $(\ref%
{tyjy 2}),$ $(\ref{tyjy 4}),$ respectively.

From Lemma $\ref{L4},$ we have for $t\in (a,b],$ $\theta (t,\lambda
)\rightarrow 0,$ as $\lambda \rightarrow -\infty $. So,%
\begin{equation}
\lim\limits_{\lambda \rightarrow -\infty }\frac{\phi _{1}^{[1]}(t,\lambda )}{%
\phi _{1}(t,\lambda )}=\infty ,\text{ \ \ for }t\in (a,b].  \label{5.2}
\end{equation}%
Now we show that%
\begin{equation*}
\lim\limits_{\lambda \rightarrow -\infty }\alpha (t,\lambda )=\infty ,\text{
\ \ for }t\in (a,b].
\end{equation*}%
Assume the contrary, without loss of generality, let $\lim\limits_{\lambda
\rightarrow -\infty }\alpha (b,\lambda )=r<\infty .$ Then $\alpha (b,\lambda
)\leq r$ on $(-\infty ,\lambda _{0}].$

From $(\ref{5.2}),$ there is $L\leq \lambda _{0}$ such that $\frac{\phi
_{1}^{[1]}(b,L)}{\phi _{1}(b,L)}>0,$ so $\phi _{1}^{[1]}(b,L)>0.$ By the
continuity of $\phi _{1}^{[1]}(\cdot ,L)$ we have that $\phi
_{1}^{[1]}(t,L)>0$ on $[c,b]$ for some $c\in (a,b).$ In view of $(\ref{5.3})$
with $\lambda _{0}$ replaced by $L,$ we see that for $\lambda \leq L,$ $%
\frac{\phi _{1}^{[1]}(t,\lambda )}{\phi _{1}(t,\lambda )}>0$ on $[c,b],$ so $%
\phi _{1}^{[1]}(t,\lambda )>0$ on $[c,b].$

From the knowledge of the differential equation,\ for a solution of
\begin{equation*}
y^{\prime }(x)=p(x)^{-1}y^{[1]}(x)-s(x)y(x),
\end{equation*}%
we have
\begin{equation*}
y(x)=e^{-S(x)}\left( \int_{x_{0}}^{x}e^{S(t)}\frac{1}{p(t)}%
y^{[1]}(t)dt+y(x_{0})\right) ,\text{ \ \ \ }S(x)=\int_{x_{0}}^{x}s(t)dt.
\end{equation*}%
Thus we have
\begin{equation*}
\phi _{1}(b,\lambda )\geq e^{-\int_{t}^{b}s(u)du}\phi _{1}(t,\lambda )\text{%
, \ for }t\in \lbrack c,b]\text{ and }\lambda \leq L.
\end{equation*}%
So
\begin{eqnarray*}
k &\leq &\phi _{1}(t,\lambda )\leq e^{\int_{t}^{b}s(u)du}\phi _{1}(b,\lambda
) \\
&\leq &e^{\left\vert \int_{t}^{b}s(u)du\right\vert }\phi _{1}(b,\lambda
)\leq e^{\int_{a}^{b}\left\vert s(u)\right\vert du}k\cosh (\alpha (b,\lambda
))\leq C,
\end{eqnarray*}%
where $C=$ $e^{\int_{a}^{b}\left\vert s(u)\right\vert du}k\cosh r$ is a
constant independent of $\lambda $ and $t$. However, for $\lambda \leq L,$
from the equation $(\ref{fangcheng}),$ we have%
\begin{equation*}
\phi _{1}^{[1]}(t,\lambda )=e^{\int_{c}^{t}s(u)du}\left(
\int_{c}^{t}e^{-\int_{c}^{u}s(v)dv}(q(u)-\lambda r(u))\phi _{1}(u,\lambda
)du+\phi _{1}^{[1]}(c,\lambda )\right) ,
\end{equation*}%
\begin{eqnarray*}
\phi _{1}(b,\lambda )
&=&e^{-\int_{c}^{b}s(t)dt}\int_{c}^{b}e^{\int_{c}^{t}s(v)dv}\frac{1}{p(t)}%
\phi _{1}^{[1]}(t,\lambda )dt+e^{-\int_{c}^{b}s(t)dt}\phi _{1}(c,\lambda ) \\
&=&e^{-\int_{c}^{b}s(t)dt}\int_{c}^{b}e^{2\int_{c}^{t}s(v)dv}\frac{1}{p(t)}%
\int_{c}^{t}e^{-\int_{c}^{u}s(v)dv}(q(u)-\lambda r(u))\phi _{1}(u,\lambda
)dudt \\
&&+e^{-\int_{c}^{b}s(t)dt}\int_{c}^{b}e^{2\int_{c}^{t}s(v)dv}\frac{1}{p(t)}%
\phi _{1}^{[1]}(c,\lambda )dt+e^{-\int_{c}^{b}s(t)dt}\phi _{1}(c,\lambda ) \\
&\geq &e^{-\int_{c}^{b}s(t)dt}\int_{c}^{b}e^{2\int_{c}^{t}s(v)dv}\frac{1}{%
p(t)}\int_{c}^{t}e^{-\int_{c}^{u}s(v)dv}q(u)\phi _{1}(u,\lambda )dudt \\
&&-\lambda e^{-\int_{c}^{b}s(t)dt}\int_{c}^{b}e^{2\int_{c}^{t}s(v)dv}\frac{1%
}{p(t)}\int_{c}^{t}e^{-\int_{c}^{u}s(v)dv}r(u)\phi _{1}(u,\lambda )dudt \\
&\rightarrow &\infty ,\text{ as }\lambda \rightarrow -\infty .
\end{eqnarray*}%
Now we reach a contradiction to obtain our claim.
\end{proof}

\begin{lemma}
\label{12 copy(1)}For $\frac{1}{p},s\in L(J,%
\mathbb{R}
),$ $p>0$ a.e. on $(a,b),$ then there exist $p_{m}\in C^{\infty }[a,b],$ $%
s_{m}\in C_{0}^{\infty }(a,b),$ $m\in
\mathbb{N}
,$ $p_{m}$ and $s_{m}$ are real-valued, $p_{m}>0$ on $[a,b],$ such that $%
\left\Vert \frac{1}{p_{m}}-\frac{1}{p}\right\Vert _{1}\rightarrow 0,$ $%
\left\Vert s_{m}-s\right\Vert _{1}\rightarrow 0,$ as $m\rightarrow \infty .$
\end{lemma}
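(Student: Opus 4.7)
The plan is to handle the two approximations separately, since the requirements differ: for $s_m$ we need compact support in the open interval $(a,b)$, while for $p_m$ we need smoothness up to the closed interval together with strict positivity so that $1/p_m$ makes sense and is itself smooth. Both parts will reduce to the standard fact that $C_0^\infty(a,b)$ is dense in $L(J,\mathbb{R})$, via mollification combined with a small positivity perturbation in the second case.

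For $s_m$, I would first replace $s$ by its truncation to a slightly smaller interval: set $\tilde s_m = s \cdot \chi_{[a+1/m,\,b-1/m]}$. Since $s \in L(J,\mathbb{R})$, dominated convergence yields $\|\tilde s_m - s\|_1 \to 0$. Then I would mollify $\tilde s_m$ with a standard symmetric nonnegative mollifier $\phi_{\varepsilon_m}$ of support radius $\varepsilon_m < 1/(2m)$ and set $s_m = \tilde s_m * \phi_{\varepsilon_m}$. This function is smooth, real-valued, supported in $[a+1/(2m),\,b-1/(2m)] \subset (a,b)$, hence in $C_0^\infty(a,b)$, and $\|s_m - \tilde s_m\|_1 \to 0$ by the standard $L^1$ mollifier estimate, so $\|s_m - s\|_1 \to 0$ by the triangle inequality.

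For $p_m$, set $h := 1/p$, so $h \geq 0$ a.e.\ on $(a,b)$ and $h \in L(J,\mathbb{R})$. Extend $h$ by zero to $\mathbb{R}$ and form the mollification $\tilde h_m := h * \phi_{\varepsilon_m}$ with a nonnegative $C_0^\infty$ mollifier; this is smooth, nonnegative and satisfies $\|\tilde h_m - h\|_1 \to 0$. To force strict positivity I add a small constant and define
\begin{equation*}
h_m := \tilde h_m + \varepsilon_m, \qquad p_m := \frac{1}{h_m}.
\end{equation*}
Since $h_m \geq \varepsilon_m > 0$ everywhere on $[a,b]$ and $h_m \in C^\infty[a,b]$, the reciprocal $p_m$ lies in $C^\infty[a,b]$ and is positive on $[a,b]$. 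Moreover
\begin{equation*}
\left\|\tfrac{1}{p_m} - \tfrac{1}{p}\right\|_1
= \|h_m - h\|_1
\leq \|\tilde h_m - h\|_1 + \varepsilon_m (b-a) \longrightarrow 0,
\end{equation*}
which is the desired convergence.

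The argument is essentially routine mollification; the only point requiring care is the positivity of $p_m$ on the closed interval, which could fail if one simply mollified $1/p$ (the mollification may touch zero, and might even vanish on a set of positive measure if $1/p$ is supported away from some subinterval). The additive correction $\varepsilon_m$ is precisely what bypasses this obstacle while not disturbing $L^1$-convergence, because $(b-a)\varepsilon_m \to 0$.
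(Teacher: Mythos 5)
Your proof is correct and follows essentially the same route as the paper: mollify $1/p$ to get a smooth positive function whose reciprocal is $p_m$, and approximate $s$ by $C_0^\infty(a,b)$ functions (the paper simply invokes density, you construct them by truncation plus mollification). The additive shift $\varepsilon_m$ is a harmless safeguard but not actually needed: since $p>0$ a.e.\ forces $1/p>0$ a.e.\ on $(a,b)$, the mollification of $1/p$ is already strictly positive at every point of $[a,b]$ (the mollifier window always meets $(a,b)$ in a set of positive measure), which is how the paper gets positivity directly.
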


\begin{proof}
For $\frac{1}{p},s\in L(J,%
\mathbb{R}
),$ define%
\begin{equation*}
\rho (x)=\left\{
\begin{array}{l}
Ce^{\frac{1}{\left\vert x\right\vert ^{2}-1}},\text{ }\left\vert
x\right\vert <1, \\
0,\hspace{13mm}\left\vert x\right\vert \geq 1,\ \text{\ }%
\end{array}%
\right.
\end{equation*}%
where $C=\left( \int_{-1}^{1}e^{\frac{1}{\left\vert x\right\vert ^{2}-1}%
}dx\right) ^{-1},$ and define $\rho _{\frac{1}{m}}(x)=m\rho (mx),$ $m\in
\mathbb{N}
,$ let $\widetilde{p}_{m}(x)=\int_{a}^{b}\rho _{\frac{1}{m}}(x-y)\frac{1}{%
p(y)}dy,$ it is clear that $\widetilde{p}_{m}(x)>0$ on $[a,b].$ From \cite[%
Theorem 2.29]{cc9}, we know that
\begin{equation*}
\widetilde{p}_{m}(x)\in C^{\infty }[a,b],\text{ }\left\Vert \text{ }%
\widetilde{p}_{m}-\frac{1}{p}\right\Vert _{1}\rightarrow 0,\text{ as }%
m\rightarrow \infty .
\end{equation*}%
So if we let $p_{m}=\frac{1}{\widetilde{p}_{m}},$ we can obtain
\begin{equation*}
p_{m}\in C^{\infty }[a,b],\text{ }\left\Vert \frac{1}{p_{m}}-\frac{1}{p}%
\right\Vert _{1}\rightarrow 0,\text{ as }m\rightarrow \infty .
\end{equation*}%
For $s\in L(J,%
\mathbb{R}
),$ since $C_{0}^{\infty }(a,b)$ is dense in $L(J,%
\mathbb{R}
),$ so we can find $s_{m}\in C_{0}^{\infty }(a,b),$ such that $\left\Vert
s_{m}-s\right\Vert _{1}\rightarrow 0,$ as $m\rightarrow \infty .$
\end{proof}

\begin{remark}
For $p,s$ which satisfy the condition $(\ref{tx})$, let $L=L(p,s)$ denote
the operator generated by $(\ref{b})$ with the boundary conditions $(\ref{25}%
).$ For the case of $p=p_{m},$ $s=s_{m},$ denote the operators $%
L_{m}=L(p_{m},s_{m}).$ For the operators $L_{m}$ which satisfy the
conditions $(\ref{25}),$ since $s_{m}\in C_{0}^{\infty }(a,b),$ we can
assume $s_{m}(a)=s_{m}(b)=0,$ thus
\begin{eqnarray*}
y^{[1]}(a) &=&\left[ p_{m}y^{\prime }+p_{m}s_{m}y\right] (a)=(p_{m}y^{\prime
})(a), \\
y^{[1]}(b) &=&\left[ p_{m}y^{\prime }+p_{m}s_{m}y\right] (b)=(p_{m}y^{\prime
})(b).
\end{eqnarray*}%
Hence the operators $L_{m}=L(p_{m},s_{m})$ are actually generated by the
expression
\begin{eqnarray*}
L_{m}y &=&\frac{1}{r}(-(p_{m}y^{\prime })^{\prime }-(p_{m}s_{m})^{\prime
}y+p_{m}s_{m}^{2}y+qy) \\
&=&\frac{1}{r}(-(p_{m}y^{\prime })^{\prime }+\left( -(p_{m}s_{m})^{\prime
}+p_{m}s_{m}^{2}+q\right) y)
\end{eqnarray*}%
with the boundary conditions%
\begin{equation*}
A\left(
\begin{array}{c}
y(a) \\
(p_{m}y^{\prime })(a)%
\end{array}%
\right) +B\left(
\begin{array}{c}
y(b) \\
(p_{m}y^{\prime })(b)%
\end{array}%
\right) =\left(
\begin{array}{c}
0 \\
0%
\end{array}%
\right) ,
\end{equation*}%
where the complex matrices $A$ and $B$ satisfy that the $2\times 4$ matrix $%
(A|B)$ has full rank, and
\begin{equation*}
AEA^{\ast }=BEB^{\ast },E=\left(
\begin{array}{cc}
0 & -1 \\
1 & 0%
\end{array}%
\right) .
\end{equation*}%
It is obvious that each of the operators $L_{m}$ is a classical self-adjoint
Sturm-Liouville operator with a regular potential and a positive leading
coefficient.
\end{remark}

\begin{remark}
Note that for the case of $p=p_{m},$ $s=s_{m},$ the replacement of the
boundary conditions $(\ref{21}),(\ref{ee}),(\ref{ff}),(\ref{gg})$ are%
\begin{eqnarray}
Y(b) &=&KY(a),  \label{z21} \\
Y(b) &=&e^{i\gamma }KY(a),  \label{zee} \\
y(a) &=&0,\text{ \ }k_{22}y(b)-k_{12}(p_{m}y^{\prime })(b)=0,  \label{zff} \\
(p_{m}y^{\prime })(a) &=&0,\text{ \ }k_{21}y(b)-k_{11}(p_{m}y^{\prime
})(b)=0,  \label{zgg}
\end{eqnarray}%
where $Y($\textperiodcentered $)=\left(
\begin{array}{c}
y(\text{\textperiodcentered }) \\
(p_{m}y^{\prime })(\text{\textperiodcentered })%
\end{array}%
\right) .\ $
\end{remark}

\begin{lemma}
\label{L13}The operators $L_{m}$ converge to the operator $L$ in the sense
of the norm resolvent convergence, i.e. $L_{m}\overset{R}{\Longrightarrow }L$
as $m\rightarrow \infty .$
\end{lemma}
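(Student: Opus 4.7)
The plan is to derive Lemma \ref{L13} as an essentially immediate corollary of the general norm-resolvent-convergence criterion established in Lemma \ref{norm}, combined with the specific approximation scheme produced in Lemma \ref{12 copy(1)}. The operator $L_m$ is built exactly by leaving the boundary matrices $A,B$ and the coefficients $q,r$ unchanged, while replacing $1/p$ and $s$ by the smooth approximants $1/p_m$ and $s_m$. Hence $L_m$ fits the framework of Lemma \ref{norm} with coefficients $(1/p_m, q_m, r_m, s_m)=(1/p_m, q, r, s_m)$.

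To invoke Lemma \ref{norm} I need to verify the three convergences in (\ref{mbijin}). The condition $\|q_m-q\|_1\to 0$ is trivial since $q_m\equiv q$. The remaining two convergences, $\|1/p_m-1/p\|_1\to 0$ and $\|s_m-s\|_1\to 0$, are precisely the conclusions of Lemma \ref{12 copy(1)}, applied to the given coefficients $1/p$ and $s$ in $L(J,\mathbb{R})$. With (\ref{mbijin}) verified, Lemma \ref{norm} yields $L_m\overset{R}{\Longrightarrow}L$, which is the claim.

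The only subtle point worth flagging is the interpretation of the boundary conditions (\ref{25}) throughout the approximation. The matrices $A$ and $B$ are fixed, but the quasi-derivative $y^{[1]}$ appearing in (\ref{25}) is computed with $(p,s)$ for $L$ and with $(p_m,s_m)$ for $L_m$; this is precisely the convention used in the statement of Lemma \ref{norm} and in the remark immediately preceding Lemma \ref{L13}, so no additional argument is needed. In particular, the self-adjointness of each $L_m$ is ensured by the same algebraic condition (\ref{juzhen}) on $(A,B)$, and the smoothness of $p_m,s_m$ provided by Lemma \ref{12 copy(1)} means each $L_m$ is a classical regular Sturm--Liouville operator. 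Thus the entire proof reduces to these two citations, and I anticipate no real obstacle beyond verifying this bookkeeping.
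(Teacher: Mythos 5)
Your proposal is correct and matches the paper's own argument, which simply observes that Lemma \ref{L13} is a direct consequence of Lemma \ref{norm} once the convergences in (\ref{mbijin}) are supplied by Lemma \ref{12 copy(1)} (with $q_m\equiv q$). Your additional bookkeeping about the quasi-derivative in the boundary conditions is consistent with the remark preceding the lemma and raises no issues.
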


\begin{proof}
This is a direct consequence of Lemma $\ref{norm}.$
\end{proof}

\begin{lemma}
\label{L0}The spectrum of the operators $L$ generated by the expression $(%
\ref{b})$ and the self-adjoint boundary conditions $(\ref{25})$ is discrete.
\end{lemma}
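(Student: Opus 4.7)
The plan is to invoke the norm resolvent approximation framework already assembled in Section 1. First, I would apply Lemma \ref{12 copy(1)} to produce sequences $p_{m}\in C^{\infty}[a,b]$ with $p_{m}>0$ and $s_{m}\in C_{0}^{\infty}(a,b)$ such that $\left\Vert 1/p_{m}-1/p\right\Vert _{1}\rightarrow 0$ and $\left\Vert s_{m}-s\right\Vert _{1}\rightarrow 0$ as $m\rightarrow\infty$, leaving $q$ and $r$ unchanged. Assembling the operators $L_{m}=L(p_{m},s_{m})$ as in the Remark preceding Lemma \ref{L13}, each $L_{m}$ is a classical regular self-adjoint Sturm-Liouville operator on the bounded interval $(a,b)$ with smooth positive leading coefficient and $L^{1}$ potential. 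By standard regular Sturm-Liouville theory, each such $L_{m}$ has compact resolvent on $L_{r}^{2}(J,\mathbb{R})$, equivalently its spectrum is purely discrete.

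Next, by Lemma \ref{L13} we have $L_{m}\overset{R}{\Longrightarrow}L$. Choose any $\mu\in\mathbb{C}\setminus\mathbb{R}$, which automatically lies in $\rho(L)\cap\rho(L_{m})$ for all $m$ because all the operators involved are self-adjoint. The definition of norm resolvent convergence gives
\begin{equation*}
\left\Vert (L_{m}-\mu)^{-1}-(L-\mu)^{-1}\right\Vert \rightarrow 0\text{ as }m\rightarrow\infty.
\end{equation*}
Since the ideal of compact operators on the Hilbert space $L_{r}^{2}(J,\mathbb{R})$ is closed in the operator norm topology, the norm limit $(L-\mu)^{-1}$ of the compact operators $(L_{m}-\mu)^{-1}$ is itself compact. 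Therefore $L$ has compact resolvent, and, being self-adjoint, its spectrum is discrete.

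The argument is essentially the discreteness half of the proof of Lemma \ref{L-1}(1), transplanted to the concrete setting of the smoothed approximants. The only slightly nontrivial input is compactness of the individual resolvents $(L_{m}-\mu)^{-1}$, which is the main obstacle to check carefully but is classical: the domain of a regular self-adjoint Sturm-Liouville operator on a bounded interval embeds compactly into $L^{2}$ via an Arzel\`a--Ascoli argument applied to $y$ and $y^{[1]}$. Note that no uniform lower bound on $\sigma(L_{m})$ enters here, so the heavier machinery needed later for the convergence of the ordered eigenvalues (Lemma \ref{L0 copy(1)} and Lemma \ref{norm resovent}) does not have to be invoked at this stage.
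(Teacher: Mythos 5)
Your proposal is correct and follows essentially the same route as the paper: approximate $L$ by the regular operators $L_{m}=L(p_{m},s_{m})$, invoke the norm resolvent convergence of Lemma \ref{L13}, and conclude that $(L-\mu)^{-1}$ is compact as the norm limit of the compact resolvents $(L_{m}-\mu)^{-1}$, whence the spectrum is discrete. Your added remarks (choosing $\mu$ nonreal, the classical compactness of regular Sturm--Liouville resolvents, and the observation that no uniform semi-boundedness is needed at this stage) only make explicit what the paper leaves implicit.
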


\begin{proof}
From the above lemma, we can find a family of classical Sturm-Liouville
operators $L_{m}$ with regular potentials such that $L_{m}\overset{R}{%
\Longrightarrow }L,$ for $\mu \in \rho (L)$ $\cap $ $\rho (L_{m}),$ the
operator $(L-\mu )^{-1}$ is compact since it is the norm limit of the
compact operators $(L_{m}-\mu )^{-1}$, from \cite{xx13}, the spectrum of the
operator $L$ is discrete.
\end{proof}

\begin{lemma}
For the Sturm-Liouville\ problem consisting of $(\ref{fangcheng})$ and $(\ref%
{25}),$ the geometric and algebraic multiplicity of each eigenvalue are
always equal.
\end{lemma}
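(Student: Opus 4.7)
The plan is to obtain the equality of multiplicities for the distributional case by approximation from the classical regular case, where the result is known, combined with two kinds of stability: stability of the geometric multiplicity under norm resolvent convergence (already given by Lemma \ref{geo}), and stability of the algebraic multiplicity under uniform convergence of the characteristic function on compact sets (given by Lemma \ref{ZEREO}).

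First I would fix an eigenvalue $\lambda_{0}$ of $L$, let $\chi$ denote its geometric multiplicity, and enclose $\lambda_{0}$ in a small closed disc $\overline{B}$ in $\mathbb{C}$ whose boundary contains no other zero of $\Delta(\lambda)$ and no other point of $\sigma(L)$ (possible since $\sigma(L)$ is discrete by Lemma \ref{L0} and $\Delta$ is entire by Lemma \ref{L2}). Using Lemma \ref{12 copy(1)} and the accompanying remarks, I would pick the approximating regular Sturm--Liouville operators $L_{m}=L(p_{m},s_{m})$ and invoke Lemma \ref{L13} to get $L_{m}\overset{R}{\Longrightarrow}L$. For each regular $L_{m}$ the classical theory (see \cite{kong,CC7}) tells us that the algebraic and geometric multiplicities of every eigenvalue coincide; this is the starting point.

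Next I would run the two stability arguments in parallel on $\overline{B}$. On the geometric side, Lemma \ref{geo} yields that for all sufficiently large $m$, the sum of the geometric multiplicities of the eigenvalues of $L_{m}$ lying in $\overline{B}$ is exactly $\chi$. On the algebraic side, from the estimate $(\ref{1.11})$ in the proof of Lemma \ref{norm} together with the formula $\Delta_{m}(\lambda)=\det(A+B\Phi_{m}(b,\lambda))$, the characteristic functions $\Delta_{m}(\lambda)$ converge to $\Delta(\lambda)$ uniformly on $\overline{B}$, and all the $\Delta_{m}$ and $\Delta$ are entire in $\lambda$. Applying Lemma \ref{ZEREO} (continuity of zeros of analytic functions with a parameter, i.e.\ the argument principle / Rouché) to the family $\{\Delta_{m}\}\cup\{\Delta\}$ on $\overline{B}$, the total order of zeros in $B$ is eventually constant; that is, for $m$ large the sum of algebraic multiplicities of eigenvalues of $L_{m}$ in $\overline{B}$ equals the algebraic multiplicity of $\lambda_{0}$ as a zero of $\Delta$.

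Combining the three ingredients now gives the result: for $m$ large,
\begin{equation*}
\text{alg.\ mult.}_{L}(\lambda_{0})=\sum_{\mu\in\sigma(L_{m})\cap\overline{B}}\text{alg.\ mult.}_{L_{m}}(\mu)=\sum_{\mu\in\sigma(L_{m})\cap\overline{B}}\text{geom.\ mult.}_{L_{m}}(\mu)=\chi,
\end{equation*}
where the middle equality uses the classical regular case and the outer two use the two stability principles above. Hence algebraic and geometric multiplicities agree for $L$ as well.

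The main obstacle I expect is the uniform convergence $\Delta_{m}\to\Delta$ on compact subsets of $\mathbb{C}$, since Lemma \ref{norm} was formulated for a fixed spectral parameter $\lambda\in\mathbb{C}\setminus\mathbb{R}$. However, inspecting the proof of Lemma \ref{norm}, the estimate $(\ref{1.11})$ for $|\phi_{i,m}^{[j]}-\phi_{i}^{[j]}|$ comes from Theorem 1.6.1 of \cite{CC7} and is in fact locally uniform in $\lambda$ with constants depending continuously on $\lambda$. This upgrades pointwise convergence of $\Delta_{m}$ to local uniform convergence on $\mathbb{C}$, which is exactly what Lemma \ref{ZEREO} requires. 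Everything else is routine.
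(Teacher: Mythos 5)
Your proposal is correct and follows essentially the same route as the paper: the paper's own proof combines the regular-case equality of multiplicities from \cite{XIN1} with Remark \ref{alge} (stability of algebraic multiplicity, itself derived from Lemma \ref{ZEREO} via Lemmas \ref{L2}, \ref{continuous} and \ref{L1}) and Lemma \ref{geo} (stability of geometric multiplicity under norm resolvent convergence). Your worry about locally uniform convergence of $\Delta_{m}$ is resolved in the paper exactly as you suggest, through the joint continuity supplied by Lemma \ref{continuous}.
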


\begin{proof}
From \cite{XIN1}, we know that the geometric and algebraic multiplicity of
each eigenvalue of self-adjoint Sturm-Liouville operators $L_{m}$ with
regular potentials are always equal. Thus this theorem is a direct
consequence of Remark $\ref{alge}$ and Lemma $\ref{geo}.$
\end{proof}

\begin{remark}
The eigenvalues for the self-adjoint Sturm-Liouville operators with
distributional potentials discussed in this paper can be ordered to form a
sequence so that the number of an eigenvalue appears in the sequence is
equal to its multiplicity without distinguishing the geometric and algebraic
multiplicity.
\end{remark}

For the proof of the following theorem, we introduce
\begin{equation*}
D_{m}(\lambda ):=k_{11}\phi _{2,m}^{[1]}(b,\lambda )-k_{21}\phi
_{2,m}(b,\lambda )+k_{22}\phi _{1,m}(b,\lambda )-k_{12}\phi
_{1,m}^{[1]}(b,\lambda ),
\end{equation*}%
where $\phi _{1,m}$ and $\phi _{2,m}$ are the pair of solutions of the
equation%
\begin{equation*}
-(p_{m}\left[ y^{\prime }+s_{m}y\right] )^{\prime }+s_{m}(p_{m}\left[
y^{\prime }+s_{m}y\right] )+q_{m}y=\lambda ry,\text{ }x\in (a,b),
\end{equation*}%
determined by the initial conditions%
\begin{equation*}
\phi _{1,m}(a)=\phi _{2,m}^{[1]}(a)=1,\text{ }\phi _{2,m}(a)=\phi
_{1,m}^{[1]}(a)=0.
\end{equation*}%
Together with the properties of the coefficients $p_{m}$ and $s_{m},$ we
have
\begin{equation*}
D_{m}(\lambda )=k_{11}(p_{m}\phi _{2,m}^{\prime })(b,\lambda )-k_{21}\phi
_{2,m}(b,\lambda )+k_{22}\phi _{1,m}(b,\lambda )-k_{12}(p_{m}\phi
_{1,m}^{\prime })(b,\lambda ),
\end{equation*}%
where $\phi _{1,m}$ and $\phi _{2,m}$ are the pair of solutions of the
equation%
\begin{equation*}
-(p_{m}y^{\prime })^{\prime }+\left( -(p_{m}s_{m})^{\prime
}+p_{m}s_{m}^{2}+q\right) y=\lambda ry,\text{ }x\in (a,b),
\end{equation*}%
determined by the initial conditions%
\begin{equation*}
\phi _{1,m}(a)=(p_{m}\phi _{2,m}^{\prime })(a)=1,\text{ }\phi
_{2,m}(a)=(p_{m}\phi _{1,m}^{\prime })(a)=0.
\end{equation*}%
The properties of $D_{m}(\lambda )$ introduced above have been investigated
in \cite{CC5}.

\begin{lemma}
\label{L0 copy(1)}The lowest eigenvalues of $L_{m}$ are uniformly
semi-bounded from below.
\end{lemma}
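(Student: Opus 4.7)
The plan is to apply Lemma~\ref{solutio} separately to each regular Sturm-Liouville operator $L_m$ and then use the $L^1$-convergence $1/p_m\to 1/p$, $s_m\to s$ (from Lemma~\ref{12 copy(1)}) together with the continuity estimates of Lemma~\ref{continuous} and Lemma~\ref{norm} to make the resulting asymptotics uniform in $m$.

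First I would invoke Lemma~\ref{solutio} on each $L_m$. Since each $L_m$ is a classical regular self-adjoint Sturm-Liouville operator, this is legitimate and yields a Neumann-type first eigenvalue $\lambda_0(m)$, a constant $k(m)>0$ and a continuous function $\alpha_m(t,\lambda)$ such that the fundamental solutions $\phi_{1,m},\phi_{2,m}$ and their quasi-derivatives admit the hyperbolic-cosine representations \eqref{tyjy1}--\eqref{tyjy 4} (with $p,s,k,\alpha$ replaced by $p_m,s_m,k(m),\alpha_m$) on $[a,b]\times(-\infty,\lambda_0(m)]$. I would then show that $\{\lambda_0(m)\}$ is bounded below uniformly in $m$: since $\lambda_0(m)$ is characterized by the Prüfer-angle condition of Lemma~\ref{L4} applied to $L_m$, and by Lemma~\ref{continuous} together with the $L^1$-convergence of the coefficients, $\phi_{1,m}(\cdot,\lambda),\phi_{1,m}^{[1]}(\cdot,\lambda)$ converge uniformly on $[a,b]$ to $\phi_1(\cdot,\lambda),\phi_1^{[1]}(\cdot,\lambda)$ for every fixed $\lambda$, forcing $\lambda_0(m)\to\lambda_0$. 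In particular, there exists $\lambda_*\in\mathbb{R}$ such that $\lambda_0(m)\ge\lambda_*$ for every $m$, so the asymptotic representations are simultaneously valid on $(-\infty,\lambda_*]$.

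Next, for the prescribed self-adjoint boundary conditions \eqref{25} I substitute the asymptotic forms into the characteristic function $\Delta_m(\lambda)=\det(A+B\,\Phi_m(b,\lambda))$ of Lemma~\ref{L1}. The dominant growth as $\lambda\to-\infty$ is driven by $\cosh(\alpha_m(b,\lambda))$ in $\phi_{1,m}(b,\lambda)$ and $\phi_{1,m}^{[1]}(b,\lambda)$, while the complementary solutions contribute bounded $\operatorname{sech}(\alpha_m(b,\lambda))$ terms via \eqref{tyjy 3}--\eqref{tyjy 4}. A direct computation — separating the three cases (separated, real-coupled, complex-coupled) — shows that the leading term of $\Delta_m(\lambda)$ is proportional to a nonvanishing algebraic combination of the entries of $A,B$ (resp.\ $K$) times a factor that is unbounded as $\lambda\to -\infty$, so $\Delta_m(\lambda)\neq 0$ for all $\lambda\le\Lambda'$ with some $\Lambda'\le\lambda_*$ independent of $m$ and all sufficiently large $m$. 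The finitely many remaining $L_m$ are individual regular self-adjoint Sturm-Liouville operators whose lowest eigenvalues are finite real numbers; taking the minimum of these finitely many values together with $\Lambda'$ gives the desired uniform lower bound on $\lambda_0(m)$.

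The main obstacle will be the uniformity: one needs to verify that the dominant $\cosh(\alpha_m(b,\lambda))$-term in $\Delta_m(\lambda)$ does not cancel in any of the admissible boundary condition configurations, and that $\alpha_m(b,\lambda)\to\infty$ as $\lambda\to -\infty$ uniformly in $m$. Controlling $\alpha_m$ and $k(m)$ uniformly in $m$ relies crucially on the $L^1$-continuity of the fundamental solutions (Lemmas~\ref{continuous} and \ref{norm}) combined with the concrete mollification structure of Lemma~\ref{12 copy(1)}, and this is where the delicate bookkeeping lies.
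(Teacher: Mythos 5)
Your plan is genuinely different from the paper's: the paper applies Lemma \ref{solutio} only to the \emph{limit} operator $L$ (and only in the one hard case $k_{11}\le 0$, $k_{12}<0$, to show $D(\lambda)\to\infty$ as $\lambda\to-\infty$), and obtains uniformity in $m$ not from asymptotics of $\Delta_m$ but from (A) a separate argument for separated conditions --- norm resolvent convergence produces an eigenvalue branch $\Lambda(m)\to\lambda_0(p,s)$, and Lemma \ref{eigenfunction} together with Lemma \ref{L3} and the oscillation count of Lemma \ref{L4} identifies $\Lambda(m)$ with the lowest eigenvalue $\lambda_0(p_m,s_m)$ for large $m$ --- followed by (B), (C): the known inequalities for regular problems ($\nu_0\le\lambda_0(K)<\lambda_0(\gamma,K)<\lambda_0(-K)\le\mu_0$, and the fact that $\lambda_0(K,p_m,s_m)$ is the only eigenvalue below $\mu_0(K,p_m,s_m)$) which sandwich the coupled lowest eigenvalues by separated ones whose convergence was just established. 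Your route instead asks for asymptotics of $\Delta_m(\lambda)$ as $\lambda\to-\infty$ that are uniform in $m$, and that is where it breaks down.

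Two concrete gaps. First, your uniform bound $\lambda_0(m)\ge\lambda_*$ for the first Neumann eigenvalues is not justified by what you cite: uniform convergence of $\phi_{1,m}(\cdot,\lambda)$, $\phi_{1,m}^{[1]}(\cdot,\lambda)$ for each fixed $\lambda$ (Lemma \ref{continuous}) gives, via Lemma \ref{ZEREO}/\ref{lianxuxing}, control of eigenvalues of $L_m$ in compact $\lambda$-windows around the limit spectrum, but it does not prevent the lowest eigenvalues $\lambda_0(m)$ from escaping to $-\infty$ --- which is exactly the statement of this lemma for that particular boundary condition, so the step is circular as argued; the paper needs the whole of its step (A) (eigenfunction convergence plus zero counting) precisely to rule this out. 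Second, even granting $\lambda_*$, Lemma \ref{solutio} is purely qualitative: $k(m)$ is a pointwise lower bound for the $m$-th Neumann ground state $\phi_{1,m}(\cdot,\lambda_0(m))$, and the divergence $\alpha_m(b,\lambda)\to\infty$, $\phi_{1,m}^{[1]}(b,\lambda)\to\infty$ is obtained in its proof by a non-quantitative contradiction argument. Nothing in Lemmas \ref{continuous}, \ref{norm} or \ref{12 copy(1)} converts this into $k(m)\ge k_*>0$, a uniform modulus of divergence for $\alpha_m(b,\lambda)$, or uniform smallness of $\int_a^b\mathrm{sech}^2(\alpha_m)/p_m$, and without these the conclusion ``$\Delta_m(\lambda)\neq0$ for all $\lambda\le\Lambda'$ with $\Lambda'$ independent of $m$'' does not follow; note also that controlling $k(m)$ uniformly again presupposes convergence of the Neumann ground states, i.e.\ the very step (A) argument you skipped. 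So the ``delicate bookkeeping'' you defer is not bookkeeping but the entire content of the lemma; as written the proposal does not close, whereas the paper's detour through the separated case and the regular-problem eigenvalue inequalities avoids ever needing uniform-in-$m$ asymptotics.
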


\begin{proof}
We divide our proof in four steps.

(A) First, we consider the case for the separated boundary condition. Denote
the first eigenvalue of $L=L(p,s)$ and $L_{m}=L(p_{m},s_{m})$ by $\lambda
_{0}(p,s)$ and $\lambda _{0}(p_{m},s_{m}),$ respectively. It suffices to
show that
\begin{equation*}
\lambda _{0}(p_{m},s_{m})\rightarrow \lambda _{0}(p,s),\text{ as }%
m\rightarrow \infty .
\end{equation*}
Since $L_{m}\overset{R}{\Longrightarrow }L,$ there exist eigenvalues $%
\Lambda (m)$ of $L_{m}$ such that $\Lambda (m)\rightarrow \lambda _{0}(p,s),$
as $m\rightarrow \infty ,$ and $\Lambda (m)$ is also simple when $m$ is
sufficiently large. Denote the normalized eigenfunction of $\Lambda (m)$ and
$\lambda _{0}(p,s)$ by $w_{m}=w(\cdot ,p_{m},s_{m})$ and $w=w(\cdot ,p,s),$
respectively.

According to Lemma \ref{eigenfunction}, for an arbitrary $\epsilon >0,$
there exists a number $M>0$ such that if $m>M,$%
\begin{equation}
\left\vert w(t,p_{m},s_{m})-w(t,p,s)\right\vert <\epsilon  \label{8.10}
\end{equation}%
and%
\begin{equation}
\left\vert w^{[1]}(t,p_{m},s_{m})\rightarrow w^{[1]}(t,p,s)\right\vert
<\epsilon  \label{8.11}
\end{equation}%
uniformly for all $t\in \lbrack a,b].$

Note that $w(t,p,s)$ does not have a zero in $(a,b)$ from Lemma \ref{L4}$.$
So we may assume that $w(t,p,s)>0$ on $(a,b).$

$($i$)$ If $w(a,p,s)=0$, from Lemma \ref{L3}, it is a fact that $%
w^{[1]}(a,p,s)>0.$ Hence there exists $\epsilon _{1}>0$ such that $%
w^{[1]}(t,p,s)>0$ on the interval $\left[ a,a+\epsilon _{1}\right] .$ From
the knowledge of $(\ref{8.11})$, when $m$ is sufficiently large, $%
w^{[1]}(t,p_{m},s_{m})>0$ on $\left[ a,a+\epsilon _{1}\right] .$ Since the
boundary condition is fixed, so $w(a,p_{m},s_{m})=0.$ Thus by Lemma \ref{L3}%
, $w(t,p_{m},s_{m})>0$ on the interval $\left( a,a+\epsilon _{1}\right) $
when $m$ is sufficiently large$.$

If $w(b,p,s)=0,$ through a similar process, we can also obtain that for
sufficiently large $m$, there must exist $\epsilon _{2}>0$ such that $%
w(t,p_{m},s_{m})>0$ on the interval $\left( b-\epsilon _{2},b\right) .$
Since $w(t,p,s)>0$ on $\left[ a+\epsilon _{1,}b-\epsilon _{2}\right] ,$
combining with $(\ref{8.10}),$ one deduces that $w(t,p_{m},s_{m})>0$ on $%
\left[ a+\epsilon _{1,}b-\epsilon _{2}\right] $ when $m$ is sufficiently
large$.$ So $w(t,p_{m},s_{m})>0$ on $(a,b)$ when $m$ is sufficiently large$.$

$($ii$)$ If $w(t,p,s)>0$ on $\left[ a,b\right] ,$ by $(\ref{8.10})$, $%
w(t,p_{m},s_{m})>0\ $on $\left[ a,b\right] $ when $m$ is sufficiently large$%
. $ Thus for sufficiently large $m,$ $\lambda _{0}(p_{m},s_{m})=\Lambda (m).$
Therefore, we complete the proof of this part..

In the following steps, we will denote the $n$-th eigenvalue for the
operators $L_{m}$ with one of the boundary conditions (\ref{z21}), (\ref{zee}%
), (\ref{zff}), (\ref{zgg}) by $\lambda _{n}(K,p_{m},s_{m})$, $\lambda
_{n}(\gamma ,K,p_{m},s_{m}),$ $\mu _{n}(K,p_{m},s_{m})$, $\nu
_{n}(K,p_{m},s_{m})$ respectively, $n\in
\mathbb{N}
_{0}$. And the similar denotations apply to the operator $L.$

(B) Next, assume that the self-adjoint boundary condition is the coupled one
(\ref{z21}) or (\ref{zee}), with\textit{\ }$k_{11}>0$\textit{, }$k_{12}\leq
0.$ The inequalities
\begin{eqnarray}
\nu _{0}(K,p_{m},s_{m}) &\leq &\lambda _{0}(K,p_{m},s_{m})<\lambda
_{0}(\gamma ,K,p_{m},s_{m})  \label{se in} \\
&<&\lambda _{0}(-K,p_{m},s_{m})\leq \mu _{0}(K,p_{m},s_{m})  \notag
\end{eqnarray}%
for Sturm-Liouville problems with regular potentials can be found in \cite%
{CC5}$.$ From the\ step (A), we know that $\{\nu
_{0}(K,p_{m},s_{m})\}_{m=1}^{\infty }$ is bounded from below, thus the
lowest eigenvalues of the operators $L_{m}$ with the boundary conditions
considered in this step are uniformly semi-bounded from below.

(C) Finally, we consider the case where the self-adjoint boundary condition
is the coupled one (\ref{z21}) or (\ref{zee}), with $k_{11}\leq 0$\textit{, }%
$k_{12}<0.$

(1) From the paper \cite{XIN1}, it is an obvious fact that
\begin{equation*}
D_{m}(\mu _{0}(K,p_{m},s_{m}))\leq -2.
\end{equation*}%
Then according to Lemma \ref{continuous} and the fact that $\mu
_{0}(K,p_{m},s_{m})\rightarrow \mu _{0}(K,p,s)$ as $m\rightarrow \infty ,$
it follows that
\begin{equation*}
D(\mu _{0}(K,p,s))\leq -2.
\end{equation*}

(2) From Lemma \ref{solutio}, we obtain that as $\lambda \rightarrow -\infty
,$ $\phi _{1}(b,\lambda )$ and $\phi _{1}^{[1]}(b,\lambda )$ approach
infinity. By the Bounded Convergence Theorem and the decreasing property of $%
\alpha $ in $\lambda ,$%
\begin{equation*}
\lim\limits_{\lambda \rightarrow -\infty }\int_{a}^{b}\frac{\mathit{\mathrm{%
sech}}^{2}\mathit{(}\alpha (t,\lambda )))}{p(s)}ds=0.
\end{equation*}%
Then it can be easily seen that among the functions $\phi
_{2}^{[1]}(b,\lambda ),\phi _{2}(b,\lambda ),\phi _{1}(b,\lambda )$ and $%
\phi _{1}^{[1]}(b,\lambda ),$ $\phi _{1}^{[1]}(b,\lambda )$ grows the
fastest and $\phi _{2}(b,\lambda )$ the slowest, as $\lambda \rightarrow
-\infty .$ Thus from the following equality
\begin{equation*}
D(\lambda )=k_{11}\phi _{2}^{[1]}(b,\lambda )-k_{21}\phi _{2}(b,\lambda
)+k_{22}\phi _{1}(b,\lambda )-k_{12}\phi _{1}^{[1]}(b,\lambda ),
\end{equation*}%
It is easy to see that if $k_{11}\leq 0$\textit{, }$k_{12}<0,$ $D(\lambda
)\rightarrow \infty $ as $\lambda \rightarrow -\infty .$

From the above results (1), (2) and Lemma \ref{L1}, there must exist an
eigenvalue $\lambda _{n_{0}}(K,p,s)$ of the operator $L=L(p,s)$ with the
boundary condition (\ref{21}) in which $k_{11}\leq 0$\textit{, }$k_{12}<0$
such that
\begin{equation*}
\lambda _{n_{0}}(K,p,s)<\mu _{0}(K,p,s).
\end{equation*}%
For Sturm-Liouville operators with regular potentials, as was proved in \cite%
{CC5}, the eigenvalue $\lambda _{0}(K,p_{m},s_{m})$ is the only eigenvalue
that satisfies the inequality
\begin{equation*}
\lambda _{0}(K,p_{m},s_{m})<\lambda _{0}(\gamma ,K,p_{m},s_{m})<\lambda
_{0}(-K,p_{m},s_{m})\leq \mu _{0}(K,p_{m},s_{m}),\text{ }m\in
\mathbb{N}
.
\end{equation*}%
Together with the fact $L_{m}\overset{R}{\Longrightarrow }L$ and$\ \mu
_{0}(K,p_{m},s_{m})\rightarrow \mu _{0}(K,p,s),$ as $m\rightarrow \infty ,$
we have
\begin{equation*}
\lambda _{0}(K,p_{m},s_{m})\rightarrow \lambda _{n_{0}}(K,p,s),\text{ as }%
m\rightarrow \infty .
\end{equation*}%
Thus the lowest eigenvalues of the operators $L_{m}$ with the boundary
condition considered in this step are uniformly semi-bounded from below.

(D) If neither Part (A) nor Part (B) applies to $K,$ then either Part (A) or
Part (B) applies to $-K.$
\end{proof}

\begin{lemma}
\label{L13 copy(1)}The eigenvalues of the self-adjoint differential
operators associated with the differential expressions $(\ref{a})$ can be
ordered to form a non-decreasing sequence%
\begin{equation*}
\lambda _{0},\lambda _{1},\lambda _{2},\lambda _{3},\ldots
\end{equation*}%
approaching $+\infty $. $($Note that the eigenvalues are ordered with
multiplicities without distinguishing the algebraic and geometric
multiplicities.$)$
\end{lemma}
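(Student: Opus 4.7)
The plan is to derive the lemma as a direct consequence of the approximation scheme built up in Section 2, namely combining Lemma \ref{L13} (norm resolvent convergence $L_m \overset{R}{\Longrightarrow} L$), Lemma \ref{L0 copy(1)} (uniform semi-boundedness from below of the spectra of $L_m$), and Lemma \ref{L-1} (the abstract result on convergence of $n$-th eigenvalues under these two hypotheses).

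First I would recall that each $L_m$ is a classical self-adjoint Sturm-Liouville operator with a regular potential (as observed in the Remark following Lemma \ref{12 copy(1)}), so its spectrum is discrete and consists of eigenvalues that can be ordered into a non-decreasing sequence $\lambda_0(m) \le \lambda_1(m) \le \lambda_2(m) \le \cdots$ approaching $+\infty$, with multiplicities counted. Next I would invoke Lemma \ref{L13} to get $L_m \overset{R}{\Longrightarrow} L$ and Lemma \ref{L0 copy(1)} to get a uniform lower bound for $\lambda_0(m)$. These are precisely the hypotheses of Lemma \ref{L-1}, so applying part (1) of that lemma yields that the spectrum of $L$ is discrete and semi-bounded from below, and applying part (2) gives that for each $n \in \mathbb{N}_0$ the eigenvalues $\lambda_n(m)$ converge to an eigenvalue $\lambda_n$ of $L$, with the sequence $\{\lambda_n\}_{n \in \mathbb{N}_0}$ enumerated with multiplicity.

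The ordering $\lambda_0 \le \lambda_1 \le \lambda_2 \le \cdots$ for $L$ is then inherited from the corresponding ordering at each finite level $m$ by passing to the limit. To see that $\lambda_n \to +\infty$ as $n \to \infty$, I would argue by contradiction: if the sequence were bounded above by some constant $M$, then together with the already established lower bound, infinitely many eigenvalues of $L$ would lie in a compact subset of $\mathbb{R}$, contradicting the discreteness established in Lemma \ref{L0}.

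Most of the work has already been done in the preceding lemmas, so this step is essentially a bookkeeping assembly; the only delicate point is the convergence of the $n$-th eigenvalue with correct multiplicities, but this is precisely the content of Lemma \ref{L-1}(2), combined with the equality of algebraic and geometric multiplicities guaranteed by the preceding lemma. Therefore I do not anticipate any genuine obstacle in writing out this proof; the main job is to cite the correct combination of previously established results.
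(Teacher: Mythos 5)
Your proposal is correct, and for the lower bound it coincides with the paper's argument (Lemma \ref{L13} $+$ Lemma \ref{L0 copy(1)} $+$ Lemma \ref{L-1}); but for the key remaining step --- that the sequence tends to $+\infty$ --- you take a genuinely different route. The paper first settles the separated case by the Pr\"{u}fer analysis of Lemma \ref{L4}, and then, for coupled conditions, invokes the interlacing inequalities $\mu _{n}(K,p_{m},s_{m})\leq \lambda _{n+1}(\pm K,p_{m},s_{m})\leq \mu _{n+1}(K,p_{m},s_{m})$ for the regular approximants, passes to the limit using $\mu _{n}(K,p_{m},s_{m})\rightarrow \mu _{n}(K,p,s)$, and concludes that the coupled eigenvalues are squeezed between consecutive separated eigenvalues, which already diverge. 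You instead argue abstractly: discreteness of $\sigma (L)$ (compact resolvent, Lemma \ref{L0}), the lower bound, and finiteness of multiplicities force any bounded infinite family of eigenvalues to accumulate at a finite point, a contradiction. This is a valid and in fact shorter argument, with the one small point you leave implicit being that $L$ has infinitely many eigenvalues at all; this follows from the compactness and injectivity of the resolvent on the infinite-dimensional space $L_{r}^{2}(J)$ (or, equivalently, from the fact that Lemma \ref{L-1}(2) furnishes an $n$-th eigenvalue of $L$ for every $n$), and you should state it explicitly. What the paper's longer route buys in exchange is quantitative information --- the location of the coupled eigenvalues relative to the separated ones --- which it reuses in Section 4; your route proves the present lemma but yields no such by-product.
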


\begin{proof}
For the separated boundary conditions, the claim has been proved in Lemma $%
\ref{L4}.$

From Lemma \ref{L-1} and Lemma \ref{L0 copy(1)}, it is a direct result that
the eigenvalues of the self-adjoint differential operator associated with
the differential expression (\ref{a}) are bounded from below. So it remains
to show that the sequence of the eigenvalues approaches $+\infty .$

As is well known in \cite{CC5}, for the Sturm-Liouville problems with
regular potentials, if\textit{\ }$k_{11}>0$ and\textit{\ }$k_{12}\leq 0,$ $($%
or $k_{11}\leq 0$, $k_{12}<0),$ for \textit{\ }$-\pi <\gamma <0$ or $%
0<\gamma <\pi ,$ we have
\begin{eqnarray*}
\mu _{n}(K,p_{m},s_{m}) &\leq &\lambda _{n+1}(-K,p_{m},s_{m})<\lambda
_{n+1}(\gamma ,K,p_{m},s_{m}) \\
&<&\lambda _{n+1}(K,p_{m},s_{m})\leq \mu _{n+1}(K,p_{m},s_{m}),\text{ \ }%
n\in
\mathbb{N}
_{0}.
\end{eqnarray*}%
For $n\in
\mathbb{N}
_{0},$ from Lemma \ref{L-1}, Lemma \ref{L4}, and Lemma \ref{L0 copy(1)}, one
deduces that
\begin{equation*}
\mu _{n}(K,p_{m},s_{m})\rightarrow \mu _{n}(K,p,s),\text{ as }m\rightarrow
\infty .
\end{equation*}
Hence there exists a sub-sequence $\left\{ \lambda
_{n+1}(K,p_{m_{j}},s_{m_{j}})\right\} _{j=1}^{\infty }$ such that as $%
j\rightarrow \infty ,$
\begin{equation*}
\lambda _{n+1}(K,p_{m_{j}},s_{m_{j}})\rightarrow c\text{ }
\end{equation*}%
and%
\begin{equation*}
\mu _{n}(K,p,s)\leq c\leq \mu _{n+1}(K,p,s),
\end{equation*}
where $c$ is a constant$.$ From the fact that $L_{m}\overset{R}{%
\Longrightarrow }L,$ $c$ must be an eigenvalue of the operator $L$ with the
boundary condition (\ref{21}).

So the eigenvalues of the operator $L$ with the boundary condition (\ref{21}%
) form a non-decreasing sequence approaching $+\infty $. The claim for the
operators $L$ with other boundary conditions follows from a similar proof.
\end{proof}

\begin{remark}
Note that from \cite{wede}, it is a fact that a self-adjoint operator is
bounded from below if and only if its spectrum is bounded from below. From
Lemma \ref{L13 copy(1)}, we obtain the self-adjoint operators with
distributional potentials discussed in this paper are bounded from below
which has been proved in \cite{xx14} by J. Eckhardt, F. Gesztesy, R. Nichols
and G. Teschl using a different approach.
\end{remark}

Based on the fact that the geometric and algebraic multiplicity of each
eigenvalue of the Sturm-Liouville problem consisting of $(\ref{fangcheng})$
and $(\ref{25})$ are equal, we will give a lemma on the continuity of the
eigenvalues which will be used to prove the continuous dependence of the $n$%
-th eigenvalue on the coefficients of the differential equation in Section 7.

\begin{lemma}
\label{princeple}Let $O$ be a subset of $\Omega .$ If $\lambda _{0}$ is
uniformly bounded from below on $O,$ $\omega _{0}\in \Omega ,$ then the
restrictions of the $n-$th eigenvalue to $O$ is continuous at $\omega _{0},$
i.e. $\lim\limits_{\Omega \ni \omega \rightarrow \omega _{0}}\lambda
_{n}(\omega )=\lambda _{n}(\omega _{0}),$ $n\in
\mathbb{N}
_{0}.$

\begin{proof}
The proof is similar to the classical Sturm-Liouville problems with regular
potentials, see \cite[Theorem 1.40]{CC5}.
\end{proof}
\end{lemma}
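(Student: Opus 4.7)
The plan is to adapt the zero-counting argument of \cite[Theorem 1.40]{CC5} to the distributional setting, with the uniform lower bound on $O$ replacing the role played by a fixed lower spectral bound in the regular case. The characteristic function $\Delta(\lambda,\omega)=\det(A+B\Phi(b,\lambda))$ of Lemma \ref{L1} is entire in $\lambda$ by Lemma \ref{L2} and continuous in $\omega\in\Omega$ by Lemma \ref{continuous}, so Lemma \ref{ZEREO} applies to it. Because the algebraic and geometric multiplicities of each eigenvalue coincide, the multiplicity-counted enumeration $\lambda_0\le\lambda_1\le\cdots$ agrees with the enumeration of zeros of $\Delta(\cdot,\omega)$ by order.

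Fix $n\in\mathbb{N}_0$ and let $C\in\mathbb{R}$ satisfy $\lambda_0(\omega)\ge C$ for every $\omega\in O$. First I would list the distinct eigenvalues $\mu_0<\mu_1<\cdots<\mu_r=\lambda_n(\omega_0)$ of $\omega_0$ not exceeding $\lambda_n(\omega_0)$ with algebraic multiplicities $k_0,\ldots,k_r$, so that $\sum_{j<r}k_j<n+1\le\sum_{j\le r}k_j$. Then I would fix $\epsilon>0$ small enough that the open complex discs of radius $\epsilon$ around the $\mu_j$ are pairwise disjoint, all have real part exceeding $C-1$, lie below $\mu_{r+1}$ (with $\mu_{r+1}:=+\infty$ if absent), and carry no $\omega_0$-eigenvalues on their boundaries.

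Next I would invoke Lemma \ref{ZEREO} in two ways. Applied to each disc $\{|\lambda-\mu_j|<\epsilon\}$ it yields a neighborhood $U_1$ of $\omega_0$ in $\Omega$ such that, for every $\omega\in U_1$, the disc around $\mu_j$ contains exactly $k_j$ zeros of $\Delta(\cdot,\omega)$ counted with multiplicity and none on its boundary. Covering the compact set $K=[C-1,\mu_r+\epsilon]\setminus\bigcup_j\{|\lambda-\mu_j|<\epsilon\}$ by finitely many small open discs free of $\omega_0$-eigenvalues and applying Lemma \ref{ZEREO} to each, I would obtain a neighborhood $U_2$ of $\omega_0$ in which $\Delta(\cdot,\omega)$ has no zero in $K$. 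For $\omega\in U_1\cap U_2\cap O$, the uniform bound $\lambda_0(\omega)\ge C$ forces every eigenvalue of $\omega$ into $[C,\infty)$, so the $\sum_j k_j$ lowest eigenvalues of $\omega$ are precisely those clustered in the discs around $\mu_0,\ldots,\mu_r$. Since $n+1\in(\sum_{j<r}k_j,\sum_{j\le r}k_j]$, this places $\lambda_n(\omega)$ inside the disc around $\mu_r$, yielding $|\lambda_n(\omega)-\lambda_n(\omega_0)|<\epsilon$.

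The main obstacle will be the second application of Lemma \ref{ZEREO}: ruling out stray eigenvalues of $\omega$ in the compact region $K$ that lies below $\lambda_n(\omega_0)+\epsilon$ but is bounded away from every eigenvalue of $\omega_0$. The uniform lower bound on $O$ is indispensable here, for otherwise eigenvalues could drift in from $-\infty$ as $\omega$ varies and shift the global indexing; with it, the bottom of the spectrum is anchored and the local zero-counting near $\lambda_n(\omega_0)$ pins down $\lambda_n(\omega)$ itself rather than some nearby but unrelated eigenvalue of $\omega$.
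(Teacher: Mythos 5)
Your proposal is correct and follows essentially the same route the paper intends: the paper's proof simply defers to the zero-counting argument of Kong--Wu--Zettl (their Theorem 1.40), and your write-up is a faithful adaptation of that argument to the distributional setting, using Lemma \ref{ZEREO} applied to the characteristic function of Lemma \ref{L1}, the equality of algebraic and geometric multiplicities, and the uniform lower bound on $O$ to anchor the indexing. No gaps.
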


\begin{lemma}
\label{norm resovent}Denote the n-th eigenvalue of $L_{m}$ and $L$ by $%
\lambda _{n}(m)$ and $\lambda _{n}(0),$ respectively, $n\in
\mathbb{N}
_{0}$, then the sequence of the n-th eigenvalues $\lambda _{n}(m)$ of the
operators $L_{m}$ converges to the n-th eigenvalue $\lambda _{n}(0)$ of the
operator $L$, i.e. $\lambda _{n}(m)\rightarrow \lambda _{n}(0),$ as $%
m\rightarrow \infty .$ $($Note that the eigenvalues are ordered with
multiplicities without distinguishing the algebraic and geometric
multiplicities.$)$
\end{lemma}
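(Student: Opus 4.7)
The plan is to realize this lemma as a direct application of Lemma \ref{L-1} to the sequence $T_m = L_m$ and the limit operator $T = L$. To invoke Lemma \ref{L-1}, I need to verify three hypotheses: self-adjointness of $L_m$ and $L$, norm resolvent convergence $L_m \overset{R}{\Longrightarrow} L$, and uniform semi-boundedness from below of the spectra of $L_m$ together with discreteness.

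First I would recall that each $L_m = L(p_m, s_m)$ is, by the remarks following Lemma \ref{12 copy(1)}, a classical self-adjoint Sturm--Liouville operator with a regular potential and positive leading coefficient, hence has purely discrete spectrum bounded from below; the operator $L$ is self-adjoint by construction in (\ref{25}) together with (\ref{juzhen}). Next, Lemma \ref{L13} delivers the norm resolvent convergence $L_m \overset{R}{\Longrightarrow} L$ that Lemma \ref{L-1} requires, and Lemma \ref{L0 copy(1)} supplies precisely the uniform lower bound on the lowest eigenvalue $\lambda_0(m)$ across $m \in \mathbb{N}$. Since each $L_m$ has discrete spectrum, the entire spectrum of each $L_m$ is therefore uniformly semi-bounded from below in the sense demanded by Lemma \ref{L-1}.

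With these hypotheses verified, part (1) of Lemma \ref{L-1} reproves that $L$ has discrete spectrum semi-bounded from below (consistent with Lemma \ref{L0} and Lemma \ref{L13 copy(1)}), and part (2) yields exactly the desired conclusion $\lambda_n(m) \to \lambda_n(0)$ as $m \to \infty$ for every $n \in \mathbb{N}_0$. Because the preceding lemma shows that geometric and algebraic multiplicities coincide for the self-adjoint Sturm--Liouville problems considered, the ordering with multiplicity used in Lemma \ref{L-1} (which orders with geometric multiplicity) is the same as the ordering used in the present statement, so no reindexing is needed.

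I do not anticipate any real obstacle here: all the heavy lifting (the uniform semi-boundedness in Lemma \ref{L0 copy(1)}, which requires the four-case analysis of separated and coupled boundary conditions together with the asymptotics of Lemma \ref{solutio}, and the norm resolvent convergence in Lemma \ref{norm}/Lemma \ref{L13}) has already been performed. The only mild point worth spelling out explicitly in the write-up is that Lemma \ref{L-1} was proved in the abstract setting of self-adjoint operators on a Hilbert space, so one should note that $L_m$ and $L$ act in the same weighted space $L^2_r(J,\mathbb{R})$ (the weight $r$ is unchanged under the approximation in Lemma \ref{12 copy(1)}), so the hypotheses of Lemma \ref{L-1} apply verbatim.
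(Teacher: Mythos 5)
Your proposal is correct and follows exactly the paper's own route: the paper proves this lemma as a direct consequence of Lemma \ref{L-1} combined with Lemma \ref{L0 copy(1)} (with the norm resolvent convergence supplied by Lemma \ref{L13}), precisely as you do. Your additional remarks on the coincidence of geometric and algebraic multiplicities and on the common weighted space $L^2_r$ are consistent with the paper's surrounding remarks and add no divergence from its argument.
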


\begin{proof}
This claim is a direct consequence of Lemma \ref{L-1} and Lemma \ref{L0
copy(1)}.
\end{proof}

\begin{remark}
Note that Lemma \ref{L-1} describes the continuity of the $n-$th eigenvalue
with respect to the coefficients $1/p,$ $q,$ $s$ in the Banach space $L(J,%
\mathbb{R}
)\ $when the eigenvalues are ordered with geometric multiplicity. And Lemma %
\ref{princeple} describes the continuity of the $n-$th eigenvalue with
respect to the coefficients in the equation $(\ref{fangcheng})$ when the
eigenvalues are ordered with algebraic multiplicity. However, based on Lemma %
\ref{L0 copy(1)} and the fact that the geometric and algebraic multiplicity
of each eigenvalue of the Sturm-Liouville problem consisting of $(\ref%
{fangcheng})$ and $(\ref{25})$ are always equal, either Lemma \ref{L-1} or
Lemma \ref{L0 copy(1)} can be used to obtain Lemma \ref{norm resovent}.
\end{remark}

\section{Continuity region of the $n$-th eigenvalue on separated boundary
conditions}

Since for the operators $L_{m}\ $and $L,$ the coefficients $q$ and $r$ in
the expression (\ref{a}) are fixed, for simplicity, we introduce a simpler
space. Let $\dot{\Omega}=\{\omega =(1/p,q,r,s);$ $(\ref{tx})$ holds
and $q,r$ fixed$\}.$ For the topology of $\dot{\Omega}$ we use a metric $d$
defined as follows:

For $\omega =(1/p,q,r,s)\in \dot{\Omega},$ $\omega _{0}=(\frac{1}{p_{0}}%
,q,r,s_{0})\in \dot{\Omega},$ define
\begin{equation*}
d(\omega ,\omega _{0})=\int_{a}^{b}\left( \left\vert \frac{1}{p}-\frac{1}{%
p_{0}}\right\vert +\left\vert s-s_{0}\right\vert \right) .
\end{equation*}

\begin{theorem}
\label{L6}Let $(\alpha ,\beta )\in \lbrack 0,\pi )\times $ $(0,\pi ],$ and $%
\lambda _{n}=\lambda _{n}(S_{\alpha ,\beta })$ be the n-th eigenvalue of the
Sturm-Liouville problems consisting of $(\ref{fangcheng})\ $and $(\ref{11}).$
Let $w_{n}($\textperiodcentered $,\alpha ,\beta )$ be the normalized real
valued eigenfunction associated with $\lambda _{n}(S_{\alpha ,\beta }).$
Then as a function of $(\alpha ,\beta ),$ $\lambda _{n}$ is continuous on $%
[0,\pi )\times $ $(0,\pi ].$ Moreover,

$(a)$ for a fixed $\beta \in (0,\pi ],$ $\lambda _{n}$ is continuously
differentiable in $\alpha $ on $[0,\pi ),$%
\begin{equation}
\lambda _{n}^{\prime }(\alpha )=-(w_{n}^{[1]}(a,\alpha
))^{2}-(w_{n}(a,\alpha ))^{2};  \label{w1}
\end{equation}

$(b)$ for a fixed $\alpha \in \lbrack 0,\pi ),$ $\lambda _{n}$ is
continuously differentiable in $\beta \ $on $(0,\pi ],\ $%
\begin{equation}
\lambda _{n}^{\prime }(\beta )=(w_{n}^{[1]}(b,\beta ))^{2}+(w_{n}(b,\beta
))^{2}.  \label{w2}
\end{equation}
\end{theorem}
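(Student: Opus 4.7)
The plan is to establish three things: continuity of $\lambda_n$ in $(\alpha,\beta)$, differentiability in $\alpha$ for fixed $\beta$, and differentiability in $\beta$ for fixed $\alpha$. All three rest on a Lagrange-type identity together with results already proved in the paper.

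For continuity, I would use the characteristic function $\Delta(\lambda) = \det(A+B\Phi(b,\lambda))$ of Lemma~\ref{L1}. For separated conditions, choosing $A$ and $B$ as the obvious matrices built from $(\cos\alpha,-\sin\alpha)$ and $(\cos\beta,-\sin\beta)$, the function $(\lambda,\alpha,\beta)\mapsto \Delta(\lambda,\alpha,\beta)$ is jointly continuous in its three arguments, analytic in $\lambda$ (by Lemma~\ref{L2}), and by Lemma~\ref{L4} each eigenvalue $\lambda_n(\alpha,\beta)$ is a simple zero. An application of Lemma~\ref{ZEREO} (the Hurwitz-type stability of zeros) inside a small disk around $\lambda_n(\alpha_0,\beta_0)$ gives a nearby simple zero of $\Delta(\cdot,\alpha,\beta)$ for $(\alpha,\beta)$ close to $(\alpha_0,\beta_0)$; simplicity and the monotone ordering from the Pr\"ufer analysis in Lemma~\ref{L4} identify this zero as $\lambda_n(\alpha,\beta)$, yielding continuity.

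For the derivative in $\alpha$ with $\beta$ fixed, I would fix $\alpha_0\in[0,\pi)$ and compare with a nearby $\alpha$. Write $u=w_n(\cdot,\alpha_0)$ and $v=w_n(\cdot,\alpha)$ for the normalized eigenfunctions with eigenvalues $\lambda_n(\alpha_0)$ and $\lambda_n(\alpha)$. The Lagrange identity for the expression $L$ (same computation as used to derive (\ref{126}) but without the boundary vanishing assumption) gives
\begin{equation*}
(\lambda_n(\alpha)-\lambda_n(\alpha_0))\int_a^b uv\,r\,dt = \bigl[u^{[1]}v - u v^{[1]}\bigr]_a^b.
\end{equation*}
At $b$ both eigenfunctions satisfy $\cos\beta\,y(b)-\sin\beta\,y^{[1]}(b)=0$, so the $b$-contribution vanishes. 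At $a$ one has $u^{[1]}(a)v(a)-u(a)v^{[1]}(a) = u(a)v(a)(\cot\alpha_0-\cot\alpha) = u(a)v(a)\sin(\alpha-\alpha_0)/(\sin\alpha\sin\alpha_0)$ when $\sin\alpha_0,\sin\alpha \neq 0$. Dividing by $\alpha-\alpha_0$ and letting $\alpha\to\alpha_0$, using the normalization $\int_a^b uv\,r\to 1$ and the uniform convergence $v\to u$, $v^{[1]}\to u^{[1]}$ on $[a,b]$ from Lemma~\ref{eigenfunction}, yields $\lambda_n'(\alpha_0) = -u(a)^2/\sin^2\alpha_0$; since $u(a)=c\sin\alpha_0$ and $u^{[1]}(a)=c\cos\alpha_0$ for some $c$, this equals $-(u(a)^2 + u^{[1]}(a)^2)$, which is (\ref{w1}). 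Continuity of $\lambda_n'(\alpha)$ then follows from continuity of $w_n(a,\cdot)$ and $w_n^{[1]}(a,\cdot)$ via Lemma~\ref{eigenfunction}.

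For the derivative in $\beta$, the argument is symmetric: at $a$ the boundary terms now cancel since both eigenfunctions satisfy the same $\alpha$-condition, and the $b$-boundary computation produces $+u(b)v(b)\sin(\beta-\beta_0)/(\sin\beta\sin\beta_0)$, giving (\ref{w2}). The main obstacle will be the boundary cases $\alpha_0=0$ and $\beta_0=\pi$, where $\sin\alpha_0=0$ or $\sin\beta_0=0$ makes the cotangent identity above break down. These I would treat directly: at $\alpha_0=0$ use $u(a)=0$, $u^{[1]}(a)=c$ and $v(a)=c'\sin\alpha$, $v^{[1]}(a)=c'\cos\alpha$, so the boundary term at $a$ equals $-c c'\sin\alpha$; dividing by $\alpha$ and passing to the limit recovers $\lambda_n'(0)=-(w_n^{[1]}(a,0))^2$, consistent with (\ref{w1}) since $w_n(a,0)=0$. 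The case $\beta_0=\pi$ is handled identically.
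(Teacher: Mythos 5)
Your proposal is correct, and the derivative computation in parts (a) and (b) is essentially the paper's own argument: the same Lagrange identity, the vanishing of the boundary term at the endpoint whose condition is held fixed, and Lemma \ref{eigenfunction} to pass to the limit. The only cosmetic difference is that you parametrize the boundary condition by $\cot\alpha$ where the paper uses $\tan\alpha$, so your exceptional values are $\alpha_0=0$ and $\beta_0=\pi$ rather than the paper's $\alpha=\pi/2$; you treat these endpoint cases explicitly (correctly recovering $\lambda_n'(0)=-(w_n^{[1]}(a,0))^2$), whereas the paper merely asserts the exceptional case is "similar." Where you genuinely diverge is the continuity statement. The paper proves joint continuity of $\lambda_n(S_{\alpha,\beta})$ by approximating $L$ with the regular-potential operators $L_m$, invoking norm resolvent convergence (Lemma \ref{norm resovent}) together with the known continuity for regular problems from Kong--Wu--Zettl, and then running a three-epsilon argument through a locally unique continuous eigenvalue branch. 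You instead argue directly from the characteristic function and Lemma \ref{ZEREO}, identifying the perturbed zero as the $n$-th eigenvalue via the Pr\"ufer analysis of Lemma \ref{L4}. This is legitimate and more elementary for separated conditions --- the cleanest version is that $\lambda_n(\alpha,\beta)$ is the unique solution of $\theta(b,\lambda;\alpha)=\beta+n\pi$ with $\theta(b,\cdot\,;\alpha)$ strictly increasing and jointly continuous (Lemma \ref{continuous}), which pins down the index without any counting argument; I would spell out that identification step rather than leave it at "simplicity and the monotone ordering," and note that the simplicity you need of the zero of $\Delta$ is algebraic, which for separated conditions again follows from strict monotonicity of the Pr\"ufer angle. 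What the paper's heavier route buys is uniformity: the same $L_m$-approximation scheme is reused verbatim for coupled and complex boundary conditions in Sections 5--7, where the Pr\"ufer characterization is unavailable.
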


\begin{corollary}
\label{L11}Let $(\alpha ,\beta )\in \lbrack 0,\pi )\times $ $(0,\pi ],$ and $%
\lambda _{n}=\lambda _{n}(S_{\alpha ,\beta })$ be the $n-$th eigenvalue of
the Sturm-Liouville problems consisting of $(\ref{fangcheng})\ $and $(\ref%
{11}).$

$(a)$ For a fixed $\beta \in (0,\pi ],$ $\lambda _{n}$ is strictly
decreasing in $\alpha $ on $[0,\pi ),$%
\begin{equation}
\lim_{\alpha \rightarrow \pi ^{-}}\lambda _{0}(S_{\alpha ,\beta })=-\infty
,\lim_{\alpha \rightarrow \pi ^{-}}\lambda _{n}(S_{\alpha ,\beta })=\lambda
_{n-1}(S_{0,\beta }),\text{ for }n\in
\mathbb{N}
.  \label{DD}
\end{equation}

$(b)$ For a fixed $\alpha \in \lbrack 0,\pi ),$ $\lambda _{n}$ is strictly
increasing in $\beta \ $on $(0,\pi ],$
\begin{equation}
\lim_{\beta \rightarrow 0^{+}}\lambda _{0}(S_{\alpha ,\beta })=-\infty
,\lim_{\beta \rightarrow 0^{+}}\lambda _{n}(S_{\alpha ,\beta })=\lambda
_{n-1}(S_{\alpha ,\pi }),\text{ for }n\in
\mathbb{N}
.  \label{CC}
\end{equation}
\end{corollary}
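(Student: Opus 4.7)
The plan is to prove both parts of the corollary simultaneously via the derivative formulas in Theorem \ref{L6} (for strict monotonicity) together with the Pr\"ufer-angle characterization of eigenvalues from Lemma \ref{L4} (for the limits). I will describe the argument for part (a); part (b) is entirely symmetric, replacing $(\ref{w1})$ by $(\ref{w2})$ and using the condition at $b$.

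For the strict monotonicity I would simply observe that Theorem \ref{L6}(a) gives
\[
\lambda_n'(\alpha) \;=\; -\bigl(w_n^{[1]}(a,\alpha)\bigr)^2 - \bigl(w_n(a,\alpha)\bigr)^2 \;\le\; 0 ,
\]
and equality would force $w_n(a,\alpha) = w_n^{[1]}(a,\alpha) = 0$, which by the uniqueness of solutions of the initial value problem in Lemma \ref{L2} forces $w_n \equiv 0$, contradicting normalization. Thus $\lambda_n'(\alpha) < 0$ on $[0,\pi)$.

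For the limits in $(\ref{DD})$, I would exploit that the Pr\"ufer equation $(\ref{s1})$ is $\pi$-periodic in $\theta$, since $\cos^2\theta$, $\sin^2\theta$, and $\sin 2\theta$ all have period $\pi$. Writing $\theta(x,\lambda;\alpha)$ for the Pr\"ufer angle with initial condition $\theta(a,\lambda;\alpha)=\alpha$, this periodicity yields the translation identity
\[
\theta(b,\lambda;\alpha+\pi) \;=\; \theta(b,\lambda;\alpha)+\pi ,
\]
so by continuous dependence on initial data, $\theta(b,\lambda;\alpha) \to \theta(b,\lambda;0)+\pi$ as $\alpha \to \pi^-$, locally uniformly in $\lambda$. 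By Lemma \ref{L4}, $\lambda_n(S_{\alpha,\beta})$ is the unique $\lambda$ satisfying $\theta(b,\lambda;\alpha)=\beta+n\pi$. For $n\ge 1$, the monotone convergent limit $L = \lim_{\alpha\to\pi^-}\lambda_n(S_{\alpha,\beta})$ is finite (bounded below by $\lambda_n(S_{0,\beta})$'s successor in the ordering below), and passing to the limit in the defining identity and using the shift above gives $\theta(b,L;0)+\pi = \beta+n\pi$, i.e.\ $\theta(b,L;0)=\beta+(n-1)\pi$, which by Lemma \ref{L4} identifies $L=\lambda_{n-1}(S_{0,\beta})$. For $n=0$ the same limit procedure would demand $\theta(b,L;0)=\beta-\pi<0$, impossible since property (2) of Lemma \ref{L4} (applied with $\alpha=0$, $k=0$) forces $\theta(b,\lambda;0)>0$ for all $\lambda$; hence no finite limit exists and the strictly decreasing branch $\lambda_0(S_{\alpha,\beta})$ must escape to $-\infty$, giving $(\ref{DD})$.

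The main technical obstacle I expect is justifying the joint continuity of $\theta(b,\lambda;\alpha)$ in $(\lambda,\alpha)$ needed to pass to the limit inside the defining equation $\theta(b,\lambda_n(\alpha);\alpha)=\beta+n\pi$. This follows from continuous dependence of ODE solutions on initial data (Lemma \ref{continuous}) applied to the Pr\"ufer system, combined with the $\pi$-shift identity above. Once this is in hand, part (b) is carried out by the symmetric argument: strict monotonicity uses $(\ref{w2})$ and the same uniqueness dichotomy at $x=b$, while the limits as $\beta\to 0^+$ use the Pr\"ufer characterization at the right endpoint together with the identity $\theta(b,\lambda;\alpha) = \beta+n\pi$, letting $\beta\to 0^+$ to shift the eigenvalue index downward by one.
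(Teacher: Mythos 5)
Your proof is correct in substance but reaches the limit statements by a genuinely different route than the paper. For the strict monotonicity both arguments are the same (the derivative formulas $(\ref{w1})$--$(\ref{w2})$, with your observation that $w_n(a,\alpha)=w_n^{[1]}(a,\alpha)=0$ would force $w_n\equiv 0$ supplying the strictness). For the limits $(\ref{DD})$--$(\ref{CC})$, however, the paper stays within its global strategy: it approximates $L$ by regular operators $L_m$, imports from the regular theory the identity $\inf_{\alpha}\lambda_n(\omega_m,(\alpha,\beta))=\lambda_{n-1}(\omega_m,(0,\beta))$, passes to $m\to\infty$ via Lemma \ref{norm resovent} to get $\lambda_n(\omega_0,(\alpha,\beta))\geq\lambda_{n-1}(\omega_0,(0,\beta))$, and then identifies the monotone limit as an eigenvalue of $S_{0,\beta}$ squeezed between $\lambda_{n-1}$ and $\lambda_n$. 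You instead work directly with the Pr\"ufer angle of the distributional problem, using the $\pi$-periodicity of $(\ref{s1})$ to get $\theta(b,\lambda;\alpha+\pi)=\theta(b,\lambda;\alpha)+\pi$ and the characterization $\theta(b,\lambda_n;\alpha)=\beta+n\pi$ from Lemma \ref{L4}; this is more self-contained and avoids the approximation machinery entirely, and it also handles $n=0$ cleanly via $\theta(b,\lambda;0)>0$. The one step you should make explicit is the finiteness of $\lim_{\alpha\to\pi^-}\lambda_n(S_{\alpha,\beta})$ for $n\geq 1$: your parenthetical about a lower bound is not yet an argument. It does follow inside your own framework, since solutions of the scalar Pr\"ufer ODE cannot cross, so $\theta(b,\lambda;\alpha)$ is increasing in the initial value $\alpha$ and hence $\theta(b,\lambda;\alpha)\leq\theta(b,\lambda;0)+\pi$; combined with the eigenvalue equation and the strict monotonicity of $\theta(b,\cdot\,;0)$ in $\lambda$ this yields $\lambda_n(S_{\alpha,\beta})\geq\lambda_{n-1}(S_{0,\beta})$ for all $\alpha\in[0,\pi)$, which is exactly the bound the paper obtains by approximation. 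With that comparison inserted, your passage to the limit in $\theta(b,\lambda_n(\alpha);\alpha)=\beta+n\pi$ (using joint continuity from Lemma \ref{continuous}) is sound, and part (b) follows symmetrically as you say.
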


\begin{proof}[Proof of Theorem 3.1]
As is well known in \cite{CC5}, for the classical Sturm-Liouville problems
with regular potentials$,\ $the assertion of the theorem is true. Let $%
L=L(p,s)$ denote the operator generated by (\ref{b}) with the boundary
conditions (\ref{11})$.$ For the case of $p=p_{m},$ $s=s_{m},$ denote the
operators $L_{m}=L(p_{m},s_{m}).$ For the operators $L_{m}$, the
corresponding separated boundary condition (\ref{11}) becomes
\begin{equation}
S_{\alpha ,\beta }:\left\{
\begin{array}{c}
\cos \alpha y(a)-\sin \alpha (p_{m}y^{\prime })(a)=0,\text{ }\alpha \in
\lbrack 0,\pi ), \\
\cos \beta y(b)-\sin \beta (p_{m}y^{\prime })(b)=0,\text{ }\beta \in (0,\pi
].%
\end{array}%
\right.  \label{23}
\end{equation}%
From Lemma \ref{norm resovent}, the $n-$th eigenvalues of the operators $%
\{L_{m}\}_{m\in
\mathbb{N}
}$ converge to the $n-$th eigenvalue of the limit operator $L.$

(i) In this step, we denote $\omega _{m}=(\frac{1}{p_{m}},q,r,s_{m})\in \dot{%
\Omega},$ $\omega _{0}=(\frac{1}{p},q,r,s)\in \dot{\Omega},$ the $n-$th
eigenvalue of $L$ and $L_{m}$ by $\lambda _{n}(\omega _{0},(\alpha ,\beta ))$
and $\lambda _{n}(\omega _{m},(\alpha ,\beta )),$ respectively, and also let
$S=[0,\pi )\times $ $(0,\pi ].$

Suppose $\Lambda $ is a simple continuous eigenvalue branch through the
eigenvalue $\lambda _{n}(\omega _{0},(\alpha _{0},\beta _{0}))$ defined on a
connected neighborhood $O$ of $(\omega _{0},(\alpha _{0},\beta _{0}))$ in $%
\dot{\Omega}\times S.$ Since
\begin{equation*}
\lambda _{n}(\omega _{m},(\alpha _{0},\beta _{0}))\rightarrow \lambda
_{n}(\omega _{0},(\alpha _{0},\beta _{0}))\text{ as }m\rightarrow \infty ,
\end{equation*}
we obtain
\begin{equation*}
\Lambda (\omega _{m},(\alpha _{0},\beta _{0}))=\lambda _{n}(\omega
_{m},(\alpha _{0},\beta _{0}))
\end{equation*}%
when $m$ is sufficiently large.

It is clear that for sufficiently large $m,$ $\Lambda (\omega _{m},(\alpha
,\beta ))$ is a continuous eigenvalue branch through $\lambda _{n}(\omega
_{m},(\alpha _{0},\beta _{0}))$ defined on a neighborhood of $(\alpha
_{0},\beta _{0})$ in $[0,\pi )\times $ $(0,\pi ].$ However, as was proved in
\cite{CC5}, $\lambda _{n}(\omega _{m},(\alpha ,\beta ))$ is the unique
continuous eigenvalue branch through $\lambda _{n}(\omega _{m},(\alpha
_{0},\beta _{0}))$ defined on $[0,\pi )\times $ $(0,\pi ]$, thus
\begin{equation*}
\Lambda (\omega _{m},(\alpha ,\beta ))=\lambda _{n}(\omega _{m},(\alpha
,\beta ))
\end{equation*}%
for sufficiently large $m$ and $\left( \omega _{m},(\alpha ,\beta )\right)
\in O.$

By Lemma \ref{lianxuxing}, for an arbitrary $\epsilon >0,$ there exists a $%
\delta _{1}>0\ $such that if
\begin{equation*}
\left\vert (\alpha ,\beta )-(\alpha _{0},\beta _{0})\right\vert <\delta
_{1}/2,\int_{a}^{b}\left( \left\vert \frac{1}{p_{m}}-\frac{1}{p}\right\vert
+\left\vert s_{m}-s\right\vert \right) <\delta _{1}/2,
\end{equation*}%
then%
\begin{equation*}
\left\vert \lambda _{n}(\omega _{m},(\alpha ,\beta ))-\lambda _{n}(\omega
_{0},(\alpha _{0},\beta _{0}))\right\vert =\left\vert \Lambda (\omega
_{m},(\alpha ,\beta ))-\lambda _{n}(\omega _{0},(\alpha _{0},\beta
_{0}))\right\vert <\epsilon /2.
\end{equation*}%
Moreover$,$ for such a $\epsilon >0,$ and a fixed point $(\alpha ,\beta )\in
\lbrack 0,\pi )\times $ $(0,\pi ],$\ there exists a $\delta _{2}>0\ $such
that if
\begin{equation*}
\int_{a}^{b}\left( \left\vert \frac{1}{p_{m}}-\frac{1}{p}\right\vert
+\left\vert s_{m}-s\right\vert \right) <\delta _{2}/2,
\end{equation*}%
then%
\begin{equation*}
\left\vert \lambda _{n}(\omega _{m},(\alpha ,\beta ))-\lambda _{n}(\omega
_{0},(\alpha ,\beta ))\right\vert <\epsilon /2.
\end{equation*}%
Thus for an arbitrary $\epsilon >0,$ there exists a $\delta =\delta _{1}/2\ $%
such that if
\begin{equation*}
\left\vert (\alpha ,\beta )-(\alpha _{0},\beta _{0})\right\vert <\delta ,
\end{equation*}%
then%
\begin{eqnarray*}
\left\vert \lambda _{n}(\omega _{0},(\alpha ,\beta ))-\lambda _{n}(\omega
_{0},(\alpha _{0},\beta _{0}))\right\vert &\leq &\left\vert \lambda
_{n}(\omega _{0},(\alpha ,\beta ))-\lambda _{n}(\omega _{m},(\alpha ,\beta
))\right\vert \\
&&+\left\vert \lambda _{n}(\omega _{m},(\alpha ,\beta ))-\lambda _{n}(\omega
_{0},(\alpha _{0},\beta _{0}))\right\vert \\
&<&\epsilon .
\end{eqnarray*}%
So it is a direct result that the eigenvalue $\lambda _{n}(S_{\alpha ,\beta
})$ of the problem (\ref{b}), (\ref{11}) is continuous on $[0,\pi )\times $ $%
(0,\pi ]$.

(ii) In this step we will show that for a fixed $\beta ,\ \lambda
_{n}=\lambda _{n}(\alpha )$ is continuously differentiable in $\alpha $ on $%
[0,\pi )$. Also we assume $\alpha \neq \frac{\pi }{2},\ $the proof for the
case $\alpha =\frac{\pi }{2}$ can be completed similarly. For sufficiently
small $h\in
\mathbb{R}
$, denote the normalized real valued eigenfunctions of $\lambda _{n}(\alpha
) $ and $\lambda _{n}(\alpha +h)$ by $w_{n}=w_{n}($\textperiodcentered $%
,\alpha )$ and $v_{n}=w_{n}($\textperiodcentered $,\alpha +h)$,
respectively. According to the Lagrange's formula in \cite{xx14}, it follows
that
\begin{equation*}
(\lambda _{n}(\alpha +h)-\lambda _{n}(\alpha
))\int_{a}^{b}v_{n}w_{n}rdt=-\left. [w_{n},v_{n}]\right\vert _{a}^{b},
\end{equation*}%
where $[w_{n},v_{n}]:=w_{n}v_{n}^{[1]}-v_{n}w_{n}^{[1]}$. From the boundary
condition (\ref{11}), we obtain
\begin{eqnarray*}
(\lambda _{n}(\alpha +h)-\lambda _{n}(\alpha ))\int_{a}^{b}v_{n}w_{n}rdt
&=&w_{n}(a)v_{n}^{[1]}(a)-v_{n}(a)w_{n}^{[1]}(a) \\
&=&(\tan \alpha -\tan (\alpha +h))w_{n}^{[1]}(a)v_{n}^{[1]}(a) \\
&=&-(\tan (\alpha +h)-\tan \alpha )w_{n}^{[1]}(a,\alpha
)w_{n}^{[1]}(a,\alpha +h).
\end{eqnarray*}%
Since $\lambda _{n}(S_{\alpha ,\beta })$ is continuous on $[0,\pi )\times $ $%
(0,\pi ]$ which has been proved in step $($i$),$ then by Lemma \ref%
{eigenfunction}, we obtain
\begin{equation*}
\left\vert w_{n}(x,\alpha )-w_{n}(x,\alpha +h)\right\vert \rightarrow 0,%
\text{ }\left\vert w_{n}^{[1]}(x,\alpha )-w_{n}^{[1]}(x,\alpha
+h)\right\vert \rightarrow 0,\text{ as }h\rightarrow 0,
\end{equation*}%
both uniformly for $x\in \lbrack a,b].$

Thus we get
\begin{equation*}
\lambda _{n}^{\prime }(\alpha )=-\sec ^{2}\alpha (w_{n}^{[1]}(a,\alpha
))^{2}=-(w_{n}^{[1]}(a,\alpha ))^{2}-\tan ^{2}\alpha (w_{n}^{[1]}(a,\alpha
))^{2}<0.
\end{equation*}%
Hence for a fixed $\beta $, $\lambda _{n}(\alpha )$ is differentiable in $%
\alpha $ on $[0,\pi ).\ $

The statement on the differentiability of $\lambda _{n}(\beta )$ can be
proved by the method analogous to that used above.
\end{proof}

\begin{proof}[Proof of Corollary 3.2]
The strict monotonicity of $\lambda _{n}$ is a direct consequence of (\ref%
{w1}) and (\ref{w2}).

As in the proof of Theorem \ref{L6}, we denote the $n-$th eigenvalue of $L$
and $L_{m}$ with the boundary conditions (\ref{11}) and (\ref{23}) by $%
\lambda _{n}(\omega _{0},(\alpha ,\beta ))$ and $\lambda _{n}(\omega
_{m},(\alpha ,\beta )),$ respectively. For a fixed $\beta \in (0,\pi ],$ as
is well known from \cite{CC5},
\begin{equation*}
\inf\limits_{\alpha \in \lbrack 0,\pi )}\lambda _{n}(\omega _{m},(\alpha
,\beta ))=\lambda _{n-1}(\omega _{m},(0,\beta )),\text{ }n\in
\mathbb{N}
.
\end{equation*}%
Since%
\begin{equation*}
\lim\limits_{m\rightarrow \infty }\lambda _{n}(\omega _{m},(\alpha ,\beta
))=\lambda _{n}(\omega _{0},(\alpha ,\beta )),\text{ }n\in
\mathbb{N}
_{0},
\end{equation*}%
so we can easily obtain for $\alpha \in \lbrack 0,\pi ),$
\begin{equation*}
\lambda _{n}(\omega _{0},(\alpha ,\beta ))\geq \lambda _{n-1}(\omega
_{0},(0,\beta )),\text{ }n\in
\mathbb{N}
.
\end{equation*}%
As we have known that for a fixed $\beta \in (0,\pi ],$ $\lambda _{n}(\omega
_{0},(\alpha ,\beta ))$ is strictly decreasing in $\alpha $ on $[0,\pi ),$ so%
$\lim\limits_{\alpha \rightarrow \pi ^{-}}\lambda _{n}(\omega _{0},(\alpha
,\beta ))$ exists, and is equal to an eigenvalue $\lambda (\omega
_{0},(0,\beta ))$, thus
\begin{equation*}
\lambda _{n-1}(\omega _{0},(0,\beta ))\leq \lambda (\omega _{0},(0,\beta
))<\lambda _{n}(\omega _{0},(0,\beta )),
\end{equation*}%
so
\begin{equation*}
\lim_{\alpha \rightarrow \pi ^{-}}\lambda _{n}(\omega _{0},(\alpha ,\beta
))=\lambda _{n-1}(\omega _{0},(0,\beta )).
\end{equation*}%
It is obvious that%
\begin{equation*}
\lim_{m\rightarrow \infty }\lambda _{0}(\omega _{m},(\alpha ,\beta
))=\lambda _{0}(\omega _{0},(\alpha ,\beta )),
\end{equation*}%
since $\lambda _{0}(\omega _{0},(\alpha ,\beta ))$ is strictly decreasing in
$\alpha $ for a fixed $\beta \in (0,\pi ],$ suppose that
\begin{equation*}
\lim\limits_{\alpha \rightarrow \pi ^{-}}\lambda _{0}(\omega _{0},(\alpha
,\beta ))=\inf\limits_{\alpha \in \lbrack 0,\pi )}\lambda _{0}(\omega
_{0},(\alpha ,\beta ))=c>-\infty ,
\end{equation*}%
thus $c$ must be an eigenvalue $\lambda (\omega _{0},(0,\beta ))$ and $%
\lambda (\omega _{0},(0,\beta ))<\lambda _{0}(\omega _{0},(0,\beta )).$ This
contradiction leads to the conclusion (\ref{DD}). The proof for (\ref{CC})
is similar.$\ $
\end{proof}

\section{Inequalities among eigenvalues}

\begin{theorem}
\label{L7} Let $K\in \mathit{\mathrm{SL}}(2,%
\mathbb{R}
).\ \mathit{(a)}$ \textit{If }$k_{11}>0$\textit{\ and }$k_{12}\leq 0,$ $-\pi
<\gamma <0$\textit{\ or }$0<\gamma <\pi ,$ \textit{then }$\lambda _{0}(K)$%
\textit{\ is simple, and}%
\begin{eqnarray}
\nu _{0} &\leq &\lambda _{0}(K)<\lambda _{0}(\gamma ,K)<\lambda _{0}(-K)\leq
\{\mu _{0},\nu _{1}\}  \label{tt} \\
&\leq &\lambda _{1}(-K)<\lambda _{1}(\gamma ,K)<\lambda _{1}(K)\leq \{\mu
_{1},\nu _{2}\}  \notag \\
&\leq &\lambda _{2}(K)<\lambda _{2}(\gamma ,K)<\lambda _{2}(-K)\leq \{\mu
_{2},\nu _{3}\}  \notag \\
&\leq &\lambda _{3}(-K)<\lambda _{3}(\gamma ,K)<\lambda _{3}(K)\leq \cdots .
\notag
\end{eqnarray}%
$(b)$ If $k_{11}\leq 0$ and $k_{12}<0,$ for $-\pi <\gamma <0$ or $0<\gamma
<\pi ,$ we have%
\begin{eqnarray}
\lambda _{0}(K) &<&\lambda _{0}(\gamma ,K)<\lambda _{0}(-K)\leq \{\mu
_{0},\nu _{0}\}  \label{ww} \\
&\leq &\lambda _{1}(-K)<\lambda _{1}(\gamma ,K)<\lambda _{1}(K)\leq \{\mu
_{1},\nu _{1}\}  \notag \\
&\leq &\lambda _{2}(K)<\lambda _{2}(\gamma ,K)<\lambda _{2}(-K)\leq \{\mu
_{2},\nu _{2}\}  \notag \\
&\leq &\lambda _{3}(-K)<\lambda _{3}(\gamma ,K)<\lambda _{3}(K)\leq \cdots .
\notag
\end{eqnarray}%
$(c)$ For $0<\gamma _{1}<\gamma _{2}<\pi $, we have
\begin{equation*}
\lambda _{0}(\gamma _{1},K)<\lambda _{0}(\gamma _{2},K)<\lambda _{1}(\gamma
_{2},K)<\lambda _{1}(\gamma _{1},K)<\lambda _{2}(\gamma _{1},K)<\lambda
_{2}(\gamma _{2},K)<\cdots .
\end{equation*}%
$(d)$ If $K$ is not included in the case $(a)$ and $(b)$, then $-K$ is
included in either case $(a)$ or case $(b)$.
\end{theorem}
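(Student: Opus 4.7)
The plan is to reduce the theorem to the corresponding known result for Sturm-Liouville problems with regular potentials (Eastham-Kong-Wu-Zettl, \cite{CC5}) by means of the approximation $L_m \overset{R}{\Longrightarrow} L$ constructed in Lemma \ref{L13}, and then to transfer each of the inequalities to the limit operator $L$ using Lemma \ref{norm resovent}. Concretely, I would choose smoothed coefficients $p_m$ and $s_m$ as in Lemma \ref{12 copy(1)}, so that each $L_m$ is a classical self-adjoint Sturm-Liouville operator with a positive smooth leading coefficient. For the regular operators $L_m$, parts (a), (b), (c) hold by the results of \cite{CC5}, supplying for every $m$ a full chain of inequalities among $\nu_n(K,p_m,s_m)$, $\lambda_n(K,p_m,s_m)$, $\lambda_n(\gamma,K,p_m,s_m)$, $\lambda_n(-K,p_m,s_m)$ and $\mu_n(K,p_m,s_m)$.

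Next, by Lemma \ref{norm resovent} and Lemma \ref{L0 copy(1)}, each of these five sequences converges as $m \to \infty$ to the corresponding $n$-th eigenvalue of $L$. Passing termwise to the limit therefore yields at once the \emph{weak} versions of all the inequalities claimed in (a), (b), (c). The delicate point is to upgrade to the strict inequalities $\lambda_n(K)<\lambda_n(\gamma,K)<\lambda_n(-K)$ (and the analogous strict monotonicity in (c)), since strict inequalities can collapse under limits. For this I would use the characteristic function from Lemma \ref{L1}: an eigenvalue $\lambda$ of the coupled problem corresponds to $D(\lambda)=2\cos\gamma$, so the identities
\begin{equation*}
D(\lambda_n(K))=2,\qquad D(\lambda_n(\gamma,K))=2\cos\gamma,\qquad D(\lambda_n(-K))=-2
\end{equation*}
together with $2>2\cos\gamma>-2$ for $\gamma\in(-\pi,0)\cup(0,\pi)$ immediately preclude coincidence; part (c) is handled analogously, since $2\cos\gamma_1\neq 2\cos\gamma_2$ for $0<\gamma_1<\gamma_2<\pi$.

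The simplicity of $\lambda_0(K)$ in (a) is handled similarly: for each $L_m$ this eigenvalue is simple and is separated from $\lambda_0(\gamma,K,p_m,s_m)$ and $\lambda_0(-K,p_m,s_m)$ by strict gaps, so Lemma \ref{geo} guarantees that simplicity survives in the limit. Part (d) is a short algebraic verification using $\det K=1$: if $K$ falls into neither case (a) nor case (b), then in each sign-pattern for $(k_{11},k_{12})$ one checks directly that $-K$ satisfies the hypotheses of one of (a) or (b), and the conclusion then follows from the earlier parts applied to $-K$ (with the roles of $\lambda_n(K)$ and $\lambda_n(-K)$ interchanged). The main obstacle I foresee is bookkeeping: one has to verify that the norm-resolvent limit pairs the $n$-th eigenvalue of $L_m$ with the $n$-th eigenvalue of $L$ consistently across all five boundary conditions simultaneously, so that the inequalities lift index by index; this is exactly the content of Lemma \ref{norm resovent}, but applying it in parallel to five sequences requires that one track the multiplicities carefully, as already rehearsed in the proof of Lemma \ref{L13 copy(1)}.
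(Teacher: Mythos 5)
Your proposal is correct and, for parts (a), (b) and (d), follows essentially the same route as the paper: transfer the eigenvalue chains of \cite{CC5}, \cite{XIN1} for the regular approximating operators $L_m$ to the limit operator via Lemma \ref{norm resovent} (which rests on the uniform lower bound of Lemma \ref{L0 copy(1)}), obtaining the weak inequalities, and then restore strictness among $\lambda_n(K)$, $\lambda_n(\gamma,K)$, $\lambda_n(-K)$ from Lemma \ref{L1}, since their $D$-values $2$, $2\cos\gamma$, $-2$ are pairwise distinct. Where you genuinely diverge is part (c). The paper does not deduce (c) from the limit at all: it computes $D'(\lambda)$ by the variation-of-parameters formula and derives the identity $4C(\lambda)D^{\prime}(\lambda)=-\int_a^b\bigl(2C(\lambda)\phi_1+B(\lambda)\phi_2\bigr)^2\,ds-(4-D^2(\lambda))\int_a^b\phi_2^{\,2}\,ds$, whence $D'(\lambda)\neq 0$ wherever $|D(\lambda)|<2$; the resulting strict monotonicity of $D$ on each band yields the alternating chain in (c) directly. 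Your route --- weak inequalities inherited from the regular case plus the observation $2\cos\gamma_1\neq 2\cos\gamma_2$ --- also works and is lighter, but be aware that the distinct-$D$-value argument only separates the cross-$\gamma$ pairs; the within-$\gamma$ links $\lambda_n(\gamma_i,K)<\lambda_{n+1}(\gamma_i,K)$ occurring in the chain of (c) carry equal $D$-values and must instead be read off from the already-established chains (a)/(b), so you should state that explicitly. The paper's computation buys somewhat more (genuine monotonicity of each branch $\gamma\mapsto\lambda_n(\gamma,K)$ rather than pairwise comparisons), while your argument avoids the computation entirely. Your treatment of the simplicity of $\lambda_0(K)$ (which in any case follows from $\lambda_0(K)<\lambda_1(K)$ once the chain is in place) and of part (d) fills in details the paper leaves implicit, and is fine.
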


\begin{theorem}
\label{L8}Recall that $\lambda _{n}^{D}$ is the n-th Dirichlet eigenvalue.
For the n-th eigenvalue $\lambda _{n}(A,B)$ of the Sturm-Liouville problems
consisting of $(\ref{fangcheng})$ and $(\ref{25})$, we obtain the following
conclusions$:$ $(a)$ $\lambda _{0}(A,B)$ $\leq \lambda _{0}^{D}$, $\lambda
_{1}(A,B)\leq \lambda _{1}^{D}$, $(b)$ $\lambda _{n-2}^{D}<\lambda _{n}(A,B)$
$\leq \lambda _{n}^{D},$ for $n\geq 2.\ $
\end{theorem}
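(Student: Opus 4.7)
The plan is to transfer the known inequalities for classical regular Sturm-Liouville operators to the distributional setting via the norm resolvent approximation $L_m \overset{R}{\Longrightarrow} L$ of Lemma \ref{L13} together with the convergence of indexed eigenvalues supplied by Lemma \ref{norm resovent}.

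First, for each regular approximating operator $L_m$ I would invoke the classical inequalities of Eastham, Kong, Wu, and Zettl from \cite{CC5,XIN1}: for every $m \in \mathbb{N}$ one has $\lambda_0(A,B;m) \leq \lambda_0^D(m)$, $\lambda_1(A,B;m) \leq \lambda_1^D(m)$, and $\lambda_{n-2}^D(m) < \lambda_n(A,B;m) \leq \lambda_n^D(m)$ for $n \geq 2$, where $\lambda_n^D(m)$ denotes the $n$-th Dirichlet eigenvalue of $L_m$. Since the Dirichlet condition $S_{0,\pi}$ is a particular case of (\ref{25}), Lemma \ref{norm resovent} applied in parallel to the $(A,B)$-operators and to the Dirichlet operators yields simultaneously $\lambda_n(A,B;m) \to \lambda_n(A,B)$ and $\lambda_n^D(m) \to \lambda_n^D$ as $m \to \infty$.

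Taking the limit then gives part $(a)$ at once, together with the weak companion $\lambda_{n-2}^D \leq \lambda_n(A,B) \leq \lambda_n^D$ in part $(b)$. The main obstacle, since strict inequalities are generally lost when passing to the limit, is upgrading $\lambda_{n-2}^D \leq \lambda_n(A,B)$ to the strict inequality $\lambda_{n-2}^D < \lambda_n(A,B)$ asserted in the theorem.

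To handle strictness I would argue by contradiction: suppose $\lambda_n(A,B) = \lambda_{n-2}^D =: \Lambda$. By Lemma \ref{L4}, $\Lambda$ is a geometrically simple Dirichlet eigenvalue with eigenfunction $\psi$ having exactly $n-2$ interior zeros in $(a,b)$, and the Dirichlet eigenvalues satisfy the strict ordering $\lambda_{n-2}^D < \lambda_{n-1}^D$. Extending $\psi$ by a linearly independent solution $\varphi$ of (\ref{fangcheng}) at $\lambda = \Lambda$, any eigenfunction of $L$ at $\Lambda$ with the boundary condition $(A,B)$ must be a nontrivial combination of $\psi$ and $\varphi$; the Sturm comparison principle of Lemma \ref{L5}, together with the oscillation bounds developed in Section 5 (which require an eigenfunction of $\lambda_n(A,B)$ to have at least $n-1$ interior zeros), would force such a combination to have a zero count incompatible with the interlacing dictated by $\psi$, yielding the required contradiction. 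A more robust alternative I would keep in reserve is a multiplicity-counting argument based on Lemma \ref{geo}: applied to both approximating sequences inside a fixed small $\delta$-neighborhood of $\Lambda$, together with the equality of geometric and algebraic multiplicities established in Section 2, it would allow one to compare the indexed positions of the converging eigenvalues of $L_m$ and derive a collision of indices that contradicts the strict regular-case inequality $\lambda_{n-2}^D(m) < \lambda_n(A,B;m)$ for all sufficiently large $m$.
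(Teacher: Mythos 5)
Your limiting argument cleanly delivers part $(a)$ and the two non-strict inequalities $\lambda _{n-2}^{D}\leq \lambda _{n}(A,B)\leq \lambda _{n}^{D}$; indeed the paper itself remarks that the upper bound can be obtained exactly by passing to the limit in the regular-case inequalities. The genuine gap is the strict lower bound $\lambda _{n-2}^{D}<\lambda _{n}(A,B)$, and neither of your two proposed repairs closes it. The first is circular: the oscillation bounds of Section 5 (Theorems \ref{L9} and \ref{L10}), which you invoke to cap the number of zeros of an eigenfunction of $\lambda _{n}(A,B)$, are proved in the paper as consequences of Theorem \ref{L8} together with Lemma \ref{L5}; without Theorem \ref{L8} you only have the one-sided Sturm comparison of Lemma \ref{L5}, which yields a \emph{lower} bound on zero counts at energies $\geq \lambda _{n-2}^{D}$ and hence produces no contradiction for a coupled boundary condition. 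The reserve argument also fails: Lemma \ref{geo} counts the total multiplicity of eigenvalues of $L_{m}$ near $\Lambda $ separately for the Dirichlet operators and for the $(A,B)$-operators, and since these are two different sequences of operators there is no index collision to exploit --- the strict inequalities $\lambda _{n-2}^{D}(m)<\lambda _{n}(A,B;m)$ are perfectly compatible with both sides converging to the same limit $\Lambda $. Norm resolvent convergence alone cannot rule out equality in the limit.

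The paper obtains strictness by a different mechanism, which you would need to import. Corollary \ref{L11} shows that $\lambda _{n}(S_{\alpha ,\beta })$ is strictly decreasing in $\alpha $ and strictly increasing in $\beta $, with $\lim_{\alpha \rightarrow \pi ^{-}}\lambda _{n}(S_{\alpha ,\beta })=\lambda _{n-1}(S_{0,\beta })$ and $\lim_{\beta \rightarrow 0^{+}}\lambda _{n}(S_{\alpha ,\beta })=\lambda _{n-1}(S_{\alpha ,\pi })$; hence $\inf \lambda _{n}$ over all separated conditions equals $\lambda _{n-2}^{D}$ for $n\geq 2$ and this infimum is \emph{not attained}, which settles the separated case. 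The coupled cases then follow from the interlacing $\lambda _{n}(K)\leq \lambda _{n}(S_{0,\beta })\leq \lambda _{n+1}(K)$ and the strict chain $\lambda _{n}(K)<\lambda _{n}(\gamma ,K)<\lambda _{n}(-K)$ of Theorem \ref{L7}. Some such monotonicity or interlacing input is indispensable; the limit of the regular-case inequalities by itself will not give the strict bound.
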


\begin{proof}[Proof of Theorem 4.1]
(i) Let $L=L(p,s)$ denote the operator generated by (\ref{b}) with one of
the boundary conditions (\ref{21}), (\ref{ee}), (\ref{ff}), (\ref{gg})$.$
For the case of $p=p_{m},$ $s=s_{m},$ denote the operators $%
L_{m}=L(p_{m},s_{m}).$ For the case of $L_{m},$ since $s_{m}(a)=s_{m}(b)=0,$
so$\ $the replacements of the boundary conditions (\ref{21}), (\ref{ee}), (%
\ref{ff}), (\ref{gg}) are (\ref{z21}), (\ref{zee}), (\ref{zff}), (\ref{zgg}).

For the operators $L_{m},\ $the inequalities among the eigenvalues $\lambda
_{n}(K),$ $\lambda _{n}(\gamma ,K),\ \mu _{n},$ $\nu _{n}$ can be found in
\cite{XIN1}$.$ From Lemma \ref{norm resovent}, the $n-$th eigenvalues of the
operators $\{L_{m}\}_{m\in
\mathbb{N}
}$ converge to the $n-$th eigenvalue of the limit operator $L.$ Lemma \ref%
{L1} implies that $D(\lambda _{n}(\pm K))=\pm 2$, $\left\vert D(\lambda
_{n}(\gamma ,K))\right\vert <2,\ $thus the inequalities in $(a)$\ and $(b)$
can be obtained.

(ii) Now we will show the monotonicity of $\lambda _{n}(\gamma ,K),$ for $%
0<\gamma _{1}<\gamma _{2}<\pi .$ It suffices to show that $D^{\prime
}(\lambda )$ is not zero at values of $\lambda $ such that $\left\vert
D(\lambda )\right\vert <2.\ $

Let $\Phi _{\lambda }\left( x,\lambda \right) =(d/d\lambda )\Phi \left(
x,\lambda \right) ,\ $it follows from (\ref{3})$,$ (\ref{4}) that%
\begin{equation*}
\Phi _{\lambda }^{\prime }=(P-\lambda W)\Phi _{\lambda }-W\Phi ,\Phi
_{\lambda }\left( a,\lambda \right) =0.
\end{equation*}%
By the variation of parameters formula, we have%
\begin{equation*}
\Phi _{\lambda }\left( x,\lambda \right) =-\int\nolimits_{a}^{x}\Phi \left(
x,\lambda \right) \Phi ^{-1}\left( s,\lambda \right) W(s)\Phi \left(
s,\lambda \right) ds.
\end{equation*}%
So by (\ref{GP}), we can obtain that
\begin{eqnarray}
D^{\prime }(\lambda ) &=&\mathrm{trace}K^{-1}\Phi _{\lambda }\left(
b,\lambda \right)  \notag \\
&=&-\mathrm{trace}\int\nolimits_{a}^{b}K^{-1}\Phi \left( b,\lambda \right)
\Phi ^{-1}\left( s,\lambda \right) W(s)\Phi \left( s,\lambda \right) ds
\notag \\
&=&-\mathrm{trace}\int\nolimits_{a}^{b}\left(
\begin{array}{cc}
-D_{2}(\lambda )\phi _{1}\phi _{2}+C(\lambda )\phi _{1}^{\text{ }2} & \ast
\\
\ast & -A(\lambda )\phi _{2}^{\text{ }2}+D_{1}(\lambda )\phi _{1}\phi _{2}%
\end{array}%
\right) ds  \notag \\
&=&\int\nolimits_{a}^{b}[A(\lambda )\phi _{2}^{\text{ }2}(s,\lambda
)-B(\lambda )\phi _{1}(s,\lambda )\phi _{2}(s,\lambda )-C(\lambda )\phi
_{1}^{\text{ }2}(s,\lambda )]ds.  \label{15}
\end{eqnarray}%
Since
\begin{equation*}
D_{1}(\lambda )D_{2}(\lambda )-A(\lambda )C(\lambda )=\det K^{-1}\Phi \left(
b,\lambda \right) =1,
\end{equation*}%
we have
\begin{eqnarray*}
4-D^{2}(\lambda ) &=&4-(D_{1}(\lambda )+D_{2}(\lambda ))^{2} \\
&=&4(1-D_{1}(\lambda )D_{2}(\lambda ))-B^{2}(\lambda )=-(4A(\lambda
)C(\lambda )+B^{2}(\lambda )).
\end{eqnarray*}%
Hence from (\ref{15}) it follows that
\begin{eqnarray}
4C(\lambda )D^{\prime }(\lambda ) &=&\int\nolimits_{a}^{b}[4A(\lambda
)C(\lambda )\phi _{2}^{\text{ }2}-4B(\lambda )C(\lambda )\phi _{1}\phi
_{2}-4C^{2}(\lambda )\phi _{1}^{\text{ }2}]ds  \notag \\
&=&\int\nolimits_{a}^{b}[-(2C(\lambda )\phi _{1}+B(\lambda )\phi
_{2})^{2}+(4A(\lambda )C(\lambda )+B^{2}(\lambda ))\phi _{2}^{\text{ }2}]ds
\notag \\
&=&-\int\nolimits_{a}^{b}(2C(\lambda )\phi _{1}+B(\lambda )\phi
_{2})^{2}ds-(4-D^{2}(\lambda ))\int\nolimits_{a}^{b}\phi _{2}^{\text{ }2}ds.
\label{rr}
\end{eqnarray}%
Thus if $\left\vert D(\lambda )\right\vert <2$, $D^{\prime }(\lambda )\neq
0.\ $
\end{proof}

\begin{proof}[Proof of Theorem 4.2]
First we prove the inequality for separated boundary conditions, and denote
the $n-$eigenvalue for some $S_{\alpha ,\beta }$ by $\lambda _{n},$ suppose $%
\lambda _{n}^{D}<\lambda _{n}$ for some $n\in
\mathbb{N}
_{0}$. Denote the eigenfunctions of $\lambda _{n}^{D}$ and $\lambda _{n}$ by
$\Psi ($\textperiodcentered ,$\lambda _{n}^{D})$ and $\Psi ($%
\textperiodcentered ,$\lambda _{n}),\ $respectively. From Lemma \ref{L4} and
Lemma \ref{L5}, we obtain that $\Psi ($\textperiodcentered ,$\lambda _{n})$
has at least $n+1$ zeros on the interval $(a,b)$, this contradicts the
conclusion of Lemma \ref{L4}. Then the general inequality
\begin{equation}
\lambda _{n}(A,B)\leq \lambda _{n}^{D}  \label{oo}
\end{equation}%
follows from (\ref{tt}), (\ref{ww}). Of course, the general inequality $%
\lambda _{n}(A,B)$ $\leq \lambda _{n}^{D}$ can also be obtained by using the
fact that the $n-$th eigenvalues of the operators $\{L_{m}\}_{m\in
\mathbb{N}
}$ converge to the $n-$th eigenvalue of the limit operator $L.$ For the
operators $L_{m},\ $this inequality has been proved in \cite{CC5}.

Now we denote the set of all the separated boundary conditions $S_{\alpha
,\beta }$ by $\Gamma .$

By Corollary \ref{L11}, for an arbitrary $\beta \in (0,\pi ],$ we obtain that%
$\ $%
\begin{equation}
\lambda _{n}(S_{0,\beta })>\lim\limits_{\gamma \rightarrow 0^{+}}\lambda
_{n}(S_{0,\gamma })=\lambda _{n-1}(S_{0,\pi })=\lambda _{n-1}^{D},n\in
\mathbb{N}
,  \label{qq}
\end{equation}%
\begin{equation}
\inf\limits_{C\in \Gamma }\lambda _{n}(C)=\inf\limits_{0\leq \alpha <\pi
}(\inf\limits_{0<\beta \leq \pi }\lambda _{n}(S_{\alpha ,\beta
}))=\inf\limits_{0\leq \alpha <\pi }\lambda _{n-1}(S_{\alpha ,\pi })=\lambda
_{n-2}^{D},n\geq 2.  \label{mm}
\end{equation}%
It can be obtained that the infimum in (\ref{mm}) can not be achieved by
using Corollary \ref{L11}.

By Theorem \ref{L7}, for any $K\in \mathit{\mathrm{SL}}(2,%
\mathbb{R}
)$, there exists $0<\beta \leq \pi $, such that
\begin{equation}
\lambda _{n}(K)\leq \lambda _{n}(S_{0,\beta })\leq \lambda _{n+1}(K),n\in
\mathbb{N}
_{0},  \label{ss}
\end{equation}%
and for $0<\gamma <\pi $ or $-\pi <\gamma <0$,
\begin{equation}
\lambda _{n}(K)<\lambda _{n}(\gamma ,K)<\lambda _{n}(-K)\text{ or }\lambda
_{n}(-K)<\lambda _{n}(\gamma ,K)<\lambda _{n}(K),n\in
\mathbb{N}
_{0},  \label{aw}
\end{equation}%
thus by (\ref{oo})$-$(\ref{aw}), the proof is completed.
\end{proof}

\section{Discontinuity of $\protect\lambda _{n}$ on the space of
self-adjoint boundary conditions}

In this section, we describe the continuity region of the $n$-th eigenvalue
as a function on the space of self-adjoint boundary conditions. As a similar
work on the classical Sturm-Liouville problems with regular potentials,
following \cite{8}, we give some notations and results on the space of
self-adjoint boundary conditions.

$\mathrm{M}_{2\times 4}^{\ast }\left(
\mathbb{C}
\right) $ stands for the set of $2$ by $4$ matrices over $%
\mathbb{C}
$ with rank $2$ and let $\mathit{\mathrm{GL}}(2,%
\mathbb{C}
)$ be the Lie group of invertible complex matrices in dimension $2.$ As
mentioned in the introduction, a complex boundary condition (not necessarily
self-adjoint) is just a system of two linearly independent homogeneous
equations on $y(a)$, $y^{[1]}(a)$, $y(b)$ and $y^{[1]}(b)$ with complex
coefficients, i.e.%
\begin{equation}
A\left(
\begin{array}{c}
y(a) \\
y^{[1]}(a)%
\end{array}%
\right) +B\left(
\begin{array}{c}
y(b) \\
y^{[1]}(b)%
\end{array}%
\right) =\left(
\begin{array}{c}
0 \\
0%
\end{array}%
\right) ,  \label{BC}
\end{equation}%
with the $2\times 4$ matrix $(A|B)\in $ $\mathrm{M}_{2\times 4}^{\ast
}\left(
\mathbb{C}
\right) .$

Following \cite{8}, we will take the quotient space

\begin{equation*}
\mathit{\mathrm{GL}}(2,%
\mathbb{C}
)\backslash \mathrm{M}_{2\times 4}^{\ast }\left(
\mathbb{C}
\right)
\end{equation*}%
as the space $\mathscr{B}^{%
\mathbb{C}
}$ of complex boundary conditions, i.e., each boundary condition is an
equivalence class of coefficient matrices $($with the elements of $\mathit{%
\mathrm{GL}}(2,%
\mathbb{C}
)$ multiplying from the left$)$ of linear systems $(\ref{BC})$, and the
boundary condition represented by the linear system $(\ref{BC})$ will be
denoted by $\left[ A\left\vert B\right. \right] .$ Note here, that square
brackets, not parentheses, are used. Similarly, the space $\mathscr{B}^{%
\mathbb{R}
}$ of real boundary conditions is just $\mathit{\mathrm{GL}}(2,%
\mathbb{R}
)\backslash \mathrm{M}_{2\times 4}^{\ast }\left(
\mathbb{R}
\right) $. Note that the space $\mathscr{B}_{S}^{%
\mathbb{R}
}$ of self-adjoint real boundary conditions consists of the separated real
boundary conditions and the coupled real boundary conditions of the form $%
\left[ K\left\vert -I\right. \right] $ where $K\in \mathit{\mathrm{SL}}(2,%
\mathbb{R}
).$ Denote all the self-adjoint complex boundary conditions by $\mathscr{B}%
_{S}^{%
\mathbb{C}
}.$ In this section, we characterize the discontinuity set of $\lambda _{n}$
as a function on $\mathscr{B}_{S}^{%
\mathbb{R}
}$ or $\mathscr{B}_{S}^{%
\mathbb{C}
}$ and determine the behavior of $\lambda _{n}$ near each discontinuity
point.

As a similar work on the classical Sturm-Liouville problems with regular
potentials, the space $\mathscr{B}_{S}^{%
\mathbb{R}
}$ can be obtained by \textquotedblleft gluing\textquotedblright\ the open
sets
\begin{eqnarray*}
\mathscr{O}_{1,S}^{%
\mathbb{R}
} &=&\mathscr{O}_{6,s}^{%
\mathbb{R}
}=\left\{ \left[ K\left\vert -I\right. \right] ;K\in \mathit{\mathrm{SL}}(2,%
\mathbb{R}
)\right\} , \\
\mathscr{O}_{2,S}^{%
\mathbb{R}
} &=&\left\{ \left[
\begin{array}{cccc}
1 & a_{12} & 0 & a_{22} \\
0 & a_{22} & -1 & b_{22}%
\end{array}%
\right] ;a_{12},a_{22},b_{22}\in
\mathbb{R}
\right\} , \\
\mathscr{O}_{3,S}^{%
\mathbb{R}
} &=&\left\{ \left[
\begin{array}{cccc}
1 & a_{12} & -a_{22} & 0 \\
0 & a_{22} & b_{21} & -1%
\end{array}%
\right] ;a_{12},a_{22},b_{21}\in
\mathbb{R}
\right\} , \\
\mathscr{O}_{4,S}^{%
\mathbb{R}
} &=&\left\{ \left[
\begin{array}{cccc}
a_{11} & 1 & 0 & -a_{21} \\
a_{21} & 0 & -1 & b_{22}%
\end{array}%
\right] ;a_{11},a_{21},b_{22}\in
\mathbb{R}
\right\} , \\
\mathscr{O}_{5,S}^{%
\mathbb{R}
} &=&\left\{ \left[
\begin{array}{cccc}
a_{11} & 1 & a_{21} & 0 \\
a_{21} & 0 & b_{21} & -1%
\end{array}%
\right] ;a_{11},a_{21},b_{21}\in
\mathbb{R}
\right\}
\end{eqnarray*}%
via the coordinate transformations among these open sets. Also, the space $%
\mathscr{B}_{S}^{%
\mathbb{C}
}$ can be obtained by \textquotedblleft gluing\textquotedblright\ the open
sets%
\begin{eqnarray*}
\mathscr{O}_{1,S}^{%
\mathbb{C}
} &=&\mathscr{O}_{6,S}^{%
\mathbb{C}
}=\left\{ \left[ e^{i\theta }K\left\vert -I\right. \right] ;\theta \in
\lbrack 0,\pi ),K\in \mathit{\mathrm{SL}}(2,%
\mathbb{R}
)\right\} , \\
\mathscr{O}_{2,S}^{%
\mathbb{C}
} &=&\left\{ \left[
\begin{array}{cccc}
1 & a_{12} & 0 & \overline{\bar{z}} \\
0 & z & -1 & b_{22}%
\end{array}%
\right] ;a_{12},b_{22}\in
\mathbb{R}
,z\in
\mathbb{C}
\right\} , \\
\mathscr{O}_{3,S}^{%
\mathbb{C}
} &=&\left\{ \left[
\begin{array}{cccc}
1 & a_{12} & -\overline{\bar{z}} & 0 \\
0 & z & b_{21} & -1%
\end{array}%
\right] ;a_{12},b_{21}\in
\mathbb{R}
,z\in
\mathbb{C}
\right\} , \\
\mathscr{O}_{4,S}^{%
\mathbb{C}
} &=&\left\{ \left[
\begin{array}{cccc}
a_{11} & 1 & 0 & -\overline{\bar{z}} \\
z & 0 & -1 & b_{22}%
\end{array}%
\right] ;a_{11},b_{22}\in
\mathbb{R}
,z\in
\mathbb{C}
\right\} , \\
\mathscr{O}_{5,S}^{%
\mathbb{C}
} &=&\left\{ \left[
\begin{array}{cccc}
a_{11} & 1 & \overline{\bar{z}} & 0 \\
z & 0 & b_{21} & -1%
\end{array}%
\right] ;a_{11},b_{21}\in
\mathbb{R}
,z\in
\mathbb{C}
\right\}
\end{eqnarray*}%
via the coordinate transformations among these open sets. Details of the
above results have been described in \cite{8}.

The following are some continuity results about $\lambda _{n}$ on $%
\mathscr{B}_{S}^{%
\mathbb{R}
}.$ In this context, we will use the notation

\begin{eqnarray*}
\mathscr{F}_{-}^{%
\mathbb{R}
} &=&\left\{ \left[ K\left\vert -I\right. \right] ;K\in \mathit{\mathrm{SL}}%
(2,%
\mathbb{R}
),k_{11}k_{12}\leq 0\right\} , \\
\mathscr{G}_{-}^{%
\mathbb{R}
} &=&\left\{ \left[
\begin{array}{cccc}
a_{1} & 1 & 0 & -r \\
r & 0 & -1 & b_{2}%
\end{array}%
\right] ;b_{2}\leq 0,\text{ }a_{1},r\in
\mathbb{R}
\right\} , \\
\mathscr{H}_{-}^{%
\mathbb{R}
} &=&\left\{ \left[
\begin{array}{cccc}
1 & a_{2} & -r & 0 \\
0 & r & b_{1} & -1%
\end{array}%
\right] ;a_{2}\leq 0,\text{ }b_{1},r\in
\mathbb{R}
\right\} , \\
\mathscr{I}_{-}^{%
\mathbb{R}
} &=&\left\{ \left[
\begin{array}{cccc}
1 & a_{2} & 0 & r \\
0 & r & -1 & b_{2}%
\end{array}%
\right] ;a_{2},b_{2}\leq 0,\text{ }r\in
\mathbb{R}
,a_{2}b_{2}\geq r^{2}\right\} , \\
\mathscr{K}^{%
\mathbb{R}
} &=&\left\{ \left[ K\left\vert -I\right. \right] ;K\in \mathit{\mathrm{SL}}%
(2,%
\mathbb{R}
),k_{12}=0\right\} \\
&&\cup \left\{ \left[
\begin{array}{cccc}
a_{1} & a_{2} & 0 & 0 \\
0 & 0 & b_{1} & b_{2}%
\end{array}%
\right] \in \mathscr{B}_{s}^{%
\mathbb{R}
};a_{2}b_{2}=0\right\} .
\end{eqnarray*}%
Note that in this section we will still use the space $\dot{\Omega}$
introduced in Section 3.

\begin{proposition}
\label{3.10}Let $n\in
\mathbb{N}
_{0}.$ Then as a function on the space $\mathscr{B}_{s}^{%
\mathbb{R}
},$ $\lambda _{n}$ is continuous at each point not in $\mathscr{K}^{%
\mathbb{R}
}.$
\end{proposition}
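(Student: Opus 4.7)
The plan is to transfer the classical continuity result of \cite{8} from the regular Sturm-Liouville setting to the distributional one via the norm-resolvent approximation developed in Lemma \ref{L13} and Lemma \ref{norm resovent}, paralleling the strategy used in the proof of Theorem \ref{L6}.

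First, fix $\omega_0 \in \mathscr{B}_{s}^{\mathbb{R}} \setminus \mathscr{K}^{\mathbb{R}}$ and, via Lemma \ref{12 copy(1)}, select smooth approximations $p_m, s_m$ giving rise to regular operators $L_m = L(p_m,s_m)$ with $L_m \overset{R}{\Longrightarrow} L$. By Lemma \ref{lianxuxing} combined with Remark \ref{alge}, there is a continuous eigenvalue branch $\Lambda : O \to \mathbb{R}$ of $L$ defined on a neighborhood $O$ of $\omega_0$ in $\mathscr{B}_{s}^{\mathbb{R}}$ and satisfying $\Lambda(\omega_0) = \lambda_n(L,\omega_0)$. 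The goal is to show $\Lambda(\omega) = \lambda_n(L,\omega)$ for every $\omega \in O$, which immediately yields the desired continuity of $\lambda_n(L,\cdot)$ at $\omega_0$.

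Second, to make this identification, I introduce the approximating regular operators. By Lemma \ref{norm resovent}, $\lambda_n(L_m,\omega_0) \to \lambda_n(L,\omega_0) = \Lambda(\omega_0)$, so for sufficiently large $m$ the continuous eigenvalue branch $\Lambda_m$ of $L_m$ lying closest to $\Lambda$ at $\omega_0$ satisfies $\Lambda_m(\omega_0) = \lambda_n(L_m,\omega_0)$. The classical result of \cite{8} for regular Sturm-Liouville operators asserts that $\lambda_n(L_m,\cdot)$ is itself the unique continuous $n$-th eigenvalue branch on $\mathscr{B}_{s}^{\mathbb{R}} \setminus \mathscr{K}^{\mathbb{R}}$; since $\omega_0$ lies in this region, this forces $\Lambda_m = \lambda_n(L_m,\cdot)$ on a sub-neighborhood $O' \subset O$. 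Passing $m \to \infty$ pointwise in $\omega$ via Lemma \ref{norm resovent}, and using the uniform semi-boundedness from Lemma \ref{L0 copy(1)} together with Lemma \ref{princeple} to keep the convergence under control, one obtains $\Lambda(\omega) = \lambda_n(L,\omega)$ throughout $O'$. A standard $\varepsilon/3$-triangle-inequality bookkeeping completes the continuity statement.

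The main obstacle is the identification $\Lambda_m = \lambda_n(L_m,\cdot)$ on a uniform neighborhood of $\omega_0$: one must exclude the possibility that, under a small perturbation of the boundary condition within $O$, the branch $\Lambda_m$ coincides with $\lambda_{n\pm 1}(L_m,\cdot)$ of the regular operator. This is precisely what the hypothesis $\omega_0 \notin \mathscr{K}^{\mathbb{R}}$ rules out through the classical theorem of \cite{8}, whose explicit characterization of the discontinuity stratum $\mathscr{K}^{\mathbb{R}}$ is what drives the entire argument. Beyond this point, the remaining manipulations are routine given the machinery assembled in Section 2.
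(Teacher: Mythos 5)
Your proposal is correct and follows essentially the same route as the paper: approximate $L$ by regular operators $L_m$ via Lemmas \ref{L13} and \ref{norm resovent}, use the norm-resolvent convergence at the fixed boundary condition to identify the continuous eigenvalue branch of $L$ with the branch $\lambda_n(\omega_m,\cdot)$ of $L_m$, invoke the classical continuity of the $n$-th eigenvalue on $\mathscr{B}_{s}^{\mathbb{R}}\setminus \mathscr{K}^{\mathbb{R}}$ from \cite{CC5} to propagate this identification over a neighborhood, and close with a triangle inequality. The one refinement in the paper's version is that when $\lambda_n(\omega_0,\mathbf{A}_0)$ is a double eigenvalue it tracks the two ordered branches $\Lambda_n\leq \Lambda_{n+1}$ simultaneously (so that after the eigenvalue splits one knows which branch is the $n$-th and which is the $(n+1)$-th), a point your single-branch bookkeeping would need to spell out with the same care.
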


\begin{proof}
Denote $\omega _{m}=(\frac{1}{p_{m}},q,r,s_{m})\in \dot{\Omega},$ $\omega
_{0}=(\frac{1}{p},q,r,s)\in \dot{\Omega}.$ For every $\mathbf{A\in }%
\mathscr{B}_{s}^{%
\mathbb{R}
},$ the eigenvalues $\lambda _{n}(\omega _{0},\mathbf{A})$ and $\lambda
_{n}(\omega _{m},\mathbf{A})$ of the operators $L$ and $L_{m}$ are well
defined, respectively.

Let us consider $\mathbf{A}_{0}\mathbf{\in }\mathscr{B}_{s}^{%
\mathbb{R}
}\backslash \mathscr{K}^{%
\mathbb{R}
},$ if the eigenvalue $\lambda _{n}(\omega _{0},\mathbf{A}_{0})$ is double,
assume that $\lambda _{n}(\omega _{0},\mathbf{A}_{0})=\lambda _{n+1}(\omega
_{0},\mathbf{A}_{0}).$ Suppose $\Lambda _{n}$ and $\Lambda _{n+1}$ are the
continuous eigenvalue branches through the eigenvalue $\lambda _{n}(\omega
_{0},\mathbf{A}_{0})$ defined on a sufficiently small connected neighborhood
$O$ of $(\omega _{0},\mathbf{A}_{0})$ in $\dot{\Omega}\times \mathscr{B}%
_{s}^{%
\mathbb{R}
}$ and $\Lambda _{n}(\omega ,\mathbf{A})\leq \Lambda _{n+1}(\omega ,\mathbf{A%
})$ for any element $(\omega ,\mathbf{A})\in O.$ Note that the number of the
eigenvalue branches is counted with its multiplicities. It can be seen from
\cite[Proposition 3.10]{CC5} that when the neighborhood $O$ is sufficiently
small, $\mathbf{A}$ is in $\mathscr{B}_{s}^{%
\mathbb{R}
}\backslash \mathscr{K}^{%
\mathbb{R}
}$ for every $(\omega ,\mathbf{A})\in O.$

By Lemma \ref{norm resovent},
\begin{equation*}
\lambda _{n}(\omega _{m},\mathbf{A}_{0})\rightarrow \lambda _{n}(\omega _{0},%
\mathbf{A}_{0}),\lambda _{n+1}(\omega _{m},\mathbf{A}_{0})\rightarrow
\lambda _{n+1}(\omega _{0},\mathbf{A}_{0}),\text{ as }m\rightarrow \infty .
\end{equation*}%
Hence the multiplicity of $\lambda _{n}(\omega _{0},\mathbf{A}_{0})$ implies
that
\begin{equation*}
\Lambda _{n}(\omega _{m},\mathbf{A}_{0})=\lambda _{n}(\omega _{m},\mathbf{A}%
_{0}),\Lambda _{n+1}(\omega _{m},\mathbf{A}_{0})=\lambda _{n+1}(\omega _{m},%
\mathbf{A}_{0})
\end{equation*}%
when $m$ is sufficiently large. It is clear that\ for sufficiently large $m,$
$\Lambda _{n}(\omega _{m},\mathbf{A})$ and $\Lambda _{n+1}(\omega _{m},%
\mathbf{A})$ are continuous eigenvalue branches through $\lambda _{n}(\omega
_{m},\mathbf{A}_{0})$ and $\lambda _{n+1}(\omega _{m},\mathbf{A}_{0})$
defined on $O$ respectively$.$ Thus, from the multiplicity of $\Lambda _{n}$
and $\Lambda _{n+1},$ the continuity of $\lambda _{n}(\omega _{m},\mathbf{A}%
) $ and $\lambda _{n+1}(\omega _{m},\mathbf{A})$ on $\mathscr{B}_{s}^{%
\mathbb{R}
}\backslash \mathscr{K}^{%
\mathbb{R}
}$ as was proved in \cite{CC5}, one deduces that%
\begin{equation*}
\Lambda _{n}(\omega _{m},\mathbf{A})=\lambda _{n}(\omega _{m},\mathbf{A}%
),\Lambda _{n+1}(\omega _{m},\mathbf{A})=\lambda _{n+1}(\omega _{m},\mathbf{A%
})
\end{equation*}%
for sufficiently large $m$ and $\left( \omega _{m},\mathbf{A}\right) \in O.$

For an arbitrary $\epsilon >0,$ there exists a $\delta _{1}>0\ $such that if
\begin{equation*}
\left\Vert \mathbf{A-A}_{0}\right\Vert <\delta _{1}/2,\int_{a}^{b}\left(
\left\vert \frac{1}{p_{m}}-\frac{1}{p}\right\vert +\left\vert
s_{m}-s\right\vert \right) <\delta _{1}/2,
\end{equation*}%
then%
\begin{equation*}
\left\vert \lambda _{n}(\omega _{m},\mathbf{A})-\lambda _{n}(\omega _{0},%
\mathbf{A}_{0})\right\vert =\left\vert \Lambda _{n}(\omega _{m},\mathbf{A}%
)-\lambda _{n}(\omega _{0},\mathbf{A}_{0})\right\vert <\epsilon /2.
\end{equation*}%
Moreover, for such a $\epsilon >0\ $and a fixed point $\mathbf{A\in }%
\mathscr{B}_{s}^{%
\mathbb{R}
}$, there exists a $\delta _{2}>0\ $such that if
\begin{equation*}
\int_{a}^{b}\left( \left\vert \frac{1}{p_{m}}-\frac{1}{p}\right\vert
+\left\vert s_{m}-s\right\vert \right) <\delta _{2}/2,
\end{equation*}%
then%
\begin{equation*}
\left\vert \lambda _{n}(\omega _{m},\mathbf{A})-\lambda _{n}(\omega _{0},%
\mathbf{A})\right\vert <\epsilon /2.
\end{equation*}%
Thus for an arbitrary $\epsilon >0,$ there exists a $\delta =\delta _{1}/2$
such that if
\begin{equation*}
\left\Vert \mathbf{A-A}_{0}\right\Vert <\delta ,
\end{equation*}%
then%
\begin{eqnarray}
\left\vert \lambda _{n}(\omega _{0},\mathbf{A})-\lambda _{n}(\omega _{0},%
\mathbf{A}_{0})\right\vert &\leq &\left\vert \lambda _{n}(\omega _{0},%
\mathbf{A})-\lambda _{n}(\omega _{m},\mathbf{A})\right\vert \\
&&+\left\vert \lambda _{n}(\omega _{m},\mathbf{A})-\lambda _{n}(\omega _{0},%
\mathbf{A}_{0})\right\vert  \notag \\
&<&\epsilon .
\end{eqnarray}%
So it is a direct result that the eigenvalue $\lambda _{n}(\omega _{0},%
\mathbf{A})$ is continuous at each point not in $\mathscr{K}^{%
\mathbb{R}
}$.

If the eigenvalue $\lambda _{n}(\omega _{0},\mathbf{A}_{0})$ is simple, the
proof is similar and simpler.
\end{proof}

\begin{proposition}
\label{3.18}For every $n\in
\mathbb{N}
_{0},$ the restriction of $\lambda _{n}$ to each of $\mathscr{F}_{-}^{%
\mathbb{R}
},\mathscr{G}_{-}^{%
\mathbb{R}
},\mathscr{H}_{-}^{%
\mathbb{R}
}$ and $\mathscr{I}_{-}^{%
\mathbb{R}
}$ is continuous.

\begin{proof}
The proof is similar to that of Proposition \ref{3.10}.
\end{proof}
\end{proposition}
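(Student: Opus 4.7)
The plan is to replay the argument of Proposition \ref{3.10} essentially verbatim, with the only change being the ambient subset of $\mathscr{B}_{s}^{\mathbb{R}}$ on which we work. Fix one of the four sets and call it $\mathscr{S}$; fix $\mathbf{A}_{0}\in\mathscr{S}$ and a point $n\in\mathbb{N}_{0}$. As before, write $\omega_{m}=(1/p_{m},q,r,s_{m})\in\dot{\Omega}$ for the smooth approximants supplied by Lemma \ref{12 copy(1)} and $\omega_{0}=(1/p,q,r,s)\in\dot{\Omega}$. The required input from the regular-coefficient theory is that, for each fixed $m$, the restriction $\mathbf{A}\mapsto\lambda_{n}(\omega_{m},\mathbf{A})$ is continuous on each of $\mathscr{F}_{-}^{\mathbb{R}},\mathscr{G}_{-}^{\mathbb{R}},\mathscr{H}_{-}^{\mathbb{R}},\mathscr{I}_{-}^{\mathbb{R}}$; this is part of the classical continuity results for Sturm-Liouville problems with regular potentials recalled in the paragraph preceding Proposition \ref{3.10}, and may be cited from \cite{CC5}.

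Next, I would choose a sufficiently small connected neighborhood $O$ of $(\omega_{0},\mathbf{A}_{0})$ in $\dot{\Omega}\times\mathscr{S}$ and invoke Lemma \ref{lianxuxing} together with Remark \ref{alge} to produce continuous eigenvalue branches through $\lambda_{n}(\omega_{0},\mathbf{A}_{0})$, counted with multiplicity (one branch $\Lambda_{n}$ if the eigenvalue is simple, two branches $\Lambda_{n},\Lambda_{n+1}$ if double, and analogously for higher multiplicities if they arise). By Lemma \ref{norm resovent}, $\lambda_{n}(\omega_{m},\mathbf{A}_{0})\to\lambda_{n}(\omega_{0},\mathbf{A}_{0})$ as $m\to\infty$, and likewise for $\lambda_{n+1}$ in the double case, so for sufficiently large $m$ these branches identify with $\lambda_{n}(\omega_{m},\mathbf{A}_{0})$ (and $\lambda_{n+1}(\omega_{m},\mathbf{A}_{0})$) at the basepoint. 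The classical continuity of $\lambda_{n}(\omega_{m},\cdot)|_{\mathscr{S}}$ then forces
\begin{equation*}
\Lambda_{n}(\omega_{m},\mathbf{A})=\lambda_{n}(\omega_{m},\mathbf{A})\qquad\text{for all }(\omega_{m},\mathbf{A})\in O,
\end{equation*}
for all sufficiently large $m$, by the usual uniqueness of a continuous branch through a fixed eigenvalue.

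Finally, a standard three-epsilon splitting through the triangle inequality
\begin{equation*}
|\lambda_{n}(\omega_{0},\mathbf{A})-\lambda_{n}(\omega_{0},\mathbf{A}_{0})|
\le |\lambda_{n}(\omega_{0},\mathbf{A})-\lambda_{n}(\omega_{m},\mathbf{A})|
+ |\lambda_{n}(\omega_{m},\mathbf{A})-\lambda_{n}(\omega_{0},\mathbf{A}_{0})|
\end{equation*}
yields the desired continuity: the first term is made small uniformly in $\mathbf{A}$ close to $\mathbf{A}_{0}$ by Lemma \ref{princeple} applied with $\omega$ ranging over the family of smooth approximants, while the second is controlled by the identification with the continuous branch $\Lambda_{n}$ established in the previous step.

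The main obstacle is the bookkeeping when $\lambda_{n}(\omega_{0},\mathbf{A}_{0})$ is a double eigenvalue sitting at a boundary condition in $\mathscr{S}$: one must know that no branch merging or crossing with $\lambda_{n-1}$ or $\lambda_{n+2}$ occurs as $\mathbf{A}$ varies inside $\mathscr{S}$, so that the branches $\Lambda_{n},\Lambda_{n+1}$ really continue to be the $n$-th and $(n+1)$-st eigenvalues of $L_{m}$ throughout $O$. This is precisely what the classical continuity of $\lambda_{n}(\omega_{m},\cdot)|_{\mathscr{S}}$ rules out by shrinking $O$ if necessary, so once that ingredient is imported from \cite{CC5} the rest of the proof is a mechanical translation of the argument for Proposition \ref{3.10}.
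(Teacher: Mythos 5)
Your proposal is correct and matches the paper's intent exactly: the paper's own proof of this proposition is literally "similar to that of Proposition \ref{3.10}," and you have carried out precisely that adaptation, importing the classical continuity of $\lambda_{n}(\omega_{m},\cdot)$ on the restricted sets from \cite{CC5}, identifying the continuous eigenvalue branches via Lemma \ref{norm resovent}, and closing with the same triangle-inequality splitting.
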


Note that the continuity claim in Theorem \ref{L6} for the eigenvalues of
the separated boundary conditions is a consequence of Propositions \ref{3.10}
and \ref{3.18} .

In order to describe the discontinuity of $\lambda _{n}$ on $\mathscr{B}%
_{s}^{%
\mathbb{R}
},$ we let
\begin{eqnarray*}
\mathscr{F}_{+}^{%
\mathbb{R}
} &=&\mathscr{O}_{6,S}^{%
\mathbb{R}
}\backslash \mathscr{F}_{-}^{%
\mathbb{R}
},\text{ \ }\mathscr{G}_{+}^{%
\mathbb{R}
}=\mathscr{O}_{4,S}^{%
\mathbb{R}
}\backslash \mathscr{G}_{-}^{%
\mathbb{R}
},\text{ }\mathscr{H}_{+}^{%
\mathbb{R}
}=\mathscr{O}_{3,S}^{%
\mathbb{R}
}\backslash \mathscr{H}_{-}^{%
\mathbb{R}
}, \\
\mathscr{I}_{+}^{%
\mathbb{R}
} &=&\left\{ \left[
\begin{array}{cccc}
1 & a_{2} & 0 & r \\
0 & r & -1 & b_{2}%
\end{array}%
\right] ;a_{2},b_{2}>0,\text{ }r\in
\mathbb{R}
,a_{2}b_{2}>r^{2}\right\} , \\
\mathscr{I}_{0}^{%
\mathbb{R}
} &=&\mathscr{O}_{2,S}^{%
\mathbb{R}
}\backslash (\mathscr{I}_{-}^{%
\mathbb{R}
}\cup \mathscr{I}_{+}^{%
\mathbb{R}
}).
\end{eqnarray*}%
Note that the coupled boundary conditions in $\mathscr{K}^{%
\mathbb{R}
}$ are all in $\mathscr{F}_{-}^{%
\mathbb{R}
},$ and
\begin{equation*}
\mathscr{K}^{%
\mathbb{R}
}\cap \Gamma =(\mathscr{K}^{%
\mathbb{R}
}\cap \mathscr{G}_{-}^{%
\mathbb{R}
}\cap \Gamma )\cup (\mathscr{K}^{%
\mathbb{R}
}\cap \mathscr{H}_{-}^{%
\mathbb{R}
}\cap \Gamma )\cup (\mathbf{D})
\end{equation*}%
where $\mathbf{D}$ is the Dirichlet boundary condition and $\Gamma $ is the
set of all the separated boundary conditions $S_{\alpha ,\beta }.$

\begin{theorem}
\label{3.18 copy(1)}The function $\lambda _{0}$ on $\mathscr{B}_{S}^{%
\mathbb{R}
}$ is continuous on $\mathscr{B}_{S}^{%
\mathbb{R}
}\backslash \mathscr{K}^{%
\mathbb{R}
}$ and discontinuous at each point of $\mathscr{K}^{%
\mathbb{R}
}$. For $n\in
\mathbb{N}
$, the function $\lambda _{n}$ is continuous on $\mathscr{B}_{S}^{%
\mathbb{R}
}\backslash \mathscr{K}^{%
\mathbb{R}
}$ and at each coupled boundary condition in $\mathscr{K}^{%
\mathbb{R}
}$ where $\lambda _{n}=\lambda _{n-1}$ \ and discontinuous at any other
point of $\mathscr{K}^{%
\mathbb{R}
}$. More precisely, for each coupled \ boundary condition $\mathbf{A}\in %
\mathscr{K}^{%
\mathbb{R}
}$, the restriction of $\lambda _{n}$ to $\mathscr{F}_{-}^{%
\mathbb{R}
}$ is continuous at $\mathbf{A}$ \ for $n\in
\mathbb{N}
_{0}$ and%
\begin{equation}
\lim\limits_{\mathscr{F}_{+}^{%
\mathbb{R}
}\ni \mathbf{B}\rightarrow \mathbf{A}}\lambda _{0}(\mathbf{B})=-\infty ,%
\text{ \ \ \ \ \ \ \ }\lim\limits_{\mathscr{F}_{+}^{%
\mathbb{R}
}\ni \mathbf{B}\rightarrow \mathbf{A}}\lambda _{n}(\mathbf{B})=\lambda
_{n-1}(\mathbf{A})\text{ for }n\in
\mathbb{N}
;\text{ }  \label{3.40}
\end{equation}%
for each $\mathbf{A}\in \mathscr{K}^{%
\mathbb{R}
}\cap \mathscr{G}_{-}^{%
\mathbb{R}
}\cap \Gamma $, the restriction of $\lambda _{n}$ to $\mathscr{G}_{-}^{%
\mathbb{R}
}$ is continuous at $\mathbf{A}$ for $n\in
\mathbb{N}
_{0}$ and%
\begin{equation}
\lim\limits_{\mathscr{G}_{+}^{%
\mathbb{R}
}\ni \mathbf{B}\rightarrow \mathbf{A}}\lambda _{0}(\mathbf{B})=-\infty ,%
\text{ \ \ \ \ \ \ \ }\lim\limits_{\mathscr{G}_{+}^{%
\mathbb{R}
}\ni \mathbf{B}\rightarrow \mathbf{A}}\lambda _{n}(\mathbf{B})=\lambda
_{n-1}(\mathbf{A})\text{ for }n\in
\mathbb{N}
;\text{ }  \label{3.41}
\end{equation}%
for $\mathbf{A}\in \mathscr{K}^{%
\mathbb{R}
}\cap \mathscr{H}_{-}^{%
\mathbb{R}
}\cap \Gamma $, the restriction of $\lambda _{n}$ to $\mathscr{H}_{-}^{%
\mathbb{R}
}$ is continuous at $\mathbf{A}$ for $n\in
\mathbb{N}
_{0}$ and%
\begin{equation}
\lim\limits_{\mathscr{H}_{+}^{%
\mathbb{R}
}\ni \mathbf{B}\rightarrow \mathbf{A}}\lambda _{0}(\mathbf{B})=-\infty ,%
\text{ \ \ \ \ \ \ \ }\lim\limits_{\mathscr{H}_{+}^{%
\mathbb{R}
}\ni \mathbf{B}\rightarrow \mathbf{A}}\lambda _{n}(\mathbf{B})=\lambda
_{n-1}(\mathbf{A})\text{ for }n\in
\mathbb{N}
;\text{ }  \label{3.42}
\end{equation}%
while the restriction $\lambda _{n}$ to $\mathscr{I}_{-}^{%
\mathbb{R}
}$ is continuous at the Dirichlet boundary condition $\mathbf{D}$ for $n\in
\mathbb{N}
_{0}$ and%
\begin{eqnarray}
\lim\limits_{\mathscr{I}_{0}^{%
\mathbb{R}
}\cup \mathscr{I}_{+}^{%
\mathbb{R}
}\ni \mathbf{B}\rightarrow \mathbf{D}}\lambda _{0}(\mathbf{B})
&=&\lim\limits_{\mathscr{I}_{+}^{%
\mathbb{R}
}\ni \mathbf{B}\rightarrow \mathbf{D}}\lambda _{1}(\mathbf{B})=-\infty ,%
\text{ \ \ \ \ \ }  \label{3.43} \\
\text{\ \ }\lim\limits_{\mathscr{I}_{0}^{%
\mathbb{R}
}\ni \mathbf{B}\rightarrow \mathbf{D}}\lambda _{n}(\mathbf{B}) &=&\lambda
_{n-1}(\mathbf{D})\text{ for }n\in
\mathbb{N}
,  \label{3.44} \\
\text{ }\lim\limits_{\mathscr{I}_{+}^{%
\mathbb{R}
}\ni \mathbf{B}\rightarrow \mathbf{D}}\lambda _{n}(\mathbf{B}) &=&\lambda
_{n-2}(\mathbf{D})\text{ for }n\geq 2.  \label{3.45}
\end{eqnarray}
\end{theorem}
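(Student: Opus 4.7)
The proof divides into two parts: continuity of $\lambda_n$ on the stated set, and one-sided limits at the discontinuity points. Continuity of $\lambda_n$ on $\mathscr{B}_S^{\mathbb{R}}\setminus\mathscr{K}^{\mathbb{R}}$ is Proposition~\ref{3.10}. Continuity of the restrictions of $\lambda_n$ to each of $\mathscr{F}_-^{\mathbb{R}},\mathscr{G}_-^{\mathbb{R}},\mathscr{H}_-^{\mathbb{R}},\mathscr{I}_-^{\mathbb{R}}$ at the points of $\mathscr{K}^{\mathbb{R}}$ (respectively at $\mathbf{D}$) is Proposition~\ref{3.18}. At a coupled $\mathbf{A}\in\mathscr{K}^{\mathbb{R}}$ where $\lambda_n(\mathbf{A})=\lambda_{n-1}(\mathbf{A})$ (which holds for $n\geq 1$ by Theorem~\ref{L7}), continuity of the full $\lambda_n$ at $\mathbf{A}$ follows once the one-sided limits from the ``$+$''-side are shown to coincide with $\lambda_{n-1}(\mathbf{A})$.

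My plan for computing the one-sided limits is to port the corresponding classical results for regular Sturm--Liouville problems, proved in \cite{CC5}, to the present distributional setting via the norm resolvent approximation of Lemma~\ref{L13}. For the smooth approximants $\omega_m=(1/p_m,q,r,s_m)$ constructed in Lemma~\ref{12 copy(1)}, the operators $L_m=L(\omega_m)$ are classical regular Sturm--Liouville operators, and the regular-case versions of (\ref{3.40})--(\ref{3.45}) for $L_m$ are supplied by \cite{CC5}. Lemma~\ref{norm resovent} then provides the pointwise convergence $\lambda_n(\omega_m,\mathbf{B})\to\lambda_n(\omega_0,\mathbf{B})$ as $m\to\infty$ for every fixed $\mathbf{B}\in\mathscr{B}_S^{\mathbb{R}}$.

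Rather than attempting to interchange the iterated limits $m\to\infty$ and $\mathbf{B}\to\mathbf{A}$ directly, I would use the eigenvalue sandwich of Theorem~\ref{L7} applied within $\omega_0$ itself. For any $\mathbf{B}=[K_\mathbf{B}\,|\,-I]\in\mathscr{F}_+^{\mathbb{R}}$ close to a coupled $\mathbf{A}=[K\,|\,-I]\in\mathscr{K}^{\mathbb{R}}$ (so $k_{12}=0$), one invokes Theorem~\ref{L7}(d) to replace $K_\mathbf{B}$ by $-K_\mathbf{B}$ when necessary and reads off from (\ref{tt}) or (\ref{ww}) a chain of inequalities of the form
\begin{equation*}
\mu_{n-1}(K_\mathbf{B})\leq\lambda_n(\omega_0,\mathbf{B})\leq\mu_n(K_\mathbf{B}),
\end{equation*}
or the analogous one with $\nu_j$. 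The separated eigenvalues $\mu_j,\nu_j$ depend continuously on $K_\mathbf{B}$ by Theorem~\ref{L6}, and as $K_\mathbf{B}\to K$ with $k_{12}\to 0$ the bracketing values collapse to $\lambda_{n-1}(\omega_0,\mathbf{A})$, which yields (\ref{3.40}) for $n\geq 1$. For $n=0$, Corollary~\ref{L11} delivers $\lambda_0(\omega_0,\mathbf{B})\to-\infty$ through the collapse of an associated separated boundary angle to $\pi$ or $0$. Formulas (\ref{3.41})--(\ref{3.45}) follow by parallel reasoning in the other three charts; the Dirichlet case (\ref{3.43})--(\ref{3.45}) is distinguished by the fact that both endpoints degenerate simultaneously, so Corollary~\ref{L11} forces both $\lambda_0$ and $\lambda_1$ to diverge to $-\infty$ through $\mathscr{I}_+^{\mathbb{R}}$.

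The main obstacle is the bookkeeping: in each of the four ``$+$'' charts one must identify the correct case of Theorem~\ref{L7}, apply case (d) with the correct sign choice, and verify that the sandwiching separated eigenvalues of $\omega_0$ collapse exactly to $\lambda_{n-1}(\mathbf{A})$ (respectively to $\lambda_{n-2}(\mathbf{D})$ in the doubly-degenerate Dirichlet case) rather than to a neighboring eigenvalue. This is delicate but closely parallels the argument given in \cite{CC5} for the regular case; the only new ingredient required for distributional potentials is the continuity and strict monotonicity of the separated eigenvalues established in Theorem~\ref{L6} and Corollary~\ref{L11}, both of which are now available thanks to the norm resolvent framework developed in Sections~1 and~2.
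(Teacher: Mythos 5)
Your decomposition is the paper's: Propositions \ref{3.10} and \ref{3.18} dispose of the continuity assertions, and the content is the one-sided limits $(\ref{3.40})$--$(\ref{3.45})$. For $(\ref{3.40})$ your sandwich argument is exactly what the paper does: for $\mathbf{L}=[L|-I]\in\mathscr{F}_{+}^{\mathbb{R}}$ near $\mathbf{K}=[K|-I]$ it traps $\lambda_n(\mathbf{L})$ between the separated eigenvalues $\mu_j(L),\nu_j(L)$ via Theorem \ref{L7} and sends $\mu_0(L)\to-\infty$, $\mu_j(L)\to\mu_{j-1}(K)$, $\nu_j(L)\to\nu_j(K)$ by Corollary \ref{L11}. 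For $(\ref{3.41})$--$(\ref{3.45})$, however, the paper switches to a different technique: it fixes the regular approximants $\omega_m$, imports the corresponding limit formulas from \cite{CC5} for each $m$, and transfers them to $\omega_0$ along locally unique continuous eigenvalue branches with a triangle inequality that is uniform in the boundary condition (the same two-$\delta$ scheme as in Proposition \ref{3.10}), together with a separate contradiction argument for $\lambda_0\to-\infty$. Your plan pushes the sandwich through all four charts instead; that is viable (and the paper's closing Remark acknowledges the \cite{CC5}-style route works throughout), but it is not the proof the paper gives for those three charts.

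Two points in your sketch need to be supplied for the sandwich route to close. First, the bracketing values do not literally ``collapse to $\lambda_{n-1}(\mathbf{A})$'': the inequalities $(\ref{555})$ only confine $\lambda_n(\mathbf{L})$ to an interval whose endpoints converge to an interval in which $\lambda_{n-1}(\mathbf{K})$ is the unique eigenvalue of $\mathbf{K}$. One must still show that every limit point of $\lambda_n(\mathbf{B}_k)$ is an eigenvalue of $\mathbf{A}$; the paper obtains this from Lemma \ref{L1} and Lemma \ref{continuous} (joint continuity of the characteristic function) before invoking uniqueness. Second, in the $\mathscr{G}$, $\mathscr{H}$ and $\mathscr{I}$ charts the limit condition $\mathbf{A}$ is \emph{separated}, so the matrices $K_{\mathbf{B}}$ representing nearby coupled conditions do not converge to any $K\in\mathrm{SL}(2,\mathbb{R})$ --- their entries diverge --- and the phrase ``$K_{\mathbf{B}}\to K$ with $k_{12}\to 0$'' does not describe those charts; identifying which $\mu_j(K_{\mathbf{B}}),\nu_j(K_{\mathbf{B}})$ converge to which eigenvalues of $\mathbf{A}$ there is the genuinely delicate part of the bookkeeping you defer. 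With those two items made precise your argument is correct; the paper's branch-transfer argument avoids the second issue entirely at the cost of leaning on Lemma \ref{norm resovent}.
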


\begin{proof}
By Proposition \ref{3.10} and \ref{3.18}, we only need to prove $(\ref{3.40}%
)-(\ref{3.45})$.

Fix a $K\in \mathit{\mathrm{SL}}(2,%
\mathbb{R}
)$ with $k_{11}>0$\textit{\ }and\textit{\ }$k_{12}=0.$ When $\mathbf{L=[}%
L\left\vert -I\right. \mathbf{]\in }\mathscr{F}_{+}^{%
\mathbb{R}
}$ is sufficiently close to $\mathbf{K=[}K\left\vert -I\right. \mathbf{]\in }%
\mathscr{K}^{%
\mathbb{R}
}$, we have $l_{11}>0$ and\textit{\ }$l_{12}>0.$ Part $(a)$ and $(b)$ of
Theorem \ref{L7} implies
\begin{eqnarray}
\lambda _{0}(L) &\leq &\{\mu _{0}(L),\nu _{0}(L)\},  \label{555} \\
\left\{ \mu _{2n}(L),\nu _{2n}(L)\right\} &\leq &\lambda _{2n+1}(L)<\left\{
\mu _{2n+1}(L),\nu _{2n+1}(L)\right\} ,  \notag \\
\left\{ \mu _{2n+1}(L),\nu _{2n+1}(L)\right\} &<&\lambda _{2n+2}(L)\leq
\{\mu _{2n+2}(L),\nu _{2n+2}(L)\},  \notag \\
\nu _{0}(K) &\leq &\lambda _{0}(K)<\{\mu _{0}(K),\nu _{1}(K)\},  \notag \\
\left\{ \mu _{2n}(K),\nu _{2n+1}(K)\right\} &<&\lambda _{2n+1}(K)\leq
\left\{ \mu _{2n+1}(K),\nu _{2n+2}(K)\right\} ,  \notag \\
\left\{ \mu _{2n+1}(K),\nu _{2n+2}(K)\right\} &\leq &\lambda
_{2n+2}(K)<\{\mu _{2n+2}(K),\nu _{2n+3}(K)\},  \notag
\end{eqnarray}%
where $\mu _{0}(L)$ and $\nu _{n}(L)$ are the eigenvalues for the separated
boundary conditions%
\begin{equation*}
\left[
\begin{array}{cccc}
1 & 0 & 0 & 0 \\
0 & 0 & l_{22} & -l_{12}%
\end{array}%
\right] \text{ and }\left[
\begin{array}{cccc}
1 & 0 & 0 & 0 \\
0 & 0 & -l_{21} & l_{11}%
\end{array}%
\right] ,
\end{equation*}%
respectively. By Corollary \ref{L11}, $\mu _{0}(L)\rightarrow -\infty $ and $%
\nu _{n}(L)\rightarrow \nu _{n}(K)$ as $\mathbf{L}$ in $\mathscr{F}_{+}^{%
\mathbb{R}
}$ approaches $\mathbf{K,}$ since then $l_{12}\rightarrow 0^{+},$ $%
l_{22}\rightarrow k_{22}>0$ and $l_{11}\rightarrow k_{11}>0.$ Thus from $(%
\ref{555}),$ it follows that%
\begin{equation*}
\lim\limits_{\mathscr{F}_{+}^{%
\mathbb{R}
}\ni \mathbf{L}\rightarrow \mathbf{K}}\lambda _{0}(\mathbf{L})=-\infty .
\end{equation*}

Assume $\lim\limits_{\mathscr{F}_{+}^{%
\mathbb{R}
}\ni \mathbf{L}\rightarrow \mathbf{K}}\lambda _{n}(\mathbf{L})=\lambda
_{n-1}(\mathbf{K})$ is false for $n\in
\mathbb{N}
$, \ from $(\ref{555}),$ we can find a sequence $\left\{ \mathbf{L}%
_{k}\right\} \subset \mathscr{F}_{+}^{%
\mathbb{R}
}$ such that
\begin{equation*}
\lambda _{n}(\mathbf{L}_{k})\rightarrow c\neq \lambda _{n-1}(\mathbf{K}),%
\text{ as }\mathbf{L}_{k}\rightarrow \mathbf{K,}
\end{equation*}
where $c$ is a constant$.$ From Lemma \ref{L1}, Lemma \ref{continuous}, and $%
(\ref{555})$, one deduces that $c$ is an eigenvalue for $\mathbf{K}$\textbf{%
\ }and $\nu _{n-1}(K)\leq c\leq \nu _{n}(K).$ This contrary implies%
\begin{equation*}
\lim\limits_{\mathscr{F}_{+}^{%
\mathbb{R}
}\ni \mathbf{L}\rightarrow \mathbf{K}}\lambda _{n}(\mathbf{L})=\lambda
_{n-1}(\mathbf{K})\text{ for }n\in
\mathbb{N}
.
\end{equation*}%
Similarly, we prove $(\ref{3.40})$ for $K\in \mathit{\mathrm{SL}}(2,%
\mathbb{R}
)$ with $k_{11}<0$\textit{\ }and\textit{\ }$k_{12}=0.$

Next, we will prove $(\ref{3.41}),$ denote $\omega _{m}=(\frac{1}{p_{m}}%
,q,r,s_{m})\in \dot{\Omega},$ $\omega _{0}=(\frac{1}{p},q,r,s)\in \dot{\Omega%
}.$ For every $\mathbf{A\in }\mathscr{B}_{s}^{%
\mathbb{R}
},$ the eigenvalues $\lambda _{n}(\omega _{0},\mathbf{A})$ and $\lambda
_{n}(\omega _{m},\mathbf{A})$ of $L$ and $L_{m}$ are well defined,
respectively.

Let us consider $\mathbf{A}\in \mathscr{K}^{%
\mathbb{R}
}\cap \mathscr{G}_{-}^{%
\mathbb{R}
}\cap \Gamma ,$ and an arbitrary point $\mathbf{B}\in \mathscr{G}_{+}^{%
\mathbb{R}
},$
\begin{eqnarray}
\left\vert \lambda _{n}(\omega _{0},\mathbf{B})-\lambda _{n-1}(\omega _{0},%
\mathbf{A})\right\vert &\leq &\left\vert \lambda _{n}(\omega _{0},\mathbf{B}%
)-\lambda _{n}(\omega _{m},\mathbf{B})\right\vert  \label{6.2} \\
&&+\left\vert \lambda _{n}(\omega _{m},\mathbf{B})-\lambda _{n-1}(\omega
_{0},\mathbf{A})\right\vert .  \notag
\end{eqnarray}%
By Lemma \ref{L4}, the eigenvalues for $\mathbf{A}$ are all simple$.$
Suppose $\Lambda _{n}$ is the continuous simple eigenvalue branch through
the eigenvalue $\lambda _{n-1}(\omega _{0},\mathbf{A})$ defined on a
connected neighborhood $O$ of $(\omega _{0},\mathbf{A})$ in $\dot{\Omega}%
\times \left\{ \mathscr{G}_{+}^{%
\mathbb{R}
}\cup \left\{ \mathbf{A}\right\} \right\} $.

By Lemma \ref{norm resovent},
\begin{equation}
\lambda _{n-1}(\omega _{m},\mathbf{A})\rightarrow \lambda _{n-1}(\omega _{0},%
\mathbf{A}),\text{ as }m\rightarrow \infty .  \label{1.110}
\end{equation}%
Hence the simplicity of $\Lambda _{n}$ implies that $\Lambda _{n}(\omega
_{m},\mathbf{A})=\lambda _{n-1}(\omega _{m},\mathbf{A})$ when $m$ is
sufficiently large. It is clear that for sufficiently large $m,$ $\Lambda
_{n}(\omega _{m},\mathbf{B})$ is a continuous eigenvalue branch through $%
\lambda _{n-1}(\omega _{m},\mathbf{A})$ defined on $O.$ Thus, from the
simplicity of $\Lambda _{n},$ the continuity of $\lambda _{n}(\omega _{m},%
\mathbf{B})$ on $\mathscr{G}_{+}^{%
\mathbb{R}
}$ and the fact $\lim\limits_{\mathscr{G}_{+}^{%
\mathbb{R}
}\ni \mathbf{B}\rightarrow \mathbf{A}}\lambda _{n}(\omega _{m},\mathbf{B}%
)=\lambda _{n-1}(\omega _{m},\mathbf{A})$ for $n\in
\mathbb{N}
$ as was proved in \cite{CC5}, one deduces that
\begin{equation*}
\Lambda _{n}(\omega _{m},\mathbf{B})=\lambda _{n}(\omega _{m},\mathbf{B})
\end{equation*}
for sufficiently large $m$ and $(\omega _{m},\mathbf{B})\in O.$

For an arbitrary $\epsilon >0,$ there exists a $\delta _{1}>0\ $such that if
\begin{equation*}
\left\Vert \mathbf{B-A}\right\Vert <\delta _{1}/2,\int_{a}^{b}\left(
\left\vert \frac{1}{p_{m}}-\frac{1}{p}\right\vert +\left\vert
s_{m}-s\right\vert \right) <\delta _{1}/2,
\end{equation*}%
then%
\begin{equation*}
\left\vert \lambda _{n}(\omega _{m},\mathbf{B})-\lambda _{n-1}(\omega _{0},%
\mathbf{A})\right\vert =\left\vert \Lambda _{n}(\omega _{m},\mathbf{B}%
)-\lambda _{n-1}(\omega _{0},\mathbf{A})\right\vert <\epsilon /2.
\end{equation*}%
Moreover, for such a $\epsilon >0$ and a fixed point $\mathbf{B}\in %
\mathscr{G}_{+}^{%
\mathbb{R}
},$ there exists a $\delta _{2}>0$ such that if
\begin{equation*}
\int_{a}^{b}\left( \left\vert \frac{1}{p_{m}}-\frac{1}{p}\right\vert
+\left\vert s_{m}-s\right\vert \right) <\delta _{2}/2,
\end{equation*}%
then%
\begin{equation*}
\left\vert \lambda _{n}(\omega _{m},\mathbf{B})-\lambda _{n}(\omega _{0},%
\mathbf{B})\right\vert <\epsilon /2.
\end{equation*}%
Thus for an arbitrary $\epsilon >0,$ there exists a $\delta =\delta _{1}/2\ $%
such that if
\begin{equation*}
\left\Vert \mathbf{B-A}\right\Vert <\delta ,
\end{equation*}%
then%
\begin{eqnarray*}
\left\vert \lambda _{n}(\omega _{0},\mathbf{B})-\lambda _{n-1}(\omega _{0},%
\mathbf{A})\right\vert &\leq &\left\vert \lambda _{n}(\omega _{0},\mathbf{B}%
)-\lambda _{n}(\omega _{m},\mathbf{B})\right\vert \\
&&+\left\vert \lambda _{n}(\omega _{m},\mathbf{B})-\lambda _{n-1}(\omega
_{0},\mathbf{A})\right\vert \\
&<&\epsilon .
\end{eqnarray*}%
So it is a direct result that%
\begin{equation*}
\ \lim\limits_{\mathscr{G}_{+}^{%
\mathbb{R}
}\ni \mathbf{B}\rightarrow \mathbf{A}}\lambda _{n}(\mathbf{B})=\lambda
_{n-1}(\mathbf{A})\text{ for }n\in
\mathbb{N}
.
\end{equation*}%
Assume that $\lim\limits_{\mathscr{G}_{+}^{%
\mathbb{R}
}\ni \mathbf{B}\rightarrow \mathbf{A}}\lambda _{0}(\mathbf{B})=-\infty $ is
false, then there exists a sequence $\{\mathbf{B}_{k}\}\subset \mathscr{G}%
_{+}^{%
\mathbb{R}
}$ such that $\{\lambda _{0}(\mathbf{B}_{k})\}$ is bounded. Without loss of
generality, assume $\lambda _{0}(\mathbf{B}_{k})\rightarrow c$ ($c$ is a
constant) as $\mathbf{B}_{k}\rightarrow \mathbf{A.}$ From Lemma \ref{L1},
Lemma \ref{continuous}, and the fact that $\lambda _{0}(\mathbf{A})$ is a
simple eigenvalue, one deduces that $c$ is an eigenvalue for $\mathbf{A}$%
\textbf{\ }and $c<\lambda _{0}(\mathbf{A}).$ This contrary implies
\begin{equation*}
\lim\limits_{\mathscr{G}_{+}^{%
\mathbb{R}
}\ni \mathbf{B}\rightarrow \mathbf{A}}\lambda _{0}(\mathbf{B})=-\infty .
\end{equation*}%
Similarly, one proves $(\ref{3.42}),(\ref{3.43}),(\ref{3.44}),(\ref{3.45}).$
\end{proof}

In order to describe the discontinuity set of $\lambda _{n}$ as a function
on $\mathscr{B}_{S}^{%
\mathbb{C}
}$, we set%
\begin{eqnarray*}
\mathscr{F}_{-}^{%
\mathbb{C}
} &=&\left\{ \left[ e^{i\theta }K\left\vert -I\right. \right] ;K\in \mathit{%
\mathrm{SL}}(2,%
\mathbb{R}
),k_{11}k_{12}\leq 0\right\} , \\
\mathscr{G}_{-}^{%
\mathbb{C}
} &=&\left\{ \left[
\begin{array}{cccc}
a_{1} & 1 & 0 & -\bar{z} \\
z & 0 & -1 & b_{2}%
\end{array}%
\right] ;b_{2}\leq 0,\text{ }a_{1}\in
\mathbb{R}
,z\in
\mathbb{C}
\right\} , \\
\mathscr{H}_{-}^{%
\mathbb{C}
} &=&\left\{ \left[
\begin{array}{cccc}
1 & a_{2} & -\bar{z} & 0 \\
0 & z & b_{1} & -1%
\end{array}%
\right] ;a_{2}\leq 0,\text{ }b_{1}\in
\mathbb{R}
,z\in
\mathbb{C}
\right\} , \\
\mathscr{I}_{-}^{%
\mathbb{C}
} &=&\left\{ \left[
\begin{array}{cccc}
1 & a_{2} & 0 & \bar{z} \\
0 & z & -1 & b_{2}%
\end{array}%
\right] ;a_{2},b_{2}\leq 0,z\in
\mathbb{C}
,a_{2}b_{2}\geq z\overline{z}\right\} , \\
\mathscr{K}^{%
\mathbb{C}
} &=&\left\{ \left[ e^{i\theta }K\left\vert -I\right. \right] ;K\in \mathit{%
\mathrm{SL}}(2,%
\mathbb{R}
),k_{12}=0,\theta \in \lbrack 0,\pi )\right\} \\
&&\text{\ }\cup \left\{ \left[
\begin{array}{cccc}
a_{1} & a_{2} & 0 & 0 \\
0 & 0 & b_{1} & b_{2}%
\end{array}%
\right] \in \mathscr{B}_{s}^{%
\mathbb{R}
};a_{2}b_{2}=0\right\} . \\
\mathscr{F}_{+}^{%
\mathbb{C}
} &=&\mathscr{O}_{6,S}^{%
\mathbb{C}
}\backslash \mathscr{F}_{-}^{%
\mathbb{C}
},\text{ \ }\mathscr{G}_{+}^{%
\mathbb{C}
}=\mathscr{O}_{4,S}^{%
\mathbb{C}
}\backslash \mathscr{G}_{-}^{%
\mathbb{C}
},\text{ }\mathscr{H}_{+}^{%
\mathbb{C}
}=\mathscr{O}_{3,S}^{%
\mathbb{C}
}\backslash \mathscr{H}_{-}^{%
\mathbb{C}
}, \\
\mathscr{I}_{+}^{%
\mathbb{C}
} &=&\left\{ \left[
\begin{array}{cccc}
1 & a_{2} & 0 & \bar{z} \\
0 & z & -1 & b_{2}%
\end{array}%
\right] ;a_{2},b_{2}>0,\text{ }z\in
\mathbb{C}
,a_{2}b_{2}>z\overline{z}\right\} , \\
\mathscr{I}_{0}^{%
\mathbb{C}
} &=&\mathscr{O}_{2,S}^{%
\mathbb{C}
}\backslash (\mathscr{I}_{-}^{%
\mathbb{C}
}\cup \mathscr{I}_{+}^{%
\mathbb{C}
}).
\end{eqnarray*}

The proofs of the following results are similar to those of Proposition \ref%
{3.10}. \ref{3.18} and Theorem \ref{3.18 copy(1)}, so we omit them.

\begin{proposition}
Let $n\in
\mathbb{N}
_{0}.$Then as a function on the space $\mathscr{B}_{s}^{%
\mathbb{C}
},$ $\lambda _{n}$ is continuous at each point not in $\mathscr{K}^{%
\mathbb{C}
}.$
\end{proposition}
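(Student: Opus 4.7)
The plan is to mirror the argument of Proposition \ref{3.10} almost verbatim, using the approximation scheme $L_m\overset{R}{\Longrightarrow}L$ from Lemma \ref{L13} and the corresponding continuity statement for regular Sturm--Liouville problems with complex self-adjoint boundary conditions (the analog of \cite{CC5} for the complex case, established for the $L_m$). I would set $\omega_m=(1/p_m,q,r,s_m)\in\dot{\Omega}$ and $\omega_0=(1/p,q,r,s)\in\dot{\Omega}$, and fix an arbitrary $\mathbf{A}_0\in\mathscr{B}_S^{\mathbb{C}}\setminus\mathscr{K}^{\mathbb{C}}$. I want to show $\lambda_n(\omega_0,\mathbf{A})\to\lambda_n(\omega_0,\mathbf{A}_0)$ as $\mathbf{A}\to\mathbf{A}_0$ in $\mathscr{B}_S^{\mathbb{C}}$.

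First I would separate two cases according to the multiplicity of $\lambda_n(\omega_0,\mathbf{A}_0)$. If the eigenvalue is simple, Lemma \ref{lianxuxing} together with Remark \ref{alge} furnishes a locally unique continuous eigenvalue branch $\Lambda_n$ through $\lambda_n(\omega_0,\mathbf{A}_0)$ defined on a connected neighborhood $O$ of $(\omega_0,\mathbf{A}_0)$ in $\dot{\Omega}\times\mathscr{B}_S^{\mathbb{C}}$. If instead $\lambda_n(\omega_0,\mathbf{A}_0)=\lambda_{n+1}(\omega_0,\mathbf{A}_0)$ is geometrically (hence algebraically) double, I would take two continuous branches $\Lambda_n\le\Lambda_{n+1}$ through this value on such a neighborhood $O$; by shrinking $O$ and using the explicit description of $\mathscr{B}_S^{\mathbb{C}}\setminus\mathscr{K}^{\mathbb{C}}$ through the charts $\mathscr{O}_{i,S}^{\mathbb{C}}$, we may assume $O$ avoids $\mathscr{K}^{\mathbb{C}}$.

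Next I would invoke Lemma \ref{norm resovent} to conclude $\lambda_n(\omega_m,\mathbf{A}_0)\to\lambda_n(\omega_0,\mathbf{A}_0)$ (and similarly for $n+1$ in the double case), so that for $m$ sufficiently large, $\Lambda_n(\omega_m,\mathbf{A}_0)=\lambda_n(\omega_m,\mathbf{A}_0)$ (and $\Lambda_{n+1}(\omega_m,\mathbf{A}_0)=\lambda_{n+1}(\omega_m,\mathbf{A}_0)$). Since each $L_m$ is a regular Sturm--Liouville operator, the known continuity of the $n$-th eigenvalue on $\mathscr{B}_S^{\mathbb{C}}\setminus\mathscr{K}^{\mathbb{C}}$ for regular problems (the complex analog of Proposition \ref{3.10}, available by the same gluing argument as in \cite{8,CC5}) forces $\Lambda_n(\omega_m,\mathbf{A})=\lambda_n(\omega_m,\mathbf{A})$ throughout $O$ for large $m$. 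I can then close the argument with a standard $\varepsilon/2$ triangle inequality: given $\varepsilon>0$, choose $\delta_1>0$ so that $\|\mathbf{A}-\mathbf{A}_0\|<\delta_1/2$ and $\int_a^b(|1/p_m-1/p|+|s_m-s|)<\delta_1/2$ force $|\Lambda_n(\omega_m,\mathbf{A})-\lambda_n(\omega_0,\mathbf{A}_0)|<\varepsilon/2$; then for each fixed such $\mathbf{A}$, Lemma \ref{norm resovent} provides $m$ large enough that $|\lambda_n(\omega_m,\mathbf{A})-\lambda_n(\omega_0,\mathbf{A})|<\varepsilon/2$. Adding gives $|\lambda_n(\omega_0,\mathbf{A})-\lambda_n(\omega_0,\mathbf{A}_0)|<\varepsilon$ whenever $\|\mathbf{A}-\mathbf{A}_0\|<\delta_1/2$.

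The hard part, I expect, is not any single estimate but keeping the bookkeeping of eigenvalue branches honest in the double-eigenvalue case: one must be sure that the two branches $\Lambda_n,\Lambda_{n+1}$ furnished on $O$ really coincide with the $n$-th and $(n+1)$-st eigenvalues of $L_m$ simultaneously (rather than being overtaken by a third branch coming from outside the neighborhood), and that this identification is stable as $\mathbf{A}$ moves in $\mathscr{B}_S^{\mathbb{C}}\setminus\mathscr{K}^{\mathbb{C}}$. This is precisely where one uses the fact that $\mathbf{A}_0\notin\mathscr{K}^{\mathbb{C}}$: near such $\mathbf{A}_0$ the spectrum is bounded below uniformly in $(\omega_m,\mathbf{A})$ locally (thanks to Lemma \ref{L0 copy(1)} combined with the local compactness of the relevant chart in $\mathscr{B}_S^{\mathbb{C}}$), so eigenvalues cannot escape to $-\infty$ and the counting argument of Lemma \ref{princeple} applies, identifying $\Lambda_n$ and $\Lambda_{n+1}$ with $\lambda_n$ and $\lambda_{n+1}$ throughout $O$. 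The remaining steps are then routine imitations of the real case in Proposition \ref{3.10}.
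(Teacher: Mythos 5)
Your proposal is correct and follows essentially the same route as the paper: the authors explicitly omit the proof of this proposition, stating it is similar to that of Proposition \ref{3.10}, and your argument is a faithful transcription of that proof to the complex boundary-condition space (same branch-tracking via Lemma \ref{lianxuxing} and Remark \ref{alge}, the same use of Lemma \ref{norm resovent} to identify $\Lambda_n(\omega_m,\cdot)$ with $\lambda_n(\omega_m,\cdot)$, and the same $\varepsilon/2$ triangle inequality). Your closing remarks on the double-eigenvalue bookkeeping match the role that the multiplicity argument and the regular-case continuity from \cite{CC5} play in the paper's real-case proof.
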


\begin{proposition}
For every $n\in
\mathbb{N}
_{0},$ the restriction of $\lambda _{n}$ to each of $\mathscr{F}_{-}^{%
\mathbb{C}
},\mathscr{G}_{-}^{%
\mathbb{C}
},\mathscr{H}_{-}^{%
\mathbb{C}
}$ and $\mathscr{I}_{-}^{%
\mathbb{C}
}$ is continuous.
\end{proposition}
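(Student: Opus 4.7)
The plan is to imitate the norm resolvent approximation scheme used in Proposition \ref{3.10}, Proposition \ref{3.18} and Theorem \ref{3.18 copy(1)}, adapted to the complex self-adjoint setting. The key observation is that each of $\mathscr{F}_{-}^{\mathbb{C}},\mathscr{G}_{-}^{\mathbb{C}},\mathscr{H}_{-}^{\mathbb{C}},\mathscr{I}_{-}^{\mathbb{C}}$ is the complex analogue of the corresponding real set, and the defining sign conditions are precisely those that guarantee, via step (A)--(C) of the proof of Lemma \ref{L0 copy(1)}, that the lowest eigenvalue of the approximating operators $L_m$ remains uniformly semi-bounded from below as $\mathbf{A}$ varies in a small neighborhood of any fixed boundary condition in the set.

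I would fix $\omega_{0}=(1/p,q,r,s)\in\dot{\Omega}$ and $\mathbf{A}_{0}$ in one of the four sets, say $\mathscr{F}_{-}^{\mathbb{C}}$ (the other three cases are handled identically). Using Lemma \ref{12 copy(1)} I would pick mollified coefficients $(1/p_{m},s_{m})$ approximating $(1/p,s)$ in $L(J,\mathbb{R})$, so that by Lemma \ref{L13} the regular-potential operators $L_{m}$ converge to $L$ in norm resolvent sense, and by Lemma \ref{norm resovent} one has $\lambda_{n}(\omega_{m},\mathbf{A})\to\lambda_{n}(\omega_{0},\mathbf{A})$ for every fixed $\mathbf{A}$. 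The continuity of $\mathbf{A}\mapsto\lambda_{n}(\omega_{m},\mathbf{A})$ on $\mathscr{F}_{-}^{\mathbb{C}}$ for the regular-coefficient operators is a known fact (the complex-coupled version of the argument in \cite{CC5}, made possible because the inequalities in Theorem \ref{L7} hold uniformly on $\mathscr{F}_{-}^{\mathbb{C}}$).

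Next I would invoke the continuous-eigenvalue-branch technique exactly as in the proof of Proposition \ref{3.10}. Lemma \ref{continuous}, Lemma \ref{L1} and Remark \ref{alge} produce, in a small connected neighborhood $O$ of $(\omega_{0},\mathbf{A}_{0})$ inside $\dot{\Omega}\times\mathscr{F}_{-}^{\mathbb{C}}$, continuous eigenvalue branches (counted with multiplicity) through each eigenvalue at $(\omega_{0},\mathbf{A}_{0})$. Combining Lemma \ref{norm resovent} applied at $\mathbf{A}_{0}$ with the uniqueness of such branches for the regular operators $L_{m}$ on $\mathscr{F}_{-}^{\mathbb{C}}$ forces the branch through $\lambda_{n}(\omega_{0},\mathbf{A}_{0})$ to coincide with $\lambda_{n}(\omega_{m},\mathbf{B})$ for all $(\omega_{m},\mathbf{B})\in O$ when $m$ is large. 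A standard triangle inequality splitting
\[
|\lambda_{n}(\omega_{0},\mathbf{B})-\lambda_{n}(\omega_{0},\mathbf{A}_{0})|\le|\lambda_{n}(\omega_{0},\mathbf{B})-\lambda_{n}(\omega_{m},\mathbf{B})|+|\lambda_{n}(\omega_{m},\mathbf{B})-\lambda_{n}(\omega_{0},\mathbf{A}_{0})|
\]
then yields continuity of $\mathbf{A}\mapsto\lambda_{n}(\omega_{0},\mathbf{A})$ at $\mathbf{A}_{0}$, the first summand being small by Lemma \ref{norm resovent} for a fixed $\mathbf{B}$ and the second by the joint continuity of the branch.

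The main obstacle I expect is bookkeeping of multiplicity within each of the four sets: one must know a priori that the continuous eigenvalue branch one is tracking stays inside $\mathscr{F}_{-}^{\mathbb{C}}$ (resp.\ $\mathscr{G}_{-}^{\mathbb{C}},\mathscr{H}_{-}^{\mathbb{C}},\mathscr{I}_{-}^{\mathbb{C}}$) and does not collide with a branch coming from outside. For $\mathscr{F}_{-}^{\mathbb{C}}$ this follows because the defining condition $k_{11}k_{12}\le 0$ together with the inequalities in Theorem \ref{L7} prevents the spectrum from accumulating at $-\infty$; for $\mathscr{G}_{-}^{\mathbb{C}},\mathscr{H}_{-}^{\mathbb{C}},\mathscr{I}_{-}^{\mathbb{C}}$ the analogous uniform semi-boundedness is already encoded in the sign inequalities on $b_{2}$, $a_{2}$, and $a_{2}b_{2}\ge z\bar{z}$, respectively, which guarantee precisely the hypotheses needed to apply Lemma \ref{L0 copy(1)} uniformly on a neighborhood, thereby unlocking the approximation argument above.
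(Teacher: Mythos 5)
Your proposal is correct and takes essentially the same approach as the paper: the paper omits the proof of this proposition, stating only that it is similar to Propositions \ref{3.10}, \ref{3.18} and Theorem \ref{3.18 copy(1)}, and your argument is a faithful reconstruction of exactly that scheme (mollified coefficients, norm resolvent convergence via Lemma \ref{norm resovent}, the regular-potential continuity of the restricted $\lambda_n$ from \cite{CC5}, continuous eigenvalue branches, and the triangle-inequality splitting). The only cosmetic difference is that your closing discussion of uniform semi-boundedness over a whole neighborhood is slightly more than is needed, since the triangle-inequality argument only invokes Lemma \ref{norm resovent} at fixed boundary conditions.
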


\begin{theorem}
The conclusions of Theorem \ref{3.18 copy(1)} still hold when the super
indices $%
\mathbb{R}
$ in them are replaced by $%
\mathbb{C}
.$
\end{theorem}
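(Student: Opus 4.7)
The plan is to mirror the proof of Theorem \ref{3.18 copy(1)} stratum by stratum, substituting each complex family for its real counterpart and invoking the complex-coupled inequalities from Theorem \ref{L7}(c) in place of the real two-sided bound. The continuity claims on $\mathscr{B}_S^{\mathbb{C}}\setminus\mathscr{K}^{\mathbb{C}}$ and on each of $\mathscr{F}_-^{\mathbb{C}},\mathscr{G}_-^{\mathbb{C}},\mathscr{H}_-^{\mathbb{C}},\mathscr{I}_-^{\mathbb{C}}$ are exactly the two complex propositions stated immediately above, so only the four one-sided limit assertions corresponding to $(\ref{3.40})$--$(\ref{3.45})$ require fresh work.

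For each of these limits I would proceed in three stages. First, approximate $L$ by a sequence $L_m=L(p_m,s_m)$ of operators with regular potentials using Lemma \ref{12 copy(1)}, so that $L_m\overset{R}{\Longrightarrow}L$ by Lemma \ref{L13} and the $n$-th eigenvalues converge by Lemma \ref{norm resovent}. Second, for each $L_m$ invoke the complex coupled version of the Kong--Zettl discontinuity theorem from \cite{CC5}, which supplies the desired one-sided limit behaviour together with all the necessary interlacing bounds of Theorem \ref{L7}. Third, pass to the limit $m\to\infty$ by a triangular estimate of the form displayed in $(\ref{6.2})$, combining the approximation error $|\lambda_n(\omega_0,\mathbf{B})-\lambda_n(\omega_m,\mathbf{B})|$ with the known regular-potential limit. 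To pin down the correct value, suppose for contradiction that a subsequence $\lambda_n(\mathbf{B}_k)\to c$ remained bounded as $\mathbf{B}_k\to\mathbf{A}$; by Lemmas \ref{L1} and \ref{continuous} the limit $c$ must be an eigenvalue of $\mathbf{A}$, and the algebraic multiplicity count from Lemma \ref{ZEREO}, coupled with the simplicity of $\lambda_{n-1}(\mathbf{A})$ (valid because the separated part of $\mathscr{K}^{\mathbb{C}}$ is real, so Lemma \ref{L4} applies), forces $c=\lambda_{n-1}(\mathbf{A})$ or yields a contradiction.

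The step I expect to be delicate is the complex version of $(\ref{3.40})$ when the discontinuity point $\mathbf{K}=[e^{i\theta_0}K\,|\,-I]$ has $\theta_0\neq 0$. Corollary \ref{L11} applies only to separated real boundary conditions, so the blow-down $\lambda_0(\mathbf{B})\to-\infty$ cannot be extracted directly from $\alpha,\beta$-monotonicity; instead one must exploit the strict $\gamma$-monotonicity in Theorem \ref{L7}(c) together with the observation that for $\mathbf{B}=[e^{i\theta}L\,|\,-I]\in\mathscr{F}_+^{\mathbb{C}}$ the associated separated eigenvalue $\mu_0(L)$ still diverges to $-\infty$ because the relevant entry $l_{12}\to 0^{+}$. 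Once this is secured, the sandwich $\nu_0(L)\le\lambda_0(L)<\lambda_0(\gamma,L)<\lambda_0(-L)\le\mu_0(L)$ inherited from the real inequalities forces the desired divergence, and the remaining three limits follow by the same template with the appropriate relabelling of indices.
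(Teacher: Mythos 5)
Your proposal is correct and matches the route the paper intends: the paper omits this proof entirely, stating only that it is similar to those of Propositions \ref{3.10}, \ref{3.18} and Theorem \ref{3.18 copy(1)}, and your stratum-by-stratum transcription of the real-case argument (norm-resolvent approximation via Lemmas \ref{L13} and \ref{norm resovent}, the regular-potential results of \cite{CC5}, and the sandwich $\nu _{0}(L)\leq \lambda _{0}(L)<\lambda _{0}(\gamma ,L)<\lambda _{0}(-L)\leq \mu _{0}(L)$ with $\mu _{0}(L)\rightarrow -\infty $ as $l_{12}\rightarrow 0^{+}$) is exactly that transcription, including the correct observation that the separated part of $\mathscr{K}^{\mathbb{C}}$ is real so Lemma \ref{L4} still supplies simplicity there. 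The only nit is that the interlacing you need is already contained in Theorem \ref{L7}(a)--(b), which bound the complex coupled eigenvalues $\lambda _{n}(\gamma ,K)$ by the separated ones, rather than in part (c).
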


\begin{remark}
On the basis of the lemmas and theorems we give in the previous sections of
this paper, we can also use the similar methods in \cite{CC5} for the
Sturm-Liouville problems with regular potentials to prove the propositions
and theorems on the discontinuity set of $\lambda _{n}$ as a function on $%
\mathscr{B}_{S}^{%
\mathbb{R}
}$ or $\mathscr{B}_{S}^{%
\mathbb{C}
}$ in this section. However, in this paper, we supply a simpler method
which relies heavily on Lemma \ref{norm resovent}.
\end{remark}

\section{\protect\bigskip Oscillation theorems}

\begin{theorem}
\label{L9} $(a)$For any $K\in \mathit{\mathrm{SL}}(2,%
\mathbb{R}
)$, let $\psi _{n}(x)$ be a real eigenfunction of $\lambda _{n}(K),\ $then
the number of zeros of $\psi _{n}(x)$ on the interval $[a,b)$ is $0$ or $1$
if $n=0$, and if $n\geq 1,\ $the number is $n-1$, or $n$ or $n+1.\ $

$(b)$For any $K\in \mathit{\mathrm{SL}}(2,%
\mathbb{R}
),\ 0<\gamma <\pi $ or $-\pi <\gamma <0,\ $let $\psi _{n}(x)$ be an
eigenfunction of $\lambda _{n}(\gamma ,K),\ $then the number of zeros of $%
\mathit{\mathrm{Re}}\psi _{n}(x)$ on the interval $[a,b)$ is $0$ or $1$ if $%
n=0$, and if $n\geq 1,\ $the number is $n-1$, or $n$ or $n+1.\ $This claim
also holds for $\mathit{\mathrm{Im}}\psi _{n}(x).\ $Moreover, $\psi _{n}(x)$
has no zeros on $[a,b].\ $
\end{theorem}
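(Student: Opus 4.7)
The plan is to reduce the statement to the corresponding oscillation theorem for Sturm--Liouville operators with regular potentials (which is classical in \cite{CC5}), and then to transfer the zero counts through the norm resolvent approximation scheme developed earlier in this paper.

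For part $(a)$, I would first pick approximants $p_{m}$ and $s_{m}$ as in Lemma \ref{12 copy(1)}, so that the associated regular operators $L_{m}$ satisfy $L_{m}\overset{R}{\Longrightarrow}L$ by Lemma \ref{L13}. Lemma \ref{norm resovent} then gives $\lambda_{n}(K,p_{m},s_{m})\rightarrow \lambda_{n}(K)$, and Lemma \ref{eigenfunction} allows me to select normalized real eigenfunctions $\psi_{n}^{m}$ of $\lambda_{n}(K,p_{m},s_{m})$ such that $\psi_{n}^{m}\rightarrow \psi_{n}$ and $(\psi_{n}^{m})^{[1]}\rightarrow \psi_{n}^{[1]}$ uniformly on $[a,b]$. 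The classical oscillation theorem in \cite{CC5} produces the required zero count for each $\psi_{n}^{m}$, and I propose to transfer this count to $\psi_{n}$ via the Pr\"{u}fer angles $\theta_{m}$ of $\psi_{n}^{m}$ in the sense of Lemma \ref{L4}. The uniform convergence of $(\psi_{n}^{m},(\psi_{n}^{m})^{[1]})$ yields uniform convergence $\theta_{m}\rightarrow\theta$ on $[a,b]$, where $\theta$ is the Pr\"{u}fer angle of $\psi_{n}$. Zeros correspond to $\theta=k\pi$, and Lemma \ref{L4} shows that $\theta$ crosses each multiple of $\pi$ strictly; together with Lemmas \ref{L2} and \ref{L3} this forces every zero of $\psi_{n}$ to be an isolated sign change, so the zero count is preserved in the limit.

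For part $(b)$, write $\psi_{n}=u+iv$ with $u,v$ real. Since the coefficients of $(\ref{fangcheng})$ and the eigenvalue $\lambda_{n}(\gamma,K)$ are real, both $u$ and $v$ are real solutions of $(\ref{fangcheng})$ at $\lambda_{n}(\gamma,K)$. The zero counts for $u=\mathrm{Re}\,\psi_{n}$ and $v=\mathrm{Im}\,\psi_{n}$ then follow from the same approximation argument as in $(a)$ applied to the real and imaginary parts of the regular approximants (whose oscillation behavior is again settled in \cite{CC5}). For the assertion that $\psi_{n}$ has no zero on $[a,b]$: if $\psi_{n}(x_{0})=0$ then $u(x_{0})=v(x_{0})=0$, so the Wronskian $W(u,v)=uv^{[1]}-vu^{[1]}$, which is constant by Lagrange's identity (see $(\ref{126})$), must vanish identically. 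Hence $u$ and $v$ are linearly dependent over $\mathbb{R}$, and one may write $\psi_{n}=\alpha U$ with real $U$ and $\alpha\in \mathbb{C}\setminus\{0\}$. Substituting into $Y(b)=e^{i\gamma}KY(a)$ with $K$ real and $\gamma\in(-\pi,0)\cup(0,\pi)$, and separating real and imaginary parts, forces $k_{11}U(a)+k_{12}U^{[1]}(a)=0$ and $k_{21}U(a)+k_{22}U^{[1]}(a)=0$; since $\det K=1$, this gives $U(a)=U^{[1]}(a)=0$, whence Lemma \ref{L2} yields $U\equiv 0$, contradicting $\psi_{n}\not\equiv 0$.

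The main technical obstacle will be ensuring that the zero count actually passes to the limit. Uniform convergence of eigenfunctions does not by itself prevent two zeros from colliding into a single limit zero, or a zero from migrating across the endpoint $b$. The collision is excluded by the Pr\"{u}fer argument above: if $x_{j}^{m}<x_{j+1}^{m}$ were two consecutive zeros of $\psi_{n}^{m}$ collapsing to the same $x_{0}$, then $\theta_{m}(x_{j+1}^{m})-\theta_{m}(x_{j}^{m})=\pi$ while both sides converge to the common value $\theta(x_{0})$, which is impossible. The boundary issue at $b$ will be handled by examining $\psi_{n}(b)$ explicitly through the coupled boundary condition $Y(b)=KY(a)$ together with the nondegeneracy of the initial value problem given by Lemma \ref{L2}, so that the index $[a,b)$ (rather than $[a,b]$) in the oscillation count is respected under the approximation.
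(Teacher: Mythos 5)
Your overall strategy --- approximating by regular problems and transporting the classical zero count of \cite{CC5} through the limit --- is not the route the paper takes, and as written it has a genuine gap at the endpoints. Your Pr\"{u}fer/no-collision argument does control the zeros in the open interval: each zero of $\psi _{n}$ in $(a,b)$ is a sign change by Lemma \ref{L3}, hence attracts at least one zero of $\psi _{n}^{m}$, and no two zeros of $\psi _{n}^{m}$ can merge. But the count on $[a,b)$ is not stable at $a$ and $b$. If $\psi _{n}(a)=0$, the coupled condition does not force $\psi _{n}^{m}(a)=0$; depending on the sign of $\psi _{n}^{m}(a)$ the approximants may have no zero at all near $a$, so the limit count can exceed the approximate count by one. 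Symmetrically, if $\psi _{n}(b)=0$, a zero of $\psi _{n}^{m}$ lying just to the left of $b$ is counted in $[a,b)$ while its limit $b$ is not, so the count can drop by one. Together these effects only yield that the number of zeros of $\psi _{n}$ on $[a,b)$ lies in $\{n-2,\dots ,n+2\}$, which is strictly weaker than the assertion. Your proposal defers the issue at $b$ to ``examining $\psi _{n}(b)$ through the boundary condition'' without an actual argument, and does not address the endpoint $a$ at all; I do not see how either case is excluded by the boundary condition alone.

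The paper avoids taking a limit of zero counts entirely: it invokes the inequalities $\lambda _{n-2}^{D}<\lambda _{n}(K)\leq \lambda _{n}^{D}$ already established in Theorems \ref{L7} and \ref{L8}, and then applies the Sturm comparison Lemma \ref{L5} directly to $\psi _{n}$ and the Dirichlet eigenfunctions $\Psi _{n-2}$, $\Psi _{n}$, whose zero count is exact by Lemma \ref{L4}; the lower bound $n-1$ and the upper bound $n+1$ fall out at once, with a short separate case when $k_{12}=0$ and $\lambda _{n}(K)=\lambda _{n}^{D}$. I recommend you replace the limiting argument by that comparison argument for the counting part (the same comparison handles $\mathrm{Re}\,\psi _{n}$ and $\mathrm{Im}\,\psi _{n}$ in part $(b)$, since both are real solutions at $\lambda _{n}(\gamma ,K)$). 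Your treatment of the final claim in $(b)$ is correct: the constancy of the Wronskian $uv^{[1]}-vu^{[1]}$ and the reality of $K$ set against the non-real factor $e^{i\gamma }$ force $U\equiv 0$; this is equivalent to the paper's observation that $\mathrm{Re}\,\psi _{n}$ and $\mathrm{Im}\,\psi _{n}$ are linearly independent and therefore have no common zero.
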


\begin{theorem}
\label{L10} For any $K\in \mathit{\mathrm{SL}}(2,%
\mathbb{R}
),$ if $k_{11}>0\ $and $k_{12}=0,$ let $\psi _{n}(x)$ and $\xi _{n}(x)$ be
real eigenfunctions of $\lambda _{n}(K)$ and $\lambda _{n}(-K),$
respectively. Then we obtain the following conclusions$:$ $\mathbf{(}a%
\mathbf{)}$\textbf{\ }$\psi _{0}(x)$\textit{\ has no zeros in }$[a,b].$ $(b)%
\mathit{\ }\psi _{2m+1}(x)$\textit{\ and\ }$\psi _{2m+2}(x)$\textit{\ have\
exactly\ }$2m+2$\textit{\ zeros\ in\ }$[a,b).$ $(c)\mathit{\ }\xi _{2m}(x)%
\mathit{\ }$\textit{and}$\mathit{\ }\xi _{2m+1}(x)\mathit{\ }$\textit{have\
exactly}$\mathit{\ }2m+1\mathit{\ }$\textit{zeros\ in}$\mathit{\ }[a,b).$
\end{theorem}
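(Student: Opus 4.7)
The plan is to deduce Theorem \ref{L10} from the corresponding oscillation theorem for classical Sturm-Liouville operators with regular potentials, established in \cite{CC5,XIN1}, by means of the approximation machinery built up in Section 2. Concretely, using Lemma \ref{12 copy(1)} take smooth $p_m, s_m$ with $\|1/p_m-1/p\|_1\to 0$ and $\|s_m-s\|_1\to 0$, form the regular operators $L_m=L(p_m,s_m)$ with boundary condition $\pm K$ (the condition being transcribed as in the second remark of Section 2), and exploit $L_m\overset{R}{\Longrightarrow}L$ (Lemma \ref{L13}) to transfer the classical zero counts to the distributional-coefficient limit.

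\textbf{Step 1.} For each $m$, the classical oscillation theorem for regular coupled Sturm-Liouville problems with $k_{11}>0,\ k_{12}=0$ (so that $\mu_n(\pm K;L_m)$ coincides with the $n$-th Dirichlet eigenvalue of $L_m$, and the inequalities of Theorem \ref{L7} reduce to sharp Dirichlet interlacings) yields normalized real eigenfunctions $\psi_n^{(m)}$ of $\lambda_n(K;L_m)$ and $\xi_n^{(m)}$ of $\lambda_n(-K;L_m)$ whose zero counts on $[a,b)$ are exactly those asserted in (a)--(c).

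\textbf{Step 2.} By Lemma \ref{norm resovent} the eigenvalues converge, $\lambda_n(\pm K;L_m)\to\lambda_n(\pm K;L)$. Lemma \ref{eigenfunction} (part (i) when the limiting eigenvalue is geometrically simple, part (ii) when it is geometrically double) produces normalized real eigenfunctions $\psi_n,\xi_n$ of $L$ such that $\psi_n^{(m)}\to\psi_n$ and $(\psi_n^{(m)})^{[1]}\to\psi_n^{[1]}$ uniformly on $[a,b]$, and similarly for $\xi_n$.

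\textbf{Step 3 (crucial).} The zero count is stable under this limit. Every zero $x_0$ of a non-trivial eigenfunction is simple, for otherwise $(\psi_n(x_0),\psi_n^{[1]}(x_0))=(0,0)$ would force $\psi_n\equiv 0$ by uniqueness of the first-order system $(\ref{4})$. Thus $\psi_n^{[1]}(x_0)\neq 0$ at each zero $x_0$ of $\psi_n$, so the uniform convergence of both $\psi_n^{(m)}$ and $(\psi_n^{(m)})^{[1]}$ forces, for all large $m$: (i) in a small neighborhood of each zero $x_0$ of $\psi_n$ there is exactly one zero of $\psi_n^{(m)}$ (by the intermediate-value theorem applied to $\psi_n^{(m)}$, whose derivative at the approximate zero has the definite sign of $\psi_n^{[1]}(x_0)$); (ii) on any compact subset of $[a,b]$ on which $\psi_n$ is bounded away from $0$, the function $\psi_n^{(m)}$ is nowhere zero. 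Hence the number of zeros of $\psi_n^{(m)}$ on $[a,b)$ equals that of $\psi_n$ for all sufficiently large $m$, and the same argument applies to $\xi_n$.

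The main obstacle is Step 2 in the geometrically-double case: one must ensure that the real eigenfunction selected in the two-dimensional limiting eigenspace is genuinely the uniform limit of the approximating $\psi_n^{(m)}$ rather than an arbitrary element of that eigenspace (different real eigenfunctions in the eigenspace could in principle carry different zero counts). This is resolved by invoking Lemma \ref{eigenfunction}(ii), which, after passing to a subsequence if necessary, furnishes exactly such a limit in the eigenspace; the zero count for this particular real eigenfunction then inherits from the regular case via Step 3. The endpoint zero at $x=a$ is handled identically, since $\psi_n^{(m)}(a)\to\psi_n(a)$ and, under the boundary condition $y(b)=k_{11}y(a)$ with $k_{12}=0$, $y(a)=0$ forces $y(b)=0$; this compatibility with the Dirichlet interlacing of Theorem \ref{L7}(a) is precisely what pins down the exact numbers $0,\ 2m+2,\ 2m+1$ in (a)--(c).
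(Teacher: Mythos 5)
Your route (regularize, quote the classical exact zero counts for $L_m$, pass to the limit) is genuinely different from the paper's, which never touches the approximating operators in this proof: it deduces everything directly from the interlacing $\lambda_{2m}^{D}<\lambda_{2m+1}(K)\leq\lambda_{2m+1}^{D}$ of Theorem \ref{L7}, the Sturm comparison Lemma \ref{L5} against the Dirichlet eigenfunctions (giving ``at least $2m+1$, at most $2m+2$ zeros''), and a parity argument: the boundary condition $y(b)=k_{11}y(a)$, $y^{[1]}(b)=k_{21}y(a)+k_{11}^{-1}y^{[1]}(a)$ together with Lemma \ref{L3} forces an even number of zeros in $[a,b)$ for $K$ and an odd number for $-K$. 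That argument applies verbatim to \emph{every} real eigenfunction of a given $\lambda_n(\pm K)$, which is exactly where your argument has a genuine gap. The theorem asserts the count for an arbitrary real eigenfunction, in particular for both $\psi_{2m+1}$ and $\psi_{2m+2}$ when $\lambda_{2m+1}(K)=\lambda_{2m+2}(K)$ is geometrically double. You propose to cover this via Lemma \ref{eigenfunction}(ii), but that lemma hypothesizes that the eigenvalue is geometrically double for \emph{all} $\omega$ in a neighborhood of $\omega_0$; under the regularization $p\mapsto p_m,\ s\mapsto s_m$ a double eigenvalue will in general split into two simple eigenvalues of $L_m$, so the hypothesis fails and the lemma gives you nothing. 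Without it, a compactness argument only realizes \emph{some} basis of the two-dimensional eigenspace as uniform limits of $\psi_{2m+1}^{(m)},\psi_{2m+2}^{(m)}$; an arbitrary real combination $\alpha w_1+\beta w_2$ is the limit of $\alpha\psi_{2m+1}^{(m)}+\beta\psi_{2m+2}^{(m)}$, which is \emph{not} an eigenfunction of $L_m$ once the eigenvalues split, so the classical count does not transfer to it. This case needs a separate idea (e.g.\ the paper's comparison-plus-parity argument, which is insensitive to multiplicity).

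A second, smaller defect is Step 3(i) at the endpoints. If $\psi_n(a)=0$ (hence $\psi_n(b)=0$), it is not true that $\psi_n^{(m)}$ has exactly one zero in $[a,a+\epsilon)$: depending on the sign of $\psi_n^{(m)}(a)$ it may have none there and instead acquire an interior zero in $(b-\epsilon,b)$, or vice versa. The count on the half-open interval $[a,b)$ is in fact preserved, but only because the coupling $\psi^{(m)}(b)=k_{11}\psi^{(m)}(a)$ with $k_{11}>0$ and $k_{22}=k_{11}^{-1}>0$ links the two endpoint contributions; verifying this requires the three-case sign analysis (via Lemma \ref{L3}) that your ``handled identically'' sentence elides. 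Finally, note that the paper does not actually cite a classical exact-count theorem for coupled conditions with $k_{12}=0$ but reproves it; if you rely on Step 1 you should point to a precise statement in \cite{CC5} or \cite{XIN1} rather than to ``the classical oscillation theorem.''
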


\begin{proof}[Proof of Theorem 6.1]
$(a)$ Denote the eigenfunction of $\lambda _{n}^{D}$ by ${\Psi }_{n}(x)$,
according to Lemma \ref{L4}, it follows that ${\Psi }_{n}(x)$ has $n+2$
zeros on the interval $[a,b].\ $It suffices to prove the theorem in the
following two cases.

Case 1: $k_{12}\neq 0,\ $by Theorem \ref{L7}, there exists $0<\beta <\pi ,\ $%
such that
\begin{equation*}
\lambda _{n}(K)\leq \lambda _{n}(S_{0,\beta })<\lambda _{n}(S_{0,\pi
})=\lambda _{n}^{D},n\in
\mathbb{N}
_{0},
\end{equation*}%
thus from Theorem \ref{L8}$,$ $\lambda _{n-2}^{D}<\lambda _{n}(K)$ $<\lambda
_{n}^{D}$, $n\geq 2,\ $so this theorem is now a direct consequence of Lemma %
\ref{L5}.

Case 2: $k_{12}=0,$\ now the condition (\ref{21}) is%
\begin{equation}
\left\{
\begin{array}{l}
y(b)=k_{11}y(a), \\
y^{[1]}(b)=k_{21}y(a)+k_{22}y^{[1]}(a).%
\end{array}%
\right.  \label{2a}
\end{equation}%
If $\lambda _{n}(K)$ $<\lambda _{n}^{D},\ $our claims can be obtained from
Lemma \ref{L5} and Theorem \ref{L8}. If $\lambda _{n}(K)=\lambda _{n}^{D},\ $%
assume $\psi _{n}(x)$ has $n+2$ zeros on the interval $[a,b),\ $now we can
easily reach a contradiction by using Lemma \ref{L5} no matter $\psi
_{n}(a)=0$ or not. Thus the conclusions can be obtained according to Lemma %
\ref{L5} and Theorem \ref{L8},.

$(b)$ It can be easily seen that $\mathit{\mathrm{Re}}\psi _{n}(x)$ and $%
\mathit{\mathrm{Im}}\psi _{n}(x)$ are both nontrivial solutions of equation (%
\ref{b}) with $\lambda =\lambda _{n}(\gamma ,K),\ $thus by Lemma \ref{L5}
and Theorem \ref{L8}, the conclusions on them can be easily obtained. Since
the eigenfunction $\psi _{n}(x)$ of $\lambda _{n}(\gamma ,K)$ can not be
real, it follows that $\mathit{\mathrm{Re}}\psi _{n}(x)$ and $\mathit{%
\mathrm{Im}}\psi _{n}(x)$ are linearly independent solutions of the
equation, thus they do not have the same zeros on $[a,b],\ $so we obtain
that $\psi _{n}(x)$ has no zero on $[a,b].$
\end{proof}

\begin{proof}[Proof of Theorem 6.2]
$(a)$ According to Theorem \ref{L7}, for such a $K\in \mathit{\mathrm{SL}}(2,%
\mathbb{R}
),$
\begin{eqnarray*}
\lambda _{0}(K) &<&\lambda _{0}(-K)\leq \lambda _{0}^{D}\leq \lambda
_{1}(-K)<\lambda _{1}(K)\leq \lambda _{1}^{D} \\
&\leq &\lambda _{2}(K)<\lambda _{2}(-K)\leq \lambda _{2}^{D}\leq \lambda
_{3}(-K)<\lambda _{3}(K)\leq \cdots .
\end{eqnarray*}%
Denote the eigenfunction of $\lambda _{n}^{D}$ by ${\Psi }_{n}(x)$. Since ${%
\Psi }_{0}(x)$ has no zeros on $(a,b)$, and $\lambda _{0}(K)<\lambda
_{0}^{D},$ it follows that $\psi _{0}(x)$ has at most one zero on $[a,b]$.
Without loss of generality, we assume that $\psi _{0}(x_{0})=0,$ $\psi
_{0}^{[1]}(x_{0})>0\ $for some $x_{0}\in (a,b)$. Hence\ by Lemma \ref{L3}
and the continuity of $\psi _{0}(x),$ one deduces that $\psi _{0}(a)<0,\psi
_{0}(b)>0$. This leads to a contradiction\ since $\psi _{0}(x)$ satisfies
the boundary condition (\ref{2a}) with $k_{11}>0$. This completes the proof
of part $\mathbf{(}a\mathbf{)}$\textbf{. }It remains to prove part $\mathbf{(%
}b\mathbf{)}$ of the theorem.

$(b)$ Recall that\ $\lambda _{2m}^{D}<\lambda _{2m+1}(K)\leq \lambda
_{2m+1}^{D}$\thinspace , ${\Psi }_{2m}(x)$ and ${\Psi }_{2m+1}(x)$ has $2m$
and $2m+1$ zeros\ respectively on $(a,b)$. Thus it follows from Lemma \ref%
{L5} that $\psi _{2m+1}(x)$ has at least\ $2m+1$ and at most $2m+2$ zeros on
$(a,b)$.

It suffices to show that $\psi _{2m+1}(x)$ have an\ even number of zeros on $%
[a,b).$ In fact, if $k_{11}\psi _{2m+1}(a)=\psi _{2m+1}(b)\neq 0,$ combining
with the continuity of $\psi _{2m+1}(x),$ one deduces that $\psi _{2m+1}(x)$
must have an\ even number of zeros on $[a,b)$. On the other hand, if $\psi
_{2m+1}(a)=\psi _{2m+1}(b)=0,$ assume $\psi _{2m+1}^{[1]}(a)>0,$ thus we
have $\psi _{2m+1}^{[1]}(b)>0$ since $k_{22}=$ $1/k_{11}>0$. From Lemma \ref%
{L3} and the continuity of $\psi _{2m+1}(x),$ we obtain that $\psi
_{2m+1}(x) $ must have an\ even number of zeros on $[a,b).$

With the inequality
\begin{equation*}
\lambda _{2m+1}^{D}\leq \lambda _{2m+2}(K)<\lambda _{2m+2}^{D},
\end{equation*}%
the result for\ $\psi _{2m+2}(x)$\ can be proved by the same method. This
proves part $\mathbf{(}b\mathbf{)}$\textbf{.}

Part $(c)$\ can be proved in the same way as in the proof of part $\mathbf{(}%
b\mathbf{)}$.

$(c)$ In order to prove this part$\mathbf{,}$ we need the following fact:%
\begin{equation*}
\lambda _{0}(-I)\leq \lambda _{0}^{D},
\end{equation*}%
\begin{equation*}
\lambda _{2m-1}^{D}<\lambda _{2m}(-K)\leq \lambda _{2m}^{D}\text{ and }%
\lambda _{2m}^{D}\leq \lambda _{2m+1}(-K)<\lambda _{2m+1}^{D},\text{ }m\in
\mathbb{N}
.
\end{equation*}%
The only\ difference is the fact that $\xi _{2m}(x)$ and $\xi _{2m+1}(x)$
must have an\ odd number of zeros on $[a,b)$\ which is implied by Lemma \ref%
{L3} and the boundary\ condition
\begin{equation*}
\left\{
\begin{array}{l}
y(b)=-k_{11}y(a), \\
y^{[1]}(b)=-k_{21}y(a)-k_{22}y^{[1]}(a).%
\end{array}%
\right.
\end{equation*}
\end{proof}

\section{Differentiability properties of eigenvalues}

As a similar space we have introduced in Section 2, in this section we
introduce a \textquotedblleft coefficient space\textquotedblright\ with a
metric. Let $\tilde{\Omega}=\{\omega =(1/p,q,r,s);$ $(\ref{tx})$ holds$\}.$
For the topology of $\tilde{\Omega}$ we use a metric $d$ defined as follows:

For $\omega =(1/p,q,r,s)\in \tilde{\Omega},$ $\omega _{0}=(\frac{1}{p_{0}}%
,q_{0},r_{0},s_{0})\in \tilde{\Omega},$ define
\begin{equation*}
d(\omega ,\omega _{0})=\int_{a}^{b}\left( \left\vert \frac{1}{p}-\frac{1}{%
p_{0}}\right\vert +\left\vert q-q_{0}\right\vert +\left\vert
r-r_{0}\right\vert +\left\vert s-s_{0}\right\vert \right) .
\end{equation*}

\begin{theorem}
\label{L9 copy(1)}For any $n\in
\mathbb{N}
_{0},$ the $n$-th eigenvalue of the problem with a fixed boundary condition
depends continuously on the coefficients of the differential equation.
\end{theorem}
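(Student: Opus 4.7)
The plan is to reduce the claim to Lemma \ref{princeple}. That lemma says the $n$-th eigenvalue is continuous on any subset $O \subset \tilde{\Omega}$ on which $\lambda_0$ is uniformly bounded from below, so it suffices to show that for every $\omega_0 \in \tilde{\Omega}$ there is a $d$-neighborhood $O$ of $\omega_0$ on which $\lambda_0(\omega) \geq -M$ for some constant $M = M(O)$. Since $\omega_0$ is arbitrary, this yields continuity of $\lambda_n$ on all of $\tilde{\Omega}$.

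I would obtain this uniform lower bound by copying the four-case structure of the proof of Lemma \ref{L0 copy(1)}, but with $\omega$ ranging over $O$ playing the role that the smoothing index $m$ plays there. For separated boundary conditions, the Pr\"ufer angle analysis used in the proof of Lemma \ref{L4} produces a uniform bound, since the $L^1$-norms of $1/p$, $q$, $r$, $s$ are all controlled on $O$, making the $\theta(b,\lambda) = \beta + n\pi$ crossings occur above a fixed threshold. For coupled boundary conditions with $k_{11} > 0$ and $k_{12} \leq 0$, the inequality chain $\nu_0 \leq \lambda_0(K) < \lambda_0(\gamma, K)$ from Theorem \ref{L7} reduces the bound to the separated case. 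For the case $k_{11} \leq 0$, $k_{12} < 0$, I would invoke the asymptotic formulas of Lemma \ref{solutio}, made uniform over $O$ as described below, to conclude that $D(\lambda) \to +\infty$ as $\lambda \to -\infty$ uniformly in $\omega$, so by Lemma \ref{L1} no eigenvalue can lie below a uniform threshold. The remaining case is handled by the symmetry $K \mapsto -K$.

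As an alternative route, I would also extend the norm resolvent convergence of Lemma \ref{norm} to allow variation in the weight $r$, using the unitary identification $U : L^2_{r_0}(J,\mathbb{R}) \to L^2_r(J,\mathbb{R})$, $Uf = \sqrt{r_0/r}\, f$, which transports each $L(\omega)$ to the fixed Hilbert space $L^2_{r_0}$ and contributes only one additional $\|r - r_0\|_1$ term to the estimates in the proof of Lemma \ref{norm}. Combined with the uniform lower bound above, Lemma \ref{L-1} then delivers $\lambda_n(\omega_m) \to \lambda_n(\omega_0)$ for any sequence $\omega_m \to \omega_0$ in $\tilde{\Omega}$.

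The main obstacle is establishing the uniformity of Lemma \ref{solutio} over $O$, since as stated it is proved for a single fixed choice of coefficients. One must verify that both the threshold value of $\lambda$ appearing there (the smallest $y^{[1]}(a) = y^{[1]}(b) = 0$ eigenvalue) and the constant $k > 0$ bounding the corresponding $\phi_1$ from below can be chosen once and for all on the neighborhood $O$. This bootstrap reduces to a continuity statement for a Neumann-type eigenvalue, already covered by the separated case of the uniform lower bound, together with uniform-in-$\omega$ control of $\phi_1(\cdot, \lambda)$, supplied by Lemma \ref{continuous}. Once this uniformity is in place, the chain of arguments closes and Lemma \ref{princeple} delivers the theorem.
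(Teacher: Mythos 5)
Your reduction is the same as the paper's: establish a local uniform lower bound on $\lambda_{0}$ near each $\omega _{0}$ and feed it into Lemma \ref{princeple}, splitting into cases by boundary condition type. Where you diverge is in how the bound is obtained. For separated conditions the paper does not run a uniform Pr\"{u}fer estimate; it shows directly that $\lambda _{0}$ is \emph{continuous} at $\omega _{0}$ by following the continuous eigenvalue branch through $\lambda _{0}(\omega _{0})$ and using Lemma \ref{eigenfunction} together with Lemma \ref{L3} to see that the perturbed eigenfunctions still have no interior zeros, hence remain ground states by Lemma \ref{L4}. Your Pr\"{u}fer route also works (monotonicity of $\theta (b,\cdot ,\omega )$ in $\lambda $ plus continuity of $\theta (b,\Lambda _{\ast },\omega )$ in $\omega $ via Lemma \ref{continuous} gives the uniform threshold), and is arguably more elementary. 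The case $k_{11}>0$, $k_{12}\leq 0$ is handled identically in both arguments via $\nu _{0}\leq \lambda _{0}(K)<\lambda _{0}(\gamma ,K)<\lambda _{0}(-K)$.

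The real difference, and the soft spot, is the case $k_{11}\leq 0$, $k_{12}<0$. The paper does \emph{not} revisit Lemma \ref{solutio} here: since Theorem \ref{L7}(b) is already proved for the distributional problem itself, $\lambda _{0}(\omega ,K)$ is the unique eigenvalue below $\nu _{0}(\omega ,K)$, so the continuous branch through $\lambda _{0}(\omega _{0},K)$ stays below the (already continuous) $\nu _{0}(\omega ,K)$ and must equal $\lambda _{0}(\omega ,K)$; the chain $\lambda _{0}(K)<\lambda _{0}(\gamma ,K)<\lambda _{0}(-K)\leq \nu _{0}(K)$ then bounds everything. Your plan to instead prove $D(\lambda )\rightarrow +\infty $ uniformly over a metric neighborhood $O$ requires uniformizing not just the threshold $\lambda _{0}$ and the constant $k$ of Lemma \ref{solutio} but also the relative growth rates of $\phi _{1}^{[1]}(b,\lambda ),\phi _{1}(b,\lambda ),\phi _{2}^{[1]}(b,\lambda ),\phi _{2}(b,\lambda )$ (so that $-k_{12}\phi _{1}^{[1]}$ genuinely dominates uniformly); you correctly identify this as the main obstacle but leave it at a sketch, and it is considerably more work than the argument the paper's own earlier results make available. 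Separately, your alternative route via the unitary $Uf=\sqrt{r_{0}/r}\,f$ has a genuine problem: conjugating $L(\omega )$ by $U$ produces derivatives of $\sqrt{r_{0}/r}$, which is merely measurable, so the transported operator is not of the same form and the claim that only one extra $\Vert r-r_{0}\Vert _{1}$ term appears does not follow as stated; if you want to vary $r$ you should instead estimate the integral kernels $\phi _{i}(x)\phi _{j}(\xi )r(\xi )$ directly, as in the proof of Lemma \ref{norm}. Since this alternative is not needed for the main line, the proposal stands, but the third case should either be completed in detail or replaced by the appeal to Theorem \ref{L7}(b).
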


\begin{proof}
First, we consider the case where the self-adjoint boundary condition is a
separated one. Let $\omega _{0}=\left( \frac{1}{p_{0}},q_{0},r_{0},s_{0}%
\right) \in \tilde{\Omega}$. Then $\lambda _{0}(\omega _{0})$ is simple.
Consider the continuous eigenvalue branch $\Lambda (\omega )$ through $%
\lambda _{0}(\omega _{0})$ defined on a neighborhood of $\omega _{0}$ in $%
\tilde{\Omega}.$ Let $w=w(\cdot ,\omega _{0})$ denote a normalized
eigenfunction of the eigenvalue $\lambda (\omega _{0}).$ From Lemma \ref%
{eigenfunction}, there exist normalized eigenfunctions $w=w(\cdot ,\omega )$
of $\Lambda (\omega )$ such that
\begin{equation}
w(\cdot ,\omega )\rightarrow w(\cdot ,\omega _{0}),\text{ }w^{[1]}(\cdot
,\omega )\rightarrow w^{[1]}(\cdot ,\omega _{0}),\text{ as }\omega
\rightarrow \omega _{0}\text{ in }\tilde{\Omega},  \label{11eigenfun}
\end{equation}%
both uniformly on the interval $[a,b].$

Note that $w(t,\omega _{0})$ does not have a zero in $(a,b)$ from Lemma \ref%
{L4}$.$ So, we may assume that $w(t,\omega _{0})>0$ on $(a,b).$

$($i$)$ If $w(a,\omega _{0})=0$, it follows from Lemma \ref{L3} that $%
w^{[1]}(a,\omega _{0})>0.$ Hence there exists $\epsilon _{1}>0$ such that $%
w^{[1]}(t,\omega _{0})>0$ on the interval $\left[ a,a+\epsilon _{1}\right] .$
By $(\ref{11eigenfun})$, when $\omega $ is sufficiently close to $\omega
_{0},$ $w^{[1]}(t,\omega )>0$ on $\left[ a,a+\epsilon _{1}\right] .$ It is a
fact that $w(a,\omega )=0$ since the boundary condition is fixed$.$Thus $%
w(t,\omega )>0$ on the interval $\left( a,a+\epsilon _{1}\right) $ when $%
\omega $ is sufficiently close to $\omega _{0}.$

If $w(b,\omega _{0})=0,$ through a similar process, there exists $\epsilon
_{2}>0$ such that $w(t,\omega )>0$ on the interval $\left( b-\epsilon
_{2},b\right) $ when $\omega $ is sufficiently close to $\omega _{0}.$ Since
$w(t,\omega _{0})>0$ on $\left[ a+\epsilon _{1,}b-\epsilon _{2}\right] ,$ it
follows from $(\ref{11eigenfun})\ $that $w(t,\omega )>0$ on $\left[
a+\epsilon _{1,}b-\epsilon _{2}\right] $ when $\omega $ is sufficiently
close to $\omega _{0}.$ Hence $w(t,\omega )>0$ on $(a,b)$ when $\omega $ is
sufficiently close to $\omega _{0}.$

$($ii$)$ If $w(t,\omega _{0})>0$ on $\left[ a,b\right] ,$ by $(\ref%
{11eigenfun})$, $w(t,\omega )>0\ $on $\left[ a,b\right] $ when $\omega $ is
sufficiently close to $\omega _{0}.$

Thus by Lemma \ref{L4}, when $\omega $ is sufficiently close to $\omega
_{0}, $ $\Lambda (\omega )=\lambda _{0}(\omega ).$ According to Lemma \ref%
{princeple}, it follows that $\lambda _{1}(\omega ),$ $\lambda _{2}(\omega )$%
, $\lambda _{2}(\omega ),\cdots $ are continuous at $\omega _{0}.$

Next, assume that the self-adjoint boundary condition is the coupled one (%
\ref{21}) or (\ref{ee}), with\textit{\ }$k_{11}>0$\textit{, }$k_{12}\leq 0.$
Then $\nu _{0}(\omega ,K)$ is continuous at $\omega _{0}$ by the proven
case. On the other hand, by part (a) of Theorem \ref{L7},
\begin{equation*}
\nu _{0}(\omega ,K)\leq \lambda _{0}(\omega ,K)<\lambda _{0}(\omega ,\gamma
,K)<\lambda _{0}(\omega ,-K).
\end{equation*}%
Thus $\lambda _{0}(\omega ,K),$ $\lambda _{0}(\omega ,\gamma ,K),$ $\lambda
_{0}(\omega ,-K)$ are uniformly bounded from below in a small neighborhood
of $\omega _{0}.$ Therefore, Lemma \ref{princeple} implies that for each $%
n\in
\mathbb{N}
_{0,\text{ }}$ $\lambda _{n}(\omega ,K),$ $\lambda _{n}(\omega ,\gamma ,K),$
$\lambda _{n}(\omega ,-K)$ are continuous at $\omega _{0}.$

Finally, we consider the case where the self-adjoint boundary condition is
the coupled one (\ref{21}) or (\ref{ee}), with $k_{11}\leq 0$\textit{, }$%
k_{12}<0.$ Fix an $\omega _{0}\in \tilde{\Omega}$ and consider the
continuous eigenvalue branch $\Lambda $ through $\lambda _{0}(\omega ,K)\ $%
defined on a connected neighborhood $O$ of $\omega _{0}.$ By part (b) of
Theorem \ref{L7}, $\Lambda (\omega _{0})=\lambda _{0}(\omega _{0},K)$ $<$ $%
\nu _{0}(\omega _{0},K)$ and $\Lambda (\omega )\neq \nu _{0}(\omega ,K)$ for
any $\omega \in O.$ Hence, we have $\Lambda (\omega )<$ $\nu _{0}(\omega ,K)$
for any $\omega \in O,$ since both $\Lambda $ and$\ \nu _{0}$ are continuous
functions on $O.$ Therefore, $\Lambda (\omega )=\lambda _{0}(\omega ,K)$ for
any $\omega \in O$ still by part (b) of Theorem \ref{L7}, i.e., $\lambda
_{0}(\omega ,K)$ is continuous at $\omega _{0}.$ On the other hand, by part
(b) of Theorem \ref{L7},
\begin{equation*}
\lambda _{0}(\omega ,K)<\lambda _{0}(\omega ,\gamma ,K)<\lambda _{0}(\omega
,-K)\leq \nu _{0}(\omega ,K).
\end{equation*}%
Thus $\lambda _{0}(\omega ,K),$ $\lambda _{0}(\omega ,\gamma ,K),$ $\lambda
_{0}(\omega ,-K)$ are uniformly bounded from below in a small neighborhood
of $\omega _{0}.$ Therefore, by Lemma \ref{princeple}, for each $n\in
\mathbb{N}
_{0},$ $\lambda _{n}(\omega ,K),$ $\lambda _{n}(\omega ,\gamma ,K),$ $%
\lambda _{n}(\omega ,-K)$ are continuous at $\omega _{0}.$

Note that if neither of the above cases applies to $K,$ then either of the
above cases applies to $-K.$
\end{proof}

In the following we show that the eigenvalues are differentiable functions
of the coefficients $1/p,q,r,s$ in the equation. Recall the definition of
the Frechet derivative:

\begin{definition}
Let $X$ and $Y$ be Banach spaces, with norms $\left\Vert \cdot \right\Vert
_{X}$ and $\left\Vert \cdot \right\Vert _{Y}$ respectively. Let $U\subset X$
be an open set, and let $A:U$ $\rightarrow Y$ be a map. We say that $A$ is
Frechet differentiable at a point $x_{0}\in X$ if there exists a bounded
linear operator $B:X$ $\rightarrow Y$ such that for $h\in X,$%
\begin{equation*}
\left\Vert A\left( x_{0}+h\right) -A\left( x_{0}\right) -B(h)\right\Vert
_{Y}=o(\left\Vert h\right\Vert _{X})\text{ as }h\rightarrow 0,
\end{equation*}%
and denote the bounded linear operator $B\ $by $A^{\prime }\left(
x_{0}\right) .$

\begin{remark}
For investigating the differentiability of the eigenvalue $\lambda _{n}$ as
a function of the coefficients $1/p$ and $r,$ we recall the definition of
the Frechet derivative on the positive cone%
\begin{equation*}
V=\left\{ f\in L(J,%
\mathbb{R}
)\left\vert f\geq 0\text{ a.e. on }J\right. \right\}
\end{equation*}%
of the Banach space $L(J,%
\mathbb{R}
).$ Considering a $($nonlinear$)$ functional $\lambda _{n}$ from $V$ to $%
\mathbb{R}
,$ we say that $\lambda _{n}$ is Frechet differentiable at a point $r$ in $V$
if there exists a bounded linear functional $f:V$ $\rightarrow
\mathbb{R}
$ such that for $h\in V,$%
\begin{equation*}
\left\vert \lambda _{n}\left( r+h\right) -\lambda _{n}\left( r\right)
-f(h)\right\vert =o(\left\Vert h\right\Vert _{L(J,%
\mathbb{R}
)})\text{ as }h\rightarrow 0,
\end{equation*}%
and denote the bounded linear functional $f\ $by $\lambda _{n}^{\prime
}\left( r\right) .$
\end{remark}
\end{definition}

\begin{theorem}
Let $\omega =(A,B,1/p,q,r,s)\in \Omega .$ Fix $A,B.$ Assume that $\lambda
_{n}$ is a simple eigenvalue of $\omega $ for some $n\in
\mathbb{N}
_{0}$ and $w_{n}$ is a normalized eigenfunction of $\lambda _{n},$ then
there is a simple closed curve $\Gamma $ in $%
\mathbb{C}
$ with $\lambda _{n}\left( \omega \right) $ in its interior and a
neighborhood $O$ of $\omega $ in $\Omega $ such that for any $\rho $ in $O,$
the Sturm-Liouville problem $\rho $ has exactly one eigenvalue in the
interior of $\Gamma $ and this eigenvalue is simple.

$(1)$ Fix $q,r,s$ and consider $\lambda _{n}$ as a function of $1/p,$ $p>0\ $%
a.e.$\ $on $J.$ Then $\lambda _{n}$ is Frechet differentiable at $1/p$ in $V$
and its Frechet derivative is the bounded linear transformation given by%
\begin{equation}
\lambda _{n}^{\prime }(1/p)h=-\int\nolimits_{a}^{b}\left\vert
w_{n}^{[1]}(\cdot ,1/p)\right\vert ^{2}h,\text{ }h\in L(J,%
\mathbb{R}
);  \label{i/p}
\end{equation}%
$(2)$Fix $1/p,q,r$ and consider $\lambda _{n}$ as a function of $s.$ Then $%
\lambda _{n}$ is Frechet differentiable at $s$ in $L(J,%
\mathbb{R}
)$ and its Frechet derivative is the bounded linear transformation given by%
\begin{equation}
\lambda _{n}^{\prime }(s)h=2\int\nolimits_{a}^{b}\mathit{\mathrm{Re}}(w_{n}%
\bar{w}_{n}^{[1]})h,\text{ }h\in L(J,%
\mathbb{R}
);  \label{sss}
\end{equation}%
$(3)$Fix $1/p,s,r$ and consider $\lambda _{n}$ as a function of $q.$ Then $%
\lambda _{n}$ is Frechet differentiable at $q$ in $L(J,%
\mathbb{R}
)$ and its Frechet derivative is the bounded linear transformation given by%
\begin{equation}
\lambda _{n}^{\prime }(q)h=\int\nolimits_{a}^{b}\left\vert w_{n}\right\vert
^{2}h,\text{ }h\in L(J,%
\mathbb{R}
);  \label{qqq}
\end{equation}%
$(4)$Fix $1/p,s,q$ and consider $\lambda _{n}$ as a function of $r,$ $r>0\ $%
a.e.$\ $on $J.$ Then $\lambda _{n}$ is Frechet differentiable at $r$ in $V$
and its Frechet derivative is the bounded linear transformation given by%
\begin{equation}
\lambda _{n}^{\prime }(r)h=-\lambda _{n}(r)\int\nolimits_{a}^{b}\left\vert
w_{n}\right\vert ^{2}h,\text{ }h\in L(J,%
\mathbb{R}
).  \label{rrr}
\end{equation}
\end{theorem}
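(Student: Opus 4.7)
The plan is to first extract, from the simplicity of $\lambda_n(\omega)$, a locally unique simple eigenvalue branch. Since the geometric and algebraic multiplicities coincide (Section 2), $\lambda_n(\omega)$ is a simple zero of the analytic characteristic function $\Delta(\cdot,\omega)=\det(A+B\Phi(b,\cdot,\omega))$ of Lemma \ref{L1}. Joint continuity of $\Phi(b,\lambda,\cdot)$ in $(\lambda,\omega)$ (Lemmas \ref{L2} and \ref{continuous}) combined with the continuity-of-zeros principle of Lemma \ref{ZEREO} will produce a small positively oriented circle $\Gamma$ around $\lambda_n(\omega)$ and a neighborhood $O$ of $\omega$ such that, for every $\rho\in O$, $\Delta(\cdot,\rho)$ has exactly one (necessarily simple) zero $\lambda_n(\rho)$ in the interior of $\Gamma$. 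Lemma \ref{eigenfunction} then supplies normalized eigenfunctions $v=v(\cdot,\rho)$ of $\lambda_n(\rho)$ with $v\to w_n$ and $v^{[1]}_{\rho}\to w_n^{[1]}$ uniformly on $[a,b]$ as $\rho\to\omega$.

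The engine of the computation is a perturbed Lagrange identity. For each of the four parts I would multiply the perturbed equation for $v$ by $\bar{w}_n$ and the conjugated unperturbed equation for $w_n$ by $v$, integrate over $(a,b)$, and subtract. One integration by parts plus the relations $\bar{w}_n'+s\bar{w}_n=\bar{w}_n^{[1]}/p$ and its analogue for $v$ collapse the cross-terms into
\begin{equation*}
(\lambda_n(\rho)-\lambda_n(\omega))\int_a^b \bar{w}_n v\,r\,dx = \bigl[\bar{w}_n^{[1]} v - \bar{w}_n v^{[1]}_{\rho}\bigr]_a^b + R(h),
\end{equation*}
where $h$ denotes the increment of $1/p$, $s$, $q$, or $r$, and the remainder $R(h)$ equals $-\int_a^b h\,\bar{w}_n^{[1]}v^{[1]}_{\rho}$ in part (1), $\int_a^b h\,(v\bar{w}_n^{[1]}+\bar{w}_n v^{[1]}_{\rho})$ in part (2), $\int_a^b h\,\bar{w}_n v$ in part (3), and $-\lambda_n(\rho)\int_a^b h\,\bar{w}_n v$ in part (4). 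The bracket vanishes because both $(w_n,w_n^{[1]})^T$ and $(v,v^{[1]}_{\rho})^T$ lie in the kernel of the fixed matrix $(A|B)$, and the self-adjointness condition $AEA^{\ast}=BEB^{\ast}$ is precisely the statement that this kernel is Lagrangian for the form $(\xi,\eta)\mapsto[\bar\xi_1\eta_2-\bar\xi_2\eta_1]_a^b$ on $\mathbb{C}^4$.

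Once the boundary term drops, I divide by $\int_a^b \bar{w}_n v\,r\,dx$, which tends to $\int_a^b|w_n|^2 r=1$ by normalization and Lemma \ref{eigenfunction}. The uniform convergence of $v,v^{[1]}_{\rho}$ to $w_n,w_n^{[1]}$ then lets me replace $\bar{w}_n v$ by $|w_n|^2$ and $\bar{w}_n^{[1]}v^{[1]}_{\rho}$ by $|w_n^{[1]}|^2$ inside $R(h)$ at the cost of $o(\|h\|_1)$, since the replacement error is bounded uniformly by $|h|$ times a quantity tending to zero. Formulas \eqref{i/p}--\eqref{rrr} then fall out directly; for part (2) one uses $v\bar{w}_n^{[1]}+\bar{w}_n v^{[1]}_{\rho}\to 2\mathrm{Re}(w_n\bar{w}_n^{[1]})$. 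Boundedness of the resulting linear functional $h\mapsto\int_a^b h\,\varphi$ on $L(J,\mathbb{R})$ is immediate from $\varphi\in L^{\infty}(a,b)$, as eigenfunctions and their quasi-derivatives are continuous on $[a,b]$.

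The hardest step is the boundary-term argument in parts (1) and (2): the quasi-derivative itself is $p$- or $s$-dependent, so the two boundary 4-vectors $(w_n,w_n^{[1]})$ and $(v,v^{[1]}_{\rho})$ are built from different coefficient functions, and one cannot simply invoke self-adjointness of a single fixed operator. The Lagrangian-kernel interpretation of $AEA^{\ast}=BEB^{\ast}$ must be applied to a pair of boundary vectors coming from different operators but obeying the same algebraic boundary relation. A secondary technical point is turning the Gateaux-type limit into a genuine Frechet derivative, which is precisely where the uniform convergence supplied by Lemma \ref{eigenfunction} is indispensable.
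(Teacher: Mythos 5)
Your proposal is correct and follows essentially the same route as the paper: the perturbed Lagrange identity $(\lambda_n(\rho)-\lambda_n(\omega))\int_a^b w_n\bar v_n r=[\,\cdot\,]_a^b+R(h)$ with exactly the same remainders, vanishing of the boundary bracket from the self-adjoint boundary condition, and division by $\int_a^b w_n\bar v_n r\to 1$ using the uniform eigenfunction convergence of Lemma \ref{eigenfunction}. You in fact supply more detail than the paper at the two points it glosses over (the existence of $\Gamma$ and $O$ via Lemma \ref{ZEREO}, which the paper calls ``obvious,'' and the algebraic reason the bracket vanishes even though the two quasi-derivatives are built from different coefficients), and both are handled correctly.
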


\begin{proof}
The conclusion that the Sturm-Liouville problem $\rho $ has exactly one
eigenvalue in the interior of $\Gamma $ and this eigenvalue is simple is an
obvious result$.$

In the following, we only prove $(\ref{i/p})$ and $(\ref{sss})$. The conclusion $(\ref{qqq})$ and $(\ref{rrr})$ can be proved similarly.

$(1)$Denote $w_{n}=w_{n}(\cdot ,1/p),v_{n}=w_{n}(\cdot ,1/p_{h})\ $where $%
1/p_{h}=1/p+h,$ $h\in V.$ Note that $1/p$ $\in L(J,%
\mathbb{R}
)$ implies that $1/p_{h}\in L(J,%
\mathbb{R}
)$ and $p-p_{h}=pp_{h}h.$ Using $(\ref{fangcheng})$ and integration by parts
we obtain%
\begin{eqnarray*}
&&(\lambda _{n}(1/p_{h})-\lambda _{n}(1/p))\int\nolimits_{a}^{b}w_{n}\bar{v}%
_{n}r \\
&=&\lambda _{n}(1/p_{h})\int\nolimits_{a}^{b}w_{n}\bar{v}_{n}r-\lambda
_{n}(1/p)\int\nolimits_{a}^{b}w_{n}\bar{v}_{n}r \\
&=&\int\nolimits_{a}^{b}w_{n}(-(\bar{v}_{n}^{[1]})^{\prime }+s\bar{v}%
_{n}^{[1]}+q\bar{v}_{n})-\int\nolimits_{a}^{b}\bar{v}_{n}(-(w_{n}^{[1]})^{%
\prime }+sw_{n}^{[1]}+qw_{n}) \\
&=&\left. \left[ -w_{n}\bar{v}_{n}^{[1]}+\bar{v}_{n}w_{n}^{[1]}\right]
\right\vert _{a}^{b}+\int\nolimits_{a}^{b}\left( w_{n}^{\prime }\bar{v}%
_{n}^{[1]}+sw_{n}\bar{v}_{n}^{[1]}-\bar{v}_{n}^{\prime }w_{n}^{[1]}-s\bar{v}%
_{n}w_{n}^{[1]}\right) \\
&=&\left. \left[ -w_{n}\bar{v}_{n}^{[1]}+\bar{v}_{n}w_{n}^{[1]}\right]
\right\vert _{a}^{b}+\int\nolimits_{a}^{b}\left( \frac{1}{p}w_{n}^{[1]}\bar{v%
}_{n}^{[1]}-\frac{1}{p_{h}}w_{n}^{[1]}\bar{v}_{n}^{[1]}\right) \\
&=&\left. \left[ -w_{n}\bar{v}_{n}^{[1]}+\bar{v}_{n}w_{n}^{[1]}\right]
\right\vert _{a}^{b}-\int\nolimits_{a}^{b}w_{n}^{[1]}\bar{v}_{n}^{[1]}h.
\end{eqnarray*}%
For all boundary conditions we have that
\begin{equation*}
\left. \left[ -w_{n}\bar{v}_{n}^{[1]}+\bar{v}_{n}w_{n}^{[1]}\right]
\right\vert _{a}^{b}=0.
\end{equation*}%
Noting that $1/p_{h}\rightarrow 1/p$ as $h\rightarrow 0\ $in $L(J,%
\mathbb{R}
)$ and using Lemma $\ref{eigenfunction}$ we have%
\begin{equation*}
(\lambda _{n}(1/p+h)-\lambda
_{n}(1/p))(1+o(1))=-\int\nolimits_{a}^{b}\left\vert w_{n}^{[1]}\right\vert
^{2}h+o(h),
\end{equation*}%
and consequently,%
\begin{eqnarray*}
\lambda _{n}(1/p+h)-\lambda _{n}(1/p) &=&\left(
-\int\nolimits_{a}^{b}\left\vert w_{n}^{[1]}\right\vert ^{2}h+o(h)\right)
(1+o(1))^{-1} \\
&=&-\int\nolimits_{a}^{b}\left\vert w_{n}^{[1]}\right\vert ^{2}h+o(h),
\end{eqnarray*}%
as $h\rightarrow 0\ $in $L(J,%
\mathbb{R}
).$ This completes the proof of $(\ref{i/p}).$

$(2)$Denote $w_{n}=w_{n}(\cdot ,s),v_{n}=w_{n}(\cdot ,s_{h})\ $where $%
s_{h}=s+h,$ $h\in L(J,%
\mathbb{R}
).$ Note that $s$ $\in L(J,%
\mathbb{R}
)$ implies that $s_{h}\in L(J,%
\mathbb{R}
).$ Using $(\ref{fangcheng})$ and integration by parts we obtain%
\begin{eqnarray*}
&&(\lambda _{n}(s_{h})-\lambda _{n}(s))\int\nolimits_{a}^{b}w_{n}\bar{v}_{n}r
\\
&=&\lambda _{n}(s_{h})\int\nolimits_{a}^{b}w_{n}\bar{v}_{n}r-\lambda
_{n}(s)\int\nolimits_{a}^{b}w_{n}\bar{v}_{n}r \\
&=&\int\nolimits_{a}^{b}w_{n}(-(\bar{v}_{n}^{[1]})^{\prime }+s_{h}\bar{v}%
_{n}^{[1]}+q\bar{v}_{n})-\int\nolimits_{a}^{b}\bar{v}_{n}(-(w_{n}^{[1]})^{%
\prime }+sw_{n}^{[1]}+qw_{n}) \\
&=&\left. \left[ -w_{n}\bar{v}_{n}^{[1]}+\bar{v}_{n}w_{n}^{[1]}\right]
\right\vert _{a}^{b}+\int\nolimits_{a}^{b}\left( w_{n}^{\prime }\bar{v}%
_{n}^{[1]}+s_{h}w_{n}\bar{v}_{n}^{[1]}-\bar{v}_{n}^{\prime }w_{n}^{[1]}-s%
\bar{v}_{n}w_{n}^{[1]}\right) \\
&=&\left. \left[ -w_{n}\bar{v}_{n}^{[1]}+\bar{v}_{n}w_{n}^{[1]}\right]
\right\vert _{a}^{b}+\int\nolimits_{a}^{b}\left( hw_{n}\bar{v}_{n}^{[1]}+h%
\bar{v}_{n}w_{n}^{[1]}\right) .
\end{eqnarray*}%
For all boundary conditions we have that
\begin{equation*}
\left. \left[ -w_{n}\bar{v}_{n}^{[1]}+\bar{v}_{n}w_{n}^{[1]}\right]
\right\vert _{a}^{b}=0.
\end{equation*}%
Noting that $s_{h}\rightarrow s$ as $h\rightarrow 0\ $in $L(J,%
\mathbb{R}
)$ and using Lemma $\ref{eigenfunction}$ we have%
\begin{equation*}
(\lambda _{n}(s+h)-\lambda _{n}(s))(1+o(1))=\int\nolimits_{a}^{b}\left( w_{n}%
\bar{w}_{n}^{[1]}+\bar{w}_{n}w_{n}^{[1]}\right) h+o(h),
\end{equation*}%
and consequently,%
\begin{eqnarray*}
\lambda _{n}(s+h)-\lambda _{n}(s) &=&\left( 2\int\nolimits_{a}^{b}\mathit{%
\mathrm{Re}}(w_{n}\bar{w}_{n}^{[1]})h+o(h)\right) (1+o(1))^{-1} \\
&=&2\int\nolimits_{a}^{b}\mathit{\mathrm{Re}}(w_{n}\bar{w}_{n}^{[1]})h+o(h),
\end{eqnarray*}%
as $h\rightarrow 0\ $in $L(J,%
\mathbb{R}
).$ This completes the proof of $(\ref{sss}).$
\end{proof}
\section{Application to a class of transmission problems}

Consider the Sturm-Liouville operator
\begin{equation}
Ly(x):=\frac{1}{w(x)}(-(p(x)y^{\prime}(x)) ^{\prime}+q(x)y(x)),\ x\in
J=(a,c)\cup(c,b),\label{2y}%
\end{equation}
with the transmission conditions
\begin{equation}
\left\{
\begin{array}{l}
y(c+)=y(c-), \\
(py^{\prime }) (c+)-(py^{\prime }) (c-)=\alpha y(c),%
\end{array}%
\right. \label{2s}
\end{equation}
where $1/p,\ q,\ w\in L(J,\mathbb{R}),$ $p>0$, $w>0$ a.e. on $J,$ and $\alpha\in\mathbb{R}$ is a constant.

According to \cite{fi14}, we consider the self-adjoint boundary conditions as
follows:
\begin{equation}
A\left(
\begin{array}{c}
y(a) \\
(py^{\prime })(a)%
\end{array}%
\right) +B\left(
\begin{array}{c}
y(b) \\
(py^{\prime })(b)%
\end{array}%
\right) =\left(
\begin{array}{c}
0 \\
0%
\end{array}%
\right) ,  \label{2r}
\end{equation}%
where $A$, $B$ also satisfy the condition (\ref{juzhen}). As is well known,
the boundary conditions (\ref{2r}) can be divided into three
classes of boundary conditions as follows:

1.Seperated self-adjoint boundary conditions:%
\begin{equation}
S_{\alpha ,\beta }:\left\{
\begin{array}{c}
\cos \alpha y(a)-\sin \alpha (py^{\prime })(a)=0,\text{ }\alpha \in \lbrack
0,\pi ), \\
\cos \beta y(b)-\sin \beta (py^{\prime })(b)=0,\text{ }\beta \in (0,\pi ].%
\end{array}%
\right.  \label{2e}
\end{equation}

2.Real coupled self-adjoint boundary conditions:%
\begin{equation}
Y(b)=KY(a),\text{ }K\in \mathrm{SL}_{2}(%
\mathbb{R}
).  \label{2d}
\end{equation}

3.Complex coupled self-adjoint boundary conditions:%
\begin{equation}
Y(b)=e^{i\gamma }KY(a),\text{ }-\pi <\gamma <0\text{ or }0<\gamma <\pi ,%
\text{ }K\in \mathrm{SL}_{2}(%
\mathbb{R}
),  \label{2t}
\end{equation}%
where $Y(\cdot )=\left(
\begin{array}{c}
y(\cdot ) \\
py^{\prime }(\cdot )%
\end{array}%
\right) .$

\begin{theorem}
All the conclusions we have obtained in Sections $3-6$ can be applied to the
self-adjoint transmission problems $(\ref{2y})-(\ref{2r})$. \end{theorem}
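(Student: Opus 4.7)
The plan is to realize the self-adjoint transmission problem $(\ref{2y})$--$(\ref{2r})$ as a special case of a self-adjoint Sturm--Liouville problem with distributional potential on the whole interval $(a,b)$, so that every result of Sections 3--6 applies by transport of structure. The core idea is to absorb the jump condition $(py')(c+)-(py')(c-)=\alpha y(c)$ by introducing a coefficient $s$ with a Heaviside jump at $c$: then the paper's quasi-derivative $y^{[1]}=p(y'+sy)$ stays absolutely continuous across $c$ (as required by the framework), while $py'$ itself jumps by exactly $\alpha y(c)$.

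Concretely, I would set, on $(a,b)$,
\[
r_{\mathrm{eff}}=w,\qquad s_{\mathrm{eff}}=-\frac{\alpha}{p}\,\chi_{(c,b)},\qquad q_{\mathrm{eff}}=q-\frac{\alpha^{2}}{p}\,\chi_{(c,b)}.
\]
Since $1/p,q,w\in L(J,\mathbb{R})$, each of $1/p,q_{\mathrm{eff}},r_{\mathrm{eff}},s_{\mathrm{eff}}$ lies in $L((a,b),\mathbb{R})$, and $p,r_{\mathrm{eff}}>0$ a.e., so condition $(\ref{tx})$ is satisfied. Expanding $-(y^{[1]})'+s_{\mathrm{eff}}y^{[1]}+q_{\mathrm{eff}}y$ with $y^{[1]}=p(y'+s_{\mathrm{eff}}y)$ recovers $-(py')'+qy$ on each of $(a,c)$ and $(c,b)$, because $ps_{\mathrm{eff}}=-\alpha\chi_{(c,b)}$ is locally constant there. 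The continuity of $y^{[1]}$ at $c$, built into the framework, then reads $(py')(c-)=(py')(c+)-\alpha y(c)$, which is exactly the jump in $(\ref{2s})$; continuity of $y$ itself is the standing hypothesis.

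Next, I would rewrite the boundary conditions $(\ref{2r})$ in the paper's normal form. Because $s_{\mathrm{eff}}(a)=0$ and $(ps_{\mathrm{eff}})(b)=-\alpha$, one has $y^{[1]}(a)=(py')(a)$ and $(py')(b)=y^{[1]}(b)+\alpha y(b)$, so $(\ref{2r})$ is equivalent to $A\bigl(\begin{smallmatrix}y(a)\\ y^{[1]}(a)\end{smallmatrix}\bigr)+\widetilde{B}\bigl(\begin{smallmatrix}y(b)\\ y^{[1]}(b)\end{smallmatrix}\bigr)=0$ with $\widetilde{B}=BM$ and $M=\bigl(\begin{smallmatrix}1 & 0\\ \alpha & 1\end{smallmatrix}\bigr)$. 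A direct $2\times 2$ computation gives $MEM^{\ast}=E$, hence
\[
\widetilde{B}E\widetilde{B}^{\ast}=BMEM^{\ast}B^{\ast}=BEB^{\ast}=AEA^{\ast},
\]
and $(A|\widetilde{B})$ has full rank since $M\in\mathrm{GL}(2,\mathbb{R})$. Thus $(A,\widetilde{B})$ satisfies $(\ref{juzhen})$, and the transmission problem is in bijection with a self-adjoint Sturm--Liouville problem of the form $(\ref{a})$--$(\ref{25})$ on $(a,b)$; this bijection preserves eigenvalues with their multiplicities, identifies eigenfunctions pointwise, and maps the three classes $(\ref{2e})$--$(\ref{2t})$ to $(\ref{11})$, $(\ref{21})$, $(\ref{ee})$ (the separated angle at $b$ being rotated by the invertible action of $M$).

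The main obstacle is not analytic but bookkeeping: for each theorem of Sections 3--6 one must verify that the translation $(A,B)\mapsto(A,\widetilde{B})$ and $(p,q,w,\alpha)\mapsto(p,q_{\mathrm{eff}},r_{\mathrm{eff}},s_{\mathrm{eff}})$ preserves the structure used there---the topology of the boundary-condition spaces $\mathscr{B}^{\mathbb{R}}_{S}$ and $\mathscr{B}^{\mathbb{C}}_{S}$, the stratification into $\mathscr{F}^{\mathbb{R}}_{\pm},\mathscr{G}^{\mathbb{R}}_{\pm},\mathscr{H}^{\mathbb{R}}_{\pm},\mathscr{I}^{\mathbb{R}}_{\pm}$ for the discontinuity results of Section~5, and the zero-counting for eigenfunctions in Section~6 (which is unaffected because $y$ is continuous at $c$). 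Since $M$ is a fixed unimodular matrix depending only on $\alpha$, these correspondences are continuous and explicit, so once the identification is established no further analysis is needed and every conclusion of Sections 3--6 transfers verbatim to the transmission problem.
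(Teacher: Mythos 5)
Your reduction is correct, and it implements the same underlying idea as the paper --- absorb the jump $(py')(c+)-(py')(c-)=\alpha y(c)$ into the coefficient $s$ so that the quasi-derivative $y^{[1]}=p(y'+sy)$ is absolutely continuous across $c$ --- but via a genuinely different choice of $s$. You take $ps_{\mathrm{eff}}=-\alpha\chi_{(c,b)}$ (piecewise constant, vanishing near $a$ but not at $b$) and compensate with $q_{\mathrm{eff}}=q-\tfrac{\alpha^{2}}{p}\chi_{(c,b)}$; the paper instead builds $\bar{u}=u_{0}+\tilde{u}$ with $\tilde{u}$ a continuous antiderivative of $\tilde{q}=q-Cw$, normalized so that $\bar{u}(a)=\bar{u}(b)=0$. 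The paper's normalization buys the convenience that $y^{[1]}$ and $py'$ agree at both endpoints, so the boundary matrices $A,B$ need no modification; the price is that the potential is rewritten as a derivative and the spectrum is shifted by the constant $C$ (harmless for all qualitative conclusions). Your normalization keeps $q$ and the spectrum intact but forces the transformation $B\mapsto BM$ with $M=\bigl(\begin{smallmatrix}1&0\\ \alpha&1\end{smallmatrix}\bigr)$ at $b$, and you correctly verify $MEM^{\ast}=E$ so self-adjointness is preserved. One point worth making explicit in your ``bookkeeping'' step: for coupled conditions the matrix $K$ becomes $M^{-1}K$, and since $M^{-1}$ is lower unitriangular the \emph{first row} $(k_{11},k_{12})$ is unchanged; likewise $\sin\beta=0$ iff $\sin\tilde{\beta}=0$ for the transformed separated angle at $b$. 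This is exactly what guarantees that the sign classifications in Theorem \ref{L7}, the Dirichlet condition in Theorem \ref{L8}, and the strata $\mathscr{K}^{\mathbb{R}}$, $\mathscr{F}^{\mathbb{R}}_{\pm}$, etc.\ of Section 5 are carried onto themselves, so the transfer is indeed verbatim. Both routes are valid; yours is arguably the more economical of the two since it avoids the spectral shift and the global rewriting of $q$.
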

 \begin{proof}
Denote $C=\frac{\left(\int\nolimits_{a}^{b}q(s)ds+\alpha\right)  }%
{\int\nolimits_{a}^{b}w(s)ds}$, $\tilde{q}=q-Cw$. Let $\tilde{u}(x)=\int\nolimits_{a}^{x}\tilde{q}(t)dt,$
then $\tilde{u}(a)=0$ and
\[
\tilde{u}(b)=\int\nolimits_{a}^{b}\tilde{q}(t)dt=\int\nolimits_{a}%
^{b}q(t)dt-C\int\nolimits_{a}^{b}w(t)dt=-\alpha.
\]
Define the function
\[
u_{0}(x)=\left\{
\begin{array}{cc}
\alpha, & x\in [c,b], \\
0, & x\in [a,c),%
\end{array}%
\right.
\]
and let $\bar{u}(x)=u_{0}(x)+\tilde{u}(x),$ then define the following operators on $(a,b)$,
\begin{eqnarray*}
  &&\bar{L}y=\frac{1}{w}(-(py^{\prime}+\bar{u}y)^{\prime}+\frac{\bar{u}}%
{p}(py^{\prime}+\bar{u}y)-\frac{\bar{u}^{2}}{p}y), \\
  && D(\bar{L}) =\left\{ y\in AC([a,b])\left\vert
\begin{array}
[c]{l}%
y^{[1]}=py^{\prime}+\bar{u}y\in
AC([a,b]),\\
y\ \text{satisfies}\ (\ref{25})
,\ \bar{L}y\in L_{w}^{2}(J)
\end{array}
\right.  \right\}.
\end{eqnarray*}
It is obvious that $\bar{L}$ is the Sturm-Liouville operator we have mainly considered in this paper, thus the eigenvalues and eigenfunctions of it satisfy the conclusion in Sections 3--6.

Moreover, since $\bar{u}(a)=\bar{u}(b)=0,$ then $\bar{L}$ can be written as
\begin{eqnarray*}
&&\tilde{L}y=\frac{1}{w}(-(py^{\prime})^{\prime}+\tilde{q}y),\ x\in(a,c)\cup(c,b),\\
&&D(\tilde{L})=\left\{ y\in AC([a,b])\left\vert
\begin{array}
[c]{l}
y\ \text{satisfies}\ (\ref{2r}),\\ py^{\prime }\in
AC([a,c]), py^{\prime}\in AC([c,b]), \\
(py^{\prime }) (c_{+}) -(py^{\prime }) (c_{-}) =y(c),\ \tilde{L}y\in
L_{w}^{2}(J)
\end{array}%
\right. \right\}.
\end{eqnarray*}
In conclusion, from the relation of the Sturm-Liouville problem $(\ref{2y})-(\ref{2r})$ and the operator $\tilde{L}$, the proof is completed.
\end{proof}


\begin{thebibliography}{99}
\bibitem{CM1} A. M. Savchuk and A. A. Shkalikov, Sturm--Liouville operators
with singular potentials\textit{,} Math. Notes. \textbf{66} (1999) 741--753.

\bibitem{CC3} A. M. Savchuk and A. A. Shkalikov, Sturm--Liouville operators
with distribution potentials\textit{,} Trans. Moscow Math. Soc. \textbf{64}
(2003) 143--192.

\bibitem{xx14} J. Eckhardt, F. Gesztesy, R. Nichols, and G. Teschl,
Weyl--Titchmarsh theory for Sturm--Liouville operators with distributional
potentials\textit{,} Opuscula Math. \textbf{33} (2013) 467--563.

\bibitem{15} M.-L. Baeteman and K. Chadan, The inverse scattering problem
for singular oscillating potentials\textit{,} Nuclear Phys. A \textbf{255}
(1975) 35--44.

\bibitem{16} M.-L. Baeteman and K. Chadan, Scattering theory with highly
singular oscillating potentials\textit{,} Ann. Inst. H. Poincar\'{e} Sect. A
\textbf{24} (1976) 1--16 .

\bibitem{28} M. Combescure, Spectral and scattering theory for a class of
strongly oscillating potentials\textit{,} Commun. Math. Phys. \textbf{73}
(1980) 43--62.

\bibitem{27} M. Combescure and J. Ginibre, Spectral and scattering theory
for the Schr\"{o}dinger operator with strongly oscillating potentials\textit{%
,} Ann. Inst. H. Poincar\'{e} \textbf{24} (1976) 17--29.

\bibitem{131} D. B. Pearson, Scattering theory for a class of oscillating
potentials\textit{,} Helv. Phys. Acta \textbf{52} (1979) 541--554.

\bibitem{134} F. S. Rofe-Beketov and E. H. Hristov, Transformation operators
and scattering functions for a highly singular potential\textit{,} Sov.
Math. Dokl. \textbf{7} (1966) 834--837.

\bibitem{135} F. S. Rofe-Beketov and E. H. Hristov, Some analytical
questions and the inverse Sturm--Liouville problem for an equation with
highly singular potential\textit{,} Sov. Math. Dokl. \textbf{10} (1969)
432--435.

\bibitem{72} J. Herczy\'{n}ski, On Schr\"{o}dinger operators with
distributional potentials\textit{,} J. Operator Th. \textbf{21} (1989)
273--295.

\bibitem{2} S. Albeverio, F. Gesztesy, R. H\o gh-Krohn, and H. Holden,
Solvable Models in Quantum Mechanics\textit{,} 2nd ed., AMS Chelsea
Publishing, Providence, RI, 2005.

\bibitem{5} S. Albeverio and P. Kurasov, Singular Perturbations of
Differential Operators\textit{,} Cambridge Univ. Press, Cambridge, 2001.

\bibitem{C. Bennewitz} C. Bennewitz and W. N. Everitt, On second-order
left-definite boundary value problems, in Ordinary Differential Equations
and Operators\textit{,} (Proceedings, Dundee, 1982), W. N. Everitt and R. T.
Lewis (eds.), Lecture Notes in Math. Vol. 1032, Springer, Berlin, 1983, pp.
31--67.

\bibitem{Everitt} W. N. Everitt and L. Markus, Boundary Value Problems and
Symplectic Algebra for Ordinary Differential and Quasi-Differential
Operators, Math. Surv. and Monographs, Vol. 61, Amer. Math. Soc., RI, 1999.

\bibitem{inverse} J. Eckhardt, F. Gesztesy, R. Nichols, and G. Teschl,
Inverse spectral theory for Sturm-Liouville operators with distributional
potentials\textit{,} J. Lond. Math. Soc. \textbf{88} (2013) 801--828.

\bibitem{CC5} Q. Kong, H. Wu, and A. Zettl, Dependence of the $n$-th
Sturm-Liouville eigenvalue on the problem\textit{,} J. Differential Equations \textbf{156}
(1999) 328--354.

\bibitem{XIN1} M. S. P. Eastham, Q. Kong, H. Wu, and A. Zettl, Inequalities
among eigenvalues of Sturm-Liouville problems\textit{,} J. Inequal. Appl.
\textbf{3} (1999) 25--43.

\bibitem{kong} Q. Kong and A. Zettl, Eigenvalues of regular Sturm-Liouville
problems\textit{,} J. Differential Equations \textbf{131} (1996) 1--19.

\bibitem{tr1} B. Chanane, Sturm--Liouville problems with impulse effects, Appl. Math. Comput. \textbf{190} (2007) 610--626.

\bibitem{tr2} F. Gesztesy, C. Macedo, and L. Streit, An exactly solvable periodic Schroedinger operator, J. Phys. A: Math. Gen. \textbf{18} (1985) 503--507.

\bibitem{tr3} O. Sh. Mukhtarov andM. Kandemir, Asymptotic behavior of eigenvalues for the discontinuous boundary-value problem with functional-transmission
conditions, Acta Math. Scientia 22 B (3) (2002) 335--345.

\bibitem{tr4} O. Sh. Mukhtarov and S. Yakubov, Problems for differential equations with transmission conditions, Appl. Anal. \textbf{81} (2002) 1033--1064.

\bibitem{8} Q. Kong, H. Wu, and A. Zettl, Geometric aspects of
Sturm-Liouville problems, I. structures on spaces of boundary conditions%
\textit{,} Proc. Roy. Soc. Edinburgh Sect. A \textbf{130} (2000) 561--589.

\bibitem{CC7} A. Zettl, Sturm-Liouville Theory\textit{,} Amer. Math. Soc.,
Providence, RI, 2005.

\bibitem{xx13} T. Kato, Perturbation Theory for Linear Operators\textit{,}
Springler-Verlag, Berlin, 1966.

\bibitem{intro} P. Hislop, I. Sigal, Introduction to Spectral Theory. With
applications to Schr\"{o}dinger operators\textit{,} Springer-Verlag, NY,
1996.

\bibitem{xx12} M. Reed and B. Simon, Methods of Modern Mathematical Physics.
Vols. 1--4\textit{,} Academic Press, New York, 1972.

\bibitem{66} J. Dieudonne, Foundations of Modern Analysis\textit{,} Academic
Press, New York/London, 1969.

\bibitem{CC4} F. V. Atkinson, Asymptotics of an eigenvalue problem involving
an interior singularity\textit{,} Research Program Proceedings ANL-87-26,
Vol.2, pp.1-18, Argonne National Lab, Illinois, 1988.

\bibitem{xx10} E. A. Coddington and N. Levinson, Theory of Ordinary
Differential Equations\textit{,} McGraw-Hill, New York, 1955.

\bibitem{cc9} R. Adams and J. Fournier, Sobolev Spaces\textit{,} Academic
Press, New York, 1975.

\bibitem{wede} J. Weidmann, Linear Operators in Hilbert Spaces\textit{,}
Springer, New York, 1980.

\bibitem{fi14} A. Wang, J. Sun, and A. Zettl, Two-interval Sturm--Liouville
operators in modified Hilbert spaces. J. Math. Anal. Appl. \textbf{328} \textbf{(}%
2007) 390--399.
\end{thebibliography}
\end{document}